\newtheorem{teo}{Theorem}[section]
\newtheorem{lem}[teo]{Lemma}
\newtheorem{defi}[teo]{Definition}
\newtheorem{cor}[teo]{Corollary}
\newtheorem{pro}[teo]{Proposition}
\newtheorem{obs}[teo]{Remark}
\DeclareMathOperator{\dist}{dist}
\DeclareMathOperator{\Hess}{Hess}
\DeclareMathOperator{\spann}{span}
\DeclareMathOperator{\epi}{epi}
\DeclareMathOperator*{\esssup}{ess\,sup}
\newcommand{\sfe}{S_{F_e}}
\newcommand{\ssubset}{\subset\joinrel\subset}
\newcommand{\Kgzero}{K_{\mathfrak{g}^{\ast} \backslash \left\{ 0 \right\} }}
\begin{document}
	
	\title[Mollifier smoothing of left-invariant $C^0$-Finsler structures]{Mollifier smoothing of left-invariant strongly convex $C^0$-Finsler structures on Lie groups and convergence of extremals}
	
	\author{Ryuichi Fukuoka}
	\address{Department of Mathematics, State University of Maring\'a,
		87020-900, Maring\'a, PR, Brazil \\ email: rfukuoka@uem.br}
	
	\author{Anderson Macedo Setti}
	\address{Department of Exact and Earth Sciences, Federal University of Mato Grosso, 78060-900, Cuiabá, MT, Brazil, email: anderson.setti@ufmt.br}
	
	\date{\today}
	
	\begin{abstract}
		Let $M$ be a smooth manifold and $TM$ its tangent bundle. 
A $C^0$-Finsler structure of $M$ is a continuous function $F:TM \rightarrow \mathbb{R}$ such that $F$ restricted to each tangent space $T_xM$ of $M$ is an asymmetric norm.
$F$ is strongly convex if $F\vert_{T_xM}$ is a strongly convex asymmetric norm for every $x \in M$. 
Let $G$ be a Lie group endowed with a left-invariant strongly convex $C^0$-Finsler structure $F$.
	We introduce a smoothing $F_{\varepsilon}$ of $F$, which is a left-invariant version of the mollifier smoothing presented previously by the same authors.
	We study extremals $x(t)$ on $(G,F)$ using the Pontryagin maximum principle.
	Given $(x_0,\alpha_0)$ in the cotangent bundle $T^\ast G$ of $G$, we prove that there exist a unique Pontryagin extremal $t\in \mathbb{R} \mapsto (x(t), \alpha(t))$ such that $(x(0),\alpha(0))=(x_0,\alpha_0)$.
	Moreover, if $t \in \mathbb{R} \mapsto (x_\varepsilon(t), \alpha_{\varepsilon}(t))$ is the unique Pontryagin extremal on $(G,F_\varepsilon)$ such that $(x_\varepsilon(0), \alpha_{\varepsilon}(0))=(x_0, \alpha_0)$, then we prove that $(x_{\varepsilon}(t),\alpha_\varepsilon(t))$ converges uniformly to $(x(t),\alpha(t))$ on compact intervals of $\mathbb{R}$.
	\end{abstract}
	
	\keywords{Lie groups, strongly convex $C^0$-Finsler structures, Pontryagin extremals, smoothing, convergence}
	
	\subjclass[2020]{49N99, 53B20, 53B99, 53C22, 53D25}
	
	\maketitle
	
	\section{Introduction}
		
	Let $M$ be a smooth manifold, $T_xM$ be its tangent space at $x\in M$, $T^{\ast}_xM$ be its cotangent space at $x$, $TM = \{(x,y): x\in M, y \in T_xM\}$ be its tangent bundle and $T^\ast M = \{(x,\alpha): x \in M, \alpha \in T^\ast_x M\}$ be its cotangent bundle. 
	A {\em $C^0$-Finsler structure} of $M$ is a continuous function $F:TM\rightarrow [0,\infty)$ such that $F(x,\cdot):T_xM\rightarrow [0,\infty)$ is an asymmetric norm for every $x\in M$ (see Definition \ref{Definicao de norma assimetrica}).
	
	We will use the term ``Finsler geometry'' according to its more usual definition. 
	Its basic theory follows closely the way that Riemannian geometry was developed, mainly because its objects are smooth and differential calculus is applicable directly (see \cite{BaoChernShen}).
	On the other hand, we need additional techniques when dealing with some geometrical structures on smooth manifolds that generalizes Riemannian and Finsler structures.
	
	In this work, we are interested to study minimizing paths on $C^0$-Finsler manifolds.
	In Riemannian (and Finsler) geometry, we have geodesic fields on tangent bundles (slit tangent bundles) as a basic tool.
	Although general $C^0$-Finsler and $C^0$-sub-Finsler manifolds don't admit such type of vector fields, we can restrict ourselves to structures $F$ that satisfies some kind of horizontal smoothness.
	We can define a control system where its vector fields are continuously differentiable and represent $F$.
	Instead of using the Euler-Lagrange equations on the energy functional in order to obtain geodesics, we can apply the Pontryagin maximum principle (``PMP'' in short) on this control system in order to obtain extremals of $(M,F)$. 
	This is an approach that is used in works such as \cites{Agrachev-Barilari-Boscain, Agrachev-Gamkrelidze-feedback-1, Agrachev-Lee, Ardentov-LeDonne-Sachkov, Ardentov-Lokutsievskiy-Sachkov, Barilari-Boscain-LeDonne-Sigalotti, Berestovskii-Zubareva-Engel, Gribanova, Hakavuori-Infinite-geodesics, Lokutsievskiy-Heisenberg, Sachkov-bang-bang-Engel}, and an overview of them can be found in  \cites{PrudencioFukuoka, Fukuoka-Rodrigues}.
	
	Let us summarize this work.	 
	
	A $C^0$-Finsler structure being {\em of Pontryagin type} is the way 
	we define the family of continuously differentiable vector fields $\{X_u\}$ of the control theory that is useful for our purposes (see Definition \ref{Variedade de Finsler do Tipo Pontryagin} and \cite{Fukuoka-Rodrigues}).
	This family is given by unit vector fields defined on an open subset $U$ of $M$, and the main idea is that $\{X_u\}$ satisfy the minimum requirements of the PMP for the problem of minimizing paths parametrized by arc length.
	As a consequence of applying the PMP to this control system, we get the extended geodesic field $\mathcal{E}$, which is a multivalued vector field on the {\em slit cotangent bundle} $T^\ast U\backslash 0 = \{(x,\alpha): x\in U, \alpha \in T^\ast_x U \backslash \{0\}\}$.
	Its integral curves $(x(t), \alpha(t))$ are the Pontryagin extremals of $(U,F)$ and $x(t)$ are extremals of $(U,F)$. 
		Every minimizing curve of $(M,F)$ parametrized by arc length with its trace on $U$ is an extremal of $(U,F)$.
	In the particular case where $F$ is a Finsler structure, the fiberwise Legendre transform from $TM \backslash 0 \rightarrow T^\ast M \backslash 0$ is a smooth map that identifies the geodesic spray on $TM \backslash 0$ and the extended geodesic field on $T^\ast M \backslash 0$.
	In other words, if we consider the identification given by the Legendre transform, then the geodesic spray on $T M \backslash 0$ is included in the theory of extended geodesic fields on $T^\ast M \backslash 0$. 
	The proof of all these facts can be found in \cite{Fukuoka-Rodrigues}.
		
	Lie groups $G$ endowed with left-invariant $C^0$-Finsler structures $F$ are of Pontryagin type. 
	The control system is given by the unit left-invariant vector fields.
	In this work, we study Lie groups endowed with left-invariant strongly convex $C^0$-Finsler structures.
	In this case, $\mathcal{E}$ is a locally Lipschitz vector field defined on $T^\ast G\backslash 0$ (see \cite{Fukuoka-Rodrigues}), what ensures that for a given $t_0 \in \mathbb{R}$ and $(x_0, \alpha_0) \in T^\ast G \backslash 0$, there exist a unique maximal Pontryagin extremal $(x(t),\alpha(t))$ satisfying $(x(t_0), \alpha(t_0)) = (x_0, \alpha_0)$.
	We prove here that these maximal Pontryagin extremals are defined in $\mathbb{R}$ (see Theorem \ref{theorem_extremal de Pontryagin em grupos}).
	
	Let $\mathfrak{g}$ be the Lie algebra of $G$. Left-invariant $C^0$-Finsler structures $F$ on $G$ are determined by their restriction $F_e$ to $\mathfrak{g}$ as it happens for left-invariant Riemannian structures.
	In the case where $F$ is a left-invariant strongly convex $C^0$-Finsler structure on $G$, we have that $F_e$ is a strongly convex asymmetric norm (see Definition \ref{Fortemente convexa - definicao artigo R-H}).
	We define a mollifier smoothing $F_\varepsilon$ of $F$, which is a left-invariant version of the mollifier smoothing on $C^0$-Finsler structures defined on \cite{Fukuoka-Setti}.
	In order to do so, we first define the mollifier smoothing $\tilde{F}_\varepsilon$ of $F_e$, and then we spread $\tilde{F}_\varepsilon$ through left-invariance on $G$, obtaining $F_\varepsilon$.
	Given $(x_0,\alpha_0) \in T^\ast G\backslash 0$ and $t_0 \in \mathbb{R}$, consider the Pontryagin extremals $t \in \mathbb{R} \mapsto (x(t),\alpha(t))$ and $t \in \mathbb{R} \mapsto (x_\varepsilon(t), \alpha_\varepsilon(t))$ of $(G,F)$ and $(G,F_\varepsilon)$ respectively satisfying $(x(t_0),\alpha(t_0)) = (x_\varepsilon(t_0), \alpha_\varepsilon(t_0)) = (x_0, \alpha_0)$.
	The main result of this work states that given a compact interval $[a,b] \subset \mathbb{R}$,
	$(x_\varepsilon(t), \alpha_\varepsilon(t))$ converges uniformly to $(x(t), \alpha(t))$ on $[a,b]$ when $\varepsilon$ converges to zero (see Theorem \ref{theorem_principal no fibrado cotangente}).
	
	Let us make comments about some subjects related to this work. 
	
	If $(M,F)$ is a Finsler manifold, and we consider $(x_0, y_0) \in TM$, then there exist an interval $I \subset \mathbb{R}$ and a unique maximal geodesic $x:I \rightarrow M$ such that $x(0)=x_0$ and $\dot{x}(0) = y_0$.
	We can ask if Theorem \ref{theorem_extremal de Pontryagin em grupos} can be used to prove this result for Lie groups endowed with left-invariant strongly convex $C^0$-Finsler structures.
	The answer is that the existence and uniqueness result of this initial value problem, that holds for Finsler manifolds, doesn't hold in our case.
	As a counterexample, we can  
	consider the Lie group $G=\{(x^1, x^2) \in \mathbb{R}^2: x^2>0\}$, $(x^1, x^2)\cdot (z^1, z^2) = (x^2 z^1 + x^1, x^2 z^2)$, which is isomorphic to the connected component of the identity element $e$ of the affine group $\text{Aff}(\mathbb{R})$. 
	Let $F$ be the left-invariant strongly convex $C^0$-Finsler structure given in Section 9.2 of \cite{Fukuoka-Rodrigues}, which extremals can be explicitly calculated using the PMP (see also \cite{Gribanova}).
	We have that given the initial condition $(x_0,y_0) = (x_0,(0,1)) \in T G \cong G \times \mathbb{R}^2$, there exist infinitely many geodesics $x(t) \in G$ satisfying $x(0) = x_0$ and $\dot{x}(0) = y_0$, and even local uniqueness of this initial value problem doesn't hold.
	On the other hand, for every $(x_0,\alpha_0) \in T^{\ast}G$, there exists a unique Pontryagin extremal $(x(t), \alpha(t))$ such that $(x(0),\alpha(0))=(x_0,\alpha_0)$.
	This example show that, in general, it is more natural to study extremals of $C^0$-Finsler manifolds using Hamiltonian formalism and the PMP on $T^{\ast}M\backslash 0$ than a structure like geodesic fields on $TM \backslash 0$.
	
	We can define a distance function $d_F$ on a connected $C^0$-Finsler manifold $(M,F)$, where $d_F(x_1,x_2)$ is the infimum of the lengths of piecewise continuously differentiable paths that connect $x_1$ to $x_2$ (in this order).
	Notice that, in general, $d_F$ is asymmetric.
	Although given $(x_0,y_0) \in TM$ we don't have the local existence and uniqueness of an extremal $x(t)$ for the initial value problem $x(0)=x_0$ and $\dot{x}(0)=y_0$, the boundary value problem is much more well-behaved, even for general $C^0$-Finsler structures.
	We have that if $x_1, x_2 \in (M,F)$ are such that $d_F(x_1,x_2)$ is small enough, then there exist a minimizing path connecting $x_1$ and $x_2$ (see \cite{Mennucci-asymmetric-distances} for the original result and Remark 2.6 of \cite{Fukuoka-Rodrigues} for an adaptation of this result for $C^0$-Finsler manifolds).
	This result is only about existence, it doesn't state anything about uniqueness.
	The technique of the proof of this boundary value problem is metric geometry, which is another important area in studying $C^0$-Finsler manifolds.
	For another instance of the use of metric geometry in $C^0$-Finsler geometry, we can calculate minimizing paths explicitly in some specific $C^0$-Finsler manifolds, even when PMP isn't applicable (see \cite{Fukuoka-large-family}).

	In \cite{Fukuoka-Setti}, the authors introduced a mollifier smoothing for general $C^0$-Finsler structures, which is done in two steps:
	The first step is a vertical smoothing on each tangent space.
	In order to assure that this smoothing is a Minkowski norm (and therefore strongly convex), an additional term is introduced in the bump function.
	The second part is a horizontal smoothing, which is done on coordinate open subsets $\{U_\beta\}$ of $M$, weighted with respect to a partition of the unity subordinated to $\{U_\beta\}$. 
	If $F$ is a Finsler structure, the authors prove that the geometrical objects of $F_\varepsilon$ such as several connections of the Finsler geometry and the flag curvatures converge uniformly to the corresponding objects of $F$ on compact subsets when $\varepsilon \rightarrow 0$ (see Section 8 of \cite{Fukuoka-Setti}). 
	In this work, the mollifier smoothing on $F_e$ is similar to the vertical mollifier smoothing defined in \cite{Fukuoka-Setti}.
	The only difference is that here we suppose that the asymmetric norm $F_e$ is strongly convex, and the smoothing is a Minkowski norm without the additional term. 
	On the other hand, the horizontal part of the mollifier smoothing defined in \cite{Fukuoka-Setti} is replaced by an extension of the smoothing of $F_e$ through the Lie group by left-invariance. 
	Considering these differences, it is natural to ask whether the convergence of geometrical objects that hold in the previous work are also valid here. 
The answer is positive, but due to the length of this work, we will leave the proof for a future work.

	Besides Theorems \ref{theorem_extremal de Pontryagin em grupos} and \ref{theorem_principal no fibrado cotangente}, the main contribution of this manuscript is to show that the mollifier smoothing of $C^0$-Finsler structures and the uniform convergence of Pontryagin extremals works extremely well together, especially because the smoothing occurs on the tangent bundle of $G$ and the convergence occurs on the cotangent bundle of $G$.
	This is an evidence that the mollifier smoothing and the extended geodesic field are important objects in studying this kind of geometry. 
	We also emphasize the importance of developing the strong convexity hypothesis properly in order to make it useful for this kind of transition between the cotangent and the tangent bundle.
	
	Now we explain how this work is organized.
	In Section \ref{section_preliminaries} we present the prerequisites necessary for this work.
	Although most of the subjects are known in the literature, some few results are new according to our best knowledge, and they are indicated along the section.
	In Section \ref{Strongly convex functions and strongly convex asymmetric norms}, we study strong convexity of asymmetric norms.
	We give three equivalent characterizations of strong convexity in the general case and one additional characterizations for the smooth case (Minkowski norms).
	We give special attention to the modulus of convexity, which will be important afterward.  
	In Section \ref{section_Vertical part of a Pontryagin extremal}, we present the Lie algebra level version $\tilde{\mathcal{E}}$ of  extended geodesic fields when $M=G$ is a Lie group.
	We also present its version for strongly convex asymmetric norm $F_e$, which is given by $\mathfrak{a} \in \mathfrak{g}^\ast \backslash 0 \mapsto \mathfrak{a}([u(\mathfrak{a}),\cdot ]) \in \mathfrak{g}^\ast$, where $\mathfrak{g}^\ast$ is the dual space of $\mathfrak{g}$ and $u$ maps $\mathfrak{a}$ to the unique vector on the unit sphere $S_{F_e}[0,1]$ of $\mathfrak{g}$ that maximizes $\mathfrak{a}$.
	As a consequence, we prove that the maximal Pontryagin extremals on Lie groups endowed with left-invariant strongly convex $C^0$-Finsler structures are defined in $\mathbb{R}$ (see Theorem \ref{theorem_extremal de Pontryagin em grupos}). 
	In Section \ref{Smoothing of a strongly convex asymmetric norm} we define the mollifier smoothing $F_\varepsilon$ on left-invariant strongly convex $C^0$-Finsler structures $F$. 
	It is a simplified version of the vertical smoothing presented in \cite{Fukuoka-Setti} in terms of calculations.
	The main result states that $F$ and $F_\varepsilon$ are strongly convex with respect to the same norm if $\varepsilon$ is sufficiently small (see Theorem \ref{F_epsilon2 til são fortemente convexas com a mesma constante}).
	In Section \ref{Smoothing F} we prove that $F_\varepsilon$ converges uniformly to $F$ on compact subset of $TG$ when $\varepsilon$ converges to zero.
	In Section \ref{section_convergences and estimates u and E}, we prove some estimates for $\tilde{\mathcal{E}}$ and $u: \mathfrak{g}^\ast \backslash 0 \rightarrow S_{F_e}[0,1]$ of $(G,F)$.
	We also prove that if $\tilde{\mathcal{E}}_\varepsilon$ and $u_\varepsilon$ are the corresponding maps of $(G,F_\varepsilon)$, then they converge uniformly to $\tilde{\mathcal{E}}$ and $u$ respectively on compact subsets of $\mathfrak{g}^\ast \backslash 0$.
Finally, in Section \ref{a epsilon convege para a e x epsilon converge para x}, we prove that given $(x_0, \alpha_0) \in T^\ast G \backslash 0$, if $(x(t), \alpha(t))$ and $(x_\varepsilon(t), \alpha_\varepsilon(t))$ are Pontryagin extremals of $(G,F)$ and $(G,F_\varepsilon)$ respectively such that $(x(0), \alpha(0)) = (x_\varepsilon(0), \alpha_\varepsilon(0)) = (x_0, \alpha_0)$, then $(x_\varepsilon(t), \alpha_\varepsilon(t))$ converge uniformly to $(x(t), \alpha(t))$ on compact subsets $[a,b]$ of $\mathbb{R}$ as $\varepsilon \rightarrow 0$ (see Theorem \ref{theorem_principal no fibrado cotangente}).
A version of this theorem on $\mathfrak{g}\backslash \{0\} \times \mathfrak{g}^\ast \backslash \{0\}$ is also proven (see Theorem \ref{theorem_convergencia uniforme nas algebras}).
	
	\begin{obs}
	The Einstein summation convention is used throughout this work.
	\end{obs}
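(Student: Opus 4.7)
The final item in the excerpt is an \texttt{obs} environment declaring that the Einstein summation convention is used throughout the paper. This is a notational convention rather than a mathematical assertion about $G$, $F$, or extremals, so strictly speaking there is nothing to prove: one cannot derive from the axioms of set theory (or from the definition of a Lie group with a left-invariant strongly convex $C^0$-Finsler structure) that a given index-pairing shorthand is in force.

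If one nonetheless wished to treat this remark as a well-formedness claim, the natural plan would be to verify that every displayed tensorial expression appearing later in the paper is syntactically compatible with the convention, i.e., that every repeated dummy index occurs exactly once as an upper and once as a lower index. The places where such a check is non-trivial are Sections \ref{section_Vertical part of a Pontryagin extremal} and \ref{section_convergences and estimates u and E}, where coordinates on $\mathfrak{g}$ and $\mathfrak{g}^\ast$ interact with the bracket $[\,\cdot\,,\,\cdot\,]$ and with the map $\mathfrak{a} \mapsto \mathfrak{a}([u(\mathfrak{a}),\cdot])$: one would pick a basis $\{e_i\}$ of $\mathfrak{g}$ with dual basis $\{e^i\}$, introduce structure constants $c^k_{ij}$, and confirm that expressions such as $\mathfrak{a}_k c^k_{ij} u^i$ pair indices correctly. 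Similarly, in Section \ref{Smoothing of a strongly convex asymmetric norm} one would confirm that the convolutional expressions defining $\tilde{F}_\varepsilon$, once written in coordinates on $\mathfrak{g}$, involve only genuinely contracted index pairs.

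The ``main obstacle'' is therefore not analytic but bookkeeping: making sure the author has been disciplined enough in choice of index position that no expression in the sequel is ambiguous under the convention. Since the convention is standard and the paper works with finite-dimensional $\mathfrak{g}$ and its dual, I would expect no genuine difficulty here, and I would expect the author to provide no proof, moving directly into Section \ref{section_preliminaries}.
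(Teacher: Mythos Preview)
Your assessment is correct: the remark is a notational declaration, not a theorem, and the paper provides no proof whatsoever, proceeding directly to Section~\ref{section_preliminaries}. Your extended discussion of how one might audit the later sections for index consistency is harmless but unnecessary.
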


\section{Preliminaries}
\label{section_preliminaries}

In this section, we present the preliminaries that are necessary for the development of this work. 
Throughout this manuscript, $\mathbb{V}$ is an $n$-dimensional vector space over $\mathbb{R}$ endowed with an inner product $\left< \cdot, \cdot \right>$.
We denote its dual space by $\mathbb{V}^\ast$, and the isomorphism $v \mapsto \left< v, \cdot \right>$ between the vector spaces $\mathbb{V}$ and $\mathbb{V}^\ast$ induces an inner product $\left< \cdot, \cdot\right>_{\ast}$ on $\mathbb{V}^\ast$.
We denote the norms corresponding to $\left< \cdot, \cdot \right>$ and $\left< \cdot, \cdot \right>_{\ast}$ by $\Vert \cdot\Vert$ and $\Vert \cdot\Vert_\ast$ respectively.
A particular instance of this case is when $\mathbb{V}$ is the Lie algebra $\mathfrak{g}$ of a Lie group $G$, which will always be  endowed with an inner product $\left< \cdot, \cdot\right>$.

\subsection{Convex functions and asymmetric norms}
\label{subsection_asymmetric norms}

In this subsection we present the prerequisites on convex functions and asymmetric norms.
We define strongly convex asymmetric norms and we prove that Minkowski norms are strongly convex.
For asymmetric norms and convex functions, see \cites{Cobzas, Rockafellar, SunYuan}.
For strongly convex asymmetric norms, see \cite{Fukuoka-Rodrigues}.
Finally, for Minkowski norms, see \cite{BaoChernShen}.

\begin{defi} \label{Fortemente convexa - definição clássica} Let $U$ be a non-empty convex subset of $\mathbb{V}$ and $f: U \rightarrow \mathbb{R}$ be a function.
	\begin{enumerate}
	\item We say that $f$ is {\rm convex} if
		\begin{eqnarray*}
			f(ty+(1-t)z) \leq tf(y)+(1-t)f(z),
		\end{eqnarray*}
		for all $t \in [0,1]$ and any $y, z \in U.$
	\item We say that $f$ is {\rm strictly convex} if
	\begin{eqnarray*}
		f(ty+(1-t)z) < tf(y)+(1-t)f(z),
	\end{eqnarray*}
	for all $t \in (0,1)$ and any $y, z \in U$ with $y \neq z.$
	\item We say that $f$ is {\rm strongly} (or {\rm uniformly}) {\rm convex with modulus} $\gamma,$ when there is a constant $\gamma>0$ where for any $y, z \in U$ and all $t \in [0,1],$ we have
	\begin{equation*}
		f(ty+(1-t)z) \leq tf(y)+(1-t)f(z)-\frac{1}{2}\gamma t(1-t) \Vert y-z \Vert^2.
	\end{equation*}
	\end{enumerate}
\end{defi}

\begin{defi}
Consider $f: S \rightarrow \mathbb{R}$, where $S$ is a non-empty subset of $\mathbb{V}$.
The {\em epigraph} $\epi f$ of $f$ is a subset of $\mathbb{V} \times \mathbb{R}$ defined by
\[
\epi f=\{(x,r)\in \mathbb{V} \times \mathbb{R}: \,\, f(x) \leq r,\,\, x\in S,\,\, r\in \mathbb{R}\}.
\]
\end{defi}

\begin{obs}
If $f:X \rightarrow Y$ is a function and $y\in  Y$, we denote the inverse image $f^{-1}(y)$ by $\left\{ f=y \right\}$.
\end{obs}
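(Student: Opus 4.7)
The final statement is a notational convention rather than a mathematical claim, so in the strict sense no proof is required: it merely introduces the shorthand $\{f=y\}$ for the preimage $f^{-1}(y)$. The entire ``content'' is the set-theoretic identity
\[
\{f = y\} \;=\; f^{-1}(y) \;=\; \{x \in X : f(x) = y\},
\]
which is immediate from the definition of the preimage of a singleton. If pressed to write a proof, it would consist of the two tautological inclusions, each unpacking the definition of the respective side in a single line, and there is no obstacle to carrying out either one.

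The only points worth commenting on are consistency with what precedes and anticipated usage in what follows. The Remark is placed immediately after the definition of $\epi f$, and I expect its role to be the convenient description of level sets of convex functions and asymmetric norms. In particular, I anticipate seeing $\{F_e = r\}$ for the ``sphere of radius $r$'' of the asymmetric norm $F_e$, and dually $\{f \leq r\}$ for closed sublevel sets of a convex function $f$, such as the closed balls of $F_e$. Under this reading, the Remark is a bridge between the epigraph language of convex analysis just introduced and the level-set language that will be used throughout the paper when describing unit spheres $S_{F_e}$, images of the map $u \colon \mathfrak{g}^\ast \backslash 0 \to S_{F_e}[0,1]$, and the sublevel sets involved in the mollifier construction of $F_\varepsilon$.

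The main ``obstacle,'' such as it is, is purely expository: one must ensure the brace notation $\{f=y\}$ is not confused with brace-delimited sets or bracketed ball expressions such as $S_{F_e}[0,1]$ that already appeared in the introduction. Since the argument $f$ is always an explicit function symbol inside the braces and is tied to another symbol via the relation ``$=$'', no genuine ambiguity can arise, and this consistency check is automatic in every subsequent context.
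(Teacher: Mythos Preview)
Your assessment is correct: the statement is a notational convention, not a theorem, and the paper indeed offers no proof for it. Your reading of its purpose is also accurate; the notation $\{f=y\}$ is used later for level sets such as $\{\eta_\varepsilon \ast F^2 = r_M^2\}$ and for hyperplanes $\{\alpha = k\}$.
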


\begin{teo}
\label{teo_convexo e epigrafo}
Let $S\subset \mathbb{V}$ be a non-empty convex set and $f:S \rightarrow \mathbb{R}$.
Then $f$ is a convex function iff $\epi f$ is a convex subset of $\mathbb{V} \times \mathbb{R}$. 
In particular, if $r\in \mathbb{R}$ and $f$ is a convex function, then 
\begin{align}
\nonumber
\epi f \cap (\mathbb{V} \times \{r\}) & =\{(x,r) \in \mathbb{V} \times \mathbb{R}:f(x)\leq r, x\in S\} \\ 
& = \{x \in \mathbb{V}:f(x) \leq r, x \in S\} \times \{r\} \nonumber \\
&\cong \{x \in \mathbb{V}:f(x) \leq r, x \in S\}\nonumber 
\end{align}
are convex sets.
\end{teo}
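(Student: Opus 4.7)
The plan is to prove the equivalence by verifying both implications directly from the definitions, then deduce the sublevel set assertion as a consequence.

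First, for the forward direction, I would take two arbitrary points $(y_1, r_1), (y_2, r_2) \in \epi f$ and $t \in [0,1]$. By definition, $y_1, y_2 \in S$ with $f(y_i) \leq r_i$. Since $S$ is convex, $ty_1 + (1-t)y_2 \in S$, and applying the convexity of $f$ gives
\[
f(ty_1 + (1-t)y_2) \leq tf(y_1) + (1-t)f(y_2) \leq tr_1 + (1-t)r_2,
\]
so $t(y_1, r_1) + (1-t)(y_2, r_2) \in \epi f$, proving $\epi f$ is convex.

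For the converse, given $y_1, y_2 \in S$, the points $(y_1, f(y_1))$ and $(y_2, f(y_2))$ lie in $\epi f$ by definition. Convexity of $\epi f$ then yields $(ty_1 + (1-t)y_2, tf(y_1) + (1-t)f(y_2)) \in \epi f$ for every $t \in [0,1]$, which by the definition of the epigraph is exactly the inequality $f(ty_1+(1-t)y_2) \leq tf(y_1)+(1-t)f(y_2)$ needed for convexity of $f$.

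For the ``in particular'' statement, I would observe that $\mathbb{V} \times \{r\}$ is an affine subspace, hence convex, and that intersections of convex sets are convex; so $\epi f \cap (\mathbb{V} \times \{r\})$ is convex whenever $\epi f$ is. The three displayed sets differ only by trivial identifications (the last being the projection onto the first factor, which is a linear, hence convexity-preserving, map), so all three are convex.

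No serious obstacle is expected: the argument is a direct unpacking of definitions, with the main subtle point being only to keep the roles of $S$ (domain convexity) and the epigraph convexity clearly separated when invoking them.
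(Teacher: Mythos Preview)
Your argument is correct and is the standard textbook proof of this well-known fact. The paper does not actually supply a proof of this theorem: it is stated in the preliminaries section as background material, with references to \cite{Cobzas}, \cite{Rockafellar}, and \cite{SunYuan} given for convex functions, so there is nothing to compare against beyond noting that your proof matches what one finds in those sources.
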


\begin{teo}
\label{teo_composicao de convexas}
Let $f:\mathbb{V} \rightarrow \mathbb{R}$ be a convex function and $\varphi: f(\mathbb{V}) \rightarrow \mathbb{R}$ be a non-decreasing convex function. 
Then $\varphi \circ f$ is a convex function.
\end{teo}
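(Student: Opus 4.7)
The plan is to verify the convexity inequality for $\varphi \circ f$ directly from the definition by chaining the two hypotheses on $f$ and $\varphi$ in the obvious order. So I fix arbitrary $y,z \in \mathbb{V}$ and $t \in [0,1]$ and aim to bound $(\varphi \circ f)(ty + (1-t)z)$ from above by $t(\varphi \circ f)(y) + (1-t)(\varphi \circ f)(z)$.

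First I would use convexity of $f$ to write
\[
f(ty + (1-t)z) \leq t f(y) + (1-t) f(z).
\]
Next, since $\varphi$ is non-decreasing on $f(\mathbb{V})$ and the right-hand side above is a convex combination of two points of $f(\mathbb{V})$ (hence lies in the convex hull of $f(\mathbb{V})$; if $f(\mathbb{V})$ itself is not convex one would apply $\varphi$ on its convex hull, but for a real-valued $f$ the image sits inside an interval of $\mathbb{R}$ so monotonicity suffices), I would apply $\varphi$ to both sides to obtain
\[
\varphi\bigl(f(ty + (1-t)z)\bigr) \leq \varphi\bigl(t f(y) + (1-t) f(z)\bigr).
\]
Then I would apply convexity of $\varphi$ at the two values $f(y), f(z) \in f(\mathbb{V})$, yielding
\[
\varphi\bigl(t f(y) + (1-t) f(z)\bigr) \leq t\varphi(f(y)) + (1-t)\varphi(f(z)).
\]
Combining the two inequalities gives the desired bound, proving $\varphi \circ f$ is convex.

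The only subtle point, and the step I would pay most attention to, is ensuring that $\varphi$ is legitimately evaluated at the intermediate point $tf(y) + (1-t)f(z)$. Since $f$ is real-valued, its image lies in $\mathbb{R}$; the convex hull of $f(\mathbb{V})$ is an interval, and one should interpret the domain of $\varphi$ as that interval (or at least as a convex set containing $f(\mathbb{V})$) so that both the monotonicity and convexity hypotheses on $\varphi$ apply at $tf(y) + (1-t)f(z)$. Under this mild reading of the hypotheses, the chain above is immediate and no further work is needed. I expect no real obstacle: the result is a textbook composition lemma, and the entire argument is one line of inequalities.
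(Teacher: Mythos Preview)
Your proof is correct and is the standard argument. The paper does not actually prove this theorem; it is stated in the preliminaries as a known fact (with references to standard convex analysis texts such as Rockafellar and Sun--Yuan), so there is nothing to compare against. Your observation about the domain of $\varphi$ is the only point worth noting: since $f$ is convex on the finite-dimensional space $\mathbb{V}$, it is continuous, and hence $f(\mathbb{V})$ is an interval, so the intermediate value $tf(y)+(1-t)f(z)$ automatically lies in $f(\mathbb{V})$ and no extension of $\varphi$ is needed.
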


\begin{defi} \label{Definicao de norma assimetrica} An {\rm asymmetric norm} on $\mathbb{V}$ is a function $F:\mathbb{V} \rightarrow [0,\infty)$ that satisfies the following conditions:
	\begin{enumerate}
		\item $F(y) = 0$ only if $y = 0;$
		\item $F(\mu y) = \mu F(y)$ for every $\mu \in [0, \infty)$ and $y \in \mathbb{V}$ (positive homogeneity);
		\item $F(y + z) \leq F(y) + F(z)$ for every $y, z \in \mathbb{V}$ (triangle inequality).
	\end{enumerate}
\end{defi}

It is straightforward that every norm is an asymmetric norm.

\begin{pro} \label{F e Lipschitz} 
An asymmetric norm $F:(\mathbb{V},\Vert \cdot\Vert) \rightarrow [0,\infty)$ is a Lipschitz map. 
\end{pro}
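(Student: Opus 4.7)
The plan is to combine the subadditivity of $F$, which gives one-sided inequalities of the form $F(y) - F(z) \le F(y-z)$, with an a priori linear upper bound $F(w) \le C\|w\|$ valid for every $w \in \mathbb{V}$. Once both ingredients are in place, the Lipschitz estimate will follow immediately because $\|y-z\| = \|z-y\|$.

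First, I would derive the two inequalities
\[
F(y) - F(z) \le F(y-z), \qquad F(z) - F(y) \le F(z-y),
\]
by applying the triangle inequality to $F(y) = F(z + (y-z))$ and to its mirror image. Because $F$ is only positively homogeneous, these two bounds are genuinely different; the natural conclusion will therefore control the symmetric quantity $|F(y)-F(z)|$ by $\max\{F(y-z),\, F(z-y)\}$, rather than by a single value of $F$.

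Next, I would establish a linear bound $F(w) \le C\|w\|$ for all $w$. Fix an orthonormal basis $e_1, \dots, e_n$ of $(\mathbb{V}, \langle\cdot,\cdot\rangle)$, write $w = w^i e_i$, and split each coordinate as $w^i = w^i_+ - w^i_-$ with $w^i_\pm \ge 0$. Then subadditivity and positive homogeneity yield
\[
F(w) \le \sum_{i=1}^n \bigl( w^i_+\, F(e_i) + w^i_-\, F(-e_i) \bigr) \le M \sum_{i=1}^n |w^i| \le M\sqrt{n}\, \|w\|,
\]
where $M = \max_i \max\{F(e_i), F(-e_i)\}$ is finite by finite-dimensionality. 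Setting $C = M\sqrt{n}$ and applying this bound to both $w = y-z$ and $w = z-y$, the first step then gives $|F(y) - F(z)| \le C\,\|y-z\|$, which is the desired Lipschitz property.

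I do not expect a serious obstacle; the only subtlety is that the asymmetry of $F$ forces one to treat $F(e_i)$ and $F(-e_i)$ separately when bounding $F$ on coordinate vectors, which is precisely why the constant $M$ is taken to be the maximum over both. Finite-dimensionality of $\mathbb{V}$ is used exactly once, to guarantee that $M < \infty$.
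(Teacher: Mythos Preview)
Your proof is correct and follows essentially the same approach as the paper: both arguments split each coordinate into positive and negative parts, bound $F(w)$ by $M\sum_i|w^i|$ with $M=\max_i\{F(e_i),F(-e_i)\}$, and then combine this with the one-sided triangle inequalities $F(y)-F(z)\le F(y-z)$ and $F(z)-F(y)\le F(z-y)$. The only cosmetic difference is that you work with an orthonormal basis and pass to $\|\cdot\|$ via Cauchy--Schwarz, whereas the paper uses an arbitrary basis with the $\ell^1$-norm and invokes equivalence of norms.
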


\begin{proof}
Let $\mathcal{B} = \{e_1, \ldots, e_n\}$ be a basis of $\mathbb{V}$ and $\Vert \cdot \Vert_1$ be the sum norm given by $\Vert y^i e_i\Vert_1 = \sum_{i=1}^n \vert y^i \vert$. 
Set $\tilde{C} = \max_{i=1, \ldots, n} \{F(e_i),F(-e_i)\}$.
Then every $y \in \mathbb{V}$ is written as $y^i \tilde{e}_i$, where $\tilde{e}_i \in \{e_i, -e_i\}$ and $y^i \geq 0$, and we have that $F(y) \leq \tilde{C} \Vert y\Vert_1$ due to the positive homogeneity and triangle inequality of $F$.
Finally, the inequality $F(y) \leq \tilde{C}\Vert y\Vert_1$ and the equivalence of the norms $\Vert \cdot \Vert$ and $\Vert \cdot \Vert_1$ implies that there exists $C>0$ such that $\vert F(y) - F(z)\vert \leq C \Vert y-z\Vert$ for every $y,z\in \mathbb{V}$, what settles the proof. 
\end{proof}

\begin{defi}
	Let $F:\mathbb V \rightarrow [0,\infty)$ be an asymmetric norm.
	The {\em open ball} centered at $y$ and radius $r$ is the subset
	\[
	B_F(y,r) = \{z \in \mathbb V : F(z-y) < r\}.
	\]
	The {\em closed ball} centered at $y$ and radius $r$ is the subset
	\[
	B_F[y,r] = \{z \in \mathbb V : F(z-y) \leq r\}.
	\]
	The {\em sphere} centered at $y$ and radius $r$ is the subset
	\[
	S_F[y,r] = \{z \in \mathbb V : F(z-y) = r\}.
	\]
\end{defi}

\begin{obs} \label{remark_asymmetric norm equivalent} Let $F,\hat{F}:\mathbb{V} \rightarrow [0,\infty)$ be asymmetric norms. Then $F$ and $\hat{F}$ are {\em equivalent asymmetric norms}, that is, there exist positive constants $c_1$ and $c_2$ such that 
	\begin{eqnarray}
	\label{eqnarray_normas assimetricas equivalentes}
		c_1\hat{F}(y) \leq F(y) \leq c_2\hat{F}(y),
	\end{eqnarray}
for every $y \in \mathbb{V}$.
Indeed, $F$ and $\hat{F}$ are continuous by  Proposition \ref{F e Lipschitz}, and it is enough to set $c_1 = \min\limits_{y \in S_{\Vert \cdot\Vert}[0,1]} \frac{F(y)}{\hat{F}(y)}$ and $c_2 = \max\limits_{y \in S_{\Vert \cdot\Vert}[0,1]} \frac{F(y)}{\hat{F}(y)}$ due to the positive homogeneity of $F$ and $\hat{F}$.
\end{obs}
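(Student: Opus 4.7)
The plan is to reduce the claimed two-sided inequality to a statement on the Euclidean unit sphere $S_{\Vert \cdot \Vert}[0,1]$, where compactness and continuity can finish the job. First, I would invoke Proposition \ref{F e Lipschitz} to ensure that $F$ and $\hat{F}$ are continuous (in fact Lipschitz) with respect to the norm $\Vert \cdot \Vert$. Next, I would use condition (1) in Definition \ref{Definicao de norma assimetrica} to observe that $\hat{F}(\tilde y) > 0$ for every $\tilde y \in S_{\Vert \cdot \Vert}[0,1]$, so the quotient $\varphi(\tilde y) = F(\tilde y)/\hat{F}(\tilde y)$ is well-defined and continuous on $S_{\Vert \cdot \Vert}[0,1]$.

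Then, since $S_{\Vert \cdot \Vert}[0,1]$ is compact (as $\mathbb{V}$ is finite dimensional), the extreme value theorem yields $c_1 = \min_{\tilde y \in S_{\Vert \cdot \Vert}[0,1]} \varphi(\tilde y)$ and $c_2 = \max_{\tilde y \in S_{\Vert \cdot \Vert}[0,1]} \varphi(\tilde y)$, both attained. Because $F(\tilde y) > 0$ for every $\tilde y$ on the sphere (again by axiom (1)), the minimum value $c_1$ is strictly positive, and $c_2 \geq c_1 > 0$.

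Finally, to promote the inequality $c_1 \hat{F}(\tilde y) \leq F(\tilde y) \leq c_2 \hat{F}(\tilde y)$ from the unit sphere to all of $\mathbb{V}$, I would use positive homogeneity (axiom (2)). Given a nonzero $y \in \mathbb{V}$, write $y = \Vert y \Vert \cdot (y/\Vert y \Vert)$ with $y/\Vert y \Vert \in S_{\Vert \cdot \Vert}[0,1]$; multiplying the inequality by $\Vert y \Vert > 0$ and applying positive homogeneity to both $F$ and $\hat{F}$ recovers \eqref{eqnarray_normas assimetricas equivalentes}. The case $y = 0$ is trivial since both sides vanish.

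There is no real obstacle here; the only small point to watch is that strict positivity of $c_1$ relies on the non-degeneracy axiom (1) of asymmetric norms (so that $\hat{F}$ does not vanish on the sphere and $F$ stays bounded away from zero there), and that the reduction from $\mathbb{V} \setminus \{0\}$ to the unit sphere is legitimate only thanks to positive homogeneity, which is available precisely because Definition \ref{Definicao de norma assimetrica} requires homogeneity for $\mu \geq 0$ (no absolute values are needed on $\mu$).
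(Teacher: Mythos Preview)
Your proposal is correct and follows exactly the approach indicated in the paper: continuity of $F$ and $\hat{F}$ from Proposition~\ref{F e Lipschitz}, the extreme value theorem on the compact sphere $S_{\Vert \cdot\Vert}[0,1]$ to obtain $c_1$ and $c_2$, and positive homogeneity to extend the inequality to all of $\mathbb{V}$. You have simply spelled out in full the details that the paper compresses into one sentence.
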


We now introduce the definition of subdifferential and subgradient (see \cite{Rockafellar}).
We will restrict and adapt the theory for our necessities.
Let $f: \mathbb{V} \rightarrow \mathbb{R}$ be a convex function. 
A vector $\alpha_{y}{}^{\sharp}$ is said to be a {\it subgradient of $f$ at} $y \in \mathbb{V}$ if
\begin{eqnarray} \label{desigualdade subgradiente 1}
	f(z) \geq f(y) + \langle \alpha_{y}{}^{\sharp}, z-y \rangle, \quad \forall \, z \in \mathbb{V}.
\end{eqnarray}
The set of all subgradients of $f$ at $y$ is called the {\it subdifferential of $f$ at $y$} and it is denoted by $\partial f(y).$ $\partial f (y)$ is also called the set of functional supports of $f$ at $y$.
Since $f(y)\in \mathbb{R}$ for every $y\in \mathbb{V}$, we have that $\partial f(y)$ is a non-empty set for every $y \in \mathbb{R}$. 

Using the Riesz-Fr\'echet Representation Theorem we can identify the subgradient $\alpha_{y}{}^{\sharp}$ of $f$ at $y$ with the linear functional $\alpha_{y} = \langle \alpha_{y}{}^{\sharp}, \cdot \rangle.$ Thus (\ref{desigualdade subgradiente 1}) can be rewritten as
\begin{eqnarray} \label{desigualdade subgradiente 2}
	f(z) \geq f(y) + \alpha_{y}(z-y), \quad \forall \, z \in \mathbb{V}.
\end{eqnarray}
From now on, the elements of $\partial f(y)$ will be linear functionals $\alpha_y$ in $\mathbb{V}^\ast$ (compare to \cite{Fukuoka-Rodrigues}).
In particular, if $f$ is differentiable at $y$, then $\partial f(y) = \{df_y\}$.

Strict convexity of asymmetric norms is defined analogously to the strict convexity of norms.

\begin{defi}
We say that an asymmetric norm $F$ is strictly convex if 
\begin{align}
F((1-t)y+tz) < (1-t)F(y) + t F(z)
\label{align_F estritamente convexo na esfera}
\end{align} 
for every $y,z \in S_F[0,1]$, $y\neq z$, and $t\in (0,1)$.
\end{defi}

\begin{defi} \label{Fortemente convexa - definicao artigo R-H} Let $\check{F}$ be an asymmetric norm on $\mathbb{V}.$ We say that an asymmetric norm $F$ is {\rm strongly convex with respect to} $\check{F}$ if
	\begin{equation*}
		F^2(z) \geq F^2(y) + \alpha(z-y) + \check{F}^2(z-y)
	\end{equation*}
for every $y,z \in \mathbb{V}$ and $\alpha \in \partial F^2(y)$.
\end{defi}

\begin{obs}
Frequently the asymmetric norm $\check{F}$ of Definition \ref{Fortemente convexa - definicao artigo R-H} isn't important because all asymmetric norms are equivalent (see \eqref{eqnarray_normas assimetricas equivalentes}) and given any other asymmetric norm $\hat{F}$ on $\mathbb{V}$, $F$ will be  
strongly convex with respect to a multiple of $\hat{F}$.
\end{obs}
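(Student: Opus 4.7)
The Remark asserts that if $F$ is strongly convex with respect to some fixed asymmetric norm $\check{F}$ on $\mathbb{V}$, then for every other asymmetric norm $\hat{F}$ on $\mathbb{V}$ there exists a positive constant $c$ such that $F$ is also strongly convex with respect to $c\,\hat{F}$. The plan is to combine the universal equivalence of asymmetric norms recorded in Remark \ref{remark_asymmetric norm equivalent} with the inequality defining strong convexity in Definition \ref{Fortemente convexa - definicao artigo R-H}.

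First I would apply \eqref{eqnarray_normas assimetricas equivalentes} to the pair $(\check{F},\hat{F})$ to obtain a constant $c_1 > 0$ such that $c_1\,\hat{F}(w) \leq \check{F}(w)$ for every $w \in \mathbb{V}$. Since both sides are non-negative, squaring preserves the inequality, giving $c_1^{2}\,\hat{F}^{2}(w) \leq \check{F}^{2}(w)$ for all $w \in \mathbb{V}$.

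Next, fix arbitrary $y,z \in \mathbb{V}$ and $\alpha \in \partial F^{2}(y)$. By hypothesis we have
\begin{equation*}
F^{2}(z) \;\geq\; F^{2}(y) + \alpha(z-y) + \check{F}^{2}(z-y).
\end{equation*}
Applying the squared inequality above with $w = z-y$ and substituting, I obtain
\begin{equation*}
F^{2}(z) \;\geq\; F^{2}(y) + \alpha(z-y) + c_1^{2}\,\hat{F}^{2}(z-y) \;=\; F^{2}(y) + \alpha(z-y) + (c_1\hat{F})^{2}(z-y).
\end{equation*}
Since $c_1 \hat{F}$ is again an asymmetric norm (positive homogeneity and the triangle inequality survive multiplication by a positive scalar, and $c_1 \hat{F}(w) = 0$ still forces $w=0$), this is precisely the statement that $F$ is strongly convex with respect to $c_1\hat{F}$, which is the desired conclusion with $c = c_1$.

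There is no genuine obstacle here; the argument reduces to a single substitution once the equivalence of asymmetric norms is invoked. The only conceptual point worth flagging is that the strong convexity inequality is stated in terms of $\check{F}^{2}$ rather than $\check{F}$, so the relevant comparison between $\check{F}$ and $\hat{F}$ must be squared before being inserted, which is why the resulting multiplicative constant for $\hat{F}$ is $c_1$ (and not $c_1^2$) as an asymmetric norm.
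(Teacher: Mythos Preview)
Your argument is correct and is precisely the reasoning the Remark itself implicitly invokes: the paper does not supply a separate proof here, as the observation follows directly from the equivalence \eqref{eqnarray_normas assimetricas equivalentes} plugged into Definition~\ref{Fortemente convexa - definicao artigo R-H}, which is exactly what you do.
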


The relation of Definition \ref{Fortemente convexa - definicao artigo R-H} with the classical one (see Definition \ref{Fortemente convexa - definição clássica}) will be presented in Theorem \ref{Fortemente convexa - Equivalencia}.

Now we present a smooth version of strongly convex asymmetric norms (see Proposition \ref{proposition_norma limitante inferior}).

\begin{defi}
	\label{norma de Minkowski}
	A function $F:\mathbb V \rightarrow [0, \infty)$ is a {\rm Minkowski norm} if 
	\begin{enumerate}
		\item $F$ is smooth in $\mathbb V\backslash\{0\}$;
		\item $F(\mu y) = \mu F(y)$ for every $\mu \in [0,\infty)$ and $y\in \mathbb V$;
		\item If $(y^1, \ldots, y^n)$ is a coordinate system of $\mathbb V$ with respect to a basis of $\mathbb V$, then the  $n\times n$ hessian matrix
		\[
		\left( g_{ij(y)} \right) := \left( \left[\frac{1}{2} F^2 (y) \right]_{y^iy^j} \right), \hspace{5mm} i,j=1, \ldots, n,
		\] 
		is positive definite for every $y \in \mathbb V\backslash \{0\}$, where the subscript $y^iy^j$ stands for the partial derivatives with respect to $y^i$ and $y^j$.
	\end{enumerate}
\end{defi}

\begin{pro}
\label{pro_Minkowski Euler}
Let $F$ be a Minkowski norm and $y \in \mathbb{V} \backslash \{0\}$.
Then
\begin{enumerate}
\item $y^i F_{y_i}(y) = F(y)$;
\item $y^j F_{y_iy_j}(y) = 0$.
\end{enumerate}
\end{pro}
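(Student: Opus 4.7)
The plan is to apply Euler's theorem for positively homogeneous functions. By item (2) of Definition \ref{norma de Minkowski}, $F$ is positively homogeneous of degree $1$, and by item (1), $F$ is smooth on $\mathbb{V}\setminus\{0\}$, so we may freely differentiate the homogeneity identity on that open set.

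For part (1), I would fix $y\in\mathbb{V}\setminus\{0\}$ and consider the smooth map $g:(0,\infty)\to\mathbb{R}$ given by $g(\mu)=F(\mu y)$. On one hand, positive homogeneity gives $g(\mu)=\mu F(y)$, whence $g'(1)=F(y)$. On the other hand, the chain rule yields $g'(\mu)=y^{i}F_{y_i}(\mu y)$, so evaluating at $\mu=1$ we obtain $g'(1)=y^{i}F_{y_i}(y)$. Comparing the two expressions proves (1).

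For part (2), I would differentiate the identity of (1) with respect to $y^{j}$, which is legitimate since both sides are smooth on $\mathbb{V}\setminus\{0\}$. The left-hand side yields $\delta^{i}{}_{j}F_{y_i}(y)+y^{i}F_{y_iy_j}(y)=F_{y_j}(y)+y^{i}F_{y_iy_j}(y)$, while the right-hand side yields $F_{y_j}(y)$. Cancelling $F_{y_j}(y)$ produces $y^{i}F_{y_iy_j}(y)=0$, which is the claimed identity (up to relabeling the summation index). Alternatively, one could observe directly that $F_{y_i}$ is positively homogeneous of degree $0$ (since $F$ is degree $1$) and differentiate $F_{y_i}(\mu y)=F_{y_i}(y)$ at $\mu=1$.

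There is essentially no obstacle here: the statement is the standard pair of Euler identities for a homogeneous function, and the only thing to check is that the differentiations take place on the open set $\mathbb{V}\setminus\{0\}$ where smoothness is guaranteed. No use of the positive-definiteness condition (3) of Definition \ref{norma de Minkowski} is needed.
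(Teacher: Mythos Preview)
Your proof is correct and is exactly the standard Euler-identity argument; the paper itself states this proposition without proof as a preliminary fact (it is a classical result taken from \cite{BaoChernShen}), so there is no alternative approach to compare against.
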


\begin{pro}
Every Minkowski norm is an asymmetric norm. 
\end{pro}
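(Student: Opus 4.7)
The plan is to verify the three axioms in Definition \ref{Definicao de norma assimetrica}. Positive homogeneity is built into Definition \ref{norma de Minkowski}, so only the positivity condition (axiom (1)) and the triangle inequality (axiom (3)) remain.

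For positivity I would apply Proposition \ref{pro_Minkowski Euler} to $\phi := \tfrac{1}{2}F^2$, which is positively homogeneous of degree $2$ and smooth on $\mathbb{V}\setminus\{0\}$. Applying Euler's identity twice (once to $\phi$ and once to each partial $\phi_{y^i}$, which is itself positively homogeneous of degree $1$) produces
\[
F^2(y) \;=\; g_{ij}(y)\,y^iy^j, \qquad y\neq 0.
\]
Positive definiteness of $(g_{ij}(y))$ then forces $F(y)>0$ whenever $y\neq 0$.

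For subadditivity I would show that $F$ itself is a convex function on $\mathbb{V}$; combined with positive homogeneity this yields $F(y+z)=2F\bigl(\tfrac{y+z}{2}\bigr)\le F(y)+F(z)$. Product-rule differentiation of $\phi=\tfrac12 F^2$ gives
\[
g_{ij}(y) \;=\; F_{y^i}(y)\,F_{y^j}(y) + F(y)\,F_{y^iy^j}(y),
\]
so that
\[
F(y)\,v^iv^jF_{y^iy^j}(y) \;=\; g_{ij}(y)v^iv^j - \bigl(v^iF_{y^i}(y)\bigr)^2.
\]
Using Proposition \ref{pro_Minkowski Euler} one checks that $g_{ij}(y)\,y^i = F(y)\,F_{y^j}(y)$; inverting this identity yields the normalization $g^{ij}(y)F_{y^i}(y)F_{y^j}(y)=1$, and the Cauchy--Schwarz inequality for the inner product $g_y(v,w) := g_{ij}(y)v^iw^j$ then gives $\bigl(v^iF_{y^i}(y)\bigr)^2 \le g_{ij}(y)v^iv^j$. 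Hence $v^iv^jF_{y^iy^j}(y)\ge 0$ and $F$ is convex along every line segment that avoids the origin. On a segment passing through the origin, $F$ restricts to a piecewise linear function with an upward kink at $0$, which is also convex, so $F$ is convex on all of $\mathbb{V}$.

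The main obstacle is the Cauchy--Schwarz step: the key insight is recognizing that Proposition \ref{pro_Minkowski Euler} turns the relation between $y$ and $(F_{y^i}(y))$ into a duality with respect to $g_y$, producing the normalization $g^{ij}F_{y^i}F_{y^j}=1$ which is exactly what Cauchy--Schwarz needs. Once that is in place, positivity of $F_{y^iy^j}$ follows immediately, and the only remaining subtlety is the mild case-split for line segments through the origin where $F$ fails to be smooth.
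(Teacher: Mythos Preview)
Your argument is correct and is essentially the standard proof (as in Bao--Chern--Shen, the reference the paper cites for Minkowski norms). The paper itself does not supply a proof of this proposition; it states the result as known and refers the reader to \cite{BaoChernShen}, so there is nothing to compare against beyond noting that your approach matches the classical one.
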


\subsection{The maximizer $u(\alpha)$ of $\alpha \in \mathbb{V}^\ast\backslash \{0\}$ on $S_F[0,1]$}

\label{subsection_inverse of Legendre transform}

In this subsection, $\mathbb{V}$ is endowed with a strictly convex asymmetric norm $F$. 
The strict convexity of $F$ implies that for every $\alpha \in \mathbb{V}^{\ast}\backslash \{0\}$, there exist a unique $u(\alpha) \in S_F[0,1]$ that maximizes $\alpha $ on $S_F[0,1]$.
In this subsection we study the map $\alpha \mapsto u(\alpha)$.
The references for this subsection are \cites{Rockafellar, Fukuoka-Rodrigues}.

\begin{defi} Let $F$ be an asymmetric norm on $\mathbb{V}.$ The {\rm dual asymmetric norm} $F_\ast: \mathbb{V}^\ast \rightarrow [0,\infty)$ of $F$ is defined by
	\begin{eqnarray*}
		F_\ast(\alpha) = \sup_{y \in S_F[0, 1]} \alpha(y).
	\end{eqnarray*}
\end{defi}

\begin{pro}
If $F$ is a strictly convex asymmetric norm on $\mathbb{V}$, then $F_{\ast}{}^2$ is differentiable. 
\end{pro}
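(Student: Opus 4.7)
The plan is to reduce the differentiability of $F_*^2$ to the differentiability of $F_*$ on $\mathbb{V}^\ast\setminus\{0\}$ via the product/chain rule, and to handle the point $\alpha=0$ separately using the $2$-homogeneity of $F_*^2$. The core work is therefore to prove that $F_*$ is differentiable at every $\alpha\neq 0$.

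For that step, I would exploit the classical fact from finite-dimensional convex analysis that a convex function is (Fréchet) differentiable at a point if and only if its subdifferential at that point is a singleton. Since $F_*(\alpha)=\sup_{y\in S_F[0,1]}\alpha(y)$ is a supremum of linear functionals of $\alpha$, it is convex (and, being an asymmetric norm on $\mathbb{V}^\ast$, locally Lipschitz by Proposition \ref{F e Lipschitz}). So the task reduces to computing $\partial F_*(\alpha)$ for $\alpha\neq 0$ and checking it is $\{u(\alpha)\}$. I identify $\partial F_*(\alpha)$ with a subset of $\mathbb V$ via the canonical duality. Testing the subgradient inequality $F_*(\beta)\geq F_*(\alpha)+(\beta-\alpha)(y)$ with $\beta=0$ and $\beta=2\alpha$ and using $F_*(2\alpha)=2F_*(\alpha)$ forces $\alpha(y)=F_*(\alpha)$. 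Testing it with arbitrary $\beta$ yields $\beta(y)\leq F_*(\beta)$ for all $\beta\in\mathbb V^\ast$, which by the definition of $F_*$ and a short Hahn–Banach–type argument on $S_F[0,1]$ gives $F(y)\leq 1$. Conversely, any $y\in B_F[0,1]$ with $\alpha(y)=F_*(\alpha)$ satisfies the subgradient inequality, so $\partial F_*(\alpha)$ is exactly the set of maximizers of $\alpha$ on $B_F[0,1]$. By positive homogeneity, such a maximizer lies on $S_F[0,1]$; strict convexity of $F$ then forces uniqueness, since if $y_1\neq y_2$ were both maximizers, the midpoint $y=(y_1+y_2)/2$ would still satisfy $\alpha(y)=F_*(\alpha)$ but $F(y)<1$, contradicting maximality after rescaling. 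Therefore $\partial F_*(\alpha)=\{u(\alpha)\}$, and $F_*$ is differentiable at $\alpha$ with $dF_*|_\alpha = u(\alpha)$.

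With $F_*$ smooth on $\mathbb{V}^\ast\setminus\{0\}$, the product rule gives $F_*^2$ differentiable there with $d(F_*^2)|_\alpha = 2F_*(\alpha)\,u(\alpha)$. At $\alpha=0$, Remark \ref{remark_asymmetric norm equivalent} provides $F_*(\alpha)\leq c\,\|\alpha\|_\ast$, hence $F_*^2(\alpha)=O(\|\alpha\|_\ast^2)$, which means $F_*^2$ is differentiable at $0$ with zero differential. The step I expect to be the main obstacle is the subdifferential computation, and specifically the characterization $\beta(y)\leq F_*(\beta)\text{ for all }\beta\iff F(y)\leq 1$, because it is the only place where one has to be careful that $F$ and $F_*$ are only positively homogeneous and not symmetric; once that bipolar-type identity is in hand, everything else follows from standard convex analysis.
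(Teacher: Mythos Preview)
Your proof is correct. The paper does not actually supply its own argument for this proposition; it is listed among the preliminaries of Section~\ref{subsection_inverse of Legendre transform} with references to \cite{Rockafellar} and \cite{Fukuoka-Rodrigues}, so there is nothing to compare against beyond those external sources. Your route---show $F_\ast$ is convex as a supremum of linear functionals, identify $\partial F_\ast(\alpha)$ for $\alpha\neq 0$ with the set of maximizers of $\alpha$ on $B_F[0,1]$, use strict convexity of $F$ to force this set to be the singleton $\{u(\alpha)\}$, and invoke the standard fact that a finite convex function on a finite-dimensional space is differentiable exactly where its subdifferential is a singleton---is the natural one and matches the spirit of the Rockafellar reference. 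The bipolar step you flag as delicate (that $\beta(y)\le F_\ast(\beta)$ for all $\beta$ forces $y\in B_F[0,1]$) is indeed just Hahn--Banach separation applied to the closed convex set $B_F[0,1]$ containing the origin, and the asymmetry of $F$ causes no trouble there. The treatment at $\alpha=0$ via the estimate $F_\ast^2(\alpha)=O(\Vert\alpha\Vert_\ast^2)$ is also fine.
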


\begin{lem} \label{Formula F}  Let $F:\mathbb{V} \rightarrow  [0,\infty)$ be a strictly convex asymmetric norm. 
Then $F(dF_\ast^{\ 2}(\alpha)) = 2F_\ast (\alpha)$ for every $\alpha \in \mathbb{V}^\ast.$
\end{lem}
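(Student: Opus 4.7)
The plan is to first reduce the lemma to identifying the differential $dF_\ast^2(\alpha)$ as a concrete vector in $\mathbb V$, and then apply $F$ to that vector. For $\alpha\neq 0$, strict convexity of $F$ guarantees that the supremum defining $F_\ast(\alpha)$ is attained at a unique point $u(\alpha)\in S_F[0,1]$, so $F_\ast(\alpha)=\alpha(u(\alpha))$ and $F(u(\alpha))=1$. The heart of the argument is the identification
\[
\tfrac{1}{2}\,dF_\ast^{\ 2}(\alpha) \;=\; F_\ast(\alpha)\, u(\alpha),
\]
where the right-hand side is viewed as an element of $\mathbb{V}^{\ast\ast}\cong\mathbb V$. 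Once this is established, positive homogeneity of $F$ together with $F(u(\alpha))=1$ immediately gives $F(dF_\ast^{\ 2}(\alpha))=2F_\ast(\alpha)$.

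To derive the displayed identity, I would argue via Fenchel conjugation applied to $g(y)=\tfrac{1}{2}F^2(y)$. A short computation based on the inequality $\alpha(y)\leq F_\ast(\alpha)F(y)\leq \tfrac{1}{2}F_\ast^{\ 2}(\alpha)+\tfrac{1}{2}F^2(y)$ (Cauchy–Schwarz type bound plus AM–GM) shows that
\[
g^\ast(\alpha) \;=\; \sup_{y\in\mathbb V}\bigl[\alpha(y)-\tfrac{1}{2}F^2(y)\bigr] \;=\; \tfrac{1}{2}F_\ast^{\ 2}(\alpha),
\]
and that the unique maximizer is $y=F_\ast(\alpha)\,u(\alpha)$ (equality in the first inequality forces $y$ to be a positive multiple of $u(\alpha)$, and equality in AM–GM forces $F(y)=F_\ast(\alpha)$). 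By the previous proposition, $g^\ast=\tfrac12 F_\ast^{\ 2}$ is differentiable at $\alpha$; then the standard Fenchel duality relation $\nabla g^\ast(\alpha)\in\arg\max_y[\alpha(y)-g(y)]$ pins down $\nabla g^\ast(\alpha)=F_\ast(\alpha)\,u(\alpha)$, which is precisely the displayed identity.

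Alternatively, when one prefers to avoid conjugate machinery, the same identity can be obtained by a Lagrange multiplier/envelope calculation: if $u(\alpha)$ depends smoothly on $\alpha$ in a neighborhood of a fixed $\alpha_0\neq 0$, then differentiating $F(u(\alpha))=1$ gives that $du(\alpha)(\beta)$ lies in the tangent space to $S_F[0,1]$ at $u(\alpha)$, while the Lagrange condition $\alpha=F_\ast(\alpha)\,dF_{u(\alpha)}$ makes $\alpha$ vanish on this tangent space. Differentiating $F_\ast(\alpha)=\alpha(u(\alpha))$ then yields $dF_\ast(\alpha)(\beta)=\beta(u(\alpha))$, so $dF_\ast(\alpha)=u(\alpha)$ and thus $dF_\ast^{\ 2}(\alpha)=2F_\ast(\alpha)u(\alpha)$.

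The case $\alpha=0$ is handled separately and is trivial: $F_\ast(0)=0$ and $F_\ast^{\ 2}$ attains its minimum there, so $dF_\ast^{\ 2}(0)=0$ and both sides of the claimed equality equal $0$. The main technical obstacle is justifying the differentiability step and the envelope/maximizer formula for $\nabla g^\ast$; strict convexity of $F$ (which gives uniqueness of $u(\alpha)$) and the differentiability of $F_\ast^{\ 2}$ (already established in the preceding proposition) are exactly what make this pass through cleanly.
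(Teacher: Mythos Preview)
The paper does not actually give its own proof of this lemma; it is listed among the preliminaries of Subsection~\ref{subsection_inverse of Legendre transform} with references to Rockafellar and Fukuoka--Rodrigues, and no argument is supplied. So there is nothing in the paper to compare against directly.

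Your Fenchel-conjugate argument is correct and is the standard convex-analytic route: identifying $\tfrac12 F_\ast^{\,2}$ as the conjugate of $g(y)=\tfrac12 F^2(y)$, pinning down the unique maximizer as $F_\ast(\alpha)u(\alpha)$ via the equality cases in $\alpha(y)\le F_\ast(\alpha)F(y)$ and AM--GM, and then invoking the fact that the gradient of a differentiable conjugate equals the unique argmax. Note that what you have actually established is the stronger identity $dF_\ast^{\,2}(\alpha)=2F_\ast(\alpha)\,u(\alpha)$, which yields both the present lemma and the subsequent Proposition~\ref{Formula u alpha} in one stroke; the paper separates them, with the lemma placed logically prior, but your computation shows they are two faces of the same identification.

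Your alternative envelope/Lagrange sketch is correctly flagged as conditional on smooth dependence of $u(\alpha)$ on $\alpha$, which is not available for a merely strictly convex $F$; the Fenchel route is the robust one and is the argument you should keep.
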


\begin{pro} \label{Formula u alpha}  If $F:\mathbb{V} \rightarrow [0,\infty)$ is a strictly convex asymmetric norm and $\alpha \in \mathbb{V}^\ast,$ then
	\begin{eqnarray*}
		u(\alpha)= \frac{dF_\ast^{\ 2}(\alpha)}{F(dF_\ast^{\ 2}(\alpha))} = \frac{dF_\ast^{\ 2}(\alpha)}{2F_\ast(\alpha)}
	\end{eqnarray*}
	is the unique point in $S_F[0,1]$ that maximizes $\alpha$ in $S_{F}[0,1],$ where $dF_\ast^{\ 2}: \mathbb{V}^\ast \rightarrow \mathbb{V}$ is identified with the differential $dF_\ast^{\ 2}: \mathbb{V}^\ast\times \mathbb{V}^\ast\rightarrow \mathbb{R}.$
\end{pro}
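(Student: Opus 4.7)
The plan is to decompose the statement into four pieces: (i) the two displayed expressions for $u(\alpha)$ coincide, (ii) the vector $u(\alpha)$ actually lies in $S_F[0,1]$, (iii) it achieves the supremum defining $F_\ast(\alpha)$, and (iv) it is the only vector in $S_F[0,1]$ that does so.

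For (i), this is immediate from Lemma \ref{Formula F}, which gives $F(dF_\ast^{\ 2}(\alpha)) = 2 F_\ast(\alpha)$, so the two denominators agree. For (ii), observe that $F$ is positively homogeneous of degree one; since $F_\ast(\alpha)>0$ for $\alpha\neq 0$, the denominator $F(dF_\ast^{\ 2}(\alpha))$ is strictly positive, and
\[
F\!\left(\frac{dF_\ast^{\ 2}(\alpha)}{F(dF_\ast^{\ 2}(\alpha))}\right) = 1,
\]
so $u(\alpha)\in S_F[0,1]$.

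For (iii), the key input is Euler's identity applied to $F_\ast^{\ 2}$, which is positively homogeneous of degree two: differentiating the identity $F_\ast^{\ 2}(\lambda\alpha) = \lambda^2 F_\ast^{\ 2}(\alpha)$ with respect to $\lambda$ at $\lambda=1$ yields, under the identification $dF_\ast^{\ 2}(\alpha)\in \mathbb{V}^{\ast\ast}\cong \mathbb{V}$,
\[
\alpha\bigl(dF_\ast^{\ 2}(\alpha)\bigr) = 2\,F_\ast^{\ 2}(\alpha).
\]
Dividing by $2F_\ast(\alpha)$ gives $\alpha(u(\alpha))=F_\ast(\alpha)$, and by the very definition $F_\ast(\alpha) = \sup_{y\in S_F[0,1]} \alpha(y)$, the vector $u(\alpha)$ realizes this supremum.

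For (iv), suppose $y_1,y_2\in S_F[0,1]$ both satisfy $\alpha(y_i)=F_\ast(\alpha)$ with $y_1\neq y_2$. Applying strict convexity at $t=1/2$ we get $F\bigl(\tfrac{1}{2}(y_1+y_2)\bigr) < 1$. Rescaling the midpoint to
\[
\tilde y = \frac{y_1+y_2}{2\,F\bigl(\tfrac{1}{2}(y_1+y_2)\bigr)} \in S_F[0,1],
\]
linearity of $\alpha$ yields $\alpha(\tilde y) = F_\ast(\alpha)/F(\tfrac{1}{2}(y_1+y_2)) > F_\ast(\alpha)$, contradicting the definition of $F_\ast(\alpha)$. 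Hence $u(\alpha)$ is the unique maximizer.

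The main potential obstacle is (iii), which relies on Euler's identity applied to $F_\ast^{\ 2}$; this is standard but requires the differentiability of $F_\ast^{\ 2}$, which is already granted by the proposition preceding this statement. The tricky bookkeeping is only to correctly interpret $dF_\ast^{\ 2}(\alpha)$ as an element of $\mathbb{V}$ via the canonical identification $\mathbb{V}^{\ast\ast}\cong \mathbb{V}$, so that pairing it with $\alpha$ gives the required scalar; once this is made explicit, the rest is a short computation.
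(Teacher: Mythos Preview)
Your proof is correct. The paper itself does not supply a proof of this proposition; it is stated without proof, with the subsection referring globally to \cite{Rockafellar} and \cite{Fukuoka-Rodrigues}. Your argument---invoking Lemma~\ref{Formula F} for the equality of the two denominators, Euler's identity for the degree-two positively homogeneous function $F_\ast^{\,2}$ to obtain $\alpha(u(\alpha))=F_\ast(\alpha)$, and the standard rescaled-midpoint contradiction for uniqueness under strict convexity---is the natural route and matches what one finds in those references. One small remark: the statement as written allows $\alpha=0$, but of course the formula only makes sense for $\alpha\neq 0$; you implicitly (and correctly) assume this when asserting $F_\ast(\alpha)>0$.
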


\begin{teo}
\label{theorem_Lipschitz constant df2ast}
If $F$ is a strongly convex asymmetric norm on $\mathbb{V}$ with respect to $\sqrt{\frac{\gamma}{2}}\Vert \cdot\Vert$, then the mapping $dF_\ast{}^2: \mathbb{V}^\ast \rightarrow \mathbb{V}$ is Lipschitz, with Lipschitz constant $\frac{4}{\gamma}$.
\end{teo}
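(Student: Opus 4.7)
The plan is to reduce the Lipschitz estimate to a pair of strong convexity inequalities applied at carefully chosen subgradients. For notational convenience, set $y_\alpha := \tfrac{1}{2}dF_\ast^{\,2}(\alpha)$ and $y_\beta := \tfrac{1}{2}dF_\ast^{\,2}(\beta)$; then the claim becomes $\|y_\alpha - y_\beta\| \leq \tfrac{2}{\gamma}\|\alpha-\beta\|_\ast$.

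First I would identify $y_\alpha$ as an element at which $2\alpha$ lies in $\partial F^2$. By Proposition \ref{Formula u alpha} together with Lemma \ref{Formula F}, $y_\alpha = F_\ast(\alpha)\, u(\alpha)$, hence $F(y_\alpha) = F_\ast(\alpha)$ and $\alpha(y_\alpha) = F_\ast(\alpha)\,\alpha(u(\alpha)) = F_\ast^{\,2}(\alpha) = F^2(y_\alpha)$. For any $z \in \mathbb{V}$, the definition of $F_\ast$ gives the Young-type bound $\alpha(z) \leq F_\ast(\alpha)F(z)$, and the elementary inequality $2ab \leq a^2 + b^2$ applied to $a = F(z)$, $b = F_\ast(\alpha)$ yields
\[
\alpha(z) - \tfrac{1}{2}F^2(z) \;\leq\; F_\ast(\alpha)F(z) - \tfrac{1}{2}F^2(z) \;\leq\; \tfrac{1}{2}F_\ast^{\,2}(\alpha) \;=\; \tfrac{1}{2}F^2(y_\alpha) \;=\; \alpha(y_\alpha) - \tfrac{1}{2}F^2(y_\alpha).
\]
Rearranging gives $F^2(z) \geq F^2(y_\alpha) + 2\alpha(z - y_\alpha)$, i.e.\ $2\alpha \in \partial F^2(y_\alpha)$. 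The same argument shows $2\beta \in \partial F^2(y_\beta)$.

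Now I would plug these subgradients into the strong convexity hypothesis, which (with $\check F = \sqrt{\gamma/2}\,\|\cdot\|$) reads $F^2(z) \geq F^2(y) + \tilde\alpha(z-y) + \tfrac{\gamma}{2}\|z-y\|^2$ for every $\tilde\alpha \in \partial F^2(y)$. Evaluating at the two admissible pairs $(y,\tilde\alpha,z) = (y_\alpha, 2\alpha, y_\beta)$ and $(y_\beta, 2\beta, y_\alpha)$ and adding the resulting inequalities, the $F^2$-terms cancel and one obtains
\[
0 \;\geq\; 2(\alpha - \beta)(y_\beta - y_\alpha) + \gamma\,\|y_\alpha - y_\beta\|^2.
\]
Using the definition of the dual norm $\|\cdot\|_\ast$ (or Cauchy--Schwarz) on the linear term gives $\gamma \|y_\alpha - y_\beta\|^2 \leq 2\|\alpha - \beta\|_\ast\,\|y_\alpha - y_\beta\|$, whence $\|y_\alpha - y_\beta\| \leq \tfrac{2}{\gamma}\|\alpha - \beta\|_\ast$ and, multiplying by $2$, the desired Lipschitz bound $\|dF_\ast^{\,2}(\alpha) - dF_\ast^{\,2}(\beta)\| \leq \tfrac{4}{\gamma}\|\alpha-\beta\|_\ast$.

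The only delicate point is step one, namely verifying that $2\alpha$ is genuinely a subgradient of $F^2$ at $y_\alpha$ without invoking the machinery of Legendre--Fenchel conjugation in detail; the Young-type inequality above keeps the argument self-contained and compatible with the asymmetric setting. Once this is in hand, the remainder is a standard two-sided strong-monotonicity argument for the subdifferential.
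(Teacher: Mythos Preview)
Your proof is correct. The key identification $2\alpha \in \partial F^2(y_\alpha)$ via the Young-type inequality $\alpha(z) \leq F_\ast(\alpha)F(z) \leq \tfrac{1}{2}F_\ast^{\,2}(\alpha) + \tfrac{1}{2}F^2(z)$ is clean and self-contained, and the subsequent two-sided strong-convexity (i.e.\ strong-monotonicity) argument is the standard route to this kind of Lipschitz bound.

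Note that the paper itself does not supply a proof of this theorem: it is stated in the preliminaries (Subsection~\ref{subsection_inverse of Legendre transform}) with the blanket references \cite{Rockafellar} and \cite{Fukuoka-Rodrigues}, so there is no ``paper's own proof'' to compare against. Your argument is exactly the kind of direct convex-analytic derivation one would expect to underlie the cited result, and it fits naturally with the surrounding material (in particular, it uses only Proposition~\ref{Formula u alpha}, Lemma~\ref{Formula F}, and Definition~\ref{Fortemente convexa - definicao artigo R-H}). One tiny remark: the representation $y_\alpha = F_\ast(\alpha)\,u(\alpha)$ presupposes $\alpha \neq 0$; for $\alpha = 0$ one has $y_\alpha = \tfrac{1}{2}dF_\ast^{\,2}(0) = 0$ and $0 \in \partial F^2(0)$ trivially, so the subgradient claim and the final estimate extend to all of $\mathbb{V}^\ast$ without further work.
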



Now we prove that $\alpha \mapsto u(\alpha)$, when restricted to compact subsets of $\mathbb{V}^\ast\backslash \{0\}$, is Lipschitz.
This result wasn't found elsewhere, and its proof is included here for convenience. 

\begin{lem} \label{u_alpha é lipschitz} Let $F:\mathbb{V} \rightarrow [0,\infty)$ be a strongly convex asymmetric norm with respect to $\sqrt{\frac{\gamma}{2}} \Vert \cdot \Vert.$ The application that associates every $\alpha \in \mathbb{V}^\ast \setminus \{0\}$ to the vector $u(\alpha) \in \mathbb{V}$ is Lipschitz over compact subset of $\mathbb{V}^\ast \setminus \{0\}.$
\end{lem}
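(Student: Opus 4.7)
The plan is to exploit the explicit formula
\[
u(\alpha) = \frac{dF_\ast^{\,2}(\alpha)}{2F_\ast(\alpha)}
\]
from Proposition \ref{Formula u alpha} and show it is Lipschitz on a fixed compact $K \subset \mathbb{V}^\ast\setminus\{0\}$ by combining Lipschitz bounds on the numerator and denominator. The numerator is globally Lipschitz with constant $4/\gamma$ by Theorem \ref{theorem_Lipschitz constant df2ast}, and the denominator, being (up to the factor $2$) an asymmetric norm on $\mathbb{V}^\ast$, is Lipschitz by Proposition \ref{F e Lipschitz}. Dividing is harmless provided $F_\ast$ is bounded below on $K$, which is the step where compactness is used.

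First I would fix a compact set $K \subset \mathbb{V}^\ast\setminus\{0\}$ and, using that $F_\ast$ is continuous and strictly positive on $\mathbb{V}^\ast\setminus\{0\}$, set $m := \min_{\alpha \in K} F_\ast(\alpha) > 0$. Next, since $dF_\ast^{\,2}$ is Lipschitz on all of $\mathbb{V}^\ast$ and $K$ is bounded, I would set $B := \sup_{\alpha \in K}\|dF_\ast^{\,2}(\alpha)\| < \infty$ (one can also argue directly from Lemma \ref{Formula F}, since $F(dF_\ast^{\,2}(\alpha)) = 2F_\ast(\alpha)$ is bounded on $K$ and norms are equivalent). Let $L_1$ and $L_2$ denote Lipschitz constants of $dF_\ast^{\,2}$ and $F_\ast$, respectively.

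The main step is a standard quotient manipulation: for $\alpha_1, \alpha_2 \in K$,
\[
u(\alpha_1)-u(\alpha_2) = \frac{dF_\ast^{\,2}(\alpha_1)-dF_\ast^{\,2}(\alpha_2)}{2F_\ast(\alpha_1)} + \frac{dF_\ast^{\,2}(\alpha_2)\bigl(F_\ast(\alpha_2)-F_\ast(\alpha_1)\bigr)}{2F_\ast(\alpha_1)F_\ast(\alpha_2)}.
\]
Estimating each term by the bounds just collected yields
\[
\|u(\alpha_1)-u(\alpha_2)\| \leq \frac{L_1}{2m}\|\alpha_1-\alpha_2\|_\ast + \frac{B\,L_2}{2m^2}\|\alpha_1-\alpha_2\|_\ast,
\]
so $u$ is Lipschitz on $K$ with constant $\frac{L_1}{2m}+\frac{B L_2}{2m^2}$.

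The only genuine subtlety is the lower bound on $F_\ast$, which is why one cannot expect a global Lipschitz constant; the constant will blow up as $K$ approaches the origin. Everything else is bookkeeping via results already at our disposal, so I expect no real obstacle beyond writing the algebra cleanly.
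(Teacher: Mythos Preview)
Your proposal is correct and follows essentially the same approach as the paper: both use the formula $u(\alpha)=\dfrac{dF_\ast^{\,2}(\alpha)}{2F_\ast(\alpha)}$, invoke Theorem~\ref{theorem_Lipschitz constant df2ast} for the Lipschitz constant of $dF_\ast^{\,2}$, the Lipschitz continuity of $F_\ast$ as an asymmetric norm, and the compactness of $K$ to bound $F_\ast$ away from zero and $\|dF_\ast^{\,2}\|$ from above, then conclude by the standard quotient estimate. The only cosmetic difference is the algebraic form of the decomposition (the paper writes the common denominator $\frac{F_\ast(\beta)\,dF_\ast^{\,2}(\alpha)-F_\ast(\alpha)\,dF_\ast^{\,2}(\beta)}{2F_\ast(\alpha)F_\ast(\beta)}$ and then adds and subtracts $F_\ast(\alpha)\,dF_\ast^{\,2}(\alpha)$, yielding the constant $\mathcal{K}=\tfrac{1}{2}\bigl(C_1^2C_3L_1+C_1^2C_2\tfrac{4}{\gamma}\bigr)$), but the logic is identical.
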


\begin{proof} 
	Let $K_{\mathbb{V}^\ast \setminus \{0\}}$ a compact subset of $\mathbb{V}^\ast \setminus \{0\}.$ We must show that there exists a constant $\mathcal{K}>0,$ such that
	\begin{eqnarray*}
		\Vert u(\alpha) - u(\beta) \Vert \leq \mathcal{K} \Vert \alpha - \beta \Vert_\ast, \, \forall \, \alpha, \beta \in K_{\mathbb{V}^\ast \setminus \{0\}}.
	\end{eqnarray*}
	
	Let
	\begin{eqnarray}
		C_1 & = & \max\left\{ \frac{1}{F_\ast(\alpha)} : \alpha \in K_{\mathbb{V}^\ast \setminus \{0\}} \right\} > 0, \label{Constante C1} \\
		C_2 & = & \max\left\{ F_\ast(\alpha) : \alpha \in K_{\mathbb{V}^\ast \setminus \{0\}} \right\} > 0, \label{Constante C2} \\
		C_3 & = & \max\left\{\big\Vert dF_\ast^{\ 2}(\alpha) \big\Vert : \alpha \in K_{\mathbb{V}^\ast \setminus \{0\}} \right\} > 0 \label{Constante C3}, \\
		L_1 & = & \max\left\{ F_\ast(\alpha) : \alpha \in S_{\Vert \cdot \Vert_\ast}[0,1]  \right\}. \label{Constante L1}
	\end{eqnarray}
	Note that $L_1$ is a Lipschitz constant of $F_\ast$. 
	Then, given $\alpha, \beta \in K_{\mathbb{V}^\ast \setminus \{0\}},$ we have
	\begin{eqnarray*}
		\Vert u(\alpha) - u(\beta)  \Vert & = & \Bigg\Vert  \frac{dF_\ast^{\ 2}(\alpha)}{2F_\ast(\alpha)} -  \frac{dF_\ast^{\ 2}(\beta)}{2F_\ast(\beta)}  \Bigg\Vert  \\
		& = & \frac{1}{2}\Bigg\Vert  \frac{F_\ast(\beta)dF_\ast^{\ 2}(\alpha)-F_\ast(\alpha)dF_\ast^{\ 2}(\beta)}{F_\ast(\alpha)F_\ast(\beta)} \Bigg\Vert  \\
		& \leq & \frac{1}{2}C_1^2 \big\Vert  F_\ast(\beta)dF_\ast^{\ 2}(\alpha)-F_\ast(\alpha)dF_\ast^{\ 2}(\beta) \big\Vert  \\
		& \leq & \frac{1}{2}C_1^2 \big\Vert  F_\ast(\beta)dF_\ast^{\ 2}(\alpha)-F_\ast(\alpha)dF_\ast^{\ 2}(\alpha) \big\Vert \\
		& + & \frac{1}{2}C_1^2 \big\Vert F_\ast(\alpha)dF_\ast^{\ 2}(\alpha)-F_\ast(\alpha)dF_\ast^{\ 2}(\beta) \big\Vert \\
		& = & \frac{1}{2}C_1^2  \big\Vert dF_\ast^{\ 2}(\alpha) \big\Vert \big\vert   F_\ast(\alpha)-F_\ast(\beta) \big\vert \\  
		& + & \frac{1}{2}C_1^2  F_\ast(\alpha) \big\Vert dF_\ast^{\ 2}(\alpha)-dF_\ast^{\ 2}(\beta) \big\Vert \\
		& \leq & \frac{1}{2}C_1^2 C_3   \vert F_\ast(\alpha)-F_\ast(\beta) \vert \\  
		& + & \frac{1}{2}C_1^2 C_2 \Vert dF_\ast^{\ 2}(\alpha)-dF_\ast^{\ 2}(\beta) \Vert \\
		& \leq & \frac{1}{2} \left(C_1^2 C_3 L_1 + C_1^2 C_2 \frac{4}{\gamma}\right) \Vert \alpha-\beta \Vert_\ast,
	\end{eqnarray*}
where the last inequality is due to Theorem \ref{theorem_Lipschitz constant df2ast}.	Setting
	\begin{eqnarray*}
		\mathcal{K} = \frac{1}{2} \left(C_1^2 C_3 L_1 + C_1^2 C_2 \frac{4}{\gamma}\right),
	\end{eqnarray*} 
	we have that 
	\begin{eqnarray}
	\label{eqnarray_u Lipschitz}
		\Vert u(\alpha) - u(\beta)  \Vert & \leq & \mathcal{K} \Vert \alpha-\beta \Vert_\ast,
	\end{eqnarray}
	for every $\alpha, \beta \in K_{{\mathbb{V}^\ast \setminus \{0\}}},$ what settles the proof.
\end{proof}


\subsection{Almost radial growth rates of $F^2$  for asymmetric norms $F$ on $\mathbb{V}$}
\label{subsection_almost radial}

In Lemma 4 of \cite{Fukuoka-Setti}, we have already calculated the almost radial growth rates of $F$.
In this subsection we obtain a slight variation of that result.
Consider an asymmetric norm $F$ on $\mathbb{V}$.

Define
\begin{equation} 
	\label{constante r}
	r_M  =  \max_{y \in S_{\|\cdot\|}[0,1]} F(y) = \max_{y \in B_{\|\cdot\|}[0,1]} F(y)
\end{equation}
and
\begin{equation} 
	\label{constante rho}
	r_m  =  \min_{y \in S_{\|\cdot\|}[0,1]} F(y).
\end{equation}
Then
\begin{equation*} 
	\max_{y \in S_{\|\cdot\|}[0,1]} F(y) = \min_{y \in S_{\|\cdot\|}\left[ 0,\frac{r_M}{r_m} \right]} F(y).
\end{equation*}

Let
\begin{equation}
	\label{Conjunto A}
\mathcal A := \left\{y \in \mathbb{R}^n: \|y\| \in \left[ \frac{1}{2}, \frac{2r_M}{r_m} \right] \right\}.
\end{equation}


\begin{lem} \label{proposicao F em Rn}
	Let $F$ be an asymmetric norm on ${\mathbb R^n}$ and $v \in \mathcal A.$ Set
	\[
	\varepsilon \in \left(0,\frac{r_m \Vert v\Vert}{4nr_M}\right],
	\]
	where $r_M$ and $r_m$ are defined by (\ref{constante r}) and (\ref{constante rho}) respectively. Then
	\[
	\frac{r_m^2}{32}t < F^2\left(w+t\frac{v}{\|v\|}\right)-F^2(w) < \left( \frac{2r_mr_M^3+12r_M^4}{r_m^2}\right)t  
	\]
	for every $w\in B_{\|\cdot\|}[v,\varepsilon]$ and $t \in (0,1)$, and the first inequality holds also for $t > 0$.
\end{lem}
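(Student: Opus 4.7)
The plan is to reduce the bound for $F^2$ to an almost-radial bound on $F$ (a variant of Lemma~4 of \cite{Fukuoka-Setti}) via the elementary factorization
\[
F^2(\tilde w) - F^2(w) = \bigl[F(\tilde w) - F(w)\bigr]\bigl[F(\tilde w) + F(w)\bigr],
\]
where $\tilde w := w + tv/\Vert v\Vert$, and then to estimate each factor separately using only the homogeneity inequality $r_m\Vert y\Vert \leq F(y) \leq r_M\Vert y\Vert$ and the hypothesis on $\varepsilon$.

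First I would record the elementary size estimates on $B_{\Vert\cdot\Vert}[v,\varepsilon]$. The assumption $\varepsilon \leq r_m\Vert v\Vert/(4n r_M)$ combined with $v\in\mathcal A$ (and $n\geq 1$) yields $\Vert w\Vert \in [\Vert v\Vert/2,\,2\Vert v\Vert]\subset [1/4,\,4 r_M/r_m]$, hence $F(w)\geq r_m/4$. Since $\Vert \tilde w\Vert \leq \Vert w\Vert + t \leq 5 r_M/r_m$ for $t\in(0,1)$, the ``plus'' factor also satisfies $F(\tilde w)+F(w) \leq 9 r_M^2/r_m$.

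The substance of the argument is a two-sided linear-in-$t$ bound on $F(\tilde w)-F(w)$. I would use the decomposition
\[
\tilde w = \Bigl(1+\tfrac{t}{\Vert w\Vert}\Bigr)\,w + t\Bigl(\tfrac{v}{\Vert v\Vert} - \tfrac{w}{\Vert w\Vert}\Bigr)
\]
and apply subadditivity of $F$ for the ``$\leq$'' direction and its reverse-triangle counterpart for the ``$\geq$'' direction. The radial term contributes $(t/\Vert w\Vert)F(w) = tF(w/\Vert w\Vert)\in [tr_m,\,tr_M]$, while the error is dominated by $r_M\,\bigl\Vert \tfrac{v}{\Vert v\Vert} - \tfrac{w}{\Vert w\Vert}\bigr\Vert \leq 2 r_M\varepsilon/\Vert v\Vert$; the $\varepsilon$-hypothesis forces this error to be at most $tr_m/(2n)\leq tr_m/2$. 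The net outcome is
\[
\tfrac{t r_m}{2} \leq F(\tilde w) - F(w) \leq \tfrac{3 t r_M}{2},
\]
where the lower estimate is valid for all $t>0$.

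Multiplying the two sides of the factorization then produces the lemma. The lower bound gives $(tr_m/2)(r_m/4) = tr_m^2/8 > tr_m^2/32$ for every $t>0$, as required. The upper bound gives $(3tr_M/2)(9 r_M^2/r_m) = 27 tr_M^3/(2 r_m)$, which is strictly dominated by $(2 r_m r_M^3 + 12 r_M^4)/r_m^2 \cdot t$ precisely because $r_m \leq r_M$ (the reduced inequality is $23 r_m < 24 r_M$). The only obstacle I foresee is purely clerical: tracking the constants carefully through the decomposition and invoking $r_m\leq r_M$ and $n\geq 1$ at the right places so that the final numerical inequalities remain strict.
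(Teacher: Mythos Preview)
Your proof is correct. The overall strategy coincides with the paper's: factor $F^2(\tilde w)-F^2(w)=[F(\tilde w)-F(w)][F(\tilde w)+F(w)]$ and control each factor via $r_m\Vert y\Vert\le F(y)\le r_M\Vert y\Vert$. For the lower bound the paper does exactly this, but instead of your decomposition $\tilde w=(1+t/\Vert w\Vert)w+t(v/\Vert v\Vert-w/\Vert w\Vert)$ it simply quotes Lemma~4 of \cite{Fukuoka-Setti} for the $F$-level estimate $F(\tilde w)-F(w)>r_mt/8$; your self-contained derivation actually gives the sharper constant $r_mt/2$, and both feed into $r_m^2t/32$ the same way. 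For the upper bound the paper takes a slightly different elementary route: rather than reusing the factorization, it expands $F^2(\tilde w)\le (F(w)+tF(v/\Vert v\Vert))^2$ directly and bounds $F(w)$, $F(v/\Vert v\Vert)$ by $r_M$ times the relevant norms. Your version is a bit more uniform in that the same decomposition handles both inequalities; the paper's version avoids estimating $\Vert v/\Vert v\Vert-w/\Vert w\Vert\Vert$. Neither buys anything substantive over the other.
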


\begin{proof}
Let us prove the first inequality. In Lemma 4 of \cite{Fukuoka-Setti}, we proved the inequality
	\begin{align}
	\label{align_desigualdade quase radial trabalho anterior}
	F\left(w+t\frac{v}{\|v\|}\right)-F(w) > \frac{r_m}{8}t
	\end{align}
under the same hypotheses, with the exception that in that result we supposed $v\neq 0$ instead of $v\in \mathcal{A}$.  
Then

\begin{eqnarray}
& & F^2\left(w+t\frac{v}{\Vert v\Vert} \right) - F^2(w) \nonumber \\ 
= & & \left( F\left(w+t\frac{v}{\Vert v\Vert} \right) + F(w) \right) \left( F\left(w+t\frac{v}{\Vert v\Vert} \right) - F(w) \right) \nonumber \\
\geq & & F(w) \left( F\left(w+t\frac{v}{\Vert v\Vert} \right)-F(w)\right) \nonumber \\
= & & \Vert w\Vert \frac{F(w)}{\Vert w\Vert}  \left( F\left(w+t\frac{v}{\Vert v\Vert} \right)-F(w)\right) \nonumber \\
> & & \frac{1}{2}\Vert v\Vert r_m\left( \frac{r_m}{8}t \right)\geq \frac{r_m^2}{32}t, \nonumber
\end{eqnarray}
where the strict inequality is due to $\Vert w\Vert \geq \frac{1}{2}\Vert v\Vert$, \eqref{constante rho} and \eqref{align_desigualdade quase radial trabalho anterior}.

Now we prove the second inequality.
Fix $v \in \mathcal{A}.$ Given $w\in B_{\|\cdot\|}[v,\varepsilon],$ we have that
	\begin{eqnarray*}
		\Vert w \Vert - \Vert v \Vert \leq \Vert w-v \Vert \leq \varepsilon \leq   \frac{r_m \Vert v\Vert}{4nr_M}.
	\end{eqnarray*} 
	Using $\Vert v \Vert \leq \frac{2r_M}{r_m},$ we get
	\begin{eqnarray*}
		\Vert w \Vert &\leq& \frac{r_m+4r_M}{2r_m}.
	\end{eqnarray*}
	Then
	\begin{eqnarray*}
		F^2\left(w+tv\right)-F^2(w) & \leq & F^2(w)+ 2tF(w)F(v) + t^2F^2(v)-F^2(w) \\
		& = & 2tF(w)F(v) + t^2F^2(v) \\
		& < & 2tF(w)F(v) + tF^2(v) \\
		& = & t\left(2F(w)F(v) + F^2(v) \right) \\
		& \leq & t \left[2 \cdot \frac{r_mr_M+4r_M^2}{2r_m} \cdot \frac{2r_M^2}{r_m} + \left(\frac{2r_M^2}{r_m}\right)^2 \right] \\
		& = & t \left( \frac{2r_mr_M^3+12r_M^4}{r_m^2}\right),
	\end{eqnarray*}
	since $t \in (0,1).$
	Therefore,
	 \begin{eqnarray*}
	 	F^2\left(w+tv\right)-F^2(w) &  < &  \left( \frac{2r_mr_M^3+12r_M^4}{r_m^2}\right)t 
	 \end{eqnarray*}
	 for every $v\in \mathcal{A},$ $w\in B_{\|\cdot\|}[v,\varepsilon]$ and $t\in (0,1).$ 
\end{proof}

\subsection{The extended geodesic field}
\label{Campo geodesico estendido}

In this subsection we define Pontryagin type $C^0$-Finsler manifolds and extended geodesic fields 
(see Sections 3 and 4 of \cite{Fukuoka-Rodrigues}).
In short, a Pontryagin type $C^0$-Finsler manifold has its structure determined by a control system of unit continuously differentiable vector fields and the extended geodesic field are determined applying the PMP in this control system. 


\begin{defi} \label{Definicao de estrura de Finsler de classe C0} A {\rm $C^0$-Finsler structure} of $M$ is a continuous function $F: TM \rightarrow	[0,\infty)$ such that its restriction to each tangent space is an asymmetric norm. 
The pair $(M, F)$ is a {\rm $C^0$-Finsler manifold}.
If the restriction of $F$ to every tangent space of $M$ is a strongly convex asymmetric norm, then $F$ is called a {\rm strongly convex $C^0$-Finsler structure}.
\end{defi}

We denote the open ball, the closed ball and the sphere in $(T_xM,F\vert_{T_xM})$ with center at $a \in T_xM$ and radius $r>0$ by $B_F(x,a,r)$, $B_F[x,a,r]$ and $S_F[x,a,r]$ respectively.

\begin{defi} \label{Estrutura de Finsler}
	A {\rm Finsler structure} of $M$ is a function $F:TM \rightarrow [0, \infty)$ 
	with the following properties:
	\begin{enumerate}
		\item $F$ is smooth on the {\em slit tangent bundle}  $TM\setminus 0 = \{(x,y) \in TM \, : \, y \neq 0\}.$
		\item For every $x \in M,$ $F(x, \cdot): T_xM \rightarrow [0, \infty)$ is a Minkowski norm.
	\end{enumerate}  
The pair $(M,F)$ is a {\em Finsler manifold}.
\end{defi}

\begin{obs}
Since every Minkowski norm is an asymmetric norm, we have that a Finsler structure is a $C^0$-Finsler structure.
\end{obs}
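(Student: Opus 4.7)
The final statement asserts that every Finsler structure $F:TM\to[0,\infty)$ satisfies the two conditions of Definition \ref{Definicao de estrura de Finsler de classe C0}, namely continuity on all of $TM$ and the asymmetric-norm property on each fibre. I would split the verification into a fibrewise algebraic step and an analytic step at the zero section of $TM$.

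For the fibrewise step I would first prove the unnamed Proposition stated just before Definition \ref{Estrutura de Finsler}: that every Minkowski norm $F$ on $\mathbb{V}$ is an asymmetric norm. Positive homogeneity is part of the definition of a Minkowski norm, so only positivity and the triangle inequality need checking. For positivity I would fix $y\neq 0$ and use that $\tfrac{1}{2}F^2(ty)=\tfrac{t^2}{2}F^2(y)$ by $1$-homogeneity, combined with positive-definiteness of $(g_{ij}(y))$ and Proposition \ref{pro_Minkowski Euler}(1), to rule out $F(y)=0$. For the triangle inequality the plan is to first show that $F$ itself is convex on $\mathbb{V}$: since $\tfrac{1}{2}F^2$ has positive-definite Hessian on $\mathbb{V}\setminus\{0\}$ it is convex there, and together with $1$-homogeneity of $F$ and $F(0)=0$ this upgrades to convexity of $F$ on the whole of $\mathbb{V}$ by a direct $1$-homogeneity manipulation; then $F(y+z)=2F\bigl(\tfrac{y+z}{2}\bigr)\le F(y)+F(z)$.

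For the analytic step I would verify continuity of $F$ at an arbitrary point $(x_0,0)$ of the zero section, continuity on $TM\setminus 0$ being immediate from smoothness. Pick a chart around $x_0$ producing a local trivialisation $TM|_U\cong U\times\mathbb{R}^n$ with induced frame $\{e_1(x),\dots,e_n(x)\}$, and let $(x_k,y_k)\to(x_0,0)$. Applying Proposition \ref{F e Lipschitz} fibrewise in this frame as in its proof, one obtains $F(x_k,y_k)\le C(x_k)\,\|y_k\|_1$, where $C(x_k)=\max_i\max\{F(x_k,e_i(x_k)),F(x_k,-e_i(x_k))\}$. Since $F$ is continuous on $TM\setminus 0$, the function $C(\cdot)$ is continuous at $x_0$ and hence bounded on a neighbourhood; combined with $\|y_k\|_1\to 0$ this forces $F(x_k,y_k)\to 0=F(x_0,0)$.

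The only genuine obstacle is the convexity argument inside the fibrewise step: one has to upgrade positive-definiteness of the Hessian of $\tfrac{1}{2}F^2$ on $\mathbb{V}\setminus\{0\}$ to convexity of $F$ on all of $\mathbb{V}$, even though $F$ is only $C^0$ at the origin. Once convexity of $F$ is established, both the triangle inequality in the fibrewise step and the continuity estimate at the zero section follow essentially from the material already collected in Section \ref{section_preliminaries}.
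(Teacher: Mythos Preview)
The paper offers no proof for this remark: it is a one-line observation that invokes the (also unproved) Proposition ``Every Minkowski norm is an asymmetric norm'' and takes continuity of $F$ at the zero section for granted, both being standard facts in Finsler geometry covered by the cited reference \cite{BaoChernShen}. Your proposal is therefore not so much a different approach as a supply of the details the paper elects to skip; what you outline is correct.

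Two small comments on execution. First, your positivity sketch can be made precise in one line: with $g(t)=F^2(ty)$ one has $g''(t)=2g_{ij}(ty)y^iy^j>0$ for $t>0$, while $F(y)=0$ would force $g\equiv 0$, a contradiction. Second, the ``obstacle'' you flag (convexity of $F$ through the origin) is lighter than you suggest: for the triangle inequality you only need that the closed unit ball $\{F\le 1\}$ is convex, and a segment in $\{F\le 1\}$ that happens to pass through $0$ stays in $\{F\le 1\}$ trivially by positive homogeneity, while segments avoiding $0$ are handled by the Hessian condition. So there is no need to establish convexity of $F$ on all of $\mathbb{V}$ as a separate step.
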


\begin{obs} 
A $C^0$-Finsler structure $F:TM \rightarrow  [0,\infty)$ induces the fiberwise dual $C^0$-Finsler structure $F_\ast: T^\ast M \rightarrow  [0,\infty)$, that is, $F_\ast(x,\cdot):T^\ast_xM \rightarrow  [0,\infty)$ is defined as 
\[
F_\ast(x,\alpha)=\max_{y\in S_F[x,0,1]}\alpha(y).
\]
\end{obs}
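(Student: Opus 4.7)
The implicit claim of the final remark is that $F_\ast:T^\ast M\to[0,\infty)$ given by $F_\ast(x,\alpha)=\max_{y\in S_F[x,0,1]}\alpha(y)$ is a $C^0$-Finsler structure, and the plan is to verify in order that (i) the maximum exists, (ii) each $F_\ast(x,\cdot)$ is an asymmetric norm on $T^\ast_xM$, and (iii) $F_\ast$ is continuous on $T^\ast M$. Step (iii) will be the main obstacle, since it couples the base and fiber variables and requires that the unit spheres $S_F[x,0,1]$ vary continuously in $x$.

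For (i) and (ii) I fix $x\in M$ and set $F_x=F(x,\cdot)$. By Proposition \ref{F e Lipschitz}, $F_x$ is continuous, so $S_F[x,0,1]$ is closed; the norm equivalence of Remark \ref{remark_asymmetric norm equivalent} applied to $F_x$ and $\Vert\cdot\Vert$ confines $S_F[x,0,1]$ between two Euclidean spheres, so it is compact and any linear $\alpha$ attains its maximum there. To check the asymmetric norm axioms: if $\alpha\neq 0$, pick $z$ with $\alpha(z)>0$ and observe that $y:=z/F_x(z)\in S_F[x,0,1]$, giving $F_\ast(x,\alpha)\geq\alpha(y)>0$; positive homogeneity is immediate from $\max_y(\mu\alpha)(y)=\mu\max_y\alpha(y)$ for $\mu\geq 0$; the triangle inequality is the subadditivity of the supremum.

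For (iii) I would work in a local trivialization $TM\vert_U\cong U\times\mathbb{R}^n$ and $T^\ast M\vert_U\cong U\times(\mathbb{R}^n)^\ast$, and fix a compact $K\subset U$. Continuity and positivity of $F$ on the compact set $K\times S_{\Vert\cdot\Vert}[0,1]$, combined with fiberwise positive homogeneity, produce constants $0<c_1\leq c_2$ such that $c_1\Vert y\Vert\leq F(x,y)\leq c_2\Vert y\Vert$ for every $x\in K$ and $y\in\mathbb{R}^n$. Consequently all spheres $S_F[x,0,1]$, $x\in K$, lie in the fixed compact annulus $A=\{1/c_2\leq\Vert y\Vert\leq 1/c_1\}$.

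Given $(x_k,\alpha_k)\to(x_0,\alpha_0)$ in $T^\ast M\vert_K$, I would close the argument by a two-sided estimate. For the upper bound, pick maximizers $y_k\in S_F[x_k,0,1]$; compactness of $A$ provides a subsequence with $y_k\to y_\ast$, continuity of $F$ forces $y_\ast\in S_F[x_0,0,1]$, and $\limsup F_\ast(x_k,\alpha_k)=\lim\alpha_k(y_k)=\alpha_0(y_\ast)\leq F_\ast(x_0,\alpha_0)$. For the lower bound, fix a maximizer $y_0$ of $\alpha_0$ on $S_F[x_0,0,1]$ and define $y_k:=y_0/F(x_k,y_0)$; since $F(x_k,y_0)\to F(x_0,y_0)=1$, the sequence is eventually well-defined, $y_k\in S_F[x_k,0,1]$, and $y_k\to y_0$, so $\liminf F_\ast(x_k,\alpha_k)\geq\lim\alpha_k(y_k)=F_\ast(x_0,\alpha_0)$. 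Combining the two bounds yields continuity of $F_\ast$, completing the verification.
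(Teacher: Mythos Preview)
Your argument is correct. The paper itself does not supply a proof for this remark: it simply records the definition of $F_\ast$ and asserts that it is a $C^0$-Finsler structure, leaving the verification to the reader. You have filled in exactly that verification, and your three steps (existence of the maximum, fiberwise asymmetric norm axioms, joint continuity) are the natural ones.

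A minor stylistic point on step (iii): when you write ``compactness of $A$ provides a subsequence with $y_k\to y_\ast$'' and then ``$\limsup F_\ast(x_k,\alpha_k)=\lim\alpha_k(y_k)$'', the equality presumes the subsequence was chosen to realize the $\limsup$. The clean phrasing is to first pass to a subsequence along which $F_\ast(x_k,\alpha_k)$ converges to the $\limsup$, and then extract a further subsequence of the maximizers; the conclusion is unchanged. This is a routine compactness argument and the intended reasoning is clear.
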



\begin{defi} \label{Variedade de Finsler do Tipo Pontryagin} A $C^0$-Finsler manifold $(M, F)$ is of {\rm Pontryagin type at} $p\in M$ if there exist a neighborhood $U$ of $p,$ a coordinate system $\phi = (x^1, \ldots, x^n):U  \rightarrow \mathbb{R}^n,$ with	the respective natural coordinate system $d\phi = (x^1, \ldots, x^n, y^1, \ldots, y^n): TU \rightarrow \mathbb{R}^{2n}$ on the tangent bundle, and a family of $C^1$ unit vector fields
	\begin{eqnarray*}
		\left\{ x \mapsto X_u(x) = (y^1(x^1, \ldots x^n, u), \ldots, y^n(x^1, \ldots, x^n, u)): u \in S^{n-1} \right\}
	\end{eqnarray*}
	on $U$ parametrized by $u \in S^{n-1}$ such that
	\begin{enumerate}
		\item $u \mapsto X_u(x)$ is a homeomorphism from $S^{n-1} \subset \mathbb{R}^n$ onto $S_F[x,0,1]$ for every $x\in U;$
		\item $(x, u) \mapsto (y^1(x^1, \ldots, x^n, u), \ldots,  y^n(x^1, \ldots, x^n, u))$ is continuous;
		\item $(x, u) \mapsto \left( \frac{\partial y^1}{\partial x^i}(x^1, \ldots, x^n, u), \ldots, \frac{\partial y^n}{\partial x^i}(x^1, \ldots, x^n, u) \right)$
		is continuous for every $i = 1, \ldots,n.$		
	\end{enumerate}
	We say that $F$ is of {\rm Pontryagin type} on $M$ if it is of Pontryagin type for every $p \in M$. The pair $(M, F)$ is called {\em Pontryagin type $C^0$-Finsler manifold}.
	
\end{defi}

\begin{obs}
\label{remark_tipo Pontryagin nao depende de sistema de coordenadas}
Definition \ref{Variedade de Finsler do Tipo Pontryagin} does not depend on the choice of the coordinate system (see Remark 3.2 of \cite{Fukuoka-Rodrigues}).
\end{obs}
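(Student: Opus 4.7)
The plan is to show that conditions (1), (2), and (3) of Definition \ref{Variedade de Finsler do Tipo Pontryagin} are invariant under a change of smooth chart about $p$. Suppose the Pontryagin type property holds with respect to a chart $(U,\phi=(x^1,\ldots,x^n))$ and a family $\{X_u\}_{u\in S^{n-1}}$, and let $(V,\psi=(\tilde{x}^1,\ldots,\tilde{x}^n))$ be any other chart with $p\in V$. On $U\cap V$, each $X_u$ has two coordinate expressions; if $y^i(x,u)$ are the components in the $\phi$-chart, then the $\psi$-components are
\begin{equation*}
\tilde{y}^j(\tilde{x},u) \;=\; \frac{\partial \tilde{x}^j}{\partial x^i}\bigl(x(\tilde{x})\bigr)\, y^i\bigl(x(\tilde{x}),u\bigr),
\end{equation*}
where $\tilde{x}\mapsto x(\tilde{x})$ is the smooth transition map. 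I would verify the three conditions in the $\psi$-chart one by one.

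Condition (1) is intrinsic to the asymmetric norm $F\vert_{T_xM}$: the map $u\mapsto X_u(x)$ is a homeomorphism from $S^{n-1}$ onto $S_F[x,0,1]$ regardless of which chart is used to write down the components of $X_u(x)$, since passing between the two sets of fiber coordinates is a linear isomorphism of $T_xM$, hence a homeomorphism. For condition (2), the continuity of $(\tilde{x},u)\mapsto \tilde{y}^j(\tilde{x},u)$ is immediate: it is a product of the smooth function $\partial \tilde{x}^j/\partial x^i$ evaluated at the continuous $x(\tilde{x})$ with the continuous function $y^i(x(\tilde{x}),u)$.

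Condition (3) is the main point. Differentiating the transformation law with respect to $\tilde{x}^k$ and applying the chain rule gives
\begin{equation*}
\frac{\partial \tilde{y}^j}{\partial \tilde{x}^k}(\tilde{x},u) \;=\; \frac{\partial^2 \tilde{x}^j}{\partial x^l\partial x^i}\bigl(x(\tilde{x})\bigr)\frac{\partial x^l}{\partial \tilde{x}^k}(\tilde{x})\, y^i\bigl(x(\tilde{x}),u\bigr) + \frac{\partial \tilde{x}^j}{\partial x^i}\bigl(x(\tilde{x})\bigr)\frac{\partial x^l}{\partial \tilde{x}^k}(\tilde{x})\,\frac{\partial y^i}{\partial x^l}\bigl(x(\tilde{x}),u\bigr).
\end{equation*}
The transition functions are $C^\infty$ since $M$ is a smooth manifold, so the factors involving first and second derivatives of $\tilde{x}^j$ and first derivatives of $x^l$ are continuous in $\tilde{x}$. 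By hypothesis $y^i$ and $\partial y^i/\partial x^l$ are continuous in $(x,u)$, so precomposing with the continuous map $\tilde{x}\mapsto x(\tilde{x})$ yields continuous functions of $(\tilde{x},u)$. The entire right-hand side is thus a sum of products of functions continuous in $(\tilde{x},u)$, establishing both the existence and joint continuity of $\partial \tilde{y}^j/\partial \tilde{x}^k$ on $\psi(U\cap V)\times S^{n-1}$.

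The only mildly delicate point is that second derivatives of the transition map enter the computation, so one must genuinely use smoothness (not merely $C^1$) of $M$; this is automatic from our hypotheses. No step requires more than the chain rule together with the smoothness of coordinate changes, so I expect no serious obstacle, and the remark is established by restricting to any open neighborhood of $p$ contained in $U\cap V$.
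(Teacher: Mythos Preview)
Your argument is correct: the transformation law for the fiber components together with the chain rule shows that conditions (1)--(3) transfer to any other smooth chart, the key observation being that second derivatives of the transition map appear and are continuous because $M$ is smooth. The paper itself does not spell this out but simply defers to Remark~3.2 of \cite{Fukuoka-Rodrigues}; your direct verification is the expected computation and is presumably what that reference contains.
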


Now we present the definition of extended geodesic field $\mathcal{E}$ on $T^\ast U$ of a Pontryagin type $C^0$-Finsler manifold.

The {\em Hamiltonian} $H_u: T^\ast U \rightarrow \mathbb{R}$ {\em corresponding to the control $u$} is defined by $H_u(x,\alpha) = \alpha(X_u(x))$.
Denote the canonical symplectic form on $T^\ast U$ by $\omega$, which is the exterior derivative of the tautological $1$-form on $T^\ast U$.
Let $\vec{H}_u$ be a vector field on $T^\ast U$ given by $d(H_u)(x,\alpha) = \omega(\cdot, \vec{H}_u(x,\alpha))$.
For each $(x,\alpha) \in T^\ast U \backslash 0$, define 
\[
\mathcal{C}(x, \alpha) = \left\{ u \in S^{n-1}: \alpha(X_u(x)) = \max\limits_{v\in S^{n-1}}\alpha(X_v(x))\right\}.
\]
The {\em extended geodesic field} is the multivalued vector field defined on $T^\ast U \backslash 0$ given by
\begin{equation*}
\mathcal{E}(x,\alpha) = \{\vec{H}_u(x,\alpha): u \in \mathcal{C}(x,\alpha)\}.
\end{equation*}

Represent $X_u$ in a coordinate system $(x^1, \ldots, x^n)$ of $U$ by 
$$X_u(x) = f^i(x, u) \frac{\partial}{\partial x^i} = (f^1(x, u), \ldots, f^n(x, u)).$$
In the coordinate system $(x^1,\ldots, x^n,\alpha_1, \ldots, \alpha_n)$ of $T^\ast U$ corresponding to $(x^1, \ldots, x^n)$, $\mathcal{E}$ is given by 

\begin{eqnarray*}
	\mathcal{E}(x, \alpha) = \left\{ f^i(x,u) \frac{\partial}{\partial x^i} - \alpha_j \frac{\partial f^j}{\partial x^i}(x,u) \frac{\partial}{\partial \alpha_i}: u \in \mathcal{C}(x, \alpha) \right\}.
\end{eqnarray*}
The extended geodesic field is obtained as a consequence of PMP applied to the control system given by Definition \ref{Variedade de Finsler do Tipo Pontryagin}.
An absolutely continuous curve  $\gamma: [a, b] \rightarrow T^\ast U$ such that $\gamma'(t) \in \mathcal{E}(\gamma(t))$ for almost every $t \in [a, b]$ is called a {\it Pontryagin extremal} of $(U, F).$
If $\gamma(t) = (x(t),\alpha(t))$ is a Pontryagin extremal of $(U,F)$, then
\begin{eqnarray} \label{É constante pelo PMP}
	\mathcal{M}(x(t), \alpha(t)) := \max_{u \in S^{n-1}}	\alpha(t)\big(X_u(x(t))\big) = F_\ast(x(t), \alpha(t))
\end{eqnarray}
is constant.
We also have that if $x(t)$ is a length minimizer on $U,$ then there exists $\alpha(t)$ such that $(x(t), \alpha(t))$ is a Pontryagin extremal of $(U, F)$.

\begin{obs}
Suppose that we have a control system on an open subset $U$ of $M$ (not necessarily a coordinate open subset) such that every $u \in S^{n-1}$ corresponds to a continuously differentiable vector field $X_u$ on $U$.
Suppose that Item (1) of Definition \ref{Variedade de Finsler do Tipo Pontryagin} is satisfied for every $x \in U$.
In addition, suppose that for every $p\in U$, there exist a coordinate system on a neighborhood $U_p$ of $p$ such that Items (2) and (3) of Definition  \ref{Variedade de Finsler do Tipo Pontryagin} are satisfied on $U_p$.
Then Items (2) and (3) of Definition \ref{Variedade de Finsler do Tipo Pontryagin} are satisfied with respect to any coordinate system on $U$ due to 
Remark \ref{remark_tipo Pontryagin nao depende de sistema de coordenadas}, 
and $\mathcal{E}$ can be defined on $U$ independently of the choice of coordinates. 
Finally, we have that for any minimizing path  $x(t):[a,b] \rightarrow U$, there exist an absolutely continuous $\alpha(t)$ such that $(x(t), \alpha(t))$ is a Pontryagin extremal (see Corollary 12.1 of \cite{Sachkov}). 
\end{obs}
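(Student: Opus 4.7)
The plan is to verify each of the three assertions in the remark by reducing to results already stated. For the coordinate-invariance of Items (2) and (3) of Definition~\ref{Variedade de Finsler do Tipo Pontryagin}, I would cover the target chart on $U$ by pieces of the chart domains $U_p$ supplied by the hypothesis. On each overlap the transition map is smooth and depends only on $x$; writing $X_u$ in the new chart gives components $\tilde{f}^j(x,u) = (\partial \tilde{x}^j/\partial x^i)\,f^i(x,u)$, and the chain rule transfers the joint continuity of $(x,u)\mapsto f^i(x,u)$ and of $(x,u)\mapsto \partial f^i/\partial x^k(x,u)$ to the corresponding tilded quantities. This is precisely the content of Remark~\ref{remark_tipo Pontryagin nao depende de sistema de coordenadas} applied locally on each piece of the cover, so Items (2) and (3) hold with respect to any coordinate system on $U$.

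For the intrinsic definition of $\mathcal{E}$ on $T^\ast U\setminus 0$, I would note that every ingredient is coordinate-free: the Hamiltonian $H_u(x,\alpha)=\alpha(X_u(x))$ uses only the canonical pairing of $T^\ast_xU$ with $T_xU$; the symplectic form $\omega$ on $T^\ast U$ is canonical; the Hamiltonian vector field $\vec{H}_u$ is determined uniquely by $dH_u = \omega(\cdot,\vec{H}_u)$; and $\mathcal{C}(x,\alpha)$ depends only on the scalar $\alpha(X_u(x))$ maximized over $u\in S^{n-1}$. The coordinate formula
$\mathcal{E}(x,\alpha) = \{f^i(x,u)\,\partial/\partial x^i - \alpha_j (\partial f^j/\partial x^i)(x,u)\,\partial/\partial \alpha_i : u\in\mathcal{C}(x,\alpha)\}$
is then merely the expression in a chart of the intrinsic object $\{\vec{H}_u(x,\alpha) : u\in\mathcal{C}(x,\alpha)\}$, so it glues consistently across charts and defines $\mathcal{E}$ globally on $T^\ast U\setminus 0$.

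For the last assertion, I would cast length-minimization as a free-time optimal control problem on $U$ with controls $u(\cdot)$ taking values in the compact set $S^{n-1}$, dynamics $\dot x = X_{u(t)}(x)$, cost $\int_a^b 1\,dt$ (since the $X_u$ are unit vector fields), and prescribed endpoints $x(a)$ and $x(b)$. The $C^1$ regularity of $X_u$ in $x$ together with joint continuity in $(x,u)$ place the problem within the hypotheses of Corollary~12.1 of \cite{Sachkov}, which delivers an absolutely continuous covector $\alpha(t)$ along $x(t)$ such that the PMP maximality condition holds a.e.; by the definition recalled above, this means $(x(t),\alpha(t))$ is a Pontryagin extremal of $(U,F)$. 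The only delicate point in the whole argument is verifying that the chain-rule computations in the first assertion preserve joint continuity in $(x,u)$, but this is automatic because the transition maps are smooth and independent of $u$, so no genuine obstacle arises.
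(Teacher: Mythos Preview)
Your proposal is correct and follows essentially the same approach as the paper: the remark itself contains no separate proof beyond the embedded citations to Remark~\ref{remark_tipo Pontryagin nao depende de sistema de coordenadas} and Corollary~12.1 of \cite{Sachkov}, and your write-up simply unpacks those references along the expected lines (chain rule for coordinate invariance, canonical symplectic data for the intrinsic definition of $\mathcal{E}$, and the time-optimal control formulation for the PMP application). Nothing is missing or misdirected.
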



Now we present this theory for Lie groups (see \cite{PrudencioFukuoka}).
Let $G$ be a Lie group. We denote the neutral element of $G$ by $e$. Recall that its Lie algebra and its dual space are denoted by $\mathfrak{g}$ and $\mathfrak{g}^\ast$ respectively.   
For every $x \in G$ we denote the left translation $x^\prime \in G \mapsto x.x^\prime \in G$ by $L_x$. 
Let $F : TG \rightarrow  [0,\infty)$ be a left-invariant $C^0$-Finsler structure, that is,
\begin{eqnarray*}
	F(x\cdot x', (dL_x)_{x'}(y)) = F(x',y),
\end{eqnarray*}
for every $x, x' \in G$ and $y \in T_{x'}G.$ Denote $F(e, \cdot)$ por $F_e.$ 
{\em We have that Lie groups endowed with a left-invariant $C^0$-Finsler structure is of Pontryagin type, where $u \in S^{n-1}$ is replaced by $\mathfrak{u} \in S_{F_e}[0,1]$ and $X_{\mathfrak{u}}$ are left-invariant vector fields such that $X_{\mathfrak{u}}(e) = \mathfrak{u}$ (see \cite{PrudencioFukuoka} and Section 8 of \cite{Fukuoka-Rodrigues}).}

The next result, which proof can be found in Theorem 8 of \cite{PrudencioFukuoka}, shows that the differential inclusion $\gamma'(t) \in \mathcal{E}(\gamma(t))$ can be represented on $S_{F_e}[0,1]\times\mathfrak{g^\ast}.$


\begin{teo} \label{Teorema Jessica e Ryuichi} Let $(G, F)$ be a Lie group endowed with a left invariant $C^0$-Finsler structure $F.$ Let $t \in I \mapsto (x(t), \alpha(t))$ be a Pontryagin extremal of $(G, F)$ corresponding to the control $\mathfrak{u}(t)$, and $\mathfrak{a}(t)$ be the pullback $d\big(L^\ast_{x(t)}\big)_e(\alpha(t))$ of $\alpha(t)$ to $\mathfrak{g}^\ast.$ Then $\mathfrak{a}(t)$ is an absolutely continuous function and $\mathfrak{u} (t)\in \mathcal{C}(\mathfrak{a}(t)) := \mathcal{C}(e,\mathfrak{a}(t)) \subset S_{F_e}[0,1]$ is a measurable control satisfying
	\begin{eqnarray} \label{Sistema de controle}
		\dot{\mathfrak{a}}(t) = - \mbox{\rm ad}^\ast(\mathfrak{u}(t))(\mathfrak{a}(t)) = \mathfrak{a}(t) ([\mathfrak{u}(t), \cdot]),
	\end{eqnarray}	
	where ${\rm ad}^\ast$ is the infinitesimal coadjoint representation on $\mathfrak{g}^\ast$.
	Reciprocally let $\mathfrak{a}(t) \in \mathfrak{g}^\ast\setminus\{0\}$ be an absolutely continuous function and $\mathfrak{u}(t) \in \mathcal{C}(\mathfrak{a}(t))$ be a measurable control such that Eq. (\ref{Sistema de controle}) is satisfied. Then, given $t_0 \in I$ and $x_0 \in G$, there exists a unique Pontryagin extremal $(x(t), \alpha(t))$ corresponding to the control $ \mathfrak{u}(t)$ such that $\mathfrak{a}(t) = d\big(L^\ast_{x(t)}\big)_e(\alpha(t))$ and $x(t_0) = x_0.$
	
\end{teo}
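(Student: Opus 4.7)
The plan is to reduce the cotangent Hamiltonian dynamics to a differential equation on $\mathfrak{g}^\ast$ by exploiting the identity
\[
\mathfrak{a}(t)(\mathfrak{v})=\alpha(t)\bigl(X_\mathfrak{v}(x(t))\bigr),\qquad \mathfrak{v}\in\mathfrak{g},
\]
where $X_\mathfrak{v}$ denotes the left-invariant extension of $\mathfrak{v}$. This identity is immediate from $\mathfrak{a}(t)=d(L^\ast_{x(t)})_e(\alpha(t))$ together with $X_\mathfrak{v}(x(t))=(dL_{x(t)})_e(\mathfrak{v})$. Using it, the maximization condition defining $\mathcal{C}(x(t),\alpha(t))$ becomes the maximization of $\mathfrak{v}\mapsto \mathfrak{a}(t)(\mathfrak{v})$ on $S_{F_e}[0,1]$, so that $\mathfrak{u}(t)\in\mathcal{C}(\mathfrak{a}(t))$, while measurability of $\mathfrak{u}(t)$ is already built into the PMP.

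To obtain the differential equation, fix an arbitrary $\mathfrak{v}\in\mathfrak{g}$ and work in a coordinate chart around some $x(t_1)$ in which $X_\mathfrak{w}=f^i_\mathfrak{w}(x)\,\partial/\partial x^i$. The PMP equations read $\dot x^j(t)=f^j_{\mathfrak{u}(t)}(x(t))$ and $\dot\alpha_i(t)=-\alpha_k(t)\,\partial_i f^k_{\mathfrak{u}(t)}(x(t))$, so the Leibniz rule applied to $\alpha_i(t)f^i_\mathfrak{v}(x(t))=\mathfrak{a}(t)(\mathfrak{v})$ yields
\[
\dot{\mathfrak{a}}(t)(\mathfrak{v})=\alpha_k(t)\!\left[f^j_{\mathfrak{u}(t)}\partial_j f^k_\mathfrak{v}-f^j_\mathfrak{v}\partial_j f^k_{\mathfrak{u}(t)}\right]\!(x(t))=\alpha(t)\bigl([X_{\mathfrak{u}(t)},X_\mathfrak{v}](x(t))\bigr).
\]
Since left-invariant vector fields satisfy $[X_{\mathfrak{u}(t)},X_\mathfrak{v}]=X_{[\mathfrak{u}(t),\mathfrak{v}]}$, the right-hand side equals $\mathfrak{a}(t)([\mathfrak{u}(t),\mathfrak{v}])=-\mathrm{ad}^\ast(\mathfrak{u}(t))(\mathfrak{a}(t))(\mathfrak{v})$, giving \eqref{Sistema de controle}. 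Absolute continuity of $\mathfrak{a}(t)$ follows because each scalar $\mathfrak{a}(t)(\mathfrak{v})$ is the product of the absolutely continuous function $\alpha_i(t)$ and the composition $f^i_\mathfrak{v}\circ x(t)$ of a $C^1$ map with an absolutely continuous function.

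For the converse, given $\mathfrak{a}(t)$ and $\mathfrak{u}(t)$ as in the statement, let $x(t)$ be the unique Carath\'eodory solution of $\dot x(t)=X_{\mathfrak{u}(t)}(x(t))$ with $x(t_0)=x_0$; global existence on $I$ is ensured by the measurability and boundedness of $\mathfrak{u}(t)$ (since $S_{F_e}[0,1]$ is compact) together with completeness of left-invariant vector fields. Define $\alpha(t)\in T^\ast_{x(t)}G$ by $\alpha(t)=\mathfrak{a}(t)\circ(dL_{x(t)^{-1}})_{x(t)}$, so that $d(L^\ast_{x(t)})_e(\alpha(t))=\mathfrak{a}(t)$ by construction. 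Reading the forward computation in reverse then shows that the coordinate expression of $\mathcal{E}$ is satisfied almost everywhere, so $(x(t),\alpha(t))$ is a Pontryagin extremal with control $\mathfrak{u}(t)$.

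The main obstacle is the Leibniz differentiation in the second paragraph: both $\alpha(t)$ and $X_\mathfrak{v}(x(t))$ live in spaces that vary with $t$, and the identification of these spaces via left translation must interact correctly with the coordinate expressions. The essential algebraic step is the cancellation producing the Lie bracket $[X_{\mathfrak{u}(t)},X_\mathfrak{v}]$ from the two contributions coming from $\dot\alpha$ and $\dot x$, which is precisely what converts the cotangent PMP into the coadjoint equation on $\mathfrak{g}^\ast$.
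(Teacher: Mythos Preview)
The paper does not include its own proof of this statement; it merely cites Theorem~8 of \cite{PrudencioFukuoka}. Your argument is correct and is the standard route: trivialize $T^\ast G$ by left translations via the pairing $\mathfrak{a}(t)(\mathfrak{v})=\alpha(t)(X_\mathfrak{v}(x(t)))$, differentiate in a chart, and recognize the Lie bracket from the two contributions of $\dot\alpha$ and $\dot x$. The converse is handled as you describe; note that the Carath\'eodory existence and uniqueness for $\dot x=X_{\mathfrak{u}(t)}(x)$, $x(t_0)=x_0$, which you invoke, is in fact spelled out later in the paper as Lemma~\ref{Lemma_existencia e unicidade de extremal no grupo}. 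One small point worth making explicit in the converse is that $\alpha(t)$ is absolutely continuous: this follows because $\alpha_i(t)$ is obtained from $\mathfrak{a}(t)(e_l)$ by multiplying by the inverse of the smooth invertible matrix $(f^i_{e_l}(x(t)))$, and only then can you legitimately ``read the forward computation in reverse'' to recover the $\dot\alpha$-equation.
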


\begin{defi} 
A solution $(\mathfrak{u}(t), \mathfrak{a}(t))$ of $\dot{\mathfrak{a}} = -\mbox{\rm ad}^\ast(\mathfrak{u})(\mathfrak{a}),$ where $\mathfrak{u}(t) \in \mathcal{C}(\mathfrak{a}(t))$ is a measurable function, is also called a {\rm Pontryagin extremal} of $(G, F)$ and $\mathfrak{a}(t)$ is called the {\rm vertical part} of a Pontryagin extremal.
\end{defi}


\subsection{Mollifier smoothing of convex functions}
\label{subsection_mollifier smoothing of convex functions}

In this section we will present the mollifier smoothing of convex functions, but before that, we will define the standard mollifier function and the mollifier smoothing. 

We will restrict ourselves to the mollifier smoothing of continuous functions in Euclidean spaces, but this smoothing can be done for locally integrable functions. For more details, see Appendix C of \cite{Evans}.


Let $U$ be an open subset of $\mathbb{R}^n,$ $\varepsilon>0$ and $U_{\varepsilon}=\{y\in U : \dist(y,\partial U)>\varepsilon\},$ where $\partial U$ is
the boundary of $U.$ Let $\Vert \cdot \Vert$ be the Euclidean norm on $\mathbb{R}^n.$


\begin{defi} \label{definição de mollifier} 
	
	\
	
\begin{enumerate}
	
\item Define $\eta \in \mathit{C}^{\infty}(\mathbb{R}^n)$ by
		\begin{eqnarray*}
			\eta(y) & :=& \begin{cases}
				
				C\exp\left(\frac{1}{ \|y\|^2-1}\right),  \quad & {\rm{if  \quad }}\|y\|<1; \\
				
				0, & {\rm{if}} \quad \|y\|\geq 1,
				
			\end{cases}
		\end{eqnarray*}
		where the constant $C> 0$ is chosen so that $\int_{\mathbb{R}^n}\eta \ dx=1;$

\item  For  each $\varepsilon>0,$ define
\begin{eqnarray*}
	\eta_{\varepsilon}(y) & := & \frac{1}{\varepsilon^n}\eta\left(\frac{y}{\varepsilon}\right).
\end{eqnarray*}

We call $\eta$ of {\rm standard mollifier.}
\end{enumerate}
\end{defi} 


Note that $\eta_{\varepsilon}$ is $\mathit{C}^{\infty},$ 
\begin{eqnarray*}
	\int_{\mathbb{R}^n}\eta_{\varepsilon} \ dy = \int_{\mathbb{R}^n}\eta \ dy=1  \quad {\rm and} \quad {\rm{supp}}(\eta_{\varepsilon})=B_{\Vert \cdot \Vert}[0,\varepsilon],
\end{eqnarray*}
where $\rm{supp}(\eta_{\varepsilon})$ is the support of  $\eta_{\varepsilon}.$


\begin{defi} If $f:U \rightarrow \mathbb{R}$ is a continuous function, its {\rm mollifier smoothing} $\eta_\varepsilon \ast f$ is the convolution of $\eta_\varepsilon$ and $f$ in $U_\varepsilon.$ That is,
	\begin{eqnarray*}
		(\eta_{\varepsilon}*f)(y)  := \int_U\eta_{\varepsilon}(y-z)f(z)dz = \int_{B_{\Vert \cdot \Vert}[0,\varepsilon]}\eta_\varepsilon(z)f(y-z)dz,
	\end{eqnarray*}
	for every $y \in U_\varepsilon.$
\end{defi}


Before presenting the properties of $\eta_\varepsilon \ast f$, we will first introduce the definition of a multi-index.

\begin{defi}
	
	\
\begin{enumerate}
	\item A vector of the form $\nu=(\nu_1,\ldots,\nu_n),$ where each component $\nu_i$ is a non-negative integer, is called a {\rm multi-index} of order
		$$|\nu|=\nu_1+\cdots+\nu_n;$$
		
	\item Given a  multi-index $\nu,$ define
		$$D^\nu f(y):= \frac{\partial^{|\nu|}f(y)}{\partial (y^1)^{\nu_1}\cdots\partial (y^n)^{\nu_n}},$$
		where $f$ is a real valued function defined in an open subset of $\mathbb{R}^n$.
\end{enumerate}
\end{defi}


\begin{teo} \label{propriedades da aplicação mollifier} (Properties of mollifiers). Let $f:U\subset \mathbb{R}^n \rightarrow \mathbb{R}$ be a  continuous function, then the following statements are true:
	\begin{enumerate}
		\item \label{item_suavizacao suave}$(\eta_{\varepsilon}*f)\in \mathit{C}^{\infty}(U_\varepsilon)$;
		\item $(\eta_{\varepsilon}*f) \rightarrow f$ as $\varepsilon \rightarrow 0$ (pointwise);
		\item $(\eta_{\varepsilon}*f) \rightarrow f$ uniformly on compact subsets of $U.$
	\end{enumerate}
	\begin{proof} See Theorem 6 of Appendix C of \cite{Evans}. 
	\end{proof}
\end{teo}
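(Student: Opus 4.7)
The plan is to establish the three properties in turn, relying on the compact support of $\eta_\varepsilon$ in $B_{\Vert \cdot \Vert}[0,\varepsilon]$ and the normalization $\int \eta_\varepsilon = 1$.

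For item (\ref{item_suavizacao suave}), I would argue by differentiation under the integral sign. Fix $y_0 \in U_\varepsilon$ and a small closed ball $\overline{B}_{\Vert \cdot \Vert}[y_0,\delta] \subset U_\varepsilon$. For any multi-index $\nu$, the integrand $(y,z) \mapsto D^\nu_y \eta_\varepsilon(y-z) f(z)$ is continuous in $y$ on $\overline{B}_{\Vert \cdot \Vert}[y_0,\delta]$, and in absolute value it is bounded by $\max |D^\nu \eta_\varepsilon| \cdot \max_{z \in K'} |f(z)|$, where $K' \subset U$ is a compact set containing $\overline{B}_{\Vert \cdot \Vert}[y_0,\delta] - B_{\Vert \cdot \Vert}[0,\varepsilon]$. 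This integrable dominant justifies interchanging $D^\nu$ with the integral, giving $D^\nu(\eta_\varepsilon \ast f)(y) = \int_U (D^\nu \eta_\varepsilon)(y-z) f(z)\, dz$, which is continuous in $y$ by the same dominated convergence argument. Since $\nu$ was arbitrary, $\eta_\varepsilon \ast f \in C^\infty(U_\varepsilon)$.

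For items (2) and (3), the main identity is
\[
(\eta_\varepsilon \ast f)(y) - f(y) = \int_{B_{\Vert \cdot \Vert}[0,\varepsilon]} \eta_\varepsilon(z)\bigl[f(y-z) - f(y)\bigr]\, dz,
\]
using $\int \eta_\varepsilon = 1$. For pointwise convergence at a fixed $y \in U$, choose $\varepsilon$ small enough that $y \in U_\varepsilon$ and that $|f(y-z) - f(y)| < \delta$ for all $z \in B_{\Vert \cdot \Vert}[0,\varepsilon]$, which is possible by continuity of $f$ at $y$; the integral is then bounded by $\delta$, giving (2). For uniform convergence on a compact $K \subset U$, pick $\varepsilon_1 > 0$ with $K \ssubset U_{\varepsilon_1}$; then $f$ is uniformly continuous on the compact set $K + \overline{B}_{\Vert \cdot \Vert}[0,\varepsilon_1]$, so given $\delta > 0$ one can select $\varepsilon \in (0,\varepsilon_1]$ making $|f(y-z) - f(y)| < \delta$ uniformly for $y \in K$ and $z \in B_{\Vert \cdot \Vert}[0,\varepsilon]$. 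The same bound then holds uniformly in $y \in K$, yielding (3).

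No step presents a serious obstacle, but the only subtlety worth being careful about is the domain restriction: the convolution is only well-defined on $U_\varepsilon$, so when stating the limits one must verify that every $y \in U$ (respectively every $y \in K$) eventually lies in $U_\varepsilon$ as $\varepsilon \to 0$. This is immediate because $\dist(y,\partial U) > 0$ for $y \in U$, and $\inf_{y \in K} \dist(y,\partial U) > 0$ when $K \ssubset U$ is compact.
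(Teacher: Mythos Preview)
Your argument is correct and is precisely the standard proof of these mollifier properties. The paper itself does not give a proof but simply refers the reader to Theorem~6 of Appendix~C in Evans, \emph{Partial Differential Equations}; your proposal is essentially a self-contained write-up of that same argument, including the differentiation-under-the-integral justification that underlies the paper's subsequent Remark~\ref{derivada mollifier}.
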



\begin{obs} \label{derivada mollifier} 
	As a consequence of the proof of Item \eqref{item_suavizacao suave} of Theorem \ref{propriedades da aplicação mollifier} (see  \cite{Evans}),  we have that for every $y \in U_\varepsilon$ and for every multi-index $\nu=(\nu_1,\ldots,\nu_n)$
	\begin{eqnarray*}
		D^\nu(\eta_{\varepsilon}*f)(y) =  \int D^\nu\eta_{\varepsilon}(y-z)f(z)dz = (D^\nu\eta_{\varepsilon}*f)(y). 
	\end{eqnarray*}
	Moreover, when $f$ is $\mathit{C}^\infty,$ it follows that
	\begin{eqnarray*}
		D^\nu(\eta_{\varepsilon}*f)(y) =  \int\eta_{\varepsilon}(z)D^\nu f(y-z)dz =  (\eta_{\varepsilon}*D^\nu f)(y).
	\end{eqnarray*}
\end{obs}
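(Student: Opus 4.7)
The plan is to establish both identities by justifying differentiation under the integral sign, proceeding by induction on $|\nu|$. The base case $|\nu|=0$ reduces to the definition of the mollifier convolution, so the induction step reduces to handling a single partial derivative $\partial/\partial y^i$ applied to an expression of the form $\int_U D^\nu\eta_\varepsilon(y-z)f(z)\,dz$. The goal is to push $\partial/\partial y^i$ inside the integral and have it act on $D^\nu\eta_\varepsilon$, producing $D^{\nu+e_i}\eta_\varepsilon$.

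First I would pin down the domain of integration. Fix $y_0 \in U_\varepsilon$; since $U_\varepsilon$ is open, choose $\delta>0$ with $B_{\Vert\cdot\Vert}[y_0,\delta]\subset U_\varepsilon$. For $y\in B_{\Vert\cdot\Vert}[y_0,\delta]$, the support of $z\mapsto \eta_\varepsilon(y-z)$ is contained in the compact set $K:=B_{\Vert\cdot\Vert}[y_0,\delta+\varepsilon]\subset U$. On $K$ the continuous function $f$ is bounded, and every partial derivative $D^\mu\eta_\varepsilon$, being $C^\infty$ with support in $B_{\Vert\cdot\Vert}[0,\varepsilon]$, is uniformly bounded on all of $\mathbb{R}^n$. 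Consequently, for $y$ ranging over the $\delta$-neighborhood of $y_0$, the integrand $D^\nu\eta_\varepsilon(y-z)f(z)$ is dominated by a constant multiple of $\chi_K(z)|f(z)|$, which is integrable. The standard theorem on differentiation under the integral (equivalently, dominated convergence applied to difference quotients) yields
$$\frac{\partial}{\partial y^i}\int_U D^\nu\eta_\varepsilon(y-z)f(z)\,dz = \int_U D^{\nu+e_i}\eta_\varepsilon(y-z)f(z)\,dz,$$
which closes the inductive step and proves the first displayed identity.

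For the second identity, when $f\in C^\infty$, I would rewrite the convolution by the change of variables $w=y-z$ as $(\eta_\varepsilon\ast f)(y)=\int\eta_\varepsilon(w)f(y-w)\,dw$, where the integration is effectively over $\supp\eta_\varepsilon=B_{\Vert\cdot\Vert}[0,\varepsilon]$, and for each such $w$ we have $y-w\in U$ thanks to $y\in U_\varepsilon$. Now the factor depending on $y$ is $f(y-w)$; since all of its derivatives $D^\nu f(y-w)$ are continuous in $(y,w)$ and hence uniformly bounded on a compact neighborhood of $(y_0,\supp\eta_\varepsilon)$, the same domination argument legitimizes differentiation under the integral. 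The derivative $D^\nu$ now falls on $f$, producing the second identity.

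The only real subtlety is bookkeeping: one must verify that as $y$ varies in a neighborhood of $y_0$, the integrand and all its $y$-derivatives of interest are uniformly dominated by a single integrable function independent of $y$. This is precisely where the definition $U_\varepsilon=\{y\in U:\dist(y,\partial U)>\varepsilon\}$ enters, since it guarantees that $\supp\bigl(\eta_\varepsilon(y-\cdot)\bigr)$ stays inside $U$ throughout the neighborhood of $y_0$. Beyond this point, the argument is a direct application of dominated convergence and the chain rule.
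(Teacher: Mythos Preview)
Your argument is correct and is exactly the standard justification for differentiating under the integral sign via dominated convergence, carried out by induction on $|\nu|$. The paper, however, does not supply a proof of this remark at all: it simply records the identities and attributes them to the proof of Theorem~\ref{propriedades da aplicação mollifier} in Evans, so there is nothing to compare at the level of technique. What you have written is essentially the argument one would find in that reference, so your proposal and the paper's (implicit) proof coincide.
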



\begin{lem} \label{suavizacao mollifier e convexa}
	Let $f:\mathbb{R}^n \rightarrow \mathbb{R}$ be a convex function. Then the mollifier smoothing of $f$ is also a convex function.
\end{lem}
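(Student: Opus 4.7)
The plan is to verify the convexity inequality directly from the definition of the convolution, exploiting the nonnegativity of the mollifier and the linearity of the integral.

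First, I would fix $y_1, y_2 \in \mathbb{R}^n$ and $t \in [0,1]$, and rewrite the mollifier smoothing using the variable change that shifts $\eta_\varepsilon$ onto the argument of $f$, i.e. work with
\[
(\eta_\varepsilon \ast f)(y) = \int_{\mathbb{R}^n} \eta_\varepsilon(z)\, f(y-z)\, dz.
\]
This form is the convenient one because the factor $\eta_\varepsilon(z)$ does not depend on $y$, so the $y$-dependence is confined to the argument of $f$.

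Next, I would observe the simple algebraic identity
\[
t y_1 + (1-t) y_2 - z = t(y_1 - z) + (1-t)(y_2 - z),
\]
and apply the convexity of $f$ pointwise in $z$ to obtain
\[
f\bigl(t(y_1-z) + (1-t)(y_2-z)\bigr) \leq t f(y_1-z) + (1-t) f(y_2-z).
\]
Multiplying by $\eta_\varepsilon(z) \geq 0$ preserves the inequality, and integrating over $\mathbb{R}^n$ together with the linearity of the integral yields
\[
(\eta_\varepsilon \ast f)(ty_1 + (1-t) y_2) \leq t (\eta_\varepsilon \ast f)(y_1) + (1-t)(\eta_\varepsilon \ast f)(y_2),
\]
which is the desired convexity of $\eta_\varepsilon \ast f$.

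There is essentially no main obstacle here: the only ingredients needed are nonnegativity of $\eta_\varepsilon$, monotonicity and linearity of the Lebesgue integral, and the defining inequality of convexity applied under the integral sign. The one mild point worth noting is that, as a continuous convex function on $\mathbb{R}^n$, $f$ is locally bounded, so integrability of $z \mapsto \eta_\varepsilon(z) f(y-z)$ is automatic from the compact support of $\eta_\varepsilon$; this justifies the interchange of the convexity inequality with integration without further hypotheses.
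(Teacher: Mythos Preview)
Your proposal is correct and is the standard argument. The paper itself defers to a reference for this lemma, but your computation is exactly the one the paper carries out explicitly for the strongly convex analogue in Lemma~\ref{Suavização de função fortemente convexa}, so the approaches coincide.
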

\begin{proof}
	See Lemma 2 of \cite{Fukuoka-Setti}.
\end{proof}


\begin{lem} 
	\label{condicao para curvatura} 
	If $f:{\mathbb R} \rightarrow {\mathbb R}$ is a convex function, then $(\eta_{\varepsilon}*f){''}(y)\geq 0$ for every $y \in \mathbb R$. Moreover, $(\eta_{\varepsilon}*f)''(y)=0$ if and only if $f$ is affine in $(y-\varepsilon,y+\varepsilon).$ 
\end{lem}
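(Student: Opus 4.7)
The plan is to split the statement into the inequality $(\eta_\varepsilon * f)''(y) \geq 0$ and the characterization of equality. For the inequality, I would combine two results already in hand: Lemma \ref{suavizacao mollifier e convexa} gives that $\eta_\varepsilon * f$ is convex on $\mathbb{R}$, and Item (1) of Theorem \ref{propriedades da aplicação mollifier} gives that it is of class $C^\infty$. A smooth convex function of one real variable has pointwise non-negative classical second derivative, which settles the first assertion.

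For the characterization of equality, the idea is to transfer both derivatives from $\eta_\varepsilon$ onto $f$, turning
\[
(\eta_\varepsilon * f)''(y) = \int_{\mathbb{R}} \eta_\varepsilon''(y-z)\,f(z)\,dz
\]
into a form that exposes the convexity of $f$. Since $f$ is convex on $\mathbb{R}$, the right derivative $f'_+$ is everywhere defined and non-decreasing, so it determines a non-negative Radon measure $\mu := df'_+$ on $\mathbb{R}$ (the distributional second derivative of $f$). Two integrations by parts, with all boundary terms vanishing by the compact support of $\eta_\varepsilon$ and $\eta_\varepsilon'$, should produce the identity
\[
(\eta_\varepsilon * f)''(y) = \int_{\mathbb{R}} \eta_\varepsilon(y-z)\,d\mu(z).
\]
Because $\eta_\varepsilon(y-z) > 0$ exactly when $z \in (y-\varepsilon, y+\varepsilon)$ and $\mu \geq 0$, this integral vanishes if and only if $\mu\bigl((y-\varepsilon, y+\varepsilon)\bigr) = 0$, which is equivalent to $f'_+$ being constant on $(y-\varepsilon, y+\varepsilon)$, i.e., to $f$ being affine there.

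The main obstacle is justifying the two integrations by parts rigorously, since $f$ is only convex and not necessarily $C^2$. I would handle this by approximation: set $f_\delta := \eta_\delta * f$, which is $C^\infty$ and convex for every $\delta > 0$ by Theorem \ref{propriedades da aplicação mollifier} and Lemma \ref{suavizacao mollifier e convexa}. The classical integration by parts applied twice to $f_\delta$ gives $(\eta_\varepsilon * f_\delta)''(y) = \int \eta_\varepsilon(y-z) f_\delta''(z)\,dz$. Letting $\delta \to 0$, the left-hand side converges to $(\eta_\varepsilon * f)''(y)$ by differentiating under the integral and using uniform convergence of $f_\delta$ to $f$ on compact subsets, while the measures $f_\delta''(z)\,dz$ converge weakly to $d\mu$ tested against the compactly supported continuous function $\eta_\varepsilon(y-\cdot)$, yielding the displayed identity. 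As a sanity check, the implication ``$f$ affine on $(y-\varepsilon, y+\varepsilon)$ implies $(\eta_\varepsilon * f)''(y) = 0$'' is also immediate from a direct computation using $\int \eta_\varepsilon''(w)\,dw = 0$ and $\int w\,\eta_\varepsilon''(w)\,dw = 0$.
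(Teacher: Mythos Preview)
Your argument is correct. The paper itself does not give a proof of this lemma; it simply cites Lemma~3 of \cite{Fukuoka-Setti}, so there is no in-paper argument to compare against in detail. Your approach---deducing $(\eta_\varepsilon * f)''\geq 0$ from Lemma~\ref{suavizacao mollifier e convexa} and Theorem~\ref{propriedades da aplicação mollifier}, and handling the equality case via the identity $(\eta_\varepsilon * f)''(y)=\int \eta_\varepsilon(y-z)\,d\mu(z)$ with $\mu=df'_+$---is a standard and clean route. The approximation step is sound: since $f_\delta=\eta_\delta * f$, one has $f_\delta''\,dz=\eta_\delta * \mu$ as measures, and testing against the continuous compactly supported function $\eta_\varepsilon(y-\cdot)$ gives $\int \eta_\varepsilon(y-z)(\eta_\delta * \mu)(dz)=\int (\eta_\delta * \eta_\varepsilon(y-\cdot))\,d\mu \to \int \eta_\varepsilon(y-z)\,d\mu(z)$, while the left-hand side converges by uniform convergence of $f_\delta$ on the support of $\eta_\varepsilon''(y-\cdot)$. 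The conclusion that $\int \eta_\varepsilon(y-z)\,d\mu(z)=0$ forces $\mu\vert_{(y-\varepsilon,y+\varepsilon)}=0$ uses only that $\eta_\varepsilon(y-\cdot)$ is strictly positive on $(y-\varepsilon,y+\varepsilon)$ and vanishes on its complement, together with $\mu\geq 0$; this indeed characterizes affinity of $f$ on that open interval.
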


\begin{proof}
	This lemma is proved in Lemma 3 of \cite{Fukuoka-Setti}.
\end{proof}

Although we placed the next result in this section, we didn't find this result elsewhere, and therefore its proof is presented here for the sake of completeness. 

\begin{lem} \label{Suavização de função fortemente convexa} If $f:\mathbb{R}^n \rightarrow \mathbb{R}$ is a strongly convex function with module $\gamma$, then $\eta_\varepsilon \ast f$ is also strongly convex with modulus $\gamma.$
\end{lem}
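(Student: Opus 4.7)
The plan is to prove the inequality directly from the definition by applying the strong convexity of $f$ pointwise inside the convolution integral. The crucial observation is that when we write $ty+(1-t)z-w = t(y-w)+(1-t)(z-w)$, the difference of the two arguments $(y-w)-(z-w) = y-z$ is independent of the shift variable $w$, so the modulus term will come out of the integral as a constant.

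More precisely, fix $y,z \in \mathbb{R}^n$ and $t\in[0,1]$. For each fixed $w$ in the support of $\eta_\varepsilon$, apply strong convexity of $f$ with modulus $\gamma$ to the points $y-w$ and $z-w$:
\[
f\bigl(t(y-w)+(1-t)(z-w)\bigr) \leq t f(y-w) + (1-t) f(z-w) - \tfrac{1}{2}\gamma\, t(1-t)\|y-z\|^2.
\]
Multiply both sides by $\eta_\varepsilon(w)\geq 0$ and integrate over $\mathbb{R}^n$. Using $\int \eta_\varepsilon\,dw = 1$ and the definition of $\eta_\varepsilon \ast f$, the right-hand side becomes $t(\eta_\varepsilon \ast f)(y) + (1-t)(\eta_\varepsilon \ast f)(z) - \frac{1}{2}\gamma t(1-t)\|y-z\|^2$, while the left-hand side is exactly $(\eta_\varepsilon \ast f)(ty+(1-t)z)$.

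There is no real obstacle here; the argument is just a matter of pushing the strong convexity inequality through the integral and noting that the $\|y-z\|^2$ factor is independent of the integration variable $w$. The only thing to be careful about is that $\eta_\varepsilon$ has unit mass and is non-negative (so the inequality is preserved under integration), both of which are noted immediately after Definition \ref{definição de mollifier}.
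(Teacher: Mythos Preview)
Your proof is correct and is essentially identical to the paper's own argument: both write $ty+(1-t)z-w=t(y-w)+(1-t)(z-w)$, apply the strong convexity inequality of $f$ pointwise, and integrate against the non-negative unit-mass kernel $\eta_\varepsilon$ so that the $\Vert y-z\Vert^2$ term passes through unchanged.
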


\begin{proof} We have
	$$f(ty+(1-t)z) \leq tf(y)+(1-t)f(z) -\frac{1}{2} \gamma t(1-t) \Vert y-z \Vert^2,$$ 
for every $y, z \in \mathbb{R}.$ Then
\begin{eqnarray*}
	& & (\eta_\varepsilon \ast f)(ty+(1-t)z) \\
	& = & \int \eta_\varepsilon(w) f(ty+(1-t)z - w) dw \\
	& = & \int \eta_\varepsilon(w) f(t(y-w)+(1-t)(z-w)) dw \\
	& \leq &  \int \eta_\varepsilon(w) \left[ tf(y-w) + (1-t)f(z-w) -\frac{1}{2} \gamma t(1-t) \Vert y-z \Vert^2 \right] dw \\
	& = & t(\eta_\varepsilon \ast f)(y) + (1-t)(\eta_\varepsilon \ast f)(z) - \frac{1}{2} \gamma t(1-t) \Vert y-z \Vert^2.
\end{eqnarray*}
This proves that $\eta_\varepsilon \ast f$ is strongly convex with modulus $\gamma.$	
\end{proof}


\subsection{Distance between parallel hyperplanes}

We end this section with a straightforward geometric result which is presented here for future reference.

\begin{pro}
Let $(\mathbb{V},\left< \cdot, \cdot \right>)$ be a finite dimensional vector space endowed with an inner product and consider $k_1,k_2 \in \mathbb{R}$ and $\alpha \in \mathbb{V}^{\ast}\backslash \{0\}$.
Then
\begin{equation}
\label{Distancia entre hiperplanos}
	\dist(\left\{\alpha = k_1\right\}, \left\{ \alpha = k_2\right\}) = \frac{\vert k_1 - k_2 \vert }{\Vert \alpha \Vert_{\ast} }.
\end{equation}
\end{pro}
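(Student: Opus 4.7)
The plan is to identify $\alpha \in \mathbb{V}^{\ast}\backslash\{0\}$ with its Riesz--Fr\'echet representative $\alpha^\sharp \in \mathbb{V}$ satisfying $\alpha(v) = \langle \alpha^\sharp, v\rangle$ for every $v \in \mathbb{V}$, and to recall that under the induced inner product on $\mathbb{V}^\ast$ we have $\Vert \alpha\Vert_\ast = \Vert \alpha^\sharp\Vert$. Since $\alpha \neq 0$, the hyperplanes $\{\alpha = k_1\}$ and $\{\alpha = k_2\}$ are non-empty affine subspaces, so the distance is well-defined, and the proof proceeds by establishing matching lower and upper bounds.

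For the lower bound, I would pick arbitrary $y_1 \in \{\alpha = k_1\}$ and $y_2 \in \{\alpha = k_2\}$ and observe that
\[
k_2 - k_1 = \alpha(y_2) - \alpha(y_1) = \alpha(y_2 - y_1) = \langle \alpha^\sharp, y_2 - y_1 \rangle.
\]
The Cauchy--Schwarz inequality then gives $\vert k_1 - k_2 \vert \leq \Vert \alpha^\sharp\Vert \cdot \Vert y_2 - y_1\Vert = \Vert \alpha\Vert_\ast \cdot \Vert y_2 - y_1\Vert$, so dividing and taking the infimum over admissible pairs $(y_1,y_2)$ yields $\dist(\{\alpha = k_1\},\{\alpha = k_2\}) \geq \vert k_1 - k_2\vert / \Vert \alpha\Vert_\ast$.

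For the matching upper bound, I would exhibit a pair achieving equality: fix any $y_1 \in \{\alpha = k_1\}$ and set
\[
y_2 := y_1 + \frac{k_2 - k_1}{\Vert \alpha^\sharp\Vert^2}\,\alpha^\sharp.
\]
A direct computation shows $\alpha(y_2) = k_1 + (k_2 - k_1) = k_2$, so $y_2 \in \{\alpha = k_2\}$, and $\Vert y_2 - y_1\Vert = \vert k_2 - k_1\vert / \Vert \alpha^\sharp\Vert = \vert k_1 - k_2\vert / \Vert \alpha\Vert_\ast$. Combining both inequalities yields the claimed identity. There is no real obstacle here: the only subtlety is that the argument chooses the minimizer along the direction of the gradient $\alpha^\sharp$, which is precisely the equality case of Cauchy--Schwarz.
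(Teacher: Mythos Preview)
Your proof is correct: the Cauchy--Schwarz lower bound together with the explicit minimizer along the direction $\alpha^\sharp$ gives exactly the claimed formula. The paper itself does not supply a proof for this proposition, presenting it only as ``a straightforward geometric result'' stated for future reference, so there is no alternative approach to compare against.
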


\section{Strongly convex asymmetric norms and strongly convex functions}
\label{Strongly convex functions and strongly convex asymmetric norms}

We begin recalling the following well known result about continuously differentiable strongly convex functions.



\begin{teo} \label{Equivalencias - Fortemente convexa (Definicao Classica)} Let $U \subset \mathbb{V}$ be a non-empty open convex set and $f: U \rightarrow \mathbb{R}$ be a continuously differentiable function on $U$. Let $(y^1, \ldots, y^n)$ be a coordinate system of $\mathbb{V}$ relative to a basis of
		$\mathbb{V}.$ Then the following properties are equivalent:
	\begin{enumerate}
		\item The function $f$ is strongly convex in $U$ with modulus $\gamma > 0.$
		\item For any $y, z \in U$ 
		$$ f(z) \geq f(y) + d f_y(z-y) + \frac{1}{2}\gamma \Vert z-y\Vert^2; $$
		\item For any $y, z \in U$
		$$ (df_z - df_y) (z-y) \geq \gamma \Vert z-y\Vert^2. $$		
		\item \label{item_convexidade forte caso suave} When $f$ is twice continuously differentiable in $U$, the above properties are also equivalent to:
		
		The Hessian $\Hess f$ of $f$ is strongly (or uniformly) positive definite at every point of $U$. More precisely, 
		$$(\Hess f)_y (v, v) := f_{y^iy^j}(y)v^i v^j \geq  \gamma \Vert v \Vert^2,  \forall y \in U, \forall v \in \mathbb{V}.$$
	\end{enumerate}
\end{teo}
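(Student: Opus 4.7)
The plan is to close the cycle $(1) \Rightarrow (2) \Rightarrow (3) \Rightarrow (1)$, and then add the separate equivalence with (4) under the $C^2$ hypothesis. For $(1) \Rightarrow (2)$, I would rearrange the defining inequality of strong convexity so that the difference quotient $[f(y + t(z-y)) - f(y)]/t$ sits on one side and let $t \downarrow 0$; continuous differentiability of $f$ then identifies the limit as $df_y(z-y)$, giving (2). For $(2) \Rightarrow (3)$, apply (2) once as stated and once with $y$ and $z$ exchanged, and add the two inequalities; the values $f(y)$ and $f(z)$ cancel and the two quadratic contributions combine to produce $\gamma \|z-y\|^2$ on the right.

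For $(3) \Rightarrow (1)$ I would route through (2). Set $h(s) = df_{y+s(z-y)}(z-y)$ for $s \in [0,1]$, so that $\int_0^1 h(s)\, ds = f(z) - f(y)$ by the chain rule and the fundamental theorem of calculus; applying (3) to the pair $(y+s(z-y),\, y)$ and dividing by $s$ yields $h(s) \geq df_y(z-y) + \gamma s\|z-y\|^2$, and integrating over $[0,1]$ recovers (2). To pass from (2) to (1), use the standard three-point argument: given $y, z \in U$ and $t \in (0,1)$, put $w = ty + (1-t)z$, apply (2) to the pairs $(w,y)$ and $(w,z)$, multiply the two resulting inequalities by $t$ and by $1-t$ respectively, and add. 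The terms involving $df_w$ cancel because $t(y-w) + (1-t)(z-w) = 0$, while the quadratic corrections assemble into precisely $\tfrac{\gamma}{2} t(1-t)\|y-z\|^2$, which is the strong convexity inequality.

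Under the extra $C^2$ hypothesis, $(4) \Rightarrow (2)$ is the second-order Taylor formula with integral remainder,
\[
f(z) = f(y) + df_y(z-y) + \int_0^1 (1-s)\, (\Hess f)_{y+s(z-y)}(z-y,\, z-y)\, ds,
\]
combined with the pointwise lower bound on the integrand supplied by (4). Conversely, $(2) \Rightarrow (4)$ follows by choosing $z = y + tv$ in (2), inserting the second-order Taylor expansion of $f$ at $y$, dividing by $t^2/2$, and letting $t \to 0$. The result is classical, so no step is a true obstacle; the one place requiring mild care is $(3) \Rightarrow (2)$, where one must invoke continuity of $s \mapsto df_{y+s(z-y)}(z-y)$ on $[0,1]$ in order to apply the fundamental theorem of calculus to a merely $C^1$ function.
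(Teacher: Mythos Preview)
Your argument is correct in every step; the cycle $(1)\Rightarrow(2)\Rightarrow(3)\Rightarrow(2)\Rightarrow(1)$ and the $C^2$ equivalence with $(4)$ are all standard and carried out properly. The paper itself does not prove this theorem at all: it simply cites Chapter~1 of Sun--Yuan (\cite{SunYuan}) and moves on. So there is no ``paper's own proof'' to compare against; you have supplied what the authors chose to outsource. If anything, your write-up is more informative for the reader than the bare citation, and the one subtlety you flag (continuity of $s\mapsto df_{y+s(z-y)}(z-y)$ needed for the fundamental theorem of calculus in the $(3)\Rightarrow(2)$ step) is exactly the right thing to watch.
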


\begin{proof} See Chapter 1 of \cite{SunYuan}.
\end{proof}

In Theorem \ref{Fortemente convexa - Equivalencia} and Proposition \ref{proposition_norma limitante inferior}, we prove versions of Theorem \ref{Equivalencias - Fortemente convexa (Definicao Classica)} for strongly convex asymmetric norms. 
In Proposition \ref{proposition_para mostrar que S_F_e está contida em uma esfera euclidiana}, we prove that $F$ is a strongly convex asymmetric norm, iff there exist $\mathcal{R}>0$ such that $S_F[0,1]$ is ``more curved'' than a Euclidean sphere of radius $\mathcal{R}$.


\begin{teo} \label{Fortemente convexa - Equivalencia} Let $F$ be an asymmetric norm on $\mathbb{V}$ and $\gamma$ be a positive constant. Then
	$F^2$ is strongly convex with module $\gamma$ (according to Definition \ref{Fortemente convexa - definição clássica}) if, and only if,
	$F$ is strongly convex with respect to $\sqrt{\frac{\gamma}{2}}\Vert \cdot \Vert$ (according to Definition \ref{Fortemente convexa - definicao artigo R-H}).
\end{teo}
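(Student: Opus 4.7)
The plan is to pass between the two formulations using the subdifferential of $F^2$, whose nonemptiness at every point follows once one observes that $F^2$ is itself convex. Indeed, $F$ is convex by positive homogeneity together with the triangle inequality, and $F\ge 0$, so Theorem \ref{teo_composicao de convexas} applied to the nondecreasing convex map $t\mapsto t^2$ on $[0,\infty)$ shows that $F^2$ is convex on $\mathbb{V}$. In particular, $\partial F^2(y)\neq\emptyset$ for every $y\in\mathbb{V}$, so Definition \ref{Fortemente convexa - definicao artigo R-H} is a meaningful constraint. Note that with $\check{F}=\sqrt{\gamma/2}\,\|\cdot\|$ one has $\check{F}^2(z-y)=\tfrac{\gamma}{2}\|z-y\|^2$.

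For $(\Rightarrow)$, I would fix $y,z\in\mathbb{V}$ and $\alpha\in\partial F^2(y)$, and for $s\in(0,1)$ set $w_s=(1-s)y+sz$. Combining the classical strong convexity bound
\[
F^2(w_s)\le (1-s)F^2(y)+sF^2(z)-\tfrac{\gamma}{2}s(1-s)\|z-y\|^2
\]
with the subgradient bound $F^2(w_s)\ge F^2(y)+s\,\alpha(z-y)$, subtracting $F^2(y)$, dividing by $s$, and letting $s\to 0^+$ yields
\[
F^2(z)\ge F^2(y)+\alpha(z-y)+\tfrac{\gamma}{2}\|z-y\|^2,
\]
which is precisely Definition \ref{Fortemente convexa - definicao artigo R-H} with respect to $\sqrt{\gamma/2}\,\|\cdot\|$.

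For $(\Leftarrow)$, I would use the standard ``two tangents at the midpoint'' argument. Fix $y,z\in\mathbb{V}$, $t\in[0,1]$, and set $w=ty+(1-t)z$, so that $y-w=(1-t)(y-z)$ and $z-w=t(z-y)$. Pick any $\alpha\in\partial F^2(w)$ and apply the hypothesis at $(w,y)$ and at $(w,z)$:
\[
F^2(y)\ge F^2(w)+\alpha(y-w)+\tfrac{\gamma}{2}(1-t)^2\|y-z\|^2,
\]
\[
F^2(z)\ge F^2(w)+\alpha(z-w)+\tfrac{\gamma}{2}t^2\|y-z\|^2.
\]
Multiplying the first by $t$, the second by $1-t$, and adding makes the $\alpha$-linear terms cancel (since $t(y-w)+(1-t)(z-w)=0$) while the quadratic terms combine as $\tfrac{\gamma}{2}\bigl[t(1-t)^2+(1-t)t^2\bigr]\|y-z\|^2=\tfrac{\gamma}{2}t(1-t)\|y-z\|^2$. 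Rearranging gives the classical strong convexity inequality for $F^2$ with modulus $\gamma$.

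There is no genuine obstacle here: once convexity of $F^2$ is invoked to guarantee the existence of subgradients, both implications reduce to elementary algebra plus a single one-sided limit. The only point requiring mild care is that Definition \ref{Fortemente convexa - definicao artigo R-H} quantifies over all $\alpha\in\partial F^2(y)$ (so $F^2$ need not be smooth), which is why the forward direction works with an arbitrary subgradient at $y$ and the reverse direction starts by selecting an arbitrary subgradient at the convex combination $w$.
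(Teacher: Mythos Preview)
Your proof is correct and follows essentially the same approach as the paper's: in the forward direction you combine the subgradient inequality at $y$ with the classical strong convexity bound along the segment to $z$, divide by the convex-combination parameter, and let it tend to $0$; in the reverse direction you pick a subgradient at the midpoint $w=ty+(1-t)z$, apply the hypothesis toward $y$ and toward $z$, and take the convex combination so the linear terms cancel. Your explicit justification that $F^2$ is convex (via Theorem \ref{teo_composicao de convexas}) so that $\partial F^2(y)\neq\emptyset$ is a helpful addition that the paper leaves implicit from the preliminaries.
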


\begin{proof} \rm
	$(\Rightarrow)$ Suppose that $F^2$ is strongly convex with modulus $\gamma.$ Then
	\begin{equation*}
		F^2(ty+(1-t)z) \leq tF^2(y)+(1-t)F^2(z)-\frac{1}{2}\gamma t(1-t) \Vert y-z \Vert^2,
	\end{equation*}
	for all $t\in[0,1]$ and any $y,z \in \mathbb{V}.$ 
		
	Let be $y \in \mathbb{V}$ and $\alpha_y \in \partial F^2(y)$ be fixed, but arbitrary. 		
	Fix $t=t_0 \in (0,1).$ Given $z \in \mathbb{V},$ we get from  \eqref{desigualdade subgradiente 2} that
	\begin{eqnarray*}
	F^2(t_0z+(1-t_0)y) & \geq & F^2(y) + t_0 \alpha_y (z-y).
	\end{eqnarray*}
	By the Classical Definition of strongly convex function, we have
	\begin{eqnarray*}
		t_0 F^2(z)+(1-t_0)F^2(y) - \frac{1}{2}\gamma t_0(1-t_0) \Vert z-y \Vert^2 \geq F^2(t_0z+(1-t_0)y).
	\end{eqnarray*}
	From the last two inequalities, we obtain
	\begin{eqnarray*}
		t_0 F^2(z)+(1-t_0)F^2(y) - \frac{1}{2}\gamma t_0(1-t_0) \Vert z-y \Vert^2 \geq F^2(y) + t_0 \alpha_y (z-y),
	\end{eqnarray*}
	what gives
	\begin{eqnarray*}
		F^2(z) \geq F^2(y)+\alpha_y(z-y) + \frac{1}{2}\gamma(1-t_0) \Vert z-y \Vert^2.
	\end{eqnarray*}
	Note that the above inequality holds for every $t_0\in (0,1).$ Considering $t_0 \to 0$ we get
	\begin{eqnarray*}
		F^2(z) \geq F^2(y)+\alpha_y(z-y) + \frac{1}{2} \gamma \Vert z-y \Vert^2,
	\end{eqnarray*}
	which proves that $F$ is a strongly convex asymmetric norm with respect to $\sqrt{\frac{\gamma}{2}}\Vert \cdot \Vert.$

	$(\Leftarrow)$ Suppose that $F$ is strongly convex with respect to $\sqrt{\frac{\gamma}{2}}\Vert \cdot \Vert.$ Then
	\begin{equation*}
		F^2(w) \geq F^2(p) + \alpha_p(w-p) + \frac{1}{2} \gamma\Vert w-p \Vert^2,
	\end{equation*}
	for every $p,w \in \mathbb{V}$ and $\alpha_p \in \partial F^2(p).$	
		
	Given $t \in [0,1]$ and $y,z \in \mathbb{V},$ we have
	\begin{eqnarray}
	\label{eqnarray_t vezes F quadrado}
		tF^2(y) & \geq & tF^2(p) + t\alpha_p(y-p) + t \frac{1}{2} \gamma \Vert y-p \Vert^2
	\end{eqnarray}
	and
	\begin{eqnarray}
	\label{eqnarray_1 menos t vezes F quadrado}
		& & (1-t)F^2(z) \geq (1-t)F^2(p) + (1-t)\alpha_p(z-p) + (1-t) \frac{1}{2} \gamma \Vert z-p \Vert^2. 
	\end{eqnarray}
	Consider $p=ty+(1-t)z.$ Then $y-p = (1-t)(y-z)$, $z-p=t(z-y)$, and adding \eqref{eqnarray_t vezes F quadrado} and \eqref{eqnarray_1 menos t vezes F quadrado}, it is straightforward that
	\begin{eqnarray*}
		F^2(ty+(1-t)z) \leq tF^2(y)+(1-t)F^2(z)-\frac{1}{2}\gamma t(1-t)\Vert y-z \Vert^2.
	\end{eqnarray*}
	This completes the proof of the result.
\end{proof}

\begin{obs} If $F$ is a strongly convex asymmetric norm with respect to $\sqrt{\frac{\gamma}{2}}\Vert \cdot \Vert$ then $F$ is strictly convex. In fact, by Theorem \ref{Fortemente convexa - Equivalencia}, $F^2$ is strongly convex with modulus $\gamma$, 
what implies that $F^2((1-t)y+tz) < (1-t)F^2(y) + t F^2(z)$ for every $y,z \in \mathbb{V}$, $y\neq z$, and $t\in (0,1)$.
In particular, 
\begin{align*}
F((1-t)y+tz) < (1-t)F(y) + t F(z)
\end{align*} 
for every $y,z \in S_F[0,1]$, $y\neq z$, and $t\in (0,1)$, that is,  $F$ is strictly convex (see \eqref{align_F estritamente convexo na esfera}).
\end{obs}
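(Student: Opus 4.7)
The plan is to leverage Theorem \ref{Fortemente convexa - Equivalencia} to reduce strict convexity of $F$ to strict convexity of $F^{2}$, which will follow immediately from the defining inequality of classical strong convexity.

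First, I would apply Theorem \ref{Fortemente convexa - Equivalencia} to the hypothesis: since $F$ is strongly convex with respect to $\sqrt{\gamma/2}\,\Vert \cdot \Vert$, we get that $F^{2}$ is strongly convex with modulus $\gamma$ in the classical sense of Definition \ref{Fortemente convexa - definição clássica}. That is, for every $y, z \in \mathbb{V}$ and every $t \in [0,1]$,
\begin{equation*}
F^{2}(ty + (1-t)z) \;\leq\; tF^{2}(y) + (1-t)F^{2}(z) - \tfrac{1}{2}\gamma\, t(1-t)\,\Vert y - z\Vert^{2}.
\end{equation*}

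Second, I would observe that because $\gamma > 0$, the extra term on the right-hand side is strictly negative whenever $t \in (0,1)$ and $y \neq z$, so $F^{2}$ is in particular strictly convex on $\mathbb{V}$. Explicitly,
\begin{equation*}
F^{2}((1-t)y + tz) \;<\; (1-t)F^{2}(y) + tF^{2}(z) \quad \text{for all } y,z \in \mathbb{V}\text{ with } y\neq z \text{ and } t \in (0,1).
\end{equation*}

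Third, I would specialize to $y, z \in S_{F}[0,1]$ with $y \neq z$ and $t \in (0,1)$. Then $F(y) = F(z) = 1$, so $(1-t)F^{2}(y) + t F^{2}(z) = 1$, and the previous display gives $F^{2}((1-t)y + tz) < 1$. Taking square roots (using $F \geq 0$) yields $F((1-t)y + tz) < 1 = (1-t)F(y) + tF(z)$, which is precisely inequality \eqref{align_F estritamente convexo na esfera} in the definition of strict convexity of an asymmetric norm.

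There is no serious obstacle in this argument; the only delicate point is making sure strict convexity of $F^{2}$ transfers to strict convexity of $F$ along the unit sphere, and this works cleanly because $F$ and $F^{2}$ agree on $S_{F}[0,1]$, so the right-hand side of the convex combination inequality equals $1$ in both cases.
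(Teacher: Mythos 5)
Your proposal is correct and follows essentially the same route as the paper: invoke Theorem \ref{Fortemente convexa - Equivalencia} to get classical strong convexity of $F^2$ with modulus $\gamma$, deduce strict convexity of $F^2$, and then restrict to $S_F[0,1]$ where $F$ and $F^2$ agree. Your explicit justification of the final step (both convex combinations equal $1$ on the sphere, then take square roots) simply spells out what the paper's ``in particular'' leaves implicit.
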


\begin{pro}
\label{proposition_norma limitante inferior}
Let $F$ be an asymmetric norm on $(\mathbb{V},\Vert \cdot \Vert)$ which is smooth on $\mathbb{V} \backslash \{0\}$. If $F$ is Minkowski norm on $\mathbb{V}$, then there exist $\gamma > 0$ such that 
	\begin{align}
	\label{align_norma limitante inferior}
g_{ij(y)} v^i v^j := \frac{1}{2}\frac{\partial^2 F^2}{\partial y^i \partial y^j}(y)v^iv^j \geq \frac{1}{2} \gamma \Vert v\Vert^2 
	\end{align}
for every $y \in \mathbb{V} \backslash\{0\}$ and $v \in \mathbb{V}$.
Moreover, $F$ is strongly convex with respect to every norm $\sqrt{\frac{\gamma}{2}}\Vert \cdot \Vert$ that satisfies \eqref{align_norma limitante inferior}.
Reciprocally, if $F$ is a strongly convex asymmetric norm with respect to $\sqrt{\frac{\gamma}{2}}\Vert \cdot \Vert$, then $F$ is a Minkowski norm such that \eqref{align_norma limitante inferior} is satisfied.
\end{pro}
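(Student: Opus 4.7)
The plan is to establish the biconditional by proving three implications in turn: (i) the Minkowski assumption produces a uniform $\gamma>0$ satisfying (\ref{align_norma limitante inferior}); (ii) any $\gamma>0$ satisfying (\ref{align_norma limitante inferior}) makes $F$ strongly convex with respect to $\sqrt{\gamma/2}\Vert\cdot\Vert$; (iii) strong convexity (together with the standing smoothness hypothesis) forces $F$ to be Minkowski and (\ref{align_norma limitante inferior}) to hold.

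\textbf{Step (i).} Differentiating the identity $F^2(\lambda y)=\lambda^2 F^2(y)$ ($\lambda>0$) twice in $y$ shows that the matrix $(g_{ij}(y))$ is positively homogeneous of degree $0$. Consequently the continuous function
\[
\Phi(y,v)=\frac{g_{ij}(y)\,v^iv^j}{\Vert v\Vert^2}
\]
on $(\mathbb{V}\setminus\{0\})\times(\mathbb{V}\setminus\{0\})$ is invariant under independent positive rescalings of $y$ and $v$, so its infimum coincides with its minimum on the compact product of unit spheres $S_{\Vert\cdot\Vert}[0,1]\times S_{\Vert\cdot\Vert}[0,1]$. Positive definiteness of $g_{ij}(y)$ from Definition \ref{norma de Minkowski} makes this minimum strictly positive; call it $\gamma/2$. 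This is exactly (\ref{align_norma limitante inferior}).

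\textbf{Step (ii).} By Theorem \ref{Fortemente convexa - Equivalencia} it is enough to prove that $F^2$ is classically strongly convex with modulus $\gamma$ on $\mathbb{V}$. I would verify item (3) of Theorem \ref{Equivalencias - Fortemente convexa (Definicao Classica)} globally, namely
\[
\bigl(d(F^2)_z-d(F^2)_y\bigr)(z-y)\geq \gamma\Vert z-y\Vert^2\quad\text{for all }y,z\in\mathbb{V}.
\]
First I would observe that $F^2$ is $C^1$ on all of $\mathbb{V}$: its gradient is positively homogeneous of degree one, hence continuous at the origin, where it vanishes. Setting $\phi(t)=d(F^2)_{y+t(z-y)}(z-y)$, the function $\phi$ is continuous on $[0,1]$ and $C^1$ off at most one exceptional $t^\ast$ where the segment crosses the origin, with
\[
\phi'(t)=(F^2)_{y^iy^j}\bigl(y+t(z-y)\bigr)(z-y)^i(z-y)^j\geq \gamma\Vert z-y\Vert^2.
\]
Degree-zero homogeneity of the Hessian also supplies an upper bound, so $\phi'$ is bounded on $[0,1]\setminus\{t^\ast\}$, $\phi$ is Lipschitz, and the fundamental theorem of calculus gives $\phi(1)-\phi(0)\geq\gamma\Vert z-y\Vert^2$. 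Item (3)$\Rightarrow$(1) of Theorem \ref{Equivalencias - Fortemente convexa (Definicao Classica)} and Theorem \ref{Fortemente convexa - Equivalencia} then finish this direction.

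\textbf{Step (iii).} Theorem \ref{Fortemente convexa - Equivalencia} converts the strong convexity of $F$ into classical strong convexity of $F^2$ with modulus $\gamma$ on $\mathbb{V}$. Given $y_0\in\mathbb{V}\setminus\{0\}$, I would pick an open convex neighborhood $U$ of $y_0$ with $0\notin U$; then $F^2\vert_U$ is $C^2$ (by the standing smoothness hypothesis) and strongly convex with modulus $\gamma$ on $U$, so item (4) of Theorem \ref{Equivalencias - Fortemente convexa (Definicao Classica)} yields $(F^2)_{y^iy^j}(y_0)v^iv^j\geq\gamma\Vert v\Vert^2$ for every $v$. This is (\ref{align_norma limitante inferior}) at $y_0$ and, in particular, establishes positive definiteness of $g_{ij}(y_0)$, so $F$ is a Minkowski norm.

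The main obstacle is Step (ii): because $\mathbb{V}\setminus\{0\}$ is not convex, one cannot directly invoke item (4)$\Rightarrow$(1) of Theorem \ref{Equivalencias - Fortemente convexa (Definicao Classica)} to deduce a \emph{global} strong convexity inequality from a pointwise Hessian bound valid only off the origin. The line-segment FTC argument, combined with the boundedness of $(F^2)_{y^iy^j}$ that comes from its degree-zero homogeneity, is precisely what lets the inequality propagate across segments that may graze the singular direction through $0$.
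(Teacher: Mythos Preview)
Your proof is correct and essentially parallels the paper in Steps (i) and (iii). The genuine difference lies in Step (ii). The paper proves the subgradient form (item (2) of Theorem \ref{Equivalencias - Fortemente convexa (Definicao Classica)}) directly via Taylor's formula with Lagrange remainder: when the segment from $y_0$ to $y_1$ avoids the origin this is immediate, and when it passes through $0$ the authors split the segment at $0$, apply the inequality on each half, and then carry out an explicit algebraic recombination using $d(F^2)_0=0$ and the homogeneity relation $d(F^2)_{y_0}(y_0)=2F^2(y_0)$. You instead prove the monotonicity form (item (3)) by integrating $\phi'(t)=(\Hess F^2)_{y+t(z-y)}(z-y,z-y)\geq\gamma\Vert z-y\Vert^2$ across the whole segment, dealing with the possible singular parameter $t^\ast$ in one stroke by noting that the degree-zero homogeneity of $\Hess F^2$ bounds $\phi'$ above, so $\phi$ is Lipschitz and the fundamental theorem of calculus applies. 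Your route is shorter and avoids the case-by-case algebra; the paper's route is more hands-on and yields the strong-convexity inequality in its final form without passing through the equivalence (3)$\Rightarrow$(1) on the $C^1$ (but not $C^2$) function $F^2$.
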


\begin{proof}
	Before we go into the proof of this result, we make some remarks on Taylor expansions.
	Let $f:[a,x] \rightarrow \mathbb{R}$ be a $C^k$-function and suppose that its $k$-th derivative $f^{(k)}$ is differentiable on $(a,x)$ (not necessarily $C^1$).
	Then the Taylor expansion with Lagrange remainder holds, that is, there exist a $c\in (a,x)$ such that 
	\begin{align}
		\label{align Lagrange unidimensional}
		f(x) = \sum_{l=0}^k \frac{f^{(l)}(a)}{l!}(x-a)^{l} + \frac{f^{(k+1)}(c)}{(k+1)!}(x-a)^{k+1} 
	\end{align}
	holds. 
	This result is usually stated with $f$ being of class $C^{k+1}$ on $[a,b]$, but this isn't necessary if we see the proof of this result.
	In fact, we only need that $f^{(k)}$ is differentiable on $(a,x)$ when applying the Rolle's Theorem on the function 
	\[
	t \mapsto f(x) - \sum_{l=0}^k \frac{f^{(l)}(t)}{l!}(x-t)^l - \left(\frac{x-t}{x-a}\right)^{k+1}r_k(x),
	\]
	where $r_k(x)$ is the remainder term
	\[
	r_k(x) = f(x) - \sum_{l=0}^k \frac{f^{(l)}(a)}{l!}(x-a)^{l}.
	\]
	
	Let $F:\mathbb{V} \rightarrow  [0,\infty)$ be a Minkowski norm. 
	Parametrize $\mathbb{V}$ by coordinates $(y^1, \ldots, y^n)$.
	Then $F^2$ is $C^\infty$ on $\mathbb{V}\backslash \{0\}$ and $C^1$ on $\mathbb{V}$ due to the positive homogeneity of $F$.
	We have that $g_{ij(y)} dy^i \otimes dy^j =\frac{1}{2}\frac{\partial^{2} F^2}{\partial y^i \partial y^j}(y)dy^i \otimes dy^j$ is a family of inner products on $\mathbb{V}$ parametrized by $ y\in \mathbb{V} \backslash \{0\}$ such that $g_{ij(y)}=g_{ij(\lambda y)}$ for every $\lambda > 0$ and $y \in \mathbb{V}\backslash \{0\}$ (see \cite{BaoChernShen}).
	Therefore, the family of inner products above is equal to the family $\{g_{ij(y)} dy^i \otimes dy^j\}_{y \in S_{\Vert \cdot\Vert} [0,1]}$. 
	Due to the compactness of $S^{n-1}$ and the continuity of $g_{ij(y)}$, there exist $\gamma > 0$ such that
	\begin{align}
		\label{align norma limitante inferior} 
		g_{ij(y)}v^i v^j \geq \frac{1}{2}\gamma \Vert v\Vert^2
	\end{align} 
	for every $y \in S_{\Vert \cdot\Vert}[0,1]$ (and therefore $y\in \mathbb{V}\backslash\{0\}$) and $v\in \mathbb{V}$.
	
	Let $y_0 , y_1 \in \mathbb{V}$ be distinct points and consider $\gamma:[0,1] \rightarrow \mathbb{V}$ given by $\gamma(t)=(1-t).y_0 + t.y_1$.
	If $0 \not\in \gamma((0,1))$, then $F^2(\gamma(t))$ is smooth in $(0,1)$ and $C^1$ in $[0,1]$. 
	Then 
	\begin{align}
		\label{align Lagrange F quadrado}
		F^2(y_1) & = F^2(y_0) + d(F^2)_{y_0}(y_1-y_0)+g_{ij}(c)(y^i_1 - y^i_0)(y^j_1 - y^j_0) \\
		& \geq F^2(y_0) + d(F^2)_{y_0}(y_1-y_0)+\frac{1}{2}\gamma \Vert y_1 - y_0\Vert^2, \label{align fortemente convexo caso suave}
	\end{align}
	where \eqref{align Lagrange F quadrado} is \eqref{align Lagrange unidimensional} applied to $F^2(\gamma(t))$ and  \eqref{align fortemente convexo caso suave} is due to \eqref{align norma limitante inferior}.  
	On the other hand, if $\gamma(t_0)=0$ with $t_0 \in (0,1)$, then we consider \eqref{align fortemente convexo caso suave} applied to $F^2(\gamma(t))\vert_{[0,t_0]}$ and $F^2(\gamma(t))\vert_{[t_0,1]}$, what implies
	\begin{align}
	\label{align_desigualdade minkowski fortemente convexa 1}
	F^2(y_1) \geq \frac{1}{2}\gamma\Vert y_1 \Vert^2
	\end{align}
and
	\begin{align}
	0 & \geq F^2(y_0) - d(F^2)_{y_0}(y_0)+\frac{1}{2}\gamma\Vert y_0\Vert^2 \nonumber \\
	& = F^2(y_0) - d(F^2)_{y_0}(y_1) + d(F^2)_{y_0}(y_1 - y_0) +\frac{1}{2}\gamma\Vert y_0\Vert^2 \label{align_segundo segmento}
	\end{align}
because $d(F^2)_0 \equiv 0$.
Notice that $y_1=-\frac{\Vert y_1 \Vert}{\Vert y_0 \Vert} y_0$ because $y_0$ and $y_1$ lies in the opposite sides of a line that contains the origin.
Then 
	\begin{align}
	\label{align_lados opostos}
	-d(F^2)_{y_0}(y_1)=\frac{\Vert y_1\Vert}{\Vert y_0\Vert} d(F^2)_{y_0}\left( y_0 \right).
	\end{align}	
It is immediate from the positive homogeneity of $F$ and from \eqref{align_desigualdade minkowski fortemente convexa 1} that
	\begin{align}
	\label{align_positivamente homogenea}
d(F^2)_{y_0}(y_0)=2F^2(y_0) \geq \gamma \Vert y_0\Vert^2.	
	\end{align}
Plugging \eqref{align_lados opostos} and \eqref{align_positivamente homogenea} into \eqref{align_segundo segmento}, we have
	
	\begin{align}
	0 & \geq F^2(y_0) + \gamma \Vert y_0 \Vert \Vert y_1\Vert  + d(F^2)_{y_0}(y_1 - y_0) +\frac{1}{2}\gamma\Vert y_0\Vert^2 \label{align_desigualdade minkowski fortemente convexa 2}
	\end{align}
respectively.	
Summing up \eqref{align_desigualdade minkowski fortemente convexa 1} and \eqref{align_desigualdade minkowski fortemente convexa 2}, we have that
	\begin{align*}
		F^2(y_1) & \geq F^2(y_0) +  d(F^2)_{y_0}(y_1-y_0)+ \frac{1}{2}\gamma\Vert y_1 - y_0\Vert^2
	\end{align*}
because $y_1$ is a positive multiple of $-y_0$, what proves the strong convexity of $F$ with respect to $\sqrt{\frac{\gamma}{2}}\Vert \cdot\Vert$.

The reciprocal is direct consequence of Theorem \ref{Equivalencias - Fortemente convexa (Definicao Classica)} applied to $F^2$ restricted to a convex neighborhood of $y$ in $\mathbb{V}\backslash \{0\}$.
\end{proof}

\begin{obs}
Proposition \ref{proposition_norma limitante inferior} isn't a consequence of Item \eqref{item_convexidade forte caso suave} of Theorem \ref{Equivalencias - Fortemente convexa (Definicao Classica)}.
In fact, if $F$ is a Minkowski norm, then $F^2$ is of class $C^2$ at $0\in \mathbb{V}$ iff $F$ comes from an inner product (see \cite{BaoChernShen}).
\end{obs}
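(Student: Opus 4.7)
The plan is to justify the remark by identifying precisely which hypothesis of Item (\ref{item_convexidade forte caso suave}) of Theorem \ref{Equivalencias - Fortemente convexa (Definicao Classica)} fails in the setting of Proposition \ref{proposition_norma limitante inferior}. First I would spell out what a direct invocation of that item would demand: to conclude that $F^2$ is strongly convex with modulus $\gamma$ on all of $\mathbb{V}$ (equivalently, by Theorem \ref{Fortemente convexa - Equivalencia}, that $F$ is strongly convex with respect to $\sqrt{\gamma/2}\,\Vert \cdot\Vert$), one would apply Item (\ref{item_convexidade forte caso suave}) to $f = F^2$ on an open convex set $U$ large enough that the inequality in the definition holds for every pair $y_0,y_1\in\mathbb{V}$. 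Because such pairs include those whose joining segment crosses the origin, the only admissible choice is $U=\mathbb{V}$, which in turn forces the hypothesis $F^2\in C^2(\mathbb{V})$.

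The key step is then to invoke the classical characterization from \cite{BaoChernShen}: for a Minkowski norm $F$ on $\mathbb{V}$, the square $F^2$ is of class $C^2$ at $0$ if and only if $F$ comes from an inner product. Thus, the moment $F$ is a non-Euclidean Minkowski norm (for instance an $\ell^p$ norm on $\mathbb{R}^n$ with $p\neq 2$), the regularity $F^2\in C^2(\mathbb{V})$ fails at the single point $0$, and the hypothesis of Item (\ref{item_convexidade forte caso suave}) is not met on $U=\mathbb{V}$. One could try to sidestep this by taking instead $U=\mathbb{V}\setminus\{0\}$, on which $F^2$ is indeed smooth; but for every $y\in \mathbb{V}\setminus\{0\}$ the pair $\{y,-y\}$ has midpoint $0\notin U$, so $\mathbb{V}\setminus\{0\}$ is never convex and Theorem \ref{Equivalencias - Fortemente convexa (Definicao Classica)} still does not apply.

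The main (in fact only) obstacle is this regularity gap at the origin, and its identification is the entire content of the remark. I would close the argument by observing that this is precisely why the proof of Proposition \ref{proposition_norma limitante inferior} cannot avoid its ad hoc step of splitting segments $[y_0,y_1]$ that pass through $0$ into two pieces on which $F^2$ is smooth, and then recombining the two resulting Taylor estimates using the positive homogeneity of $F$, rather than simply citing Item (\ref{item_convexidade forte caso suave}) of Theorem \ref{Equivalencias - Fortemente convexa (Definicao Classica)}.
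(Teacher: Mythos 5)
Your proposal is correct and matches the paper's intent: the remark is justified exactly by the observation that Item \eqref{item_convexidade forte caso suave} of Theorem \ref{Equivalencias - Fortemente convexa (Definicao Classica)} requires $C^2$ regularity on a convex open set, that segments joining antipodal points force this set to be all of $\mathbb{V}$, that $\mathbb{V}\setminus\{0\}$ is not convex, and that $F^2\in C^2(\mathbb{V})$ holds only in the inner-product case by the cited fact from \cite{BaoChernShen} --- which is also precisely why the proof of Proposition \ref{proposition_norma limitante inferior} must split segments through the origin. One small correction: your parenthetical example of an $\ell^p$ norm with $p\neq 2$ is not a Minkowski norm in the sense of Definition \ref{norma de Minkowski} (for $p$ not an even integer $F$ fails to be smooth along the coordinate axes, and for even $p\geq 4$ the fundamental tensor degenerates there), so a non-Euclidean example should instead be taken, e.g., among Randers norms or small smooth strongly convex perturbations of the Euclidean norm; this does not affect the structure of your argument.
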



Before proving Proposition \ref{proposition_para mostrar que S_F_e está contida em uma esfera euclidiana}, we state a preliminary result.

\begin{lem}
\label{lemma_unicidade subgradiente no raio}
Let $f: \mathbb{V} \rightarrow \mathbb{R}$ be a convex function such that $f(y)>0$ for every $y\neq 0$.
Let $r\geq 1$ and suppose that $f$ is a positively homogeneous of degree $r$, that is, $f(\lambda y)= \lambda^r f(y)$ for every $\lambda > 0$. 
Consider $y \in \mathbb{V}\backslash \{0\}$, $\alpha \in \partial f(y)$ and $C\in \mathbb{R}$. 
If $C\alpha \in \partial f(y)$, then $C=1$. 
\end{lem}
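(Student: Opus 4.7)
The plan is to exploit the positive homogeneity of $f$ to extract a rigid identity relating any subgradient $\alpha$ at $y$ with the value $f(y)$, namely $\alpha(y) = r f(y)$. Once this identity is established, the same identity applied to $C\alpha$ forces $C\alpha(y) = rf(y) = \alpha(y)$, and since $f(y) > 0$ (hence $\alpha(y)\neq 0$) we conclude $C=1$.

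To obtain the identity, I would plug $z = \lambda y$ into the subgradient inequality $f(z) \geq f(y) + \alpha(z-y)$ and use $f(\lambda y) = \lambda^r f(y)$ to get
\[
(\lambda^r - 1)\,f(y) \;\geq\; (\lambda - 1)\,\alpha(y), \qquad \lambda > 0.
\]
Dividing by $\lambda - 1$, with attention to the sign for $\lambda > 1$ and $0 < \lambda < 1$, and letting $\lambda \to 1$ from each side, the difference quotient $(\lambda^r - 1)/(\lambda - 1)$ tends to $r$, and the two one-sided limits pinch $\alpha(y)$ between $\leq r f(y)$ and $\geq r f(y)$. Thus $\alpha(y) = r f(y)$.

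Applying exactly the same computation with $C\alpha$ in place of $\alpha$ yields $C\alpha(y) = rf(y)$, so $C\alpha(y) = \alpha(y)$. Since $y \neq 0$ implies $f(y) > 0$, we have $\alpha(y) = rf(y) \neq 0$, and we may cancel to conclude $C = 1$.

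I do not expect a serious obstacle here: the only subtlety is the sign handling when dividing by $\lambda - 1$, which forces one to take the two-sided limit separately. The hypotheses $r\geq 1$ (ensuring $f$ is convex and the limits $r$ are attained from the classical difference quotient) and $f(y)>0$ for $y\neq 0$ (ensuring nontriviality of $\alpha(y)$) are used precisely at the cancellation step.
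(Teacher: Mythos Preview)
Your proof is correct and follows essentially the same approach as the paper: both plug $z=\lambda y$ into the subgradient inequality, divide by $\lambda-1$ with attention to sign, and let $\lambda\to 1$ from each side to obtain $\alpha(y)=rf(y)$ (the paper does this computation directly with $C\alpha$ and then notes $\alpha\in\partial f(y)$ forces $C=1$, while you state the Euler-type identity first and apply it twice). The mathematical content is identical.
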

\begin{proof}
Suppose that $C\alpha \in \partial f(y)$. Then
\[
f(\lambda y) \geq f(y) + (\lambda-1)C\alpha (y) \Leftrightarrow \left( \lambda^r - 1\right)f(y)\geq (\lambda - 1)C\alpha(y).
\]
If $\lambda > 1$, we have 
\[
\frac{\lambda^r - 1}{\lambda - 1}f(y) \geq C\alpha(y) \Rightarrow \lim_{\lambda \rightarrow 1^+}\frac{\lambda^r - 1}{\lambda - 1}f(y) = r.f(y) \geq C\alpha(y)
\]
due to L'Hospital rule.
Analogously, if $\lambda \in (0,1)$, we have
\[
\frac{\lambda^r - 1}{\lambda - 1}f(y) \leq C\alpha(y)\Rightarrow r.f(y)\leq C\alpha(y).
\]
Therefore, $C=\frac{r.f(y)}{\alpha(y)}$ is the unique value such that $C\alpha \in \partial f(y)$. But $\alpha \in \partial f(y)$, what implies $C=1$ and settles the lemma.
\end{proof}

\begin{pro} \label{proposition_para mostrar que S_F_e está contida em uma esfera euclidiana} Let $F$ be an asymmetric norm. 
Then $F$ is strongly convex with respect to $\sqrt{\frac{\gamma}{2}}\Vert \cdot \Vert$ iff there exists $\mathcal{R}>0$ satisfying the following condition: for every $y \in S_{F}[0,1]$ and $\alpha_y \in \partial F^2 (y)$ there exists $\tilde{y} \in \mathbb{V}$ such that
	\begin{eqnarray*}
		S_{F}[0,1] \subset B_{\Vert \cdot \Vert}[\tilde{y}, \mathcal{R}], \quad y \in S_{\Vert \cdot \Vert}[\tilde{y}, \mathcal{R}]
	\end{eqnarray*}
and $T_y(S_{\Vert \cdot\Vert}[\tilde{y},\mathcal{R}]) = \{\alpha_{y} = \alpha_y(y)\}$.
\end{pro}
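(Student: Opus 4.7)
The plan is to prove both implications by exploiting the Riesz duality between functionals and vectors, together with the Euler-type identity $\alpha_y(y) = 2F^2(y)$ for $y \in \mathbb{V}\backslash\{0\}$ and $\alpha_y \in \partial F^2(y)$, which follows from applying Lemma \ref{lemma_unicidade subgradiente no raio} to the positively $2$-homogeneous function $F^2$ (the proof of that lemma exhibits $\alpha(y) = r\,f(y)$ with $r=2$). In both directions the tangent-plane equality $T_y(S_{\|\cdot\|}[\tilde{y}, \mathcal{R}]) = \{\alpha_y = \alpha_y(y)\}$ forces $y-\tilde{y}$ to be parallel to the Riesz representative $\alpha_y^{\sharp}$ of $\alpha_y$, and the ball containment $S_F[0,1] \subset B_{\|\cdot\|}[\tilde{y},\mathcal{R}]$ fixes the sign, so one is forced to take $y - \tilde{y} = (\mathcal{R}/\|\alpha_y^{\sharp}\|)\,\alpha_y^{\sharp}$.

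For the direct implication, starting from the strong convexity inequality $F^2(z) \geq F^2(y) + \alpha_y(z-y) + (\gamma/2)\|z-y\|^2$, I define, for each $y \in S_F[0,1]$ and $\alpha_y \in \partial F^2(y)$, the candidate $\tilde{y} := y - (\mathcal{R}/\|\alpha_y^{\sharp}\|)\,\alpha_y^{\sharp}$. The relations $\|y-\tilde{y}\| = \mathcal{R}$ and the tangent-plane equality follow by construction. To check the ball containment, I expand
\[
\|z - \tilde{y}\|^2 = \|z-y\|^2 + \frac{2\mathcal{R}}{\|\alpha_y^{\sharp}\|}\alpha_y(z-y) + \mathcal{R}^2
\]
for $z \in S_F[0,1]$ and substitute $\alpha_y(z-y) \leq F^2(z) - F^2(y) - (\gamma/2)\|z-y\|^2 = -(\gamma/2)\|z-y\|^2$; this yields $\|z - \tilde{y}\|^2 \leq \mathcal{R}^2$ precisely when $\mathcal{R} \geq \|\alpha_y^{\sharp}\|/\gamma$. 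Because $F^2$ is locally Lipschitz (Proposition \ref{F e Lipschitz}), the subdifferentials of $F^2$ are uniformly bounded on the compact sphere $S_F[0,1]$, so the uniform choice $\mathcal{R} := \gamma^{-1}\sup\{\|\alpha_y^{\sharp}\| : y \in S_F[0,1],\ \alpha_y \in \partial F^2(y)\}$ is finite and works simultaneously for every $y$ and every $\alpha_y$.

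For the converse, reading $\|z-\tilde{y}\|^2 \leq \mathcal{R}^2 = \|y-\tilde{y}\|^2$ backwards for $z \in S_F[0,1]$ produces the sphere inequality $\alpha_y(z-y) \leq -(\|\alpha_y^{\sharp}\|/(2\mathcal{R}))\|z-y\|^2$. Combined with the bound $\|\alpha_y^{\sharp}\| \geq 2/\|y\| \geq 2r_m$ (from $\alpha_y(y) = 2$ via Cauchy-Schwarz and $\|y\| \leq 1/r_m$ from \eqref{constante rho}), this gives the sphere version of strong convexity with modulus $\gamma_0 := 2r_m/\mathcal{R}$. To extend to all $y,z \in \mathbb{V}$, I use that the target inequality is invariant under the common rescaling $(y,z)\mapsto(\lambda y, \lambda z)$ (thanks to $\partial F^2(\lambda y) = \lambda\partial F^2(y)$ and $F^2(\lambda\cdot) = \lambda^2 F^2(\cdot)$), so the claim reduces to $y \in S_F[0,1]$. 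For $z \neq 0$, writing $z = s\hat{z}$ with $s = F(z)$ and $\hat{z} \in S_F[0,1]$ turns the target into the nonnegativity of
\[
s^2 + 1 - s\,\alpha_y(\hat{z}) - \tfrac{\gamma}{2}\|s\hat{z}-y\|^2,
\]
a quadratic polynomial in $s$ that is convex whenever $\gamma \leq 2/\|\hat{z}\|^2$ (hence for $\gamma \leq 2r_m^2$). Its value at $s=1$ is controlled by the sphere inequality, its value at $s=0$ reduces to $\gamma\|y\|^2 \leq 2$, and its minimum is controlled using $\alpha_y(y) = 2$ together with the lower bound on $\|\alpha_y^{\sharp}\|$; a choice of $\gamma$ as a small positive constant depending only on $r_m$ and $\mathcal{R}$ (essentially a suitable multiple of $\min(2r_m^2,\gamma_0)$) produces the required nonnegativity. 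The case $z=0$ is immediate from $F^2 \geq r_m^2\|\cdot\|^2$, and Theorem \ref{Fortemente convexa - Equivalencia} converts the resulting strong convexity of $F^2$ into the strong convexity of $F$ with respect to $\sqrt{\gamma/2}\|\cdot\|$.

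The main obstacle is the extension from the sphere to the whole space in the converse direction. A positively $2$-homogeneous function can be strongly convex on $S_F[0,1]$ without being convex on $\mathbb{V}$ at all (positively $2$-homogeneous non-convex functions exist), so the analysis of the quadratic in $s$ must genuinely combine the geometric sphere inequality with the explicit quadratic form $F^2(s\hat{z}) = s^2$ along rays, and the constants $r_m$ and $\mathcal{R}$ must be balanced carefully; in particular $\gamma$ must be chosen strictly smaller than both $2r_m^2$ and $2r_m/\mathcal{R}$, and one may need to enlarge $\mathcal{R}$ (which is admissible, since moving $\tilde{y}$ outward along $-\alpha_y^{\sharp}$ preserves the three geometric conditions) to make this compatible.
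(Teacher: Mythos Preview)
Your forward direction $(\Rightarrow)$ is essentially the paper's argument: both set $\tilde y = y - c\,\alpha_y^{\sharp}$ for an appropriate $c$, verify the three geometric conditions by completing the square, and then pass to a uniform $\mathcal R$ by bounding $\|\alpha_y^{\sharp}\|$ over the compact sphere $S_F[0,1]$. No objection there.

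Your converse $(\Leftarrow)$ takes a genuinely different route. The paper observes that each Euclidean sphere $S_{\|\cdot\|}[\tilde y,\mathcal R]$ is the unit sphere of a Minkowski norm $\tilde F$ (this is Theorem~2 of \cite{Fukuoka-Setti}), invokes Proposition~\ref{proposition_norma limitante inferior} to get the strong-convexity inequality for $\tilde F^2$, and then compares $F^2\geq\tilde F^2$ together with Lemma~\ref{lemma_unicidade subgradiente no raio} to identify $d(\tilde F^2)_y$ with $\alpha_y$. Your approach is more elementary and self-contained: you extract the sphere inequality $\alpha_y(\hat z-y)\le -\tfrac{\gamma_0}{2}\|\hat z-y\|^2$ directly from the ball containment, then try to propagate it to all of $\mathbb V$ by rescaling $y$ onto $S_F[0,1]$ and analyzing the quadratic $Q(s)$. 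This avoids importing the Minkowski-norm machinery, at the cost of a careful computation.

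The gap is that the computation is not actually carried out. Knowing $Q(0)\ge 0$, $Q(1)\ge 0$ and that $Q$ is convex does \emph{not} give $Q(s)\ge 0$ for $s>1$; you must control the discriminant. Writing $a=\alpha_y(\hat z)$, $b=\|\hat z\|^2$, $c=\langle\hat z,y\rangle$, $d=\|y\|^2$, one has
\[
\mathrm{disc}(Q)=(a^2-4)+2\gamma(b+d-ac)+\gamma^2(c^2-bd).
\]
Here $c^2-bd\le 0$ by Cauchy--Schwarz, and $b+d-ac=\|\hat z-y\|^2+(2-a)c$; the sphere inequality gives $\|\hat z-y\|^2\le \tfrac{2}{\gamma_0}(2-a)$, so $b+d-ac\le (2-a)\bigl(\tfrac{2}{\gamma_0}+\tfrac{1}{r_m^2}\bigr)$. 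Combined with $a^2-4=-(2-a)(2+a)$ this yields
\[
\mathrm{disc}(Q)\le (2-a)\Bigl[-(2+a)+2\gamma\bigl(\tfrac{2}{\gamma_0}+\tfrac{1}{r_m^2}\bigr)\Bigr].
\]
In the only nontrivial case $a-\gamma c>0$ one has $a>-\gamma/r_m^2$, so $2+a>1$ once $\gamma\le r_m^2$, and then $\mathrm{disc}(Q)\le 0$ as soon as $\gamma\le \tfrac{\gamma_0 r_m^2}{2(2r_m^2+\gamma_0)}$. This is the missing step; add it and your argument closes. Also note that the rescaling reduction only covers $y\neq 0$; the case $y=0$ (not just $z=0$) needs a separate one-line check via $\partial F^2(0)=\{0\}$ and $F^2\ge r_m^2\|\cdot\|^2$.
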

\begin{proof}
	($\Rightarrow$) Fix $y \in S_F[0,1]$ and $\alpha_y \in \partial F^2(y)$. We claim that there exist $y^\prime$ and $\mathcal{R}_y > 0$ such that $S_F[0,1] \subset B_{\Vert \cdot\Vert}[y^\prime, \mathcal{R}_y]$, $y \in S_{\Vert \cdot\Vert}[y^\prime,\mathcal{R}_y]$ and $T_y(S_{\Vert \cdot\Vert}[y^\prime , \mathcal{R}_y]) = \{\alpha_y =\alpha_y(y)\}$. 
	
	As $F$ is a strongly convex asymmetric norm with respect to $\sqrt{\frac{\gamma}{2}}\Vert \cdot \Vert,$ then
	\begin{eqnarray} \label{Desigualdade auxiliar 1}
		F^2(z) \geq F^2(y)+ \alpha_y(z-y)+\frac{1}{2}\gamma \Vert z-y \Vert^2,
	\end{eqnarray}
	for every $z \in \mathbb{V}$. 
	
	Let $\{e_1, \ldots, e_n\}$ be an orthonormal basis of $\mathbb{V}$ and consider $z\in \mathbb{V}$. 
	Set $y = y^ie_i$, $z=z^ie_i$ and $(\alpha_y)_i = \alpha_y(e_i)$, $i \in \{1, \ldots, n\}$. Then
	\begin{eqnarray*}
		& & F^2(y) + \alpha_y(z-y) + \frac{1}{2}\gamma \Vert z-y \Vert^2 \\
		& = & F^2(y) + (\alpha_y)_i(z^i-y^i) + \frac{1}{2}\gamma \sum_{i=1}^n (z^i-y^i)^2 \\
		& = & F^2(y) + \sum_{i=1}^n \left[\sqrt{\frac{\gamma}{2}}(z^i-y^i) + \frac{(\alpha_y)_i}{\sqrt{2\gamma}} \right]^2 - \sum_{i=1}^n \frac{(\alpha_y)_i^{\ 2} }{2\gamma} \\
		& \leq & F^2(z).
	\end{eqnarray*}
	For every $z \in S_{F}[0,1],$ we have $F^2(z) = F(z) = 1 = F(y) = F^2(y),$ what implies 
	\begin{eqnarray*}
		& & \sum_{i=1}^n \left[\sqrt{\frac{\gamma}{2}}(z^i-y^i) + \frac{(\alpha_y)_i}{\sqrt{2\gamma}} \right]^2 \leq  \sum_{i=1}^n \frac{(\alpha_y)_i^{\ 2} }{2\gamma} \nonumber\\
		& \Leftrightarrow & \frac{\gamma}{2} \sum_{i=1}^n \left[(z^i-y^i) + \frac{(\alpha_y)_i}{\gamma} \right]^2  \leq \sum_{i=1}^n \frac{(\alpha_y)_i^{\ 2} }{2\gamma} \nonumber\\
		& \Leftrightarrow & \sqrt{\sum_{i=1}^n \left[z^i-\left(y^i - \frac{(\alpha_y)_i}{\gamma}\right) \right]^2}  \leq \frac{1}{\gamma} \sqrt{\sum_{i=1}^n (\alpha_y)_i^{\ 2}} \nonumber\\
		& \Leftrightarrow & \Bigg\Vert z-\left(y - \frac{((\alpha_y)_1, \ldots, (\alpha_y)_n)}{\gamma}\right) \Bigg\Vert   \leq \frac{1}{\gamma} \sqrt{\sum_{i=1}^n (\alpha_y)_i^{\ 2}}. \label{Desigualdade para mostrar que S_F_e está contida em uma esfera euclidiana}
	\end{eqnarray*}
Therefore if we set
	\begin{align}
	\nonumber
	y^\prime = y - \frac{((\alpha_y)_1, \ldots, (\alpha_y)_n)}{\gamma} \,\text{  and  }\,\mathcal{R}_y = \frac{1}{\gamma} \sqrt{\sum_{i=1}^n (\alpha_y)_i^{\ 2}},
	\end{align}	 
it is immediate that  $S_F[0,1] \subset B_{\Vert \cdot\Vert}[y^\prime, \mathcal{R}_y]$ and $y \in S_{\Vert \cdot\Vert}[y^\prime,\mathcal{R}_y]$.
Notice also that $S_{\Vert \cdot \Vert}[y^\prime, \mathcal{R}_y]$ is the inverse image of a regular value of the smooth function $f$ defined by
\[
z \mapsto \sum_{i=1}^n \left[(z^i - y^i) + \frac{(\alpha_y)_i}{\gamma} \right]^2.
\]
Therefore, $T_y(S_{\Vert \cdot \Vert}[y^\prime, \mathcal{R}_y]) = \left\{ \alpha_y = \alpha_y(y)\right\}$ because $\nabla f(y)$ is proportional to $((\alpha_y)_1, \ldots, (\alpha_y)_n)$, what implies that $T_y(S_{\Vert \cdot \Vert}[y^\prime, \mathcal{R}_y])$ is parallel to $\ker (\alpha_y)$, what settles the claim.

	Now we will determine $\mathcal{R}>0$ such that for every $y \in S_F[0,1]$ and $\alpha_y \in \partial F^2 (y)$, there exist a $\tilde{y}$ satisfying $S_F[0,1] \subset B_{\Vert \cdot\Vert}[\tilde{y},\mathcal{R}]$, $y \in S_{\Vert \cdot \Vert}[\tilde{y},\mathcal{R}]$ and $T_y(S_{\Vert \cdot\Vert}[\tilde{y},\mathcal{R}]) = \left\{ \alpha_y = \alpha_y(y)\right\}$. 
	In order to do so, we will prove that there exist $\mathcal{R}\geq \mathcal{R}_y$ for every $y\in S_F[0,1]$.
	Considering $z=y+e_i$ in (\ref{Desigualdade auxiliar 1}), we have that
	\begin{eqnarray*}
		\alpha_y(e_i) & \leq & F^2(y+e_i),
	\end{eqnarray*}
what implies
	\begin{align}
	\nonumber
	\mathcal{R}_y = \frac{1}{\gamma} \sqrt{\sum_{i=1}^n (\alpha_y)_i^{\ 2}} \leq \frac{1}{\gamma} \sqrt{\sum_{i=1}^n F^4(y+e_i)} \leq \max_{\substack{y\in S_F[0,1]\\ i=1,\ldots, n}} \frac{1}{\gamma} \sqrt{\sum_{i=1}^n F^4(y+e_i)},
	\end{align}
where the maximum exists due to the continuity of $F$ and the compactness of $S_F[0,1] \times \{1, \ldots, n\}$.
Therefore, we can define $\mathcal{R}$ as this maximum.
	Finally notice that if $y \in S_F[0,1]$ and $\alpha_y \in \partial F^2(y)$, then there exist a unique $\tilde{y} \in \mathbb{V}$ such that $S_{\Vert \cdot \Vert}[\tilde{y},\mathcal{R}]$ contains $y$, $T_y (S_{\Vert \cdot \Vert}[\tilde{y},\mathcal{R}]) = \{\alpha_y = \alpha_y(y)\}$ and $B_{\Vert \cdot \Vert}[y^\prime,\mathcal{R}_y] \subset B_{\Vert \cdot \Vert}[\tilde{y},\mathcal{R}]$, what also implies $S_F[0,1] \subset B_{\Vert \cdot \Vert}[\tilde{y},\mathcal{R}]$ and settles this part of the proof.
	
	($\Leftarrow$) Now suppose that there exists $\mathcal{R}>0$ satisfying the following condition: for every $y \in S_F[0,1]$ and $\alpha_y \in \partial F^2(y)$, there exists $\tilde{y} \in \mathbb{V}$ such that $S_{F}[0,1] \subset B_{\Vert \cdot \Vert}[\tilde{y}, \mathcal{R}]$, $y \in S_{\Vert \cdot \Vert}[\tilde{y}, \mathcal{R}]$ and $T_y(S_{\Vert \cdot\Vert}[\tilde{y},\mathcal{R}]) = \{\alpha_{y} = \alpha_y(y)\}$.
	Then $S_{\left\Vert \cdot \right\Vert}[\tilde{y},\mathcal{R}]$ is the unit sphere of a Minkowski norm $\tilde{F}$ due to Theorem 2 of \cite{Fukuoka-Setti}, what implies that there exist $\gamma > 0$ such that
	\[
	\tilde{F}^2(z) \geq \tilde{F}^2(y) + d(\tilde{F}^2)_y (z-y) + \frac{1}{2}\gamma \Vert z-y\Vert^2,
	\]
where $d(\tilde{F}^2)_y$ is proportional to $\alpha_y$, due to Proposition \ref{proposition_norma limitante inferior}. In addition, we have that $S_F[0,1] \subset B_{\Vert \cdot \Vert}[\tilde{y},\mathcal{R}]$, what implies $F^2 \geq \tilde{F}^2$. 
	Therefore,
	\[
	F^2(z) \geq F^2(y) + d(\tilde{F}^2)_y (z-y) + \frac{1}{2}\gamma \Vert z-y\Vert^2
	\]
due to $F^2(y) = \tilde{F}^2(y) = 1$. 
Finally, Lemma \ref{lemma_unicidade subgradiente no raio} implies $d(\tilde{F}^2)_y = \alpha_y$, what settles the strong convexity of $F$ with respect to $\sqrt{\frac{\gamma}{2}} \Vert\cdot\Vert$.
\end{proof}

\begin{obs}
For the sake of simplicity, we will simply say that $F$ is strongly convex if its modulus of convexity isn't important. 
\end{obs}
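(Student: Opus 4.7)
The statement in question is a terminological \emph{Remark}, not a mathematical assertion: the authors merely declare that they will use the unqualified phrase ``$F$ is strongly convex'' as shorthand for ``$F$ is strongly convex with respect to $\sqrt{\gamma/2}\,\Vert\cdot\Vert$ for some $\gamma>0$'' whenever the particular constant $\gamma$ plays no role in the argument at hand. Consequently, there is no proposition to verify; there is only a convention to adopt. The plan is therefore to write no proof at all, and instead to check that the convention is internally consistent and unambiguous in light of the results already established in the excerpt.

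The only point that could conceivably require justification is that the property ``there exists some modulus $\gamma>0$ with respect to which $F$ is strongly convex'' does not depend on which reference norm $\Vert\cdot\Vert$ on $\mathbb{V}$ is used. This follows immediately from the equivalence of all norms on the finite-dimensional space $\mathbb{V}$ together with the Remark right after Definition \ref{Fortemente convexa - definicao artigo R-H}, which already notes that if $F$ is strongly convex with respect to one asymmetric norm, then it is strongly convex with respect to a scalar multiple of any other. One could also observe for completeness that, by Proposition \ref{proposition_norma limitante inferior}, whenever $F$ is smooth away from $0$ the existence of such a $\gamma$ is equivalent to $F$ being a Minkowski norm, so the abbreviated terminology is consistent with the smooth theory as well.

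Since there is nothing further to prove, I would simply leave the Remark as stated, with no accompanying argument. The only ``obstacle'' here is the meta-obstacle of recognizing that the item is a labeling convention rather than a mathematical claim; once that is recognized, the appropriate response is to proceed to the next section and use the abbreviated terminology in the sequel.
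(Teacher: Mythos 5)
You are correct: this item is a terminological convention with no proof in the paper, and your reading of it as such, backed by the earlier remark on equivalence of asymmetric norms, matches the paper's treatment exactly. Nothing further is needed.
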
 


\section{Pontryagin extremals}
\label{section_Vertical part of a Pontryagin extremal}


Let $F$ be a left-invariant $C^0$-Finsler structure on a Lie group $G$.
Theorem \ref{Teorema Jessica e Ryuichi} gives a representation of Pontryagin extremals on $S_{F_e}[0,1] \times \mathfrak{g}^\ast$.
In this section, we study further properties of Pontryagin extremals and extended geodesic fields. 
In Corollary \ref{corollary_Jessica Ryuichi}, we adapt Theorem \ref{Teorema Jessica e Ryuichi} for the case where $F$ is strongly convex.
Still in the strongly convex case, Theorem \ref{theorem_extremal de Pontryagin em grupos} states that given $(x_0, \alpha_0) \in T^\ast G \backslash 0$, there exist a unique Pontryagin extremal $t \in \mathbb{R} \mapsto (x(t),\alpha(t))$ on $T^\ast G \backslash 0$ such that $(x(0), \alpha(0)) = (x_0 , \alpha_0)$.
Theorem \ref{a(t) esta definida para toda a reta} is the ``Lie algebra version'' of Theorem \ref{theorem_extremal de Pontryagin em grupos}.

{\em
In this section, $G$ be a Lie group endowed with a left-invariant $C^0$-Finsler structure $F$ and 
its Lie algebra $\mathfrak{g}$ is endowed with an inner product $\left< \cdot, \cdot \right>$.
}
 
If $F_e$ is strictly convex asymmetric norm and $\mathfrak{a} \in \mathfrak{g}^\ast \backslash \{0\}$, then there exist a unique $u(\mathfrak{a})$ that maximizes $\mathfrak{a}$ in $\sfe[0,1]$, that is $\mathcal{C}(\mathfrak{a}) = \{u(\mathfrak{a})\}$ (see Proposition \ref{Formula u alpha}). 
Then the rule $\tilde{\mathcal{E}}$ that associates every element $\mathfrak{a} \in \mathfrak{g}^\ast \setminus \{0\}$ to the set
\begin{eqnarray*}
	\tilde{\mathcal{E}}(\mathfrak{a}) = \left\{ -\mbox{ad}^\ast(u)(\mathfrak{a}) : u \in \mathcal{C}(\mathfrak{a}) \right\},
\end{eqnarray*}
can be considered as the application 
\begin{eqnarray*}
	\tilde{\mathcal{E}}(\mathfrak{a}) = -\mbox{ad}^\ast(u(\mathfrak{a}))(\mathfrak{a}) = \mathfrak{a}([u(\mathfrak{a}), \cdot]) \in \mathfrak{g}^\ast.
\end{eqnarray*}


\begin{teo} \label{E til e Lipschitz} Let $G$ be a Lie group endowed with a strongly convex left invariant $C^0$-Finsler structure $F$. Let $K_{\mathfrak{g}^\ast \setminus \{0\}}$ be a compact subset of $\mathfrak{g}^\ast \setminus \{0\}.$
	Then the map $\tilde{\mathcal{E}}$ is Lipschitz on $K_{\mathfrak{g}^\ast \setminus \{0\}}.$
\end{teo}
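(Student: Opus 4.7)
The plan is to bound $\|\tilde{\mathcal{E}}(\mathfrak{a}) - \tilde{\mathcal{E}}(\mathfrak{b})\|_\ast$ by a constant times $\|\mathfrak{a} - \mathfrak{b}\|_\ast$ for $\mathfrak{a},\mathfrak{b}\in K_{\mathfrak{g}^\ast\setminus\{0\}}$ via the standard ``add-and-subtract'' technique, reducing the estimate to (i) the Lipschitz property of $u$ on compact subsets of $\mathfrak{g}^\ast\setminus\{0\}$ established in Lemma \ref{u_alpha é lipschitz}, and (ii) the bilinearity (hence continuity) of the Lie bracket.

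First I would write
\begin{align*}
\tilde{\mathcal{E}}(\mathfrak{a}) - \tilde{\mathcal{E}}(\mathfrak{b})
&= \mathfrak{a}([u(\mathfrak{a}),\cdot\,]) - \mathfrak{b}([u(\mathfrak{b}),\cdot\,]) \\
&= \mathfrak{a}([u(\mathfrak{a})-u(\mathfrak{b}),\cdot\,]) + (\mathfrak{a}-\mathfrak{b})([u(\mathfrak{b}),\cdot\,]),
\end{align*}
and estimate each term separately in the dual norm $\|\cdot\|_\ast$. Let $M>0$ be a constant such that $\|[v,w]\|\leq M\|v\|\|w\|$ for every $v,w\in\mathfrak{g}$ (this exists since $[\cdot,\cdot]:\mathfrak{g}\times\mathfrak{g}\to\mathfrak{g}$ is bilinear and $\mathfrak{g}$ is finite dimensional). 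Then for any $\mathfrak{c}\in\mathfrak{g}^\ast$ and $v\in\mathfrak{g}$,
\[
\|\mathfrak{c}([v,\cdot\,])\|_\ast = \sup_{\|w\|=1}|\mathfrak{c}([v,w])| \leq M\,\|\mathfrak{c}\|_\ast \|v\|.
\]

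Next I would introduce the compactness constants. Set
\[
P = \max_{\mathfrak{a}\in K_{\mathfrak{g}^\ast\setminus\{0\}}} \|\mathfrak{a}\|_\ast, \qquad
N = \max_{v\in S_{F_e}[0,1]} \|v\|,
\]
both finite since $K_{\mathfrak{g}^\ast\setminus\{0\}}$ and $S_{F_e}[0,1]$ are compact (the latter by Proposition \ref{F e Lipschitz}, since $F_e$ is equivalent to $\|\cdot\|$). By Lemma \ref{u_alpha é lipschitz}, there is a Lipschitz constant $\mathcal{K}>0$ for $u$ on $K_{\mathfrak{g}^\ast\setminus\{0\}}$. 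Applying these to the two terms gives
\[
\|\mathfrak{a}([u(\mathfrak{a})-u(\mathfrak{b}),\cdot\,])\|_\ast \leq M\,P\,\mathcal{K}\,\|\mathfrak{a}-\mathfrak{b}\|_\ast,
\]
\[
\|(\mathfrak{a}-\mathfrak{b})([u(\mathfrak{b}),\cdot\,])\|_\ast \leq M\,N\,\|\mathfrak{a}-\mathfrak{b}\|_\ast,
\]
and adding these bounds yields the Lipschitz constant $M(P\mathcal{K}+N)$ for $\tilde{\mathcal{E}}$ on $K_{\mathfrak{g}^\ast\setminus\{0\}}$.

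There is no significant obstacle: the only non-elementary ingredient is the Lipschitz property of $u$, which has already been proven and crucially uses the strong convexity of $F_e$ through Theorem \ref{theorem_Lipschitz constant df2ast}. Everything else is the add-and-subtract decomposition together with bilinearity of the bracket and compactness. It is worth noting, however, that the strong convexity of $F$ enters the statement in two places: it guarantees that $u(\mathfrak{a})$ is a single point (so that $\tilde{\mathcal{E}}$ is genuinely a function rather than a multivalued map), and it supplies, through Lemma \ref{u_alpha é lipschitz}, the quantitative control of $u$ that drives the estimate above.
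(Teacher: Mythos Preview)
Your proof is correct and follows essentially the same approach as the paper: the identical add-and-subtract decomposition, the Lipschitz property of $u$ from Lemma~\ref{u_alpha é lipschitz}, and compactness bounds. The only cosmetic difference is that the paper expands the bracket in an orthonormal basis and tracks the structure constants explicitly (obtaining a Lipschitz constant of the form $(\tilde{C}\mathcal{K}+1/r_m)\hat{C}n^3$), whereas you invoke the bilinearity bound $\|[v,w]\|\leq M\|v\|\|w\|$ directly; your formulation is slightly more streamlined but mathematically the same.
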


	\begin{proof}
	Let $\{e_1, \ldots, e_n\}$ be an orthonormal basis of $\mathfrak{g}$.
	
	Let
	\begin{eqnarray}
		\hat{C} & = &  \max \left\{ \big\vert c_{ij}^{\ \ k} \big\vert : \, i, j, k \in \{1, \ldots, n\} \right\}   \mbox{ and} \label{eqnarray_cijk}\\
		\tilde{C} & = & \max\left\{ \Vert \mathfrak{a} \Vert_\ast : \, \mathfrak{a} \in K_{\mathfrak{g}^\ast \setminus \{0\}} \right\},\nonumber
	\end{eqnarray} 
	where $c_{ij}^{\ \ k}$ are the Lie algebra structure constants.
	
	
	Given $\mathfrak{a}, \mathfrak{b} \in K_{\mathfrak{g}^\ast \setminus \{0\}}$, we have
	\begin{eqnarray}
		& & \big\Vert \tilde{\mathcal{E}}(\mathfrak{a}) - \tilde{\mathcal{E}}(\mathfrak{b}) \big\Vert_\ast
		 = \big\Vert \mathfrak{a}[ u(\mathfrak{a}), \cdot] - \mathfrak{b} [u(\mathfrak{b}), \cdot]  \big\Vert_\ast \nonumber \\
		& \leq & \big\Vert \mathfrak{a}[u(\mathfrak{a}), \cdot] - \mathfrak{a}[u(\mathfrak{b}), \cdot]  \big\Vert_\ast + \big\Vert \mathfrak{a}[u(\mathfrak{b}), \cdot] - \mathfrak{b}[u(\mathfrak{b}), \cdot]  \big\Vert_\ast \nonumber \\
		& = & \big\Vert \mathfrak{a}[u(\mathfrak{a})-u(\mathfrak{b}), \cdot] \big\Vert_\ast + \big\Vert (\mathfrak{a}-\mathfrak{b})[u(\mathfrak{b}), \cdot] \big\Vert_\ast \nonumber \\		 		
		& = & \max_{y \in S_{\Vert \cdot \Vert}[0,1]} \big\vert \mathfrak{a}[u(\mathfrak{a})-u(\mathfrak{b}), y^je_j] \big\vert + \max_{y \in S_{\Vert \cdot \Vert}[0,1]} \big\vert (\mathfrak{a}-\mathfrak{b})[u(\mathfrak{b}), y^je_j] \big\vert \nonumber \\	
		& = & \max_{y \in S_{\Vert \cdot \Vert}[0,1]} \big\vert y^j \big\vert \big\vert \mathfrak{a}[u(\mathfrak{a})-u(\mathfrak{b}), e_j] \big\vert +  \max_{y \in S_{\Vert \cdot \Vert}[0,1]} \big\vert y^j \big\vert \big\vert (\mathfrak{a}-\mathfrak{b})[u(\mathfrak{b}), e_j] \big\vert \nonumber \\
		& \leq & \sum_{j=1}^n \left( \big\vert \mathfrak{a}[u(\mathfrak{a})-u(\mathfrak{b}), e_j] \big\vert + \big\vert (\mathfrak{a}-\mathfrak{b})[u(\mathfrak{b}), e_j] \big\vert \right) \nonumber \\		
		& \leq & \sum_{j=1}^n \left( \Vert \mathfrak{a} \Vert_\ast \big\Vert [u(\mathfrak{a})-u(\mathfrak{b}), e_j] \big\Vert + \Vert \mathfrak{a}-\mathfrak{b} \Vert_\ast \big\Vert [u(\mathfrak{b}), e_j] \big\Vert \right)  \nonumber \\	
		& = & \sum_{j=1}^n \left( \Vert \mathfrak{a} \Vert_\ast \big\Vert [(u^{i}(\mathfrak{a})-u^{i}(\mathfrak{b}))e_i, e_j] \big\Vert + \Vert \mathfrak{a}-\mathfrak{b} \Vert_\ast \big\Vert [u^{i}(\mathfrak{b})e_i, e_j] \big\Vert \right) \nonumber \\	
		& \leq & \sum_{j=1}^n  \left(\tilde{C} \big\vert u^{i}(\mathfrak{a})-u^{i}(\mathfrak{b}) \big\vert  \big\Vert [e_i, e_j] \big\Vert + \big\vert u^{i}(\mathfrak{b})\big\vert \Vert \mathfrak{a}-\mathfrak{b} \Vert_\ast \big\Vert [e_i, e_j] \big\Vert \right) \nonumber \\
		& \leq & \sum_{i,j=1}^n  \tilde{C} \big\Vert u(\mathfrak{a})-u(\mathfrak{b}) \big\Vert  \left\Vert \sum_{k=1}^n c_{ij}{}^k e_k \right\Vert \nonumber \\
		& & + \sum_{i,j=1}^n \big\Vert u(\mathfrak{b})\big\Vert \Vert \mathfrak{a}-\mathfrak{b} \Vert_\ast \left\Vert \sum_{k=1}^n c_{ij}{}^k e_k \right\Vert \label{eqnarray_estimativa Lipschitz u}\\
		& \leq & \left(\tilde{C}\mathcal{K}+ \frac{1}{r_m}\right)\hat{C}n^3 \Vert \mathfrak{a} - \mathfrak{b}\Vert_{\ast}, \nonumber
	\end{eqnarray}
where the last inequality is due to \eqref{eqnarray_u Lipschitz}, \eqref{constante rho}, \eqref{eqnarray_cijk} and
\begin{align}
\max_{\mathfrak{b} \in K_{\mathfrak{g}^\ast \setminus \{0\}}} \Vert u(\mathfrak{b})\Vert \leq & \max_{y \in \sfe[0,1]} \Vert y\Vert = \frac{1}{r_m}.\label{align_maximo em SF eh 1 sobre rm}
\end{align}
Therefore $\left( \tilde{C}\mathcal{K} + \frac{1}{r_m}\right)\hat{C}n^3$ is a Lipschitz constant of $\tilde{\mathcal{E}}$.
\end{proof}


\begin{pro} \label{condições sobre a(t) e sua derivada} Let $(G, F)$ be a Lie group endowed with a left-invariant $C^0$-Finsler structure $F.$ Let $(\mathfrak{u}(t),\mathfrak{a}(t))$ be a Pontryagin extremal of $(G, F),$ with $t \in I$, where $I$ is any interval containing the origin. The following conditions are true:
	\begin{enumerate}
		\item There exists a $C>0$ such that $\Vert \mathfrak{a} [u(\mathfrak{a}), \cdot]\Vert_\ast <C$ for every $\mathfrak{a} \in $ $S_{(F_e)_\ast}[0,(F_e)_\ast(\mathfrak{a}(0))].$ 
		\item $\mathfrak{\dot{a}}(t) $ is bounded in $I.$
		\item $\mathfrak{a}(t)$ is Lipschitz in $I$.
		\item $\mathfrak{\dot{a}}(t)$ is Lipschitz in $I$, if $F_e$ is strongly convex asymmetric norm.
	\end{enumerate}
\end{pro}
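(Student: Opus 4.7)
For item (1), the plan is to exploit compactness. The sphere $S_{(F_e)_\ast}[0,(F_e)_\ast(\mathfrak{a}(0))]$ is a compact subset of $\mathfrak{g}^\ast$, so $\sup\{\Vert\mathfrak{a}\Vert_\ast : \mathfrak{a}\in S_{(F_e)_\ast}[0,(F_e)_\ast(\mathfrak{a}(0))]\}$ is finite. For any $\mathfrak{a}$ on this sphere and any maximizer $u\in\mathcal{C}(\mathfrak{a})\subset S_{F_e}[0,1]$, one has $\Vert u\Vert\leq 1/r_m$ by \eqref{align_maximo em SF eh 1 sobre rm}. A direct computation in the orthonormal basis $\{e_1,\ldots,e_n\}$, using the structure constants $c_{ij}{}^k$ as in the proof of Theorem \ref{E til e Lipschitz}, yields $\Vert[u,v]\Vert\leq \hat{C}n^{3/2}\Vert u\Vert\Vert v\Vert$ for every $v\in\mathfrak{g}$. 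Hence
\[
\Vert\mathfrak{a}[u,\cdot]\Vert_\ast = \sup_{\Vert v\Vert=1}\bigl\vert\mathfrak{a}([u,v])\bigr\vert \leq \Vert\mathfrak{a}\Vert_\ast \sup_{\Vert v\Vert=1}\Vert[u,v]\Vert,
\]
and both factors are uniformly bounded. Choosing $C$ strictly larger than this supremum settles (1).

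For item (2), I would first observe that Eq. \eqref{É constante pelo PMP} applied at $x=e$ (and left-invariance) forces $(F_e)_\ast(\mathfrak{a}(t))=(F_e)_\ast(\mathfrak{a}(0))$ for all $t\in I$; that is, $\mathfrak{a}(t)$ is confined to the compact sphere considered in (1). Since $\dot{\mathfrak{a}}(t)=\mathfrak{a}(t)([\mathfrak{u}(t),\cdot])$ a.e.\ with $\mathfrak{u}(t)\in\mathcal{C}(\mathfrak{a}(t))$, the bound in (1) gives $\Vert\dot{\mathfrak{a}}(t)\Vert_\ast<C$ almost everywhere, proving (2). Item (3) then follows because $\mathfrak{a}(t)$ is absolutely continuous and $\mathfrak{a}(t)-\mathfrak{a}(s)=\int_s^t\dot{\mathfrak{a}}(\tau)d\tau$, so $\Vert\mathfrak{a}(t)-\mathfrak{a}(s)\Vert_\ast\leq C\vert t-s\vert$ on $I$; thus $C$ is a Lipschitz constant for $\mathfrak{a}(t)$.

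For item (4), the strong convexity of $F_e$ makes $\mathcal{C}(\mathfrak{a})=\{u(\mathfrak{a})\}$ a singleton (Proposition \ref{Formula u alpha}), so $\dot{\mathfrak{a}}(t)=\tilde{\mathcal{E}}(\mathfrak{a}(t))$ a.e. Since $\mathfrak{a}(t)$ lies in the compact set $S_{(F_e)_\ast}[0,(F_e)_\ast(\mathfrak{a}(0))]\subset\mathfrak{g}^\ast\setminus\{0\}$, Theorem \ref{E til e Lipschitz} provides a Lipschitz constant $L$ for $\tilde{\mathcal{E}}$ there. Combined with the Lipschitz estimate from (3),
\[
\Vert\dot{\mathfrak{a}}(t)-\dot{\mathfrak{a}}(s)\Vert_\ast = \Vert\tilde{\mathcal{E}}(\mathfrak{a}(t))-\tilde{\mathcal{E}}(\mathfrak{a}(s))\Vert_\ast \leq L\Vert\mathfrak{a}(t)-\mathfrak{a}(s)\Vert_\ast \leq LC\vert t-s\vert
\]
on the set of full measure where $\dot{\mathfrak{a}}$ equals $\tilde{\mathcal{E}}\circ\mathfrak{a}$; since $\dot{\mathfrak{a}}$ is defined a.e., this already suffices (and if one wishes, the Lipschitz modification $\tilde{\mathcal{E}}\circ\mathfrak{a}$ can serve as the canonical representative on all of $I$). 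The only delicate point I anticipate is making sure the constancy of $(F_e)_\ast(\mathfrak{a}(t))$ is used correctly so that the compact sphere ``catches'' the whole trajectory — once that is in hand, the Lipschitz statements cascade from the previously established estimates on $u$ and $\tilde{\mathcal{E}}$.
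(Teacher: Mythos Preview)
Your proposal is correct and follows essentially the same approach as the paper. For item~(1) the paper simply observes that $\mathfrak{a}$, $y$, and $u(\mathfrak{a})$ all range over compact sets and invokes continuity of $[\cdot,\cdot]$, whereas you make the bound explicit via structure constants; items~(2)--(4) are argued identically (constancy of $(F_e)_\ast(\mathfrak{a}(t))$ from \eqref{É constante pelo PMP}, integration for Lipschitzness, and composition with Theorem~\ref{E til e Lipschitz}).
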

\begin{proof} 	
	\item[(1)]
	We have
	\begin{align*}
	\big\Vert \mathfrak{a}[u(\mathfrak{a}), \cdot] \big\Vert_\ast =  \max_{y \in S_{\Vert \cdot \Vert}[0, 1]} \big\vert \mathfrak{a}[u(\mathfrak{a}), y] \big\vert
	\end{align*}	
	by definition.
	Notice that $\mathfrak{a}$, $y$ and $u(\mathfrak{a})$ are contained in compact subsets. 
	Then the continuity of $[\cdot, \cdot]$ settles this item.
	
	\
	
	\item[(2)] It follows directly from Item (1), since $\mathfrak{\dot{a}}(t) = \mathfrak{{a}}(t)[\mathfrak{u}(t), \cdot]$ and $\mathfrak{a}(t) \in S_{(F_e)_\ast}[0, (F_e)_\ast(\mathfrak{a}(0))]$ by (\ref{É constante pelo PMP}).
	
	\	
	
	\item[(3)] Follows directly from Item (2).
	
	
	\item[(4)] It follows from Item (3) and Theorem \ref{E til e Lipschitz}, since $\mathfrak{\dot{a}}(t) = \tilde{\mathcal{E}}(\mathfrak{a}(t)).$
\end{proof}

In the next theorem we prove that the vertical part of a Pontryagin extremal is defined for all $t\in \mathbb{R}.$


\begin{teo} \label{a(t) esta definida para toda a reta} Let $G$ be a Lie group endowed with a left-invariant strongly convex $C^0$-Finsler structure $F$ and $\mathfrak{a}_0 \in \mathfrak{g}^\ast \backslash \{0\}$. 
Then the initial value problem 
	\begin{eqnarray} \label{EDO a(t)}
		\mathfrak{\dot{a}} = \mathfrak{a}[u(\mathfrak{a}), \cdot],\,\,\,\,\,\,\,\, \mathfrak{a}(0) = \mathfrak{a}_0
	\end{eqnarray}
	admits a unique solution $\mathfrak{a}(t)$, which is defined for all $t \in \mathbb{R}.$
\end{teo}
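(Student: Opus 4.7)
The plan is to combine standard ODE theory with a conservation law that keeps trajectories away from the singular point $0 \in \mathfrak{g}^\ast$.

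First I would address local existence and uniqueness. The right-hand side of \eqref{EDO a(t)} is exactly $\tilde{\mathcal{E}}(\mathfrak{a})$, which by Theorem \ref{E til e Lipschitz} is locally Lipschitz on $\mathfrak{g}^\ast\backslash\{0\}$. The classical Picard--Lindel\"of theorem then yields, for any $\mathfrak{a}_0 \in \mathfrak{g}^\ast \backslash \{0\}$, a unique maximal solution $\mathfrak{a}:(T_-,T_+) \rightarrow \mathfrak{g}^\ast \backslash \{0\}$ with $\mathfrak{a}(0)=\mathfrak{a}_0$, where $(T_-,T_+)$ is an open interval containing $0$.

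Next I would establish the conservation law $(F_e)_\ast(\mathfrak{a}(t)) = (F_e)_\ast(\mathfrak{a}_0)$ for every $t\in (T_-,T_+)$. This is essentially the Lie-algebra version of \eqref{É constante pelo PMP}, and a direct computation makes it transparent. By Proposition \ref{Formula u alpha}, $d((F_e)_\ast^{\,2})_{\mathfrak{a}} = 2(F_e)_\ast(\mathfrak{a})\,u(\mathfrak{a})$ once we identify the differential at $\mathfrak{a}\in \mathfrak{g}^\ast$ with an element of $\mathfrak{g}$ via the canonical pairing. Differentiating along the flow,
\begin{equation*}
\frac{d}{dt}(F_e)_\ast^{\,2}(\mathfrak{a}(t)) \;=\; \dot{\mathfrak{a}}(t)\bigl(2(F_e)_\ast(\mathfrak{a}(t))\,u(\mathfrak{a}(t))\bigr) \;=\; 2(F_e)_\ast(\mathfrak{a}(t))\,\mathfrak{a}(t)\bigl([u(\mathfrak{a}(t)),u(\mathfrak{a}(t))]\bigr) \;=\; 0,
\end{equation*}
since the Lie bracket is alternating. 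Thus $\mathfrak{a}(t)$ stays on the compact set $S_{(F_e)_\ast}[0,(F_e)_\ast(\mathfrak{a}_0)]$, which does not contain $0$ because $\mathfrak{a}_0\neq 0$.

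Finally, global existence follows from a standard extension argument. Suppose for contradiction that $T_+ < \infty$. The compact set $K = S_{(F_e)_\ast}[0,(F_e)_\ast(\mathfrak{a}_0)]$ is a compact subset of $\mathfrak{g}^\ast\backslash\{0\}$, and Proposition \ref{condições sobre a(t) e sua derivada}(2) (or directly the continuity of $\tilde{\mathcal{E}}$ on $K$) gives $\|\dot{\mathfrak{a}}(t)\|_\ast \leq C$ for $t \in [0,T_+)$. Hence $\mathfrak{a}(t)$ is Cauchy as $t\to T_+^-$ and converges to some $\mathfrak{a}_\infty \in K$, which lies in the open set $\mathfrak{g}^\ast\backslash\{0\}$ where the vector field is Lipschitz; reapplying Picard--Lindel\"of at $\mathfrak{a}_\infty$ contradicts the maximality of $T_+$. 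The same argument on $T_-$ gives $T_-=-\infty$, so the solution is defined on all of $\mathbb{R}$.

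The main step is the conservation law; everything else is standard ODE extension. No genuine obstacle is expected, provided the identification of $d((F_e)_\ast^{\,2})$ with $2(F_e)_\ast \cdot u$ from Proposition \ref{Formula u alpha} is used cleanly so that antisymmetry of $[\cdot,\cdot]$ delivers the conservation.
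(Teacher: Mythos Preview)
Your proposal is correct and follows essentially the same outline as the paper: local existence and uniqueness from the local Lipschitz property of $\tilde{\mathcal{E}}$ (Theorem \ref{E til e Lipschitz}), confinement to the compact level set $S_{(F_e)_\ast}[0,(F_e)_\ast(\mathfrak{a}_0)]$, and then the standard Cauchy-sequence extension argument. The only difference is cosmetic: the paper obtains the conservation law by invoking Proposition \ref{condições sobre a(t) e sua derivada} (which cites the PMP fact \eqref{É constante pelo PMP}), whereas you verify it directly via $d((F_e)_\ast^{\,2})_{\mathfrak{a}} = 2(F_e)_\ast(\mathfrak{a})\,u(\mathfrak{a})$ and the antisymmetry of the bracket---a self-contained computation that avoids the PMP machinery.
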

\begin{proof}
The initial value problem  \eqref{EDO a(t)} admits a unique solution because $\mathfrak{a}\mapsto \mathfrak{a}[u(\mathfrak{a}), \cdot]$ is locally Lipschitz on $\mathfrak{g}^\ast \backslash \{0\}$ due to Theorem \ref{E til e Lipschitz}.

	In order to fix ideas, suppose that
	\begin{eqnarray} \label{Solucao a(t)}
		\mathfrak{a}(t):(a,b) \rightarrow \mathfrak{g}^\ast\setminus\{0\}
	\end{eqnarray} 
	is the maximal solution of (\ref{EDO a(t)}), with $a\in [-\infty, 0)$ and $b \in (0, \infty]$.
	The other cases are analogous.
	
	We claim for a contradiction assuming that $b < \infty.$ Let $(s_i)_{i \in \mathbb{N}}$ be a sequence in $(a, b)$ converging to $b.$ As $(s_i)$ is a Cauchy sequence and $\mathfrak{a}(t)$ is Lipschitz by Proposition \ref{condições sobre a(t) e sua derivada}, then $(\mathfrak{a}(s_i))_{i \in \mathbb{N}}$ is a Cauchy sequence, what implies that  $\lim\limits_{i \rightarrow \infty} \mathfrak{a}(s_i)$ exists. 
	Denote $p_0 =\lim\limits_{t \rightarrow b^{-}} \mathfrak{a}(t).$                
	
	Consider the initial value problem
	\begin{eqnarray} \label{PVI com a(c2)=p0}
		\mathfrak{\dot{a}} = \mathfrak{a}[u(\mathfrak{a}), \cdot], \quad \mathfrak{a}(b) = p_0.
	\end{eqnarray}
	By the Picard-Lindel\"of Theorem, there is a unique solution 
	$$\mathfrak{b}(t):[b, b+h) \rightarrow \mathfrak{g}^\ast\setminus\{0\}$$
	for the initial value problem (\ref{PVI com a(c2)=p0}), with $h>0.$
	
	Let 
	\begin{equation} \label{Solucao c(t)}
		\mathfrak{c(t)} = \begin{cases}
			\mathfrak{a}(t), \quad t \in (a, b) \\
			\mathfrak{b}(t), \quad t \in [b, b+h).
		\end{cases}
	\end{equation}
	Then
	\begin{eqnarray*}
		\frac{d}{d t}\Bigg\vert_{t=b^{-}}\mathfrak{c}(t) = \lim_{t \rightarrow b^{-}} \mathfrak{\dot{a}}(t) = \lim_{t \rightarrow b^{-}} \tilde{\mathcal{E}}(\mathfrak{a}(t)) = \tilde{\mathcal{E}}\left(\lim_{t \rightarrow b^{-}}\mathfrak{a}(t)\right) = \tilde{\mathcal{E}}(p_0)
	\end{eqnarray*}
	and
	\begin{eqnarray*}
		\frac{d}{d t}\Bigg\vert_{t=b^{+}}\mathfrak{c}(t) = \lim_{t \rightarrow b^{+}} \mathfrak{\dot{b}}(t) = \lim_{t \rightarrow b^{+}} \tilde{\mathcal{E}}(\mathfrak{b}(t)) = \tilde{\mathcal{E}}\left(\lim_{t \rightarrow b^{+}}\mathfrak{b}(t)\right) = \tilde{\mathcal{E}}(p_0),
	\end{eqnarray*}
	where the last equality holds because $\mathfrak{c}(t)$ is continuous. It follows that $\mathfrak{c}(t)$ is a continuation of $\mathfrak{a}(t)$ beyond $b,$ which is a contradiction, because the solution (\ref{Solucao a(t)}) is maximal. Thus, we must have $b= \infty.$ In the same way we can verify that $a = -\infty$ and $\mathfrak{a}(t)$ is defined for all $t \in \mathbb{R}$, what settles the proof.
\end{proof}

Now we will prove the Lie group version of Theorem \ref{a(t) esta definida para toda a reta} in Theorem \ref{theorem_extremal de Pontryagin em grupos}, but we will present some preliminary results before.

The following proposition is a particular case of Theorem 8.1 of \cite{Fukuoka-Rodrigues}.
\begin{pro}
\label{proposition_campo geodesico estendido localmente Lipschitz grupo}
Let $G$ be a Lie group endowed with a left-invariant strongly convex $C^0$-Finsler structure $F$.
Then the extended geodesic field $\mathcal{E}(x,\alpha)$ of $(G,F)$ is a locally Lipschitz vector field on $T^\ast G\backslash 0$.
\end{pro}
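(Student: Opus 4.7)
The plan is to work in local coordinates and reduce the Lipschitz property of $\mathcal{E}(x,\alpha)$ to the Lipschitz property of $u: \mathfrak{g}^\ast\backslash\{0\} \to S_{F_e}[0,1]$ established in Lemma \ref{u_alpha é lipschitz}, combined with the smoothness of the left translation.

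Fix $(x_0,\alpha_0) \in T^\ast G \backslash 0$ and pick a coordinate neighborhood $(U, x^1, \ldots, x^n)$ of $x_0$ with the induced coordinates $(x^i, \alpha_i)$ on $T^\ast U$. In these coordinates the extended geodesic field reads
\[
\mathcal{E}(x,\alpha) = f^i(x, u(x,\alpha)) \frac{\partial}{\partial x^i} - \alpha_j \frac{\partial f^j}{\partial x^i}(x, u(x,\alpha)) \frac{\partial}{\partial \alpha_i},
\]
where $X_{\mathfrak{u}}(x) = f^i(x,\mathfrak{u})\partial/\partial x^i$ is the left-invariant vector field with $X_{\mathfrak{u}}(e)=\mathfrak{u}$ and $u(x,\alpha)\in S_{F_e}[0,1]$ is the unique maximizer of $\alpha(X_{\cdot}(x))$, which exists and is unique by the strong (hence strict) convexity of $F_e$. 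Since $\alpha(X_{\mathfrak{u}}(x)) = (dL_x^\ast \alpha)(\mathfrak{u}) = \mathfrak{a}(\mathfrak{u})$ with $\mathfrak{a} := dL_x^\ast \alpha$, one has the explicit identity $u(x,\alpha) = u(dL_x^\ast \alpha)$, where the right-hand $u$ is the map from Subsection \ref{subsection_inverse of Legendre transform} applied to $F_e$.

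The first step is to show that $u(x,\alpha)$ is locally Lipschitz on $T^\ast U \backslash 0$. Left translation $L_x$ is smooth in $x$, hence $(x,\alpha)\mapsto dL_x^\ast \alpha$ is smooth (and linear in $\alpha$), so it is Lipschitz on any compact $K \subset T^\ast U\backslash 0$. Its image $\tilde K$ in $\mathfrak{g}^\ast$ is compact, and since $dL_x^\ast$ is a linear isomorphism for every $x$ and $\alpha\neq 0$ on $K$, we have $\tilde K \subset \mathfrak{g}^\ast\backslash\{0\}$. Lemma \ref{u_alpha é lipschitz} then gives that $\mathfrak{a}\mapsto u(\mathfrak{a})$ is Lipschitz on $\tilde K$; composing, $(x,\alpha)\mapsto u(x,\alpha)$ is Lipschitz on $K$.

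The second step is to assemble the components of $\mathcal{E}$. Since $X_{\mathfrak{u}}$ is left-invariant and depends linearly on $\mathfrak{u}\in\mathfrak{g}$, the functions $(x,\mathfrak{u})\mapsto f^i(x,\mathfrak{u})$ and $(x,\mathfrak{u})\mapsto \partial f^j/\partial x^i(x,\mathfrak{u})$ are smooth in $x$ and linear in $\mathfrak{u}$; in particular they are locally Lipschitz on $U\times\mathfrak{g}$. Restricting $\mathfrak{u}$ to the bounded set $u(K)\subset S_{F_e}[0,1]$ and combining with the Lipschitz regularity of $(x,\alpha)\mapsto u(x,\alpha)$ from the previous step, each of the coefficient functions
\[
(x,\alpha) \mapsto f^i(x,u(x,\alpha)), \qquad (x,\alpha)\mapsto \alpha_j \frac{\partial f^j}{\partial x^i}(x,u(x,\alpha))
\]
is Lipschitz on $K$ (the second factor $\alpha_j$ is smooth and bounded on $K$, so the product remains Lipschitz). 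Therefore $\mathcal{E}$ is locally Lipschitz on $T^\ast U\backslash 0$, and since $(x_0,\alpha_0)$ was arbitrary, the proof is complete.

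The only delicate point is the interplay between the base coordinates and the fiber coordinates, but this is handled cleanly by the identification $u(x,\alpha)=u(dL_x^\ast\alpha)$; I expect no serious obstacle beyond keeping track of which Lipschitz estimate is used at which stage.
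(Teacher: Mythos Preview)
Your proof is correct. The paper does not give an argument at all: it simply records that the proposition is a particular case of Theorem 8.1 of \cite{Fukuoka-Rodrigues}. Your route is therefore different only in the sense that it is self-contained within the present paper rather than deferring to an external reference.

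The argument you give is in fact the natural specialization to the left-invariant case: the identity $u(x,\alpha)=u\big((dL_x)_e^\ast\alpha\big)$ reduces the problem to the Lipschitz property of $u$ on compact subsets of $\mathfrak{g}^\ast\backslash\{0\}$ (Lemma \ref{u_alpha é lipschitz}) composed with the smooth trivialization $(x,\alpha)\mapsto (dL_x)_e^\ast\alpha$, after which the linearity of $X_{\mathfrak u}(x)=(dL_x)_e\mathfrak u$ in $\mathfrak u$ handles the remaining coefficients. What you gain is that no result from \cite{Fukuoka-Rodrigues} beyond those already reproduced in Section \ref{subsection_inverse of Legendre transform} is needed; what the paper's citation buys is brevity and the observation that the statement holds more generally for Pontryagin type $C^0$-Finsler manifolds with strongly convex fibers, not just left-invariant ones.
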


Now we will adapt Theorem \ref{Teorema Jessica e Ryuichi} for the case where $F$ is strongly convex.

Let $t \in (a,b) \rightarrow (x(t), \alpha(t)) \in T^\ast G \backslash 0$ be the unique Pontryagin extremal such that $(x(0), \alpha(0)) = (x_0, \alpha_0)$ defined on $(a,b)$ (see Proposition \ref{proposition_campo geodesico estendido localmente Lipschitz grupo}).
Then $(u(\mathfrak{a}(t)), \mathfrak{a}(t))$, where $\mathfrak{a}(t) = d\big(L^\ast_{x(t)}\big)_e(\alpha(t))$, is the Pontryagin extremal on $S_{F_e}[0,1] \times \mathfrak{g}^\ast \backslash \{0\}$ corresponding to $(x(t), \alpha(t))$  due to Theorem \ref{Teorema Jessica e Ryuichi}.
In particular, $(x(t), \alpha(t))$ is the Pontryagin extremal corresponding to the control $u(\mathfrak{a}(t))$.
Notice that $(u(\mathfrak{a}(t)), \mathfrak{a}(t))$ is determined by $\mathfrak{a}(t)$, which is the unique solution of the initial value problem 
\[
\dot{\mathfrak{a}} = \tilde{\mathcal{E}}(\mathfrak{a}), \quad \mathfrak{a}(0) = d\big(L^\ast_{x_0}\big)_e(\alpha_0),
\]
(recall that $\tilde{\mathcal{E}}$ is locally Lipschitz due to Theorem \ref{E til e Lipschitz}).
Therefore, if $F$ is strongly convex, then the correspondence in Theorem \ref{Teorema Jessica e Ryuichi} can be simplified to a mapping from the Pontryagin extremal $(x(t), \alpha(t))$ satisfying $(x(0),$ $\alpha(0)) = (x_0, \alpha_0)$ to the vertical part of the Pontryagin extremal $\mathfrak{a}(t)$ satisfying $\mathfrak{a}(0) = d\big(L^\ast_{x_0}\big)_e(\alpha_0)$.

For the inverse map, let $t \in (a,b) \rightarrow \mathfrak{a}(t) \in \mathfrak{g}^\ast \backslash \{0\}$ be the vertical part of the Pontryagin extremal satisfying $\mathfrak{a}(0) = d\big(L^\ast_{x_0}\big)_e(\alpha_0)$.
Then $(u(\mathfrak{a}(t)),\mathfrak{a}(t))$ is a Pontryagin extremal defined on $S_{F_e}[0,1] \times \mathfrak{g}^\ast \backslash \{0\}$ and Theorem \ref{Teorema Jessica e Ryuichi} states that there exist a unique Pontryagin extremal $(x(t),\alpha(t)) \in T^\ast G\backslash 0$ corresponding to the control $u(\mathfrak{a}(t))$ such that $\mathfrak{a}(t) = d\big(L^\ast_{x(t)}\big)_e(\alpha(t))$ and $x(t_0) = x_0$, which is the original Pontryagin extremal defined on $T^\ast G \backslash 0$.

Therefore, we have proved the following corollary of Theorem \ref{Teorema Jessica e Ryuichi}.

\begin{cor}
\label{corollary_Jessica Ryuichi}
Let $(G, F)$ be a Lie group endowed with a left-invariant strongly convex $C^0$-Finsler structure $F$.
Consider $t_0 \in \mathbb{R}$ and $x_0 \in G$.
Then the correspondences between Pontryagin extremals presented in Theorem \ref{Teorema Jessica e Ryuichi} are given by 
\begin{equation}
\label{equation_correspondencia Pontryagin grupo-algebra}
\begin{array}{c}
(x(t), \alpha(t))\text{ solution of }\left\{
\begin{array}{l}
(\dot{x},\dot{\alpha}) = \mathcal{E}(x,\alpha) \\
(x(t_0), \alpha(t_0)) = (x_0, \alpha_0) 
\end{array}
\right. \\
\updownarrow \\
\mathfrak{a}(t) \text{ solution of }\left\{
\begin{array}{l}
\dot{\mathfrak{a}} = \tilde{\mathcal{E}}{(\mathfrak{a})} \\
\mathfrak{a}(0) =  d\big(L^\ast_{x_0}\big)_e(\alpha_0).
\end{array}
\right.
\end{array}
\end{equation}
Moreover, we have that
\begin{equation*}
\mathfrak{a}(t) = d\big(L^\ast_{x(t)}\big)_e(\alpha(t))
\end{equation*} 
and $(x(t),\alpha(t))$ is the Pontryagin extremal corresponding to the control $\mathfrak{u}(t) = u(\mathfrak{a}(t))$.
\end{cor}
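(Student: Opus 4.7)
The plan is to reduce the multivalued correspondence of Theorem \ref{Teorema Jessica e Ryuichi} to a single-valued bijection by exploiting strong convexity to eliminate the freedom in the control, and then invoke the ODE theory for the locally Lipschitz fields $\mathcal{E}$ and $\tilde{\mathcal{E}}$ to guarantee that the resulting IVPs on each side are well-posed.

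First I would observe that strong convexity of $F$ forces $F_e$ to be strictly convex, so Proposition \ref{Formula u alpha} makes $\mathcal{C}(\mathfrak{a}) = \{u(\mathfrak{a})\}$ a singleton for every $\mathfrak{a} \in \mathfrak{g}^\ast \setminus \{0\}$. Consequently the extended geodesic field $\tilde{\mathcal{E}}(\mathfrak{a}) = \mathfrak{a}([u(\mathfrak{a}),\cdot])$ is single-valued, and Theorem \ref{E til e Lipschitz} says it is locally Lipschitz, so the IVP $\dot{\mathfrak{a}} = \tilde{\mathcal{E}}(\mathfrak{a})$ with the prescribed value $d(L^\ast_{x_0})_e(\alpha_0)$ has a unique solution. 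On the other side, Proposition \ref{proposition_campo geodesico estendido localmente Lipschitz grupo} gives that $\mathcal{E}$ is locally Lipschitz on $T^\ast G\backslash 0$, so the upper IVP also admits a unique solution. These two facts will supply the two inverse directions once I exhibit the correspondence.

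Next I would set up the downward map. Starting from a Pontryagin extremal $(x(t),\alpha(t))$ with $(x(t_0),\alpha(t_0))=(x_0,\alpha_0)$, define $\mathfrak{a}(t) := d(L^\ast_{x(t)})_e(\alpha(t))$. Theorem \ref{Teorema Jessica e Ryuichi} ensures that $\mathfrak{a}(t)$ is absolutely continuous and satisfies $\dot{\mathfrak{a}}(t) = \mathfrak{a}(t)([\mathfrak{u}(t),\cdot])$ for some measurable $\mathfrak{u}(t)\in \mathcal{C}(\mathfrak{a}(t))$. Since $\mathcal{C}(\mathfrak{a}(t))=\{u(\mathfrak{a}(t))\}$, we automatically have $\mathfrak{u}(t) = u(\mathfrak{a}(t))$ and therefore $\dot{\mathfrak{a}} = \tilde{\mathcal{E}}(\mathfrak{a})$, with the required initial data obtained by evaluating the pullback at the appropriate time. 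Conversely, given a solution $\mathfrak{a}(t)$ of $\dot{\mathfrak{a}}=\tilde{\mathcal{E}}(\mathfrak{a})$, the pair $(u(\mathfrak{a}(t)),\mathfrak{a}(t))$ is a Pontryagin extremal in the sense of the second half of Theorem \ref{Teorema Jessica e Ryuichi}, and that theorem produces a unique Pontryagin extremal $(x(t),\alpha(t)) \in T^\ast G\setminus 0$ associated with the control $u(\mathfrak{a}(t))$ satisfying $x(t_0)=x_0$ and $\mathfrak{a}(t) = d(L^\ast_{x(t)})_e(\alpha(t))$.

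Finally I would check that these two assignments are mutually inverse. In both directions the relations $\mathfrak{a}(t) = d(L^\ast_{x(t)})_e(\alpha(t))$ and $\mathfrak{u}(t) = u(\mathfrak{a}(t))$ hold, so composing them in either order returns the original object; the two uniqueness statements cited in the first paragraph preclude any competing lift or projection. I do not expect any genuine obstacle: the entire content of the corollary is the observation that strong convexity allows one to replace the measurable-control, differential-inclusion formalism of Theorem \ref{Teorema Jessica e Ryuichi} by an ordinary ODE, turning the correspondence into a clean bijection between initial value problems.
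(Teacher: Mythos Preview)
Your proposal is correct and follows essentially the same route as the paper: both arguments use strict convexity to collapse $\mathcal{C}(\mathfrak{a})$ to the singleton $\{u(\mathfrak{a})\}$, invoke Theorem \ref{E til e Lipschitz} and Proposition \ref{proposition_campo geodesico estendido localmente Lipschitz grupo} for well-posedness of the two IVPs, and then run Theorem \ref{Teorema Jessica e Ryuichi} in each direction to obtain mutually inverse maps. Your final paragraph verifying that the two assignments compose to the identity via the uniqueness statements is exactly the reasoning the paper uses.
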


Now we are in position to prove the Lie group version of Theorem \ref{a(t) esta definida para toda a reta}.

\begin{teo}
\label{theorem_extremal de Pontryagin em grupos}
Let $G$ be a Lie group endowed with a left-invariant strongly convex $C^0$-Finsler structure $F$ and $(x_0, \alpha_0) \in T^\ast G \backslash 0$.
Then there exist a unique Pontryagin extremal $(x(t),\alpha(t))$ of $(G,F)$ defined on $\mathbb{R}$ such that $(x(0), \alpha(0)) = (x_0, \alpha_0)$.
\end{teo}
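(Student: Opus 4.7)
The plan is to reduce the statement to the Lie-algebra-level existence result, Theorem \ref{a(t) esta definida para toda a reta}, via Corollary \ref{corollary_Jessica Ryuichi}. First, I would set $\mathfrak{a}_0 := d(L^\ast_{x_0})_e(\alpha_0) \in \mathfrak{g}^\ast \setminus \{0\}$ and invoke Theorem \ref{a(t) esta definida para toda a reta} to obtain the unique globally defined solution $\mathfrak{a}:\mathbb{R} \to \mathfrak{g}^\ast \setminus \{0\}$ of $\dot{\mathfrak{a}} = \tilde{\mathcal{E}}(\mathfrak{a})$ with $\mathfrak{a}(0) = \mathfrak{a}_0$. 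Strong convexity of $F_e$ ensures that $\mathcal{C}(\mathfrak{a}(t)) = \{u(\mathfrak{a}(t))\}$ is a singleton for every $t$ (Proposition \ref{Formula u alpha}), so the control $\mathfrak{u}(t) := u(\mathfrak{a}(t))$ is continuous (in fact locally Lipschitz by Lemma \ref{u_alpha é lipschitz}), takes values in the compact set $S_{F_e}[0,1]$, and the pair $(\mathfrak{u}(t), \mathfrak{a}(t))$ satisfies (\ref{Sistema de controle}) on all of $\mathbb{R}$.

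For existence at the group level, I would lift $\mathfrak{u}(t)$ by solving the non-autonomous ODE $\dot{x}(t) = X_{\mathfrak{u}(t)}(x(t))$ with $x(0)=x_0$, where $X_\mathfrak{u}$ denotes the left-invariant vector field determined by $X_\mathfrak{u}(e) = \mathfrak{u}$. Since $\mathfrak{u}(t)$ is uniformly bounded in $\mathfrak{g}$, this left-invariant, time-dependent system is complete on $G$: concretely, the $F$-length of $x(t)$ on $[0,T]$ equals $\int_0^T F_e(\mathfrak{u}(s))\,ds = T$, which precludes finite-time escape, so the maximal interval of existence is all of $\mathbb{R}$. Setting $\alpha(t) := d(L^\ast_{x(t)^{-1}})_{x(t)}(\mathfrak{a}(t))$ yields a globally defined lift to $T^\ast G \setminus 0$; Corollary \ref{corollary_Jessica Ryuichi} (together with Theorem \ref{Teorema Jessica e Ryuichi}) certifies that the resulting pair $(x(t),\alpha(t))$ is a Pontryagin extremal of $(G,F)$ with $(x(0),\alpha(0)) = (x_0,\alpha_0)$.

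For uniqueness, suppose $(y(t),\beta(t))$ is any Pontryagin extremal with $(y(0),\beta(0))=(x_0,\alpha_0)$ defined on some interval $I\ni 0$. Its vertical part $\mathfrak{b}(t) := d(L^\ast_{y(t)})_e(\beta(t))$ is absolutely continuous (Proposition \ref{condições sobre a(t) e sua derivada}), satisfies $\mathfrak{b}(0)=\mathfrak{a}_0$, and, because strong convexity forces the measurable selection in $\mathcal{C}(\mathfrak{b}(t))$ to coincide with $u(\mathfrak{b}(t))$, solves $\dot{\mathfrak{b}} = \tilde{\mathcal{E}}(\mathfrak{b})$ on $I$. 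The uniqueness part of Theorem \ref{a(t) esta definida para toda a reta} then gives $\mathfrak{b}\equiv \mathfrak{a}|_I$, so $(y,\beta)$ is driven by the same control $\mathfrak{u}$ from the same initial base point as $(x,\alpha)$; the uniqueness clause of Theorem \ref{Teorema Jessica e Ryuichi} yields $(y(t),\beta(t))=(x(t),\alpha(t))$ on $I$.

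The only non-routine step is propagating global existence from $\mathfrak{a}(t)$ to $x(t)$, and this is resolved by the a priori $F$-length bound $T$ above, which prevents any escape to infinity even though $G$ need not be compact. Everything else is a direct assembly of Theorem \ref{a(t) esta definida para toda a reta}, Corollary \ref{corollary_Jessica Ryuichi}, and the fact that strong convexity singles out $u(\mathfrak{a})$ as the unique optimal control.
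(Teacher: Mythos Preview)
Your approach is essentially the paper's: both reduce to Theorem \ref{a(t) esta definida para toda a reta} through the bijection of Corollary \ref{corollary_Jessica Ryuichi}, obtaining the global $\mathfrak{a}(t)$ first and then lifting. The paper's three-line proof simply quotes the reverse arrow of that correspondence to produce the global $(x(t),\alpha(t))$, whereas you unpack that step and argue directly that $\dot{x}=X_{\mathfrak{u}(t)}(x)$ is complete. One small point: your $F$-length bound ``$\int_0^T F_e(\mathfrak{u})\,ds=T$ precludes escape'' tacitly uses completeness of $(G,d_F)$, which is not automatic for an arbitrary $C^0$-Finsler manifold; for left-invariant structures it follows by comparing $F$ with a left-invariant Riemannian metric and invoking Hopf--Rinow, which is precisely how the paper handles this step later in Lemma \ref{Lemma_existencia e unicidade de extremal no grupo}. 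With that one-line justification added, your argument is complete and matches the paper's.
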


\begin{proof}
Let $t \in (a,b) \mapsto (\tilde{x}(t), \tilde{\alpha}(t))$ be a Pontryagin extremal such that $(\tilde{x}(0), \tilde{\alpha}(0))=(x_0, \alpha_0)$ and $t \in (a,b) \mapsto \tilde{\mathfrak{a}}(t)$ be the vertical part of the Pontryagin extremal corresponding to $(\tilde{x}(t), \tilde{\alpha}(t))$ given by \eqref{equation_correspondencia Pontryagin grupo-algebra}. 
Theorem \ref{a(t) esta definida para toda a reta} implies that there exists a unique extension $\mathfrak{a}(t)$ of $\tilde{\mathfrak{a}}(t)$ defined on $\mathbb{R}$.
Then the Pontryagin extremal $t \in \mathbb{R} \mapsto (x(t), \alpha(t))$ corresponding to $t \in \mathbb{R} \mapsto \mathfrak{a}(t)$ is the unique extension of $(\tilde{x}(t), \tilde{\alpha}(t))$ to $\mathbb{R}$. 
\end{proof}


\section{Mollifier smoothing of strongly convex asymmetric norms}

\label{Smoothing of a strongly convex asymmetric norm}

In this section, we define the mollifier smoothing $\tilde{F}_\varepsilon$ of a strongly convex asymmetric norm $F$, which is a version of the vertical mollifier smoothing defined in \cite{Fukuoka-Setti}.
We prove that for a sufficiently small $\varepsilon$, $\{F_\varepsilon\}$ and $F$ are strongly convex with respect to the same norm (see Theorem \ref{F_epsilon2 til são fortemente convexas com a mesma constante}). 

Let $F$ be a strongly convex asymmetric norm with respect to $\sqrt{\frac{\gamma}{2}}\Vert \cdot \Vert$ on $\mathbb{R}^n$. The mollifier smoothing of $F^2$ is defined by
\[
\left(\eta_\varepsilon \ast F^2\right) (y) = \int \eta_\varepsilon(z)F^2(y-z)dz
\]
for $\varepsilon \in (0, \tau),$ where 
\begin{eqnarray} \label{Constante tau}
	\tau \in \left( 0, \frac{r_m}{8nr_M} \right), 
\end{eqnarray}
and $r_M$ and $r_m$ are defined as in (\ref{constante r}) and (\ref{constante rho}), respectively.
Notice that $F^{-1}([r_M/2,2r_M]) \subset \mathcal A,$ where $\mathcal{A}$ is defined as (\ref{Conjunto A}).

The following lemma will be used to construct the sphere of the mollifier smoothing. 


\begin{lem} 
\label{lema propriedade de Fepsilon2} 
$\left(\eta_\varepsilon \ast F^2\right)$ has the following properties for every $\varepsilon \in \left(0, \tau\right):$
\begin{enumerate}
\item $\Hess \left(\eta_\varepsilon \ast F^2\right)_{y}(v,v) \geq \gamma \Vert v \Vert^2$ for every $y\in \mathrm{int} \mathcal A$ and $v\in \mathbb{V}$;
\item The {\em radial derivative} of $\left(\eta_\varepsilon \ast F^2\right)$ on $\mathcal A$ satisfies
\[
\frac{r_m^2}{32} < \frac{y^i}{\Vert y\Vert}\left(\eta_\varepsilon \ast F^2\right)_{y^i}(y)<\left( \frac{2r_mr_M^3+12r_M^4}{r_m^2}\right),\,\, \forall \, y \in \mathcal A;
\]
\item $\left(\eta_\varepsilon \ast F^2\right)(y)<r_M^2$ for every $y \in \mathbb{R}^n$ with $\|y\| \leq \frac{1}{2};$
\item $\left(\eta_\varepsilon \ast F^2\right)(y)>r_M^2$ for every $y \in \mathbb{R}^n$ with $\|y\| \geq \frac{2r_M}{r_m}.$
\end{enumerate}
In particular we have that $\left\{ \eta_\varepsilon \ast F^2 = r_M^2\right\}$ is a smooth hypersurface in $\mathbb{R}^n$ such that its radial projection onto  $S_{\|\cdot\|}[0,1]$ is a diffeomorphism.
\end{lem}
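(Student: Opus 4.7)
The plan is to verify items (1)--(4) in order using the results from Section \ref{subsection_mollifier smoothing of convex functions} and the almost-radial estimates of Section \ref{subsection_almost radial}, and then assemble the geometric conclusion about the level set $\{\eta_\varepsilon \ast F^2 = r_M^2\}$ from them.

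For item (1), Theorem \ref{Fortemente convexa - Equivalencia} translates the strong convexity of $F$ with respect to $\sqrt{\gamma/2}\|\cdot\|$ into the classical strong convexity of $F^2$ with modulus $\gamma$. Lemma \ref{Suavização de função fortemente convexa} then carries this property across the mollifier smoothing, so $\eta_\varepsilon \ast F^2$ is again strongly convex with modulus $\gamma$; since it is smooth, item (4) of Theorem \ref{Equivalencias - Fortemente convexa (Definicao Classica)} gives the Hessian lower bound. Items (3) and (4) follow from pointwise control of the integrand. For $z$ in $\mathrm{supp}\,\eta_\varepsilon = B_{\|\cdot\|}[0,\varepsilon]$, the estimates $r_m \|w\| \leq F(w) \leq r_M \|w\|$ (immediate from \eqref{constante r}, \eqref{constante rho} and positive homogeneity) combined with the smallness of $\varepsilon$ forced by \eqref{Constante tau} give $\|y-z\| < 1$ when $\|y\| \leq 1/2$, so $F^2(y-z) < r_M^2$, and $\|y-z\| > r_M/r_m$ when $\|y\| \geq 2r_M/r_m$, so $F^2(y-z) > r_M^2$. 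Integrating against $\eta_\varepsilon$ (which has total mass $1$) yields (3) and (4).

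Item (2) is the main technical step and uses the almost-radial bounds of Lemma \ref{proposicao F em Rn}. Writing the radial derivative as
\[
\frac{y^i}{\|y\|}(\eta_\varepsilon \ast F^2)_{y^i}(y) = \lim_{t \to 0^+} \frac{1}{t}\bigl[(\eta_\varepsilon \ast F^2)\bigl(y + t \tfrac{y}{\|y\|}\bigr) - (\eta_\varepsilon \ast F^2)(y)\bigr],
\]
I would exchange the difference quotient with the convolution integral. For $y \in \mathcal{A}$ and $z \in \mathrm{supp}\,\eta_\varepsilon$, the restriction \eqref{Constante tau} guarantees $\varepsilon \leq r_m\|y\|/(4nr_M)$ and $w := y - z \in B_{\|\cdot\|}[y,\varepsilon]$. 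Lemma \ref{proposicao F em Rn} with $v = y$ then delivers the two-sided bound $\tfrac{r_m^2}{32}t < F^2(w + t\tfrac{y}{\|y\|}) - F^2(w) < \tfrac{2r_mr_M^3 + 12r_M^4}{r_m^2}\, t$ uniformly for every such $z$. Averaging against $\eta_\varepsilon(z)\,dz$ and sending $t \to 0^+$ produces the claimed bounds on the radial derivative.

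For the final geometric statement, items (3) and (4) confine the level set $\{\eta_\varepsilon \ast F^2 = r_M^2\}$ to $\mathcal{A}$, while item (2) shows that the radial derivative is strictly positive on $\mathcal{A}$; hence $r_M^2$ is a regular value and the level set is a smooth hypersurface. Along any ray from the origin, $t \mapsto (\eta_\varepsilon \ast F^2)(tu)$ has positive derivative throughout $\mathcal{A}$, so each ray meets $\{\eta_\varepsilon \ast F^2 = r_M^2\}$ exactly once by the intermediate value theorem; this makes the radial projection onto $S_{\|\cdot\|}[0,1]$ a smooth bijection, and the implicit function theorem (justified precisely by the positive radial derivative) provides the smooth inverse. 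The main obstacle I anticipate is the careful bookkeeping in item (2): one needs to verify the hypotheses of Lemma \ref{proposicao F em Rn} \emph{uniformly} for every $z \in \mathrm{supp}\,\eta_\varepsilon$, not just at $z = 0$, which is exactly the condition the choice of $\tau$ in \eqref{Constante tau} is designed to provide.
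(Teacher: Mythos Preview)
Your proposal is correct and follows essentially the same route as the paper: item (1) via Lemma \ref{Suavização de função fortemente convexa} and Theorem \ref{Equivalencias - Fortemente convexa (Definicao Classica)}(4), item (2) by passing the almost-radial estimate of Lemma \ref{proposicao F em Rn} through the convolution integral, items (3) and (4) by pointwise bounds on the integrand using $\|z\| \leq \varepsilon < \tau$, and the final geometric statement from the positive radial derivative together with (3), (4) and the implicit function theorem. Your presentation is slightly more explicit (you spell out the translation of strong convexity from $F$ to $F^2$ via Theorem \ref{Fortemente convexa - Equivalencia} and the limit $t\to 0^+$), but the argument is the paper's.
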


\begin{proof}

\

(1) 
By Lemma \ref{Suavização de função fortemente convexa}, we have that $\left(\eta_\varepsilon \ast F^2\right)$ is strongly convex with the same modulus $\gamma$ of $F^2$
and this item is consequence of Item (4) of Theorem \ref{Equivalencias - Fortemente convexa (Definicao Classica)}.

\

(2) Consider $y \in \mathcal{A}.$ Then
\begin{eqnarray*}
	\frac{r_m^2}{32}t & < & \left(\eta_\varepsilon \ast F^2\right) \left(y+t\frac{y}{\|y\|}\right)-\left(\eta_\varepsilon \ast F^2\right) (y) \\
	& = & \int_{B_{\|\cdot\|}[0,\varepsilon]} \eta_\varepsilon(z)\left[ F^2\left(y+t\frac{y}{\|y\|}-z\right) - F^2(y-z) \right] dz \\
	& = & \int_{B_{\|\cdot\|}[y,\varepsilon]} \eta_\varepsilon(y-z)\left[ F^2\left(z+t\frac{y}{\|y\|}\right) - F^2(z) \right] dz \\
	& < & \left( \frac{2r_mr_M^3+12r_M^4}{r_m^2}\right)t
\end{eqnarray*}
for $t\in (0,1)$, where the inequalities are due to Lemma \ref{proposicao F em Rn}. Thus, $\frac{r_m^2}{32}$  is a lower bound and $\frac{2r_mr_M^3+12r_M^4}{r_m^2}$ is an upper bound for $\frac{y^i}{||y||}(\eta_\varepsilon \ast F^2)_{y^i}(y)$.  

\

(3) Consider $y \in \mathbb{R}^n$ such that $\Vert y \Vert \leq 1/2$. 
Then
\begin{eqnarray}
\label{eqnarray_limitacao inferior suavizacao}
\left(\eta_\varepsilon \ast F^2\right)(y) & = & \int \eta_\varepsilon(z) F^2(y-z)dz
\end{eqnarray}
where $\Vert z\Vert \leq \frac{r_m}{8nr_M}$ on the support of $\eta_\varepsilon$.
Then $\Vert y-z\Vert^2 < 1$, what implies $F^2(y-z)< r^2_M$, and 
\[
\left(\eta_\varepsilon \ast F^2\right)(y) < r^2_M
\] 
holds due to \eqref{eqnarray_limitacao inferior suavizacao}.

(4) Consider $y \in \mathbb{R}^n$ such that $\Vert y\Vert \geq \frac{2r_M}{r_m}$. 
The proof follows analogously to Item (3). 
We have that $\Vert y-z\Vert > \frac{r_M}{r_m}$ for $z$ in the support of $\eta_\varepsilon$, what implies $F^2(y-z) > r_M^2$ and
\begin{eqnarray*}
\left(\eta_\varepsilon \ast F^2\right)(y) & = &\int \eta_\varepsilon(z) F^2(y-z)dz > r^2_M,
\end{eqnarray*}
what settles the proof.
\end{proof}


The next lemma is an intermediate step in order to prove several uniform convergences afterward. 


\begin{lem} 
\label{convergencia uniforme de F^epsilon^2} 
$\left(\eta_\varepsilon \ast F^2\right) \rightarrow F^2$ uniformly on compact subsets of  $\mathbb R^n$. 
\end{lem}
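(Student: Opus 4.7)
The statement is a standard consequence of the general theory of mollifiers already developed in Subsection \ref{subsection_mollifier smoothing of convex functions}, so the plan is essentially to reduce it to Theorem \ref{propriedades da aplicação mollifier}.

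First, I would observe that $F^2:\mathbb{R}^n \to \mathbb{R}$ is continuous. Indeed, $F$ is Lipschitz by Proposition \ref{F e Lipschitz}, so $F^2$ is locally Lipschitz (hence continuous) on $\mathbb{R}^n$. Consequently, with $U=\mathbb{R}^n$ in the notation of Subsection \ref{subsection_mollifier smoothing of convex functions}, we have $U_\varepsilon=\mathbb{R}^n$ for every $\varepsilon\in(0,\tau)$, so $(\eta_\varepsilon \ast F^2)$ is well-defined everywhere on $\mathbb{R}^n$.

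Next, I would simply invoke Item (3) of Theorem \ref{propriedades da aplicação mollifier} applied to the continuous function $F^2$ on $U=\mathbb{R}^n$ to conclude that $(\eta_\varepsilon \ast F^2) \to F^2$ uniformly on compact subsets of $\mathbb{R}^n$, which is the claim.

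If a self-contained quantitative proof is preferred, one can argue directly: given a compact $K\subset \mathbb{R}^n$, fix $\varepsilon_0 \in (0,\tau)$ and let $K' = K + B_{\Vert\cdot\Vert}[0,\varepsilon_0]$, which is compact. Since $F^2$ is locally Lipschitz, there is a constant $L_{K'}>0$ such that $|F^2(y)-F^2(w)| \leq L_{K'} \Vert y-w\Vert$ for every $y,w \in K'$. Then, for $\varepsilon \in (0,\varepsilon_0)$ and $y \in K$, using $\int \eta_\varepsilon(z)\, dz = 1$ and $\supp \eta_\varepsilon \subset B_{\Vert\cdot\Vert}[0,\varepsilon]$,
\begin{equation*}
\bigl|(\eta_\varepsilon \ast F^2)(y)-F^2(y)\bigr| \leq \int \eta_\varepsilon(z)\, \bigl|F^2(y-z)-F^2(y)\bigr|\, dz \leq L_{K'}\, \varepsilon,
\end{equation*}
which goes to zero uniformly in $y \in K$ as $\varepsilon \to 0$. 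There is no real obstacle here; the only point to be careful about is the continuity of $F^2$ (equivalently, the Lipschitz property of $F$), which justifies applying the mollifier convergence theorem.
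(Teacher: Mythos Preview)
Your proof is correct and follows exactly the paper's approach: the paper's proof is the one-line ``It follows from Theorem \ref{propriedades da aplicação mollifier}.'' Your additional remarks on the continuity of $F^2$ and the optional quantitative argument are more detailed than needed but entirely sound.
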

\begin{proof}
It follows from Theorem \ref{propriedades da aplicação mollifier}.
\end{proof}


Let $\varepsilon \in (0, \tau).$ Theorem \ref{teo_convexo e epigrafo} and Lemma \ref{lema propriedade de Fepsilon2} implies that 
\begin{equation}
\label{equation_bola aberta Ftilepsilon}
\left\{y \in \mathbb{V}:  (\eta_\varepsilon \ast F^2)(y) \leq r_M^2\right\}
\end{equation}
is bounded, closed and convex. 
Item (3) of Lemma \ref{lema propriedade de Fepsilon2} implies that \eqref{equation_bola aberta Ftilepsilon} contains the origin in its interior. 
Therefore, the positively homogeneous function
\begin{equation} \label{aplicacao F_epsilon til}
\tilde{F}_\varepsilon: \mathbb{R}^n \rightarrow {\mathbb R}
\end{equation}
such that $\tilde{F}_\varepsilon$ equals $r_M$ on $\{ \eta_\varepsilon \ast F^2 = r_M^2\}$  is an asymmetric norm because the triangle inequality follows from the convexity of \eqref{equation_bola aberta Ftilepsilon}. 


Let $(\theta^i) = (\theta^2,\ldots,\theta^n):S_\theta \rightarrow W_\theta \subset \mathbb R^{n-1}$ be a coordinate system on an open subset $S_\theta$ of the Euclidean sphere $S_{\Vert \cdot\Vert}[0,1]$. 
If $r>0$ is the radial coordinate in ${\mathbb R^n},$ then 
\begin{eqnarray*}
	(r,\theta):=(r,\theta^2,\ldots,\theta^n)
\end{eqnarray*}
is a coordinate system on the open cone $C(S_\theta)\subset \mathbb R^n$ of $S_\theta$ with the vertex at the origin. 

Let $y_0\in S_{\tilde{F}_\varepsilon}[0,r_M]$
and
\[
(r_0, \theta_0) := (r_0,\theta_0^2,\ldots,\theta_0^{n})
\]
be the coordinates of $y_0$.
Item (2) of  Lemma \ref{lema propriedade de Fepsilon2} implies that 
\[
\frac{\partial (\eta_\varepsilon \ast F^2)}{\partial r}(r_0,\theta_0) > 0
\] 
and the Implicit Function Theorem states that there exist a smooth function
\begin{eqnarray} \label{phi_epsilon}
	\phi_\varepsilon: W_\theta \rightarrow \left(\frac{1}{2},\frac{2r_M}{r_m}\right)
\end{eqnarray}
satisfying the following conditions:
\begin{enumerate}[(i)]
	\item For every $\theta \in W_\theta$, $\phi_\varepsilon(\theta)$ is the unique value such that 
	\[
	\left(\eta_\varepsilon \ast F^2\right)(\phi_\varepsilon(\theta),\theta)=r_M^2;
	\]
	\item
	\[
	\frac{\partial \phi_\varepsilon(\theta)}{\partial \theta^i} 
	= - \frac{\frac{\partial \left(\eta_\varepsilon \ast F^2\right)(\phi_\varepsilon(\theta),\theta) }{\partial \theta^i}}{\frac{\partial \left(\eta_\varepsilon \ast F^2\right)(\phi_\varepsilon(\theta),\theta) }{\partial r}}
	\]
	for every $\theta \in W_\theta$.
\end{enumerate}

Therefore, the expression of $\tilde{F}_\varepsilon^{ \ 2}$ in terms of $(r,\theta) \in (0,\infty) \times W_\theta$ is given by
\begin{equation} 
	\label{F_epsilon^2 til em coordenadas esfericas} 
	\tilde{F}_\varepsilon^{ \ 2} (r,\theta) = \frac{r^2 .r_M^2}{\phi_\varepsilon^{\ 2}(\theta)}
\end{equation}
because $\tilde{F}_\varepsilon^{\ 2}$ is positively homogeneous of degree 2 with respect to the variable $r$. Thus,
\begin{equation} 
	\label{F_epsilon til em coordenadas esfericas} 
	\tilde{F}_\varepsilon (r,\theta) = \frac{r .r_M}{\phi_\varepsilon(\theta)}
\end{equation}
for every  $(r,\theta) \in (0,\infty) \times W_\theta.$


\begin{pro} \label{F_epsilon é diferenciável} The function $\tilde{F}_\varepsilon: \mathbb{R}^n \rightarrow  \mathbb{R}$ defined in (\ref{aplicacao F_epsilon til}) is smooth in $\mathbb{R}^n\setminus\{0\}$ for every $\varepsilon \in (0, \tau).$
\end{pro}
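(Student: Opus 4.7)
The proof proposal hinges entirely on the spherical coordinate representation \eqref{F_epsilon til em coordenadas esfericas} of $\tilde{F}_\varepsilon$ that has already been set up in the excerpt. Since smoothness is a local property, it suffices to show that every point of $\mathbb{R}^n \setminus \{0\}$ admits an open neighborhood on which $\tilde{F}_\varepsilon$ is smooth.

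First, I would cover the Euclidean unit sphere $S_{\Vert \cdot\Vert}[0,1]$ by finitely many open sets $\{S_{\theta_\alpha}\}$, each admitting a smooth coordinate chart $(\theta_\alpha^2, \ldots, \theta_\alpha^n): S_{\theta_\alpha} \to W_{\theta_\alpha} \subset \mathbb{R}^{n-1}$. For each such chart, the corresponding spherical coordinate system $(r, \theta_\alpha)$ is a smooth diffeomorphism between the open cone $C(S_{\theta_\alpha})$ (excluding the apex) and $(0,\infty) \times W_{\theta_\alpha}$; in particular its inverse is smooth. Thus, writing $\tilde{F}_\varepsilon$ in Cartesian coordinates on $C(S_{\theta_\alpha})$ amounts to composing its expression in $(r, \theta_\alpha)$-coordinates with a smooth change of variables.

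Now, on each such cone, \eqref{F_epsilon til em coordenadas esfericas} gives
\[
\tilde{F}_\varepsilon(r, \theta_\alpha) = \frac{r \cdot r_M}{\phi_\varepsilon(\theta_\alpha)},
\]
where $\phi_\varepsilon: W_{\theta_\alpha} \to (1/2, 2r_M/r_m)$ is the function produced by the Implicit Function Theorem applied to $\eta_\varepsilon \ast F^2 = r_M^2$. Since $\eta_\varepsilon \ast F^2 \in C^\infty(\mathbb{R}^n)$ by Item (1) of Theorem \ref{propriedades da aplicação mollifier}, and since the radial derivative $\partial(\eta_\varepsilon \ast F^2)/\partial r$ is strictly positive on $\mathcal{A}$ by Item (2) of Lemma \ref{lema propriedade de Fepsilon2}, the Implicit Function Theorem yields that $\phi_\varepsilon$ is of class $C^\infty$ on $W_{\theta_\alpha}$. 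Moreover $\phi_\varepsilon$ never vanishes (it takes values in $(1/2, 2r_M/r_m)$), so $1/\phi_\varepsilon$ is smooth as well. Therefore $\tilde{F}_\varepsilon$ is smooth in the spherical coordinates $(r,\theta_\alpha)$ on $(0,\infty) \times W_{\theta_\alpha}$, and hence smooth in Cartesian coordinates on $C(S_{\theta_\alpha}) \setminus \{0\}$.

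Since the cones $C(S_{\theta_\alpha}) \setminus \{0\}$ cover $\mathbb{R}^n \setminus \{0\}$ and since smoothness on overlapping charts is automatic (the function $\tilde{F}_\varepsilon$ is defined intrinsically by positive homogeneity and its level set), we conclude that $\tilde{F}_\varepsilon \in C^\infty(\mathbb{R}^n \setminus \{0\})$. The main (though mild) obstacle is confirming the nonvanishing of $\partial(\eta_\varepsilon \ast F^2)/\partial r$ precisely on the neighborhood of the level set where $\phi_\varepsilon$ is defined, but this is exactly the content of Item (2) of Lemma \ref{lema propriedade de Fepsilon2}, which provides the strictly positive lower bound $r_m^2/32$ on all of $\mathcal{A}$, covering the range of $\phi_\varepsilon$.
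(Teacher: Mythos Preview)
Your proposal is correct and follows essentially the same approach as the paper: the paper's proof is a one-liner that cites the spherical-coordinate formula \eqref{F_epsilon til em coordenadas esfericas} and the smoothness of $\phi_\varepsilon$ obtained from the Implicit Function Theorem, and you have simply unpacked this argument in full detail.
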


\begin{proof} It follows from \eqref{F_epsilon til em coordenadas esfericas} and the fact that $\phi_\varepsilon$ defined by (\ref{phi_epsilon}) is smooth.
	
\end{proof}


\begin{teo}
	\label{F_epsilon^2 til converge uniforme}
	$\tilde{F}_\varepsilon^{ \ 2} \rightarrow F^2 $ uniformly on compact subsets of $\mathbb{R}^n$.
\end{teo}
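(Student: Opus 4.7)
The plan is to exploit the polar-coordinate representation \eqref{F_epsilon^2 til em coordenadas esfericas} of $\tilde{F}_\varepsilon^{\ 2}$. Since $F^2$ is positively $2$-homogeneous, defining $\phi:S_{\Vert \cdot \Vert}[0,1]\to[1,r_M/r_m]$ by the condition $F^2(\phi(\theta)\theta)=r_M^2$ yields the entirely analogous expression $F^2(r,\theta)=r^2 r_M^2/\phi^2(\theta)$. Thus it suffices to prove that $\phi_\varepsilon\to\phi$ uniformly on $S_{\Vert \cdot \Vert}[0,1]$ and then to pass from this to uniform convergence of $\tilde{F}_\varepsilon^{\ 2}$ on compact subsets via the explicit radial formulas.

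To establish $\phi_\varepsilon\to\phi$ uniformly, fix $\theta\in S_{\Vert \cdot \Vert}[0,1]$. From the defining identities $(\eta_\varepsilon\ast F^2)(\phi_\varepsilon(\theta)\theta)=r_M^2=F^2(\phi(\theta)\theta)$, the mean value theorem applied to the smooth function $r\mapsto(\eta_\varepsilon\ast F^2)(r\theta)$ on the segment joining $\phi(\theta)$ and $\phi_\varepsilon(\theta)$ provides some intermediate $c$ with
\[
(\eta_\varepsilon\ast F^2)(\phi_\varepsilon(\theta)\theta)-(\eta_\varepsilon\ast F^2)(\phi(\theta)\theta)=\frac{\partial(\eta_\varepsilon\ast F^2)}{\partial r}(c\theta)\bigl(\phi_\varepsilon(\theta)-\phi(\theta)\bigr).
\]
Since both $\phi(\theta)$ and $\phi_\varepsilon(\theta)$ lie in $(1/2,2r_M/r_m)$, so does $c$; hence $c\theta\in\mathrm{int}\,\mathcal{A}$, and Item (2) of Lemma \ref{lema propriedade de Fepsilon2} bounds the radial derivative below by $r_m^2/32$. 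Rearranging,
\[
|\phi_\varepsilon(\theta)-\phi(\theta)|\leq\frac{32}{r_m^2}\bigl|F^2(\phi(\theta)\theta)-(\eta_\varepsilon\ast F^2)(\phi(\theta)\theta)\bigr|,
\]
and since $\{\phi(\theta)\theta:\theta\in S_{\Vert \cdot \Vert}[0,1]\}$ is a compact subset of $\mathbb{R}^n$, Lemma \ref{convergencia uniforme de F^epsilon^2} drives the right-hand side uniformly to $0$.

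Finally, for any compact $K\subset\mathbb{R}^n$ and any $y=r\theta\in K\setminus\{0\}$,
\[
\bigl|\tilde{F}_\varepsilon^{\ 2}(y)-F^2(y)\bigr|=r^2 r_M^2\,\frac{|\phi^2(\theta)-\phi_\varepsilon^{\ 2}(\theta)|}{\phi_\varepsilon^{\ 2}(\theta)\phi^2(\theta)}\leq C\,\Vert y\Vert^2\,\Vert\phi_\varepsilon-\phi\Vert_\infty,
\]
for a constant $C>0$ depending only on $r_m$ and $r_M$ (coming from the uniform bounds $\phi_\varepsilon,\phi\in[1/2,2r_M/r_m]$); since both sides vanish at $y=0$, the estimate extends across the origin, and uniform convergence on $K$ follows from the previous step. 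The main subtlety I anticipate is ensuring that the segment from $\phi(\theta)\theta$ to $\phi_\varepsilon(\theta)\theta$ stays inside the annulus $\mathcal{A}$, since this is where the uniform lower bound on the radial derivative is available; that containment is what makes the mean value estimate quantitative and is the technical heart of the argument.
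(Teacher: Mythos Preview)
Your proof is correct and follows essentially the same approach as the paper's (which defers to Proposition~5 of \cite{Fukuoka-Setti}): write both $\tilde F_\varepsilon^{\,2}$ and $F^2$ in the polar form $r^2 r_M^2/\phi_\bullet^2(\theta)$, use the uniform lower bound $r_m^2/32$ on the radial derivative of $\eta_\varepsilon\ast F^2$ from Lemma~\ref{lema propriedade de Fepsilon2} together with the uniform convergence of Lemma~\ref{convergencia uniforme de F^epsilon^2} to force $\phi_\varepsilon\to\phi$ uniformly on the unit sphere, and conclude via the explicit radial formulas. The verification that the intermediate point $c\theta$ lies in $\mathcal{A}$ (since both $\phi(\theta)\in[1,r_M/r_m]$ and $\phi_\varepsilon(\theta)\in(1/2,2r_M/r_m)$ do) is exactly the point that makes the mean-value estimate quantitative, and you handle it correctly.
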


\begin{proof}
The proof is identical to the proof of Proposition 5 of \cite{Fukuoka-Setti} with the variable $x$ dropped.
\end{proof}


\begin{cor}
	\label{F_epsilon til converge uniforme}
	$\tilde{F}_\varepsilon \rightarrow F$ uniformly on compact subsets of $\mathbb{R}^n$.
\end{cor}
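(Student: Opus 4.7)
The plan is to deduce the uniform convergence of $\tilde{F}_\varepsilon$ from the already-established uniform convergence of $\tilde{F}_\varepsilon^{\ 2}$ (Theorem \ref{F_epsilon^2 til converge uniforme}) by means of the elementary inequality
\begin{equation*}
\bigl|\sqrt{a} - \sqrt{b}\,\bigr| \leq \sqrt{\,|a-b|\,}, \qquad a,b \geq 0,
\end{equation*}
which avoids any difficulty near the origin, where the naive identity $\tilde{F}_\varepsilon - F = (\tilde{F}_\varepsilon^{\ 2} - F^2)/(\tilde{F}_\varepsilon + F)$ would have a vanishing denominator.

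First I would verify the elementary inequality: assuming without loss of generality $a \geq b \geq 0$, squaring $\sqrt{a}-\sqrt{b} \leq \sqrt{a-b}$ reduces to $\sqrt{b}\leq\sqrt{a}$, which holds. Then, since $\tilde{F}_\varepsilon$ and $F$ take non-negative values, at every $y \in \mathbb{R}^n$ we obtain
\begin{equation*}
\bigl|\tilde{F}_\varepsilon(y) - F(y)\bigr| = \bigl|\sqrt{\tilde{F}_\varepsilon^{\ 2}(y)} - \sqrt{F^2(y)}\,\bigr| \leq \sqrt{\bigl|\tilde{F}_\varepsilon^{\ 2}(y) - F^2(y)\bigr|}.
\end{equation*}

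Finally, let $K \subset \mathbb{R}^n$ be compact. By Theorem \ref{F_epsilon^2 til converge uniforme}, given any $\delta > 0$ there exists $\varepsilon_0 \in (0,\tau)$ such that $\bigl|\tilde{F}_\varepsilon^{\ 2}(y) - F^2(y)\bigr| < \delta^2$ for every $y \in K$ and every $\varepsilon \in (0,\varepsilon_0)$. The displayed inequality then gives $|\tilde{F}_\varepsilon(y)-F(y)| < \delta$ uniformly on $K$, which settles the corollary.

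There is no significant obstacle: the entire argument is the trivial $\sqrt{\,\cdot\,}$ Lipschitz-type estimate plus the preceding theorem. The only point that required any care was avoiding the denominator issue near $y = 0$, which the inequality above handles automatically.
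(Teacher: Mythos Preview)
Your proof is correct and follows essentially the same approach as the paper: both deduce the convergence of $\tilde{F}_\varepsilon$ from the already-proved uniform convergence of $\tilde{F}_\varepsilon^{\ 2}$ via the square root. The paper phrases this as ``the square root function is uniformly continuous on compact subsets of $[0,\infty)$,'' whereas you make this explicit via the global H\"older estimate $|\sqrt{a}-\sqrt{b}|\leq\sqrt{|a-b|}$, which is a slightly cleaner way to handle the point $y=0$ but amounts to the same idea.
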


\begin{proof} It follows from Theorem \ref{F_epsilon^2 til converge uniforme} and the fact that the square root function is uniformly continuous on compact subsets of $[0,\infty)$.
\end{proof}


\begin{pro} \label{condicao para a hessiana de G epsilon} 
Let $f,g:\mathbb{R}^n\setminus\{0\} \rightarrow \mathbb{R}$ be smooth convex functions with the same level hypersurfaces on the level $r,$ with  $r$  a regular value of $f$ and $g.$ 
Let $\lambda$ be the non-zero continuous function on $f^{-1}(r)$ defined by $\nabla g(y) = \lambda(y) \nabla f(y)$.
Suppose that $f$ is strongly convex with respect to $\sqrt{\frac{\gamma}{2}}\Vert \cdot\Vert$ and that $f^{-1}(r)$ is compact. 
Then $\lambda$ is a positive function.
Moreover,
	\begin{eqnarray*}
		(\Hess g)_{y_0} (v, v) \geq \lambda_m \gamma \Vert v \Vert^2
	\end{eqnarray*}
	for every $y_0 \in f^{-1}(r)$ and $v \in \ker df_{y_0}$, where
	\begin{eqnarray*}
		\lambda_m = \min_{y \in f^{-1}(r)} \lambda(y).
	\end{eqnarray*}
\begin{proof}
Firstly notice that since $f^{-1}(r) = g^{-1}(r)$ and $r$ is a regular value of $f$ and $g,$ then $\nabla g(y) = \lambda(y) \nabla f(y)$ for every $y \in f^{-1}(r),$ where $\lambda$ is a non-zero continuous function defined on $f^{-1}(r).$
Consider the coordinate system $(\tilde{y}^1, \ldots, \tilde{y}^n )$ corresponding to an orthonormal basis $\{v_1, v_2, \ldots, v_n\}$ with the origin at $(0,\ldots, 0)$, where $\ker df_{y_0} = \spann \{v_2, \ldots, v_n\}$.
Without loss of generality, we can choose an orientation for $v_1$ such that $(\nabla f)_{y_0} =(a,0,\ldots,0)$ with  $a > 0,$ that is, $a=\Vert (\nabla f)_{y_0}\Vert$. 
		
Write $y_0=\tilde{y}^i_0v_i.$  By Implicit Function Theorem there are a ball $B=B((\tilde{y}^2_0,\ldots,\tilde{y}^n_0),\delta) \subset \spann\{v_2, \ldots, v_n\}$ and an interval $J=(\tilde{y}^1_0-\mu,\tilde{y}^1_0+\mu)$ with the following properties:
	\begin{itemize}
		\item[(i)] {$ \overline{J} \times B \subset \mathbb{R}^n$ and $\frac{\partial f}{\partial \tilde{y}^1}(\tilde{y}^1,\ldots,\tilde{y}^n) \neq 0 $ for every $(\tilde{y}^1,\ldots,\tilde{y}^n) \in  \overline{J} \times B;$} 
		\item[(ii)] {For every $(\tilde{y}^2,\ldots,\tilde{y}^n) \in B, $  there exists a unique $\tilde{y}^1 = \phi (\tilde{y}^2,\ldots,\tilde{y}^n) \in J$ such that $f(\tilde{y}^1,\tilde{y}^2,\ldots,\tilde{y}^n) =  f(\phi(\tilde{y}^2,\ldots,\tilde{y}^n),\tilde{y}^2,\ldots,\tilde{y}^n) = r.$}
			
The function $\phi:B \longrightarrow J$  is  smooth and the partial derivatives of $f(\phi (\tilde{y}),\tilde{y})=r$ at each point $\hat{y}=(\tilde{y}^2,\ldots,\tilde{y}^n)\in B$  satisfies
		\begin{eqnarray}
		\label{align_derivada primeira fphiytil} 
			\frac{\partial f}{\partial \tilde{y}^i}(\phi(\hat{y}),\hat{y}) + \frac{\partial f}{\partial \tilde{y}^1}(\phi(\hat{y}),\hat{y}) \frac{\partial \phi}{\partial \tilde{y}^i}(\hat{y}) =0,
		\end{eqnarray}
and
	\begin{align}
	\label{align_derivada segunda fphiytil}
	& \frac{\partial^2 f}{\partial (\tilde{y}^1)^2}(\phi(\hat{y}),\hat{y})\frac{\partial \phi}{\partial \tilde{y}^j}(\hat{y})\frac{\partial \phi}{\partial \tilde{y}^i}(\hat{y}) + \frac{\partial^2 f}{\partial \tilde{y}^j \partial \tilde{y}^1}(\phi(\hat{y}),\hat{y})\frac{\partial \phi}{\partial \tilde{y}^i}(\hat{y}) \\
	 + & \frac{\partial f}{\partial \tilde{y}^1}(\phi(\hat{y}), \hat{y}) \frac{\partial^2 \phi}{\partial \tilde{y}^j \partial \tilde{y}^i}(\hat{y}) 
	 + \frac{\partial^2 f}{\partial \tilde{y}^1 \partial \tilde{y}^i}(\phi(\hat{y}),\hat{y})\frac{\partial \phi}{\partial \tilde{y}^j}(\hat{y}) \nonumber \\
	 + & \frac{\partial^2 f}{\partial \tilde{y}^j \partial \tilde{y}^i}(\phi(\hat{y}),\hat{y}) = 0 \nonumber
	\end{align}
for every $i,j=2,\ldots,n.$
	\end{itemize}
		
Applying \eqref{align_derivada primeira fphiytil} at $\hat{y}_0 := (\tilde{y}^2_0,\ldots,\tilde{y}^n_0), $ we obtain  
$$\frac{\partial \phi}{\partial \tilde{y}^i}(\hat{y}_0)=0,$$
because $\frac{\partial}{\partial \tilde{y}^i}  \in \ker df_{y_0}, i=2,\ldots,n.$

Evaluating \eqref{align_derivada segunda fphiytil} at $\hat{y}_0$ we have
	\begin{eqnarray*}
	 \frac{\partial^2 f}{\partial \tilde{y}^j \partial \tilde{y}^i}(\phi(\hat{y}_0),\hat{y}_0)= - \frac{\partial f}{\partial \tilde{y}^1 }(\phi(\hat{y}_0),\hat{y}_0)\frac{\partial^2 \phi}{\partial \tilde{y}^j \partial \tilde{y}^i}(\hat{y}_0), \,\, i,j=2,\ldots, n.
	\end{eqnarray*}
Using the same reasoning for $g$, we have that
	\begin{eqnarray}
		& & \frac{\partial^2 g}{\partial \tilde{y}^j \partial \tilde{y}^i}(\phi(\hat{y}_0),\hat{y}_0) = - \frac{\partial g}{\partial \tilde{y}^1 }(\phi(\hat{y}_0),\hat{y}_0)\frac{\partial^2 \phi}{\partial \tilde{y}^j \partial \tilde{y}^i}(\hat{y}_0) \nonumber \\
		& = & \frac{\frac{\partial g}{\partial \tilde{y}^1 }(\phi(\hat{y}_0),\hat{y}_0)}{\frac{\partial f}{\partial \tilde{y}^1 }(\phi(\hat{y}_0),\hat{y}_0)}\frac{\partial^2 f}{\partial \tilde{y}^j \partial \tilde{y}^i}(\phi(\hat{y}_0),\hat{y}_0).\label{eqnarray_relacao entre hessianas}
	\end{eqnarray}
Since
\[
\nabla g(y_0) = \left(\frac{\partial g}{\partial \tilde{y}^1}(y_0), 0, \ldots, 0 \right)
\]
and
\[
\nabla f(y_0) = \left(\frac{\partial f}{\partial \tilde{y}^1}(y_0), 0, \ldots, 0 \right),
\]
we have that $\nabla g(y_0) = \lambda (y_0) \nabla f(y_0)$, where $\lambda (y_0) = \frac{\frac{\partial g}{\partial \tilde{y}^1}(y_0)}{\frac{\partial f}{\partial \tilde{y}^1}(y_0)}$. 
Therefore,
	\begin{eqnarray*}
		& (\Hess g)_{y_0}(v, v) 
		= \lambda(y_0) (\Hess f)_{y_0}(v, v)
	\end{eqnarray*}
for every $y_0 \in f^{-1}(r)$ and $v \in \ker df_{y_0}$ due to \eqref{eqnarray_relacao entre hessianas}. 
Moreover, $r$ is a regular value of $g$ and $g$ is convex, what implies that $\lambda(y_0) > 0$ for every $y_0 \in f^{-1}(r)$. 
Finally, the strong convexity of $f$ implies 
\[
(\Hess g)_{y_0}(v, v) \geq \lambda_m \gamma \Vert v \Vert^2
\]
for every $v \in \ker df_{y_0}$, what settles the proposition. 
	\end{proof}
\end{pro}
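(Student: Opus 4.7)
The plan is to reduce the statement to a pointwise Hessian comparison using coordinates adapted to the common level hypersurface $f^{-1}(r)=g^{-1}(r)$. First I would observe that at any $y \in f^{-1}(r)$ both $\nabla f(y)$ and $\nabla g(y)$ are nonzero (regular value) and normal to the same tangent hyperplane, hence proportional; this defines $\lambda$ continuously. Positivity of $\lambda$ should follow from the convexity of $f$ and $g$: since the sublevel sets $\{f\leq r\}$ and $\{g\leq r\}$ are convex with the same boundary $f^{-1}(r)$, the (outward) gradients $\nabla f(y)$ and $\nabla g(y)$ must lie on the same side of the tangent hyperplane.

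Next, I would fix $y_0 \in f^{-1}(r)$, choose an orthonormal coordinate system $(\tilde y^1,\ldots,\tilde y^n)$ centered at $y_0$ with $\tilde y^1$-axis aligned with $\nabla f(y_0)$ (and hence with $\nabla g(y_0)$), so that $\ker df_{y_0} = \spann\{\partial_{\tilde y^2},\ldots,\partial_{\tilde y^n}\}$. Applying the implicit function theorem, the common level hypersurface is locally a graph $\tilde y^1=\phi(\tilde y^2,\ldots,\tilde y^n)$. Differentiating $f(\phi(\hat y),\hat y)=r$ once at $\hat y_0$ shows that all first-order partials of $\phi$ vanish at $\hat y_0$ (since the tangent directions $\partial_{\tilde y^i}$ for $i\geq 2$ lie in $\ker df_{y_0}$), and differentiating again yields
\[
\frac{\partial^2 f}{\partial \tilde y^i \partial \tilde y^j}(y_0) \;=\; -\frac{\partial f}{\partial \tilde y^1}(y_0)\,\frac{\partial^2 \phi}{\partial \tilde y^i \partial \tilde y^j}(\hat y_0), \qquad i,j\geq 2,
\]
with the identical identity for $g$. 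Dividing, the common Hessian of $\phi$ cancels and I obtain $(\Hess g)_{y_0}(v,v) = \lambda(y_0)(\Hess f)_{y_0}(v,v)$ for every $v \in \ker df_{y_0}$.

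Finally, I would combine this with the strong convexity of $f$, which gives $(\Hess f)_{y_0}(v,v) \geq \gamma\Vert v\Vert^2$ by Theorem \ref{Equivalencias - Fortemente convexa (Definicao Classica)} (invoking Theorem \ref{Fortemente convexa - Equivalencia} if the hypothesis is read through the asymmetric-norm terminology), and with the compactness of $f^{-1}(r)$, which guarantees that the continuous positive function $\lambda$ attains its positive minimum $\lambda_m$. Together these yield $(\Hess g)_{y_0}(v,v) \geq \lambda_m\gamma\Vert v\Vert^2$, as required. The main obstacle is the bookkeeping in the implicit-function step: one must arrange coordinates so that first-order partials of $\phi$ vanish at $\hat y_0$, which is exactly what makes the Hessian reduce to a clean scalar multiple; the positivity of $\lambda$, while intuitively clear from convexity, also deserves care and can alternatively be read off from the sign of $\partial f/\partial \tilde y^1$ and $\partial g/\partial \tilde y^1$ in the adapted coordinates.
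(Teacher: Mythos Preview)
Your proposal is correct and follows essentially the same route as the paper: adapted orthonormal coordinates with $\tilde y^1$ along $\nabla f(y_0)$, the implicit function theorem to write the common level set as a graph $\tilde y^1=\phi(\hat y)$, two differentiations of $f(\phi(\hat y),\hat y)=r$ and $g(\phi(\hat y),\hat y)=r$ to obtain $(\Hess g)_{y_0}(v,v)=\lambda(y_0)(\Hess f)_{y_0}(v,v)$ on $\ker df_{y_0}$, and then strong convexity of $f$ plus compactness to conclude. The paper infers $\lambda>0$ slightly differently (from convexity of $g$ together with the Hessian identity and $\lambda\neq 0$), but your sublevel-set/outward-normal argument and your alternative via the sign of $\partial f/\partial\tilde y^1$, $\partial g/\partial\tilde y^1$ are equally valid.
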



Now we apply Proposition \ref{condicao para a hessiana de G epsilon} to $f=\eta_{\varepsilon}\ast F^2$, $g=\tilde{F}_{\varepsilon}{}^2$ and $r=r_M$ in order to prove the strong convexity of $\tilde{F}_{\varepsilon}{}^2$ and determine its modulus of convexity. 
The strong convexity of $\eta_{\varepsilon}\ast F^2$  follows from Lemma \ref{Suavização de função fortemente convexa} and the convexity of $\tilde{F}_{\varepsilon}{}^2$ holds due to Theorem \ref{teo_composicao de convexas}, the convexity of $\tilde{F}_\varepsilon$ and the relation $\tilde{F}_{\varepsilon}{}^2 = \varphi \circ \tilde{F}_\varepsilon$, where $\varphi$ is the square function defined on $[0,\infty)$.
Set
\begin{eqnarray} \label{constante c_epsilon}
	r_{\varepsilon,m} = \min\left\{\tilde{F}_\varepsilon(y) : y \in S_{\Vert \cdot \Vert }[0,1] \right\}
\end{eqnarray}
and
\begin{eqnarray} \label{constante lambda_m,epsilon}
	\lambda_{\varepsilon,m} = \min_{y \in \left(\eta_\varepsilon \ast F^2\right)^{-1}\left(r_M^2\right)} \lambda_\varepsilon (y),
\end{eqnarray}
where $\lambda_{\varepsilon}(y)$ is defined through the expression 
$\nabla \tilde{F}_\varepsilon^{ \ 2} (y) = \lambda_{\varepsilon} (y) \nabla \left(\eta_\varepsilon \ast F^2\right)(y).$

\begin{teo} \label{F_epsilon é uma norma assimétrica fortemente convexa} Let $F: \mathbb{R}^n \rightarrow  [0,\infty)$ be a strongly convex asymmetric norm with respect to $\sqrt{\frac{\gamma}{2}}\Vert \cdot\Vert$.
Its mollifier smoothing $\tilde{F}_\varepsilon: \mathbb{R}^n \rightarrow  [0,\infty)$ defined in (\ref{aplicacao F_epsilon til}) is a strongly convex asymmetric norm with respect to $\sqrt{\frac{\gamma_\varepsilon}{2}} \Vert \cdot \Vert$ for every $\varepsilon \in (0, \tau),$ where
	\begin{eqnarray*} \label{gamma_epsilon}
		\gamma_\varepsilon 
		= \min\left(2r_{\varepsilon,m}^2,  \gamma \lambda_{\varepsilon, m}\right),
	\end{eqnarray*}
and the constants $r_{\varepsilon,m}$ and $\lambda_{\varepsilon, m}$ are defined as (\ref{constante c_epsilon}) and (\ref{constante lambda_m,epsilon}) respectively.
In particular, $\tilde{F}_{\varepsilon}$ is a Minkowski norm.
\end{teo}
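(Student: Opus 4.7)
The plan is to verify the Hessian bound $g^\varepsilon_{ij(y)}v^iv^j \geq \frac{1}{2}\gamma_\varepsilon\|v\|^2$ for every $y \in \mathbb{R}^n \setminus \{0\}$ and $v \in \mathbb{R}^n$; granting this, Proposition \ref{proposition_norma limitante inferior} simultaneously delivers that $\tilde{F}_\varepsilon$ is a Minkowski norm and that it is strongly convex with respect to $\sqrt{\gamma_\varepsilon/2}\|\cdot\|$. Smoothness and positive homogeneity of $\tilde{F}_\varepsilon$ are already settled (Proposition \ref{F_epsilon é diferenciável} and the construction), and since $\Hess \tilde{F}_\varepsilon^2$ is homogeneous of degree zero in $y$, it suffices to verify the bound at a point $y_0$ lying in the common level set $\{\tilde{F}_\varepsilon^2 = r_M^2\} = \{\eta_\varepsilon \ast F^2 = r_M^2\}$.

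At such $y_0$ I would exploit two complementary pieces of information. On the tangent space $T := \ker d(\tilde{F}_\varepsilon^2)_{y_0} = \ker d(\eta_\varepsilon \ast F^2)_{y_0}$ to the common level set, Proposition \ref{condicao para a hessiana de G epsilon}, applied to $f = \eta_\varepsilon \ast F^2$ (strongly convex with modulus $\gamma$ by Lemma \ref{Suavização de função fortemente convexa}) and $g = \tilde{F}_\varepsilon^2$ (convex by Theorem \ref{teo_composicao de convexas}), produces the tangential bound $(\Hess \tilde{F}_\varepsilon^2)_{y_0}(v_T, v_T) \geq \gamma \lambda_{\varepsilon, m}\|v_T\|^2$ for every $v_T \in T$. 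Separately, Euler's identity for the degree-two homogeneous $\tilde{F}_\varepsilon^2$ gives $(\Hess \tilde{F}_\varepsilon^2)_{y_0}(y_0, y_0) = 2r_M^2$ together with the orthogonality $(\Hess \tilde{F}_\varepsilon^2)_{y_0}(y_0, v_T) = d(\tilde{F}_\varepsilon^2)_{y_0}(v_T) = 0$ for $v_T \in T$; combined with $\tilde{F}_\varepsilon(y_0/\|y_0\|) \geq r_{\varepsilon, m}$, this yields the radial bound $(\Hess\tilde{F}_\varepsilon^2)_{y_0}(\hat{y}_0, \hat{y}_0) \geq 2r_{\varepsilon, m}^2$, where $\hat{y}_0 = y_0/\|y_0\|$.

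Since $y_0 \notin T$, every $v \in \mathbb{R}^n$ decomposes uniquely as $v = \alpha y_0 + v_T$ with $v_T \in T$; the vanishing cross term collapses the quadratic form into $(\Hess\tilde{F}_\varepsilon^2)_{y_0}(v, v) = 2r_M^2\alpha^2 + (\Hess \tilde{F}_\varepsilon^2)_{y_0}(v_T, v_T)$, while $\|v\|^2 = \alpha^2\|y_0\|^2 + 2\alpha \langle y_0, v_T\rangle + \|v_T\|^2$. The desired inequality thus amounts to the non-negativity in $\alpha$ of a quadratic whose leading coefficient $2r_M^2 - \gamma_\varepsilon\|y_0\|^2$ and constant term $(\Hess\tilde{F}_\varepsilon^2)_{y_0}(v_T, v_T) - \gamma_\varepsilon\|v_T\|^2$ are both non-negative precisely by the calibration $\gamma_\varepsilon = \min(2r_{\varepsilon, m}^2, \gamma \lambda_{\varepsilon, m})$.

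The hard part will be controlling the discriminant of this quadratic, namely the cross term $2\gamma_\varepsilon \alpha \langle y_0, v_T\rangle$, which vanishes only when $v_T$ is Euclidean-orthogonal to the projection $y_0^T$ of $y_0$ onto $T$. A naive application of scalar Cauchy--Schwarz combined with the crude lower bound $(\Hess \tilde{F}_\varepsilon^2)_{y_0}(v_T, v_T) \geq \gamma\lambda_{\varepsilon, m}\|v_T\|^2$ is too wasteful in the extremal regimes where $\gamma_\varepsilon$ coincides with one of $2r_{\varepsilon, m}^2$ or $\gamma\lambda_{\varepsilon, m}$; the sharpening I expect to need uses Cauchy--Schwarz in the $(\Hess \tilde{F}_\varepsilon^2)_{y_0}|_T$-inner product on $T$ applied to $\langle y_0^T, v_T\rangle$, together with the identity $(\Hess \tilde{F}_\varepsilon^2)_{y_0}(y_0^T, y_0^T) = (y_0 \cdot N)^2 (\Hess\tilde{F}_\varepsilon^2)_{y_0}(N, N) - 2r_M^2$ forced by the Euler relations (with $N$ the Euclidean unit normal to $T$). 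Once this discriminant analysis is completed, Proposition \ref{proposition_norma limitante inferior} finishes the proof.
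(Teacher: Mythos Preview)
Your overall strategy --- reduce to the Hessian bound on the common level set $\{\eta_\varepsilon\ast F^2=r_M^2\}=\{\tilde F_\varepsilon^{\,2}=r_M^2\}$, decompose $v=\alpha y_0+v_T$ with $v_T\in T=\ker d(\tilde F_\varepsilon^{\,2})_{y_0}$, use Proposition~\ref{condicao para a hessiana de G epsilon} for the tangential piece and the Euler relations (Proposition~\ref{pro_Minkowski Euler}) for the radial piece and the vanishing of the Hessian cross term --- is exactly the paper's argument. The divergence is at what you call ``the hard part'': the Euclidean cross term $2\alpha\langle y_0,v_T\rangle$ in $\|v\|^2$. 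The paper does \emph{not} address it. After reaching
\[
(\Hess\tilde F_\varepsilon^{\,2})_{y_0}(v,v)\ \ge\ 2r_{\varepsilon,m}^{2}\,\|\alpha y_0\|^{2}+\gamma\lambda_{\varepsilon,m}\,\|v_T\|^{2},
\]
the paper passes in one line to $\min(2r_{\varepsilon,m}^{2},\gamma\lambda_{\varepsilon,m})\,\|v\|^{2}$, which tacitly uses $\|\alpha y_0\|^{2}+\|v_T\|^{2}\ge\|v\|^{2}$. But the decomposition $v=\alpha y_0+v_T$ is orthogonal for the $g^{\varepsilon}_{ij(y_0)}$-inner product, not for $\langle\cdot,\cdot\rangle$, so that Pythagorean inequality can fail. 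Your concern is therefore legitimate: you have spotted a genuine gap in the paper's own proof at precisely this step.

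That said, your proposed fix is only a sketch. The Cauchy--Schwarz in the $(\Hess\tilde F_\varepsilon^{\,2})|_T$-inner product and the Euler identity you derive for $(\Hess\tilde F_\varepsilon^{\,2})_{y_0}(y_0^{T},y_0^{T})$ are the natural ingredients, but you have not actually closed the discriminant inequality, and in the borderline regimes you yourself flag (where $\gamma_\varepsilon$ equals one of $2r_{\varepsilon,m}^{2}$ or $\gamma\lambda_{\varepsilon,m}$) one of the two diagonal margins vanishes while $\langle y_0,v_T\rangle$ need not. It is not clear that the \emph{exact} constant $\gamma_\varepsilon=\min(2r_{\varepsilon,m}^{2},\gamma\lambda_{\varepsilon,m})$ survives a correct argument; a smaller constant depending on the obliqueness of $y_0$ to $T$ certainly does, and that is all the downstream Theorem~\ref{F_epsilon2 til são fortemente convexas com a mesma constante} actually requires.
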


\begin{proof}
	We have that $F^2$ is strongly convex with modulus $\gamma,$ then by Lemma \ref{Suavização de função fortemente convexa} we have that $\left(n_\varepsilon \ast F^2\right)$ also is strongly convex with modulus $\gamma.$	
	The level hypersurface of $\left(n_\varepsilon \ast F^2\right)$ and $\tilde{F}_\varepsilon^{\ 2}$ at $r_M^2$ are the same, $\left(\eta_\varepsilon \ast F^2\right)^{-1}\left(r_M^2\right)$ is compact and $\tilde{F}_\varepsilon$ is convex because it is an asymmetric norm. 
	Then it follows from the Proposition \ref{condicao para a hessiana de G epsilon} that 
	\begin{equation}
	\label{equation_suavizacao fortemente convexa na esfera}
	\big(\tilde{F}_\varepsilon^{\ 2}\big)_{y^iy^j}(y)v^i v^j \geq  \lambda_{\varepsilon, m} \gamma \Vert v \Vert^2,
	\end{equation}
	for every $y \in \left(n_\varepsilon \ast F^2\right)^{-1}\left(r_M^2\right)$ and for every $v \in \ker d\left(n_\varepsilon \ast F^2\right)_y.$
	
	Now, we will prove that
	$$\big(\tilde{F}_\varepsilon^{\ 2}\big)_{y^iy^j}(y)v^i v^j \geq  \gamma_\varepsilon \Vert v \Vert^2,$$
	for every $y \in \left(n_\varepsilon \ast F^2\right)^{-1}\left(r_M^2\right)$ and every $v \in \mathbb{R}^n.$ The general case with $y \in \mathbb{R}^n\setminus\{0\}$ follows from the fact that the function $\tilde{F}_\varepsilon^{\ 2}$ is positively homogeneous of degree 2. 
	Given $y \in \left(n_\varepsilon \ast F^2\right)^{-1}\left(r_M^2\right)$ and $v \in \mathbb{R}^n,$ write $v = \mu y + w,$ where $\mu \in \mathbb{R}$ and $w \in \ker d\big(\tilde{F}_\varepsilon^{\ 2}\big)_y$. 
	Then 
	\begin{eqnarray*}
		\big(\tilde{F}_\varepsilon^{\ 2}\big)_{y^iy^j}(y) v^i v^j & = & \mu^2 \big(\tilde{F}_\varepsilon^{\ 2}\big)_{y^iy^j}(y) y^i y^j + \mu \big(\tilde{F}_\varepsilon^{\ 2}\big)_{y^iy^j}(y) y^i w^j + \\
		& & + \mu \big(\tilde{F}_\varepsilon^{\ 2}\big)_{y^iy^j}(y) w^i y^j + \big(\tilde{F}_\varepsilon^{\ 2}\big)_{y^iy^j}(y) w^i w^j \\
		& = & \mu^2 \big(\tilde{F}_\varepsilon^{\ 2}\big)_{y^iy^j}(y) y^i y^j + \big(\tilde{F}_\varepsilon^{\ 2}\big)_{y^iy^j}(y) w^i w^j
	\end{eqnarray*}
due to Proposition \ref{pro_Minkowski Euler} and because $(\tilde{F}_{\varepsilon})_{y^i}(y)w^i = \left< (\nabla \tilde{F}_{\varepsilon})(y),w\right> = 0$.
Considering \eqref{constante c_epsilon}, \eqref{equation_suavizacao fortemente convexa na esfera} and Proposition \ref{pro_Minkowski Euler} again, we have
	\begin{eqnarray*}
		\big(\tilde{F}_\varepsilon^{\ 2}\big)_{y^iy^j}(y) v^i v^j & \geq & 2 \mu^2 \tilde{F}_\varepsilon^{\ 2}(y) + \lambda_{\varepsilon, m} \gamma \Vert w \Vert^2 \\
		& = & 2 \tilde{F}_\varepsilon^{\ 2}(\mu y) +  \lambda_{\varepsilon, m} \gamma \Vert w \Vert^2 \\
		& \geq & 2 r_{\varepsilon, m}^2 \Vert \mu y \Vert^2  + \lambda_{\varepsilon, m} \gamma \Vert w \Vert^2 \\
		& \geq & \min\left(2 r_{\varepsilon, m}^2, \lambda_{\varepsilon, m}\gamma\right) \Vert v \Vert^2 \\
		& = & \gamma_\varepsilon \Vert v \Vert^2.
	\end{eqnarray*}
Therefore, $\tilde{F}_\varepsilon$ is a strongly convex asymmetric norm with respect to $\sqrt{\frac{\gamma_\varepsilon}{2}} \Vert \cdot \Vert$ for every $\varepsilon \in (0, \tau).$

Finally, in order to see that $\tilde{F}_{\varepsilon}$ is a Minkowski norm, notice that its smoothness in $\mathbb{V} \backslash \{0\}$ is consequence of the smoothness of $\phi_\varepsilon$ and \eqref{F_epsilon til em coordenadas esfericas}, and
the result is consequence of Proposition \ref{proposition_norma limitante inferior}.
\end{proof}

	
In Theorem \ref{F_epsilon2 til são fortemente convexas com a mesma constante},  we will prove that $\{\tilde{F}_\varepsilon\}$ and $F$ are strongly convex with respect to the same norm $\sqrt{\frac{\gamma_0}{2}}\Vert \cdot\Vert$. 
In order to do so, 
$\varepsilon$ must vary in the interval $(0, \tau_0),$ where $\tau_0 \in (0, \tau]$  is given by Lemma \ref{c_epsilon convege para c}.
 
Until the end of this work, we will need progressively smaller intervals of variation of $\varepsilon$. 
The choice of these intervals will prioritize clarity over optimality. 
Therefore, the intervals used along the proofs can vary up and down.


\begin{lem} \label{c_epsilon convege para c}
	There exists $\tau_0 \in (0, \tau]$	such that $r_{\varepsilon, m} \in (r_m/2, 3r_m/2)$ for every $\varepsilon \in (0, \tau_0)$, where $r_m$ is defined in \eqref{constante rho}. 
	
\end{lem}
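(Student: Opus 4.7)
The plan is to derive this lemma directly from the uniform convergence $\tilde{F}_\varepsilon \to F$ on compact subsets established in Corollary \ref{F_epsilon til converge uniforme}, applied to the compact set $S_{\Vert\cdot\Vert}[0,1]$. Since both $r_m$ and $r_{\varepsilon,m}$ are minima of a continuous function on this compact sphere, controlling the pointwise difference uniformly by something strictly smaller than $r_m/2$ will force the two minima to be close.

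Concretely, first I would fix the tolerance $\delta = r_m/4$. By Corollary \ref{F_epsilon til converge uniforme}, $\tilde{F}_\varepsilon \to F$ uniformly on the compact set $S_{\Vert\cdot\Vert}[0,1]$, so there exists $\tau_0 \in (0,\tau]$ such that
\[
\vert \tilde{F}_\varepsilon(y) - F(y)\vert < \frac{r_m}{4}
\]
for every $y \in S_{\Vert\cdot\Vert}[0,1]$ and every $\varepsilon \in (0,\tau_0)$.

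Next I would establish the lower bound. For any $y \in S_{\Vert\cdot\Vert}[0,1]$ we have $F(y)\geq r_m$ by definition \eqref{constante rho}, hence
\[
\tilde{F}_\varepsilon(y) > F(y) - \frac{r_m}{4} \geq r_m - \frac{r_m}{4} = \frac{3r_m}{4}.
\]
Taking the minimum over $y\in S_{\Vert\cdot\Vert}[0,1]$ yields $r_{\varepsilon,m} \geq 3r_m/4 > r_m/2$.

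For the upper bound, compactness of $S_{\Vert\cdot\Vert}[0,1]$ and continuity of $F$ (Proposition \ref{F e Lipschitz}) give the existence of $y_0 \in S_{\Vert\cdot\Vert}[0,1]$ with $F(y_0)=r_m$. Then
\[
r_{\varepsilon,m} \leq \tilde{F}_\varepsilon(y_0) < F(y_0) + \frac{r_m}{4} = \frac{5r_m}{4} < \frac{3r_m}{2},
\]
which completes the proof. There is no real obstacle here; the lemma is simply the statement that uniform convergence on a compact set transfers to convergence of minima, and the choice $\delta = r_m/4$ is enough to place $r_{\varepsilon,m}$ strictly inside the prescribed open interval $(r_m/2, 3r_m/2)$.
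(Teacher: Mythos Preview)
Your proof is correct and follows essentially the same approach as the paper's: both rely on Corollary \ref{F_epsilon til converge uniforme} (uniform convergence of $\tilde{F}_\varepsilon$ to $F$ on the compact set $S_{\Vert\cdot\Vert}[0,1]$) together with the fact that the minimum over a compact set depends continuously on the function. The paper's proof is a one-liner invoking this principle, while you have spelled out the elementary $\delta=r_m/4$ argument explicitly.
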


\begin{proof} It follows from the fact that the minimum function is continuous and from Corollary \ref{F_epsilon til converge uniforme}.
\end{proof}


\begin{teo} \label{F_epsilon2 til são fortemente convexas com a mesma constante} The functions $F^2$ and $\tilde{F}_\varepsilon^{\ 2}:\mathbb{R}^n \rightarrow  [0,\infty)$ are strongly convex with modulus
	\begin{eqnarray} \label{gamma_0}
		\gamma_0 = \min\left(\frac{r_{m}^2}{2},  \frac{\gamma r_m^4}{8r_mr_M^3 + 48r_M^4}\right)  
	\end{eqnarray}
	for every $\varepsilon \in (0, \tau_0),$ where $\tau_0 \in (0, \tau]$ is given by Lemma \ref{c_epsilon convege para c}.
	Equivalently $F$ and $\tilde{F}_{\varepsilon}$ are strongly convex asymmetric norms with respect to $\sqrt{\frac{\gamma_0}{2}}\Vert \cdot\Vert$.
\end{teo}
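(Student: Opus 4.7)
The plan is to read off both claims from Theorem \ref{F_epsilon é uma norma assimétrica fortemente convexa}, which already asserts strong convexity of $\tilde F_\varepsilon^{\ 2}$ with modulus $\gamma_\varepsilon = \min\!\left(2 r_{\varepsilon,m}^2,\,\gamma\lambda_{\varepsilon,m}\right)$; it therefore suffices to establish the uniform lower bound $\gamma_\varepsilon \ge \gamma_0$ for every $\varepsilon\in(0,\tau_0)$. The assertion for $F^2$ is then automatic: $F^2$ is strongly convex with modulus $\gamma$ by hypothesis, and the obvious inequality $r_m^4 \le 8 r_m r_M^3+48 r_M^4$ gives $\gamma_0\le\gamma$, so strong convexity with modulus $\gamma$ also yields strong convexity with modulus $\gamma_0$. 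The first slot of $\gamma_\varepsilon$ is immediate from Lemma \ref{c_epsilon convege para c}: $r_{\varepsilon,m}>r_m/2$ implies $2 r_{\varepsilon,m}^2>r_m^2/2$, which dominates the first entry of $\gamma_0$.

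The second slot requires a uniform lower bound on $\lambda_{\varepsilon,m}$. I would exploit the defining relation $\nabla\tilde F_\varepsilon^{\ 2}(y) = \lambda_\varepsilon(y)\nabla(\eta_\varepsilon\ast F^2)(y)$ on $\{\eta_\varepsilon\ast F^2=r_M^2\}$ by contracting with $y/\|y\|$ to obtain
\[
\lambda_\varepsilon(y) \;=\; \frac{\tfrac{y^i}{\|y\|}\,(\tilde F_\varepsilon^{\ 2})_{y^i}(y)}{\tfrac{y^i}{\|y\|}\,(\eta_\varepsilon\ast F^2)_{y^i}(y)}.
\]
Euler's identity (degree $2$) rewrites the numerator as $2\tilde F_\varepsilon^{\ 2}(y)/\|y\|$. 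Using the homogeneous bound $\tilde F_\varepsilon^{\ 2}(y)\ge r_{\varepsilon,m}^2\|y\|^2$ reduces this to $2r_{\varepsilon,m}^2\|y\|$; feeding in $\|y\|>1/2$ (from Item~(3) of Lemma \ref{lema propriedade de Fepsilon2}) and $r_{\varepsilon,m}>r_m/2$ (from Lemma \ref{c_epsilon convege para c}) leaves the numerator $>r_m^2/4$. Item~(2) of Lemma \ref{lema propriedade de Fepsilon2} bounds the denominator above by $(2 r_m r_M^3+12 r_M^4)/r_m^2$. Dividing gives $\lambda_{\varepsilon,m}\ge r_m^4/(8 r_m r_M^3+48 r_M^4)$, so $\gamma\lambda_{\varepsilon,m}$ dominates the second entry of $\gamma_0$. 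Combining the two slots yields $\gamma_\varepsilon\ge\gamma_0$, as required.

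The main subtlety lies in the estimation of the numerator. The tempting route is to use the identity $\tilde F_\varepsilon^{\ 2}(y)=r_M^2$ that holds exactly on the level set, but this produces a bound $2 r_M^2/\|y\|$ in which $\|y\|$ still depends on $\varepsilon$ and does not package cleanly in the form $\gamma_0$. Replacing that identity by the positive-homogeneity bound $\tilde F_\varepsilon^{\ 2}(y)\ge r_{\varepsilon,m}^2\|y\|^2$ is the key maneuver: it shifts the $y$-dependence to a factor of $\|y\|$, which combines with the lower bound $\|y\|>1/2$ and the earlier control $r_{\varepsilon,m}>r_m/2$ to produce precisely the denominator $8 r_m r_M^3+48 r_M^4$ appearing in $\gamma_0$.
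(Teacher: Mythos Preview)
Your argument is correct and follows the same overall route as the paper: reduce to the modulus $\gamma_\varepsilon=\min(2r_{\varepsilon,m}^2,\gamma\lambda_{\varepsilon,m})$ from Theorem~\ref{F_epsilon é uma norma assimétrica fortemente convexa}, handle the first slot via Lemma~\ref{c_epsilon convege para c}, and bound $\lambda_{\varepsilon,m}$ by writing $\lambda_\varepsilon(y)$ as a ratio of radial derivatives and controlling numerator and denominator separately (the latter by Item~(2) of Lemma~\ref{lema propriedade de Fepsilon2}).

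The only place you diverge from the paper is in the numerator estimate. The paper uses the explicit polar expression $\tilde F_\varepsilon^{\,2}(r,\theta)=r^2 r_M^2/\phi_\varepsilon^2(\theta)$ from \eqref{F_epsilon^2 til em coordenadas esfericas}, differentiates in $r$, and bounds $r\ge 1/2$ and $\phi_\varepsilon(\theta)\le 2r_M/r_m$ (both coming from $\{\eta_\varepsilon*F^2=r_M^2\}\subset\mathcal A$) to obtain the same value $r_m^2/4$; in particular this part of the paper's argument is valid for every $\varepsilon\in(0,\tau)$, not only $(0,\tau_0)$. Your variant replaces the $\phi_\varepsilon$ bound by the homogeneity inequality $\tilde F_\varepsilon^{\,2}(y)\ge r_{\varepsilon,m}^2\|y\|^2$ together with $r_{\varepsilon,m}>r_m/2$, which is equally valid on $(0,\tau_0)$ and lands on the same constant. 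Either way one gets $\lambda_{\varepsilon,m}> r_m^4/(8r_m r_M^3+48 r_M^4)$ and hence $\gamma_\varepsilon\ge\gamma_0$, so the two arguments are essentially interchangeable.
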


\begin{proof} The function $F^2$ is strongly convex with modulus $\gamma_0$ because it is strongly convex with modulus $\gamma>\gamma_0$.

We saw in Theorem \ref{F_epsilon é uma norma assimétrica fortemente convexa} that every $\tilde{F}_\varepsilon^{\ 2}$ is strongly convex with modulus 
	\begin{eqnarray*}
	\gamma_\varepsilon =   \min\left(2r_{\varepsilon,m}^2, \gamma \lambda_{\varepsilon, m}\right),
	\end{eqnarray*}
	for every $\varepsilon \in (0, \tau).$ Let us show that $\gamma_\varepsilon > \gamma_0$ for every $\varepsilon \in (0, \tau_0).$ 
	We will prove that $2r_{\varepsilon,m}^2 > \gamma_0$ and $\gamma \lambda_{\varepsilon, m} > \gamma_0$ for every $\varepsilon \in (0, \tau_0).$
	
	$\bullet$ $2r_{\varepsilon,m}^2 > \gamma_0:$ By Lemma \ref{c_epsilon convege para c} we have that $r_{\varepsilon,m}^2> \frac{r_{m}^2}{4}$ for every $\varepsilon \in (0, \tau_0)$. 
	Then 
	\begin{eqnarray*}
		2r_{\varepsilon,m}^2 > \gamma_0,
	\end{eqnarray*}
	for every $\varepsilon \in (0, \tau_0).$
	
	$\bullet$ $\gamma \lambda_{\varepsilon, m} > \gamma_0:$ 
Let us find a positive lower bound for all $\lambda_{\varepsilon, m}$, where $\varepsilon \in (0,\tau)$.

We know that $d(\tilde{F}_{\varepsilon}{}^2)(y)$ and $d(\eta_\varepsilon \ast F^2)(y)$ have the same kernel for every $y \in \left(\eta_\varepsilon\ast F^2\right)^{-1}\left(r_M^2\right)$, what implies that one is a scalar multiple of the other. 
Since $\nabla \big(\tilde{F}_\varepsilon^{\ 2}\big)(y) = \lambda_\varepsilon(y) \nabla \left(\eta_\varepsilon\ast F^2\right)(y)$, we have that $d(\tilde{F}_{\varepsilon}{}^2)(y) =  \lambda_{\varepsilon}(y).d(\eta_\varepsilon \ast F^2)(y)$, and in particular their radial derivatives satisfy
	\begin{eqnarray}
	\label{eqnarray_lambda varepsilon}
		\lambda_\varepsilon(y) & = & \frac{\frac{y^i}{\Vert y\Vert}\frac{\partial \big(\tilde{F}_\varepsilon^{\ 2}\big)(y)}{\partial y^i}}{\frac{y^i}{\Vert y\Vert}\frac{\partial \left(\eta_\varepsilon \ast F^2\right)(y)}{\partial y^i}} =\frac{\frac{\partial \tilde{F}_{\varepsilon}{}^2(r,\theta)}{\partial r}}{\frac{\partial (\eta_{\varepsilon}\ast F^2)(r,\theta)}{\partial r}}.
	\end{eqnarray}
Using the expression for $\tilde{F}_\varepsilon^{\ 2}$ given in (\ref{F_epsilon^2 til em coordenadas esfericas}) and the fact that $\phi_\varepsilon(\theta)$  
and $r=\Vert y\Vert$ are in the interval
$ (1/2, 2r_M/r_m)$ because $y\in \mathcal{A}$, we obtain 
	\begin{eqnarray}
	\label{eqnarray_limitante inferior derivada radial Fquadrado}
		\frac{\partial \tilde{F}_\varepsilon^{\ 2}(r, \theta)}{\partial r} & = & \frac{2r .r_M^2}{\phi_\varepsilon^{\ 2}(\theta)}
	 \geq \frac{2 \cdot \frac{1}{2} \cdot r_M^2}{\left(\frac{2r_M}{r_m}\right)^2}  = \frac{r_m^2}{4}.
	\end{eqnarray}
Using \eqref{eqnarray_limitante inferior derivada radial Fquadrado} and the second inequality of Item (2) of Lemma \ref{lema propriedade de Fepsilon2} on \eqref{eqnarray_lambda varepsilon}, we get
	\begin{eqnarray*}
		\lambda_\varepsilon(y) 
		 > \frac{r_m^2}{4} \cdot \frac{r_m^2}{2r_mr_M^3+12r_M^4} = \frac{r_m^4}{8r_mr_M^3 + 48r_M^4}.
	\end{eqnarray*}
	Note that the last expression does not depend on $y$ or on $\varepsilon,$ so
	\begin{eqnarray*}
		\lambda_{\varepsilon, m} & > & \frac{r_m^4}{8r_mr_M^3 + 48r_M^4}>0,
	\end{eqnarray*}
	for every $\varepsilon \in (0, \tau).$ Then
	\begin{eqnarray*}
		\gamma  \lambda_{\varepsilon, m} >  \frac{\gamma r_m^4}{8r_mr_M^3 + 48r_M^4} \geq \gamma_0,
	\end{eqnarray*}
	for every $\varepsilon \in (0, \tau).$
	
	Therefore, the functions $\tilde{F}_\varepsilon^{\ 2}$ are strongly convex with modulus
		\begin{eqnarray*}
		\gamma_0 = \min\left(\frac{r_m^2}{2},  \frac{\gamma r_m^4}{8r_mr_M^3 + 48r_M^4}\right)
	\end{eqnarray*}
	for every $\varepsilon \in (0, \tau_0).$

	The last assertion is consequence of Theorem \ref{Fortemente convexa - Equivalencia}.
\end{proof}


\section{Convergence of left-invariant mollifier smoothings on Lie groups}
\label{Smoothing F}

Until the end of this work, $G$ is a Lie group endowed with a left-invariant strongly convex $C^0$-Finsler structure $F$. Recall that $F_e = F\vert_{\mathfrak{g}}$.
Consider the mollifier smoothing $\tilde{F}_{\varepsilon}$ of $F_e$ on $(\mathfrak{g}, \left< \cdot, \cdot \right>) \cong \mathbb{R}^n$ defined by \eqref{aplicacao F_epsilon til}. It is a Minkowski norm due to Theorem \ref{F_epsilon é uma norma assimétrica fortemente convexa} and the strong convexity of $F_e$.
The left-invariant mollifier smoothing $F_\varepsilon:TG \rightarrow \mathbb{R}$ of $F$ is defined as
\begin{equation}
\label{equation_define F_varespilon}
	F_\varepsilon(x,y) = \tilde{F}_\varepsilon((dL_{x^{-1}})_x(y)).
\end{equation}
The smoothness of $F_\varepsilon$ on $TG\backslash 0$ is consequence of its left-invariance and of $\tilde{F}_\varepsilon$ being a Minkowski norm.
Therefore, $F_\varepsilon$ is a left-invariant Finsler structure of $G$ for every $\varepsilon \in (0,\tau)$, where $\tau$ is defined by \eqref{Constante tau}, which we call {\em left-invariant mollifier smoothing of $F$}.
In this short section we prove the uniform convergence $F_\varepsilon \rightarrow F$ on compact subsets of $TG$ as $\varepsilon \rightarrow 0$.

In the next result we will use the diffeomorphism $\varphi:G\times \mathfrak{g} \rightarrow TG $ defined by
\begin{equation*} \label{Trivialização X}
	\varphi(x,y)= (x,(dL_x)_e(y)).
\end{equation*}
Its inverse $\varphi^{-1}: TG \rightarrow G\times \mathfrak{g}$ is given by
\begin{equation*} \label{Inversa da Trivialização X}
	\varphi^{-1}(x,y)=(x,(dL_{x^{-1}})_x(y)).    
\end{equation*}


\begin{teo} \label{Convergência uniforme de F epsilon} $F_\varepsilon\rightarrow F$ uniformly on compact subsets of $TG$ as $\varepsilon$ converges to zero. 
\end{teo}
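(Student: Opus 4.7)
The plan is to reduce the statement to the already-established uniform convergence $\tilde{F}_\varepsilon \to F_e$ on compact subsets of $\mathfrak{g}$ (Corollary \ref{F_epsilon til converge uniforme}) by exploiting left-invariance. Since both $F$ and $F_\varepsilon$ are left-invariant, the pointwise difference depends only on the fiber coordinate expressed in the Lie algebra via left translation.

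More precisely, I would first observe that for every $(x,y) \in TG$,
\begin{equation*}
F_\varepsilon(x,y) - F(x,y) = \tilde{F}_\varepsilon\bigl((dL_{x^{-1}})_x(y)\bigr) - F_e\bigl((dL_{x^{-1}})_x(y)\bigr),
\end{equation*}
where the equality for $F_\varepsilon$ is the definition \eqref{equation_define F_varespilon}, and the equality for $F$ follows from left-invariance $F(x,y) = F(e,(dL_{x^{-1}})_x(y)) = F_e((dL_{x^{-1}})_x(y))$.

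Next, given a compact subset $K \subset TG$, I would use the diffeomorphism $\varphi^{-1}: TG \rightarrow G \times \mathfrak{g}$ defined just before this theorem to transport $K$ to the compact subset $\varphi^{-1}(K) \subset G \times \mathfrak{g}$. Denote by $\pi_{\mathfrak{g}}: G \times \mathfrak{g} \rightarrow \mathfrak{g}$ the projection onto the second factor; then $\tilde{K} := \pi_{\mathfrak{g}}(\varphi^{-1}(K))$ is a compact subset of $\mathfrak{g}$ and contains $(dL_{x^{-1}})_x(y)$ for every $(x,y) \in K$. Therefore
\begin{equation*}
\sup_{(x,y) \in K} \bigl\vert F_\varepsilon(x,y) - F(x,y) \bigr\vert \leq \sup_{\mathfrak{y} \in \tilde{K}} \bigl\vert \tilde{F}_\varepsilon(\mathfrak{y}) - F_e(\mathfrak{y}) \bigr\vert.
\end{equation*}

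Finally, the right-hand side tends to $0$ as $\varepsilon \to 0$ by Corollary \ref{F_epsilon til converge uniforme}, which settles the theorem. There is no real obstacle here; the only point requiring care is ensuring that the set $\tilde{K}$ obtained via $\varphi^{-1}$ is indeed compact, which is immediate from the continuity of $\varphi^{-1}$ and of the projection.
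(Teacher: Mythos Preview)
Your proposal is correct and follows essentially the same approach as the paper's proof: both reduce the problem via the trivialization $\varphi^{-1}: TG \to G \times \mathfrak{g}$ and the projection onto $\mathfrak{g}$, then invoke Corollary \ref{F_epsilon til converge uniforme} on the resulting compact subset of $\mathfrak{g}$. The only difference is cosmetic: the paper phrases the argument in $\mu$--$\tilde{\tau}$ form while you state it with suprema.
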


\begin{proof} Let $K_{TG}$ be a compact subset of $TG.$ Given $\mu>0$ we must find $\tilde{\tau} \in (0, \tau]$ such that
	\begin{equation*}
		|F_\varepsilon(x,y) - F(x,y)|  < \mu,
	\end{equation*}
	for every $(x,y) \in K_{TG}$ whenever $\varepsilon \in (0, \tilde{\tau}).$
	
	Since $K_{TG} \subset TG$  is compact, then $K_{\mathfrak{g}} = \pi_2(\varphi^{-1}(K_{TG}))$ is a compact subset of $\mathfrak{g}\cong  \mathbb{R}^n,$ where $\pi_2:G\times \mathfrak{g} \rightarrow \mathfrak{g}$ is given by $\pi_2(x,y)=y.$
	
	Note that
	\begin{equation*}
		F(x,y) = F_e((dL_{x^{-1}})_x (y)) = F_e(\pi_2(\varphi^{-1}(x,y)))
	\end{equation*}
	and
	\begin{equation*}
		F_\varepsilon(x,y) = \tilde{F}_\varepsilon((dL_{x^{-1}})_x (y)) = \tilde{F}_\varepsilon(\pi_2(\varphi^{-1}(x,y))).
	\end{equation*}
	So
	\begin{equation*}
		|F_\varepsilon(x,y) - F(x,y)|  = \left|\tilde{F}_\varepsilon(\pi_2(\varphi^{-1}(x,y))) - F_e(\pi_2(\varphi^{-1}(x,y)))\right|.
	\end{equation*}
	
	By Corollary \ref{F_epsilon til converge uniforme}, for each $\mu>0,$ there exists $\tilde{\tau}\in (0, \tau]$ such that 
	\begin{equation*}
		\left|\tilde{F}_\varepsilon(z) - F_e(z)\right|  < \mu,
	\end{equation*}
	for every $z \in K_{\mathfrak{g}}$ whenever $\varepsilon \in (0, \tilde{\tau})$, in particular for $z=\pi_2(\varphi^{-1}(x,y)).$ This shows that $F_\varepsilon$ converges uniformly to $F$ on $K_{TG}$ as $\varepsilon$ converges to zero.
\end{proof}


\section{The uniform convergences $u_\varepsilon \rightarrow u$ and $\tilde{\mathcal{E}}_\varepsilon \rightarrow \tilde{\mathcal{E}}$ on compact subsets of $\mathfrak{g}^\ast \backslash \{0\}$}

\label{section_convergences and estimates u and E}

Pontryagin extremals on Lie groups endowed with a left-invariant strongly convex $C^0$-Finsler structure can be studied through the initial value problem of its Lie algebra version:
\[
\dot{\mathfrak{a}} = \tilde{\mathcal{E}}(\mathfrak{a}) = \mathfrak{a}([u(\mathfrak{a}), \cdot]), \quad \mathfrak{a}(0) = \mathfrak{a}_0
\]
(see Corollary \ref{corollary_Jessica Ryuichi}).
Let $u_\varepsilon: \mathfrak{g}^\ast \backslash \{0\} \rightarrow S_{\tilde{F}_\varepsilon}[0,1]$ and $\tilde{\mathcal{E}}_\varepsilon: \mathfrak{g}^\ast \backslash \{0\} \rightarrow\mathfrak{g}^\ast$ be the corresponding objects of the left-invariant mollifier smoothing $(G,\tilde{F}_\varepsilon)$ of $(G,F)$.
In this section, we study the uniform convergences $u_\varepsilon \rightarrow u$ and $\tilde{\mathcal{E}}_\varepsilon \rightarrow \tilde{\mathcal{E}}$ on compact subsets of $\mathfrak{g}^\ast \backslash \{0\}$.
In addition, we prove that given a compact subset $\Kgzero \subset \mathfrak{g}^\ast \backslash \{0\}$, there exist $\tau_4>0$ such that the families of mappings $\{u\}\cup \{u_\varepsilon\}_{\varepsilon\in (0,\tau_4)}$ and $\{\tilde{\mathcal{E}}\} \cup \{\tilde{\mathcal{E}_\varepsilon}\}_{\varepsilon \in (0,\tau_4)}$ are Lipschitz maps on $\Kgzero$, with each family having the same Lipschitz constants.

Recall that $u(\mathfrak{a})$ is the unique point that maximizes $\mathfrak{a}$ in $S_{F_e}[0,1]$, that is,
\[
u(\mathfrak{a}) 
= \frac{d\big(F_e\big)_{\ast}{}^{2}(\mathfrak{a})}{F_e\big(d\big({F}_e\big)_\ast^{\ 2}(\mathfrak{a})\big)} 
= \frac{d\big(F_e\big)_{\ast}{}^{2}(\mathfrak{a})}{2\big(F_e\big)_\ast(\mathfrak{a})}
\]
(see Proposition \ref{Formula u alpha}).

In the same way, we have that
	\begin{eqnarray} \label{Expressão de u_epsilon(alpha)}
	u_\varepsilon(\mathfrak{a}) = \frac{d\big(\tilde{F}_\varepsilon\big)_\ast^{\ 2}(\mathfrak{a})}{\tilde{F}_\varepsilon\big(d\big(\tilde{F}_\varepsilon\big)_\ast^{\ 2}(\mathfrak{a})\big)} = \frac{d\big(\tilde{F}_\varepsilon\big)_\ast^{\ 2}(\mathfrak{a})}{2\big(\tilde{F}_\varepsilon\big)_\ast(\mathfrak{a})}.
\end{eqnarray}


\begin{pro}\label{Convergencia norma dual}  $\big(\tilde{F}_\varepsilon\big)_\ast \rightarrow (F_e)_\ast$ uniformly on compact subsets of $\mathfrak{g}^*\setminus\{0\}$ when $\varepsilon \rightarrow 0.$
\end{pro}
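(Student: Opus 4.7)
The plan is to convert the uniform closeness $\tilde{F}_\varepsilon\to F_e$ on compact subsets of $\mathfrak{g}$ (Corollary~\ref{F_epsilon til converge uniforme}) into a \emph{multiplicative} comparison of the two asymmetric norms, and then transfer that comparison to the dual norms via their variational definition $F_\ast(\mathfrak{a})=\sup\{\mathfrak{a}(y):F(y)\leq 1\}$.

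First, fix $\delta\in(0,1)$. Applying Corollary~\ref{F_epsilon til converge uniforme} to the (compact) Euclidean sphere $S_{\Vert\cdot\Vert}[0,1]$, there exists $\tilde{\tau}\in(0,\tau_0]$ such that
\[
\big|\tilde{F}_\varepsilon(y)-F_e(y)\big|<\delta\, r_m,\qquad y\in S_{\Vert\cdot\Vert}[0,1],\ \varepsilon\in(0,\tilde{\tau}).
\]
Since $F_e(y)\geq r_m$ on $S_{\Vert\cdot\Vert}[0,1]$ by \eqref{constante rho}, dividing gives $\big|\tilde{F}_\varepsilon(y)/F_e(y)-1\big|<\delta$ on that sphere. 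Positive homogeneity of both norms extends this to all $y\in\mathfrak{g}\setminus\{0\}$, i.e.,
\[
(1-\delta)F_e(y)\leq \tilde{F}_\varepsilon(y)\leq (1+\delta)F_e(y),\qquad y\in\mathfrak{g}.
\]

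Next I would translate this into a comparison of closed unit balls: the above bound yields $B_{\tilde{F}_\varepsilon}[0,1]\subset B_{F_e}\!\left[0,\frac{1}{1-\delta}\right]$ and $B_{F_e}[0,1]\subset B_{\tilde{F}_\varepsilon}[0,1+\delta]$. Using the identity $\sup_{F(y)\leq r}\mathfrak{a}(y)=r\,F_\ast(\mathfrak{a})$ (a direct consequence of positive homogeneity), these inclusions give
\[
\frac{1}{1+\delta}(F_e)_\ast(\mathfrak{a})\leq \big(\tilde{F}_\varepsilon\big)_\ast(\mathfrak{a})\leq \frac{1}{1-\delta}(F_e)_\ast(\mathfrak{a}),
\]
whence $\big|\big(\tilde{F}_\varepsilon\big)_\ast(\mathfrak{a})-(F_e)_\ast(\mathfrak{a})\big|\leq \frac{\delta}{1-\delta}(F_e)_\ast(\mathfrak{a})$ for all $\mathfrak{a}\in\mathfrak{g}^\ast$.

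Finally, on a given compact set $K\subset\mathfrak{g}^\ast\setminus\{0\}$ the continuous function $(F_e)_\ast$ attains a maximum $M$, so
\[
\sup_{\mathfrak{a}\in K}\big|\big(\tilde{F}_\varepsilon\big)_\ast(\mathfrak{a})-(F_e)_\ast(\mathfrak{a})\big|\leq \frac{\delta M}{1-\delta},
\]
for every $\varepsilon\in(0,\tilde{\tau})$. Since $\delta$ was arbitrary, this settles the uniform convergence. The only delicate point---hardly a serious obstacle---is the passage from additive closeness to multiplicative comparison, which is unlocked precisely by the strict positive lower bound $r_m$ of $F_e$ on the Euclidean unit sphere.
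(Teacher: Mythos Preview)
Your proof is correct and shares the same core idea as the paper's: pass from the uniform convergence $\tilde{F}_\varepsilon\to F_e$ on a compact sphere to a multiplicative comparison of the two norms, then transfer this to the duals via the variational definition. The executions differ slightly: the paper rewrites $(\tilde{F}_\varepsilon)_\ast(\mathfrak{a})=\max_{y\in S_{F_e}[0,1]}\mathfrak{a}(y)/\tilde{F}_\varepsilon(y)$ and uses that $1/\tilde{F}_\varepsilon\to 1$ uniformly on $S_{F_e}[0,1]$, whereas you obtain the global two-sided bound $(1-\delta)F_e\leq\tilde{F}_\varepsilon\leq(1+\delta)F_e$ by homogeneity from the Euclidean sphere and then read off the dual comparison through ball inclusions. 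Your route has two small advantages: it avoids the implicit use of $|\max f-\max g|\leq\max|f-g|$ (which the paper writes as an equality), and your final estimate $|(\tilde{F}_\varepsilon)_\ast-(F_e)_\ast|\leq\frac{\delta}{1-\delta}(F_e)_\ast$ in fact holds on all of $\mathfrak{g}^\ast$, so the exclusion of $0$ is not needed for this particular statement.
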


\begin{proof}
	Let $K_{\mathfrak{g}^*\setminus \{0\}}$ a compact subset of $\mathfrak{g}^*\setminus \{0\}.$ We will prove that $\big(\tilde{F}_\varepsilon\big)_\ast \rightarrow (F_e)_\ast$ uniformly on $K_{\mathfrak{g}^*\setminus \{0\}}$ when $\varepsilon \rightarrow 0.$ 
	
	Given $\mu>0,$ we must find $\tilde{\tau} \in (0, \tau]$ such that 
	\begin{eqnarray*}
		\big\vert\big(\tilde{F}_\varepsilon\big)_\ast(\mathfrak{a}) - (F_e)_\ast(\mathfrak{a})\big\vert < \mu, \, \forall \, \mathfrak{a} \in K_{\mathfrak{g}^*\setminus \{0\}} \mbox{ and } \forall \, \varepsilon \in (0,\tilde{\tau}).
	\end{eqnarray*}
	
	Since $S_{F_e}[0,1]$ is compact, the functions $F_e$ and $\tilde{F}_\varepsilon$ are continuous and $\tilde{F}_\varepsilon(y) \neq 0$ for every $y \in S_{F_e}[0,1]$, it follows from Corollary \ref{F_epsilon til converge uniforme} that
	\begin{eqnarray*}
		\frac{F_e}{\tilde{F}_\varepsilon} = \frac{1}{\tilde{F}_\varepsilon} \rightarrow 1,
	\end{eqnarray*}
	 uniformly on $S_{F_e}[0,1]$ when $\varepsilon \rightarrow 0.$ So, there exists $\tilde{\tau} \in (0, \tau],$ such that
	\begin{eqnarray*}
	\left\vert	\frac{1}{\tilde{F}_\varepsilon(y)} - 1 \right\vert < \frac{\mu}{M_\ast}, \, \forall \, y \in S_{F_e}[0,1] \mbox{ and } \forall \, \varepsilon \in (0,\tilde{\tau}),
	\end{eqnarray*}
	where $M_\ast = \max \left\{ (F_e)_\ast(\mathfrak{b}) : \mathfrak{b} \in \Kgzero \right\}.$
	It follows that for every $\mathfrak{a} \in K_{\mathfrak{g}^*\setminus \{0\}}$ and $\varepsilon \in (0,\tilde{\tau})$, we have
	\begin{eqnarray*}
		\big\vert\big(\tilde{F}_\varepsilon\big)_\ast(\mathfrak{a}) - (F_e)_\ast(\mathfrak{a})\big\vert & = & \left| \max_{y \in S_{F_e}[0,1]} \frac{\mathfrak{a}(y)}{\tilde{F}_\varepsilon(y)} - \max_{y \in S_{F_e}[0,1]} \frac{\mathfrak{a}(y)}{F_e(y)}   \right| \\
		& = & \max_{y \in S_{F_e}[0,1]} \mathfrak{a}(y) \left\vert \frac{1}{\tilde{F}_\varepsilon(y)} - 1 \right\vert \\
		& < & \mu,
	\end{eqnarray*}
what settles the proof.
\end{proof}


\begin{lem} \label{Convergência das curvas de níveis} Let $K_{\mathfrak{g}^*\setminus \{0\}}$ a compact subset of $\mathfrak{g}^*\setminus\{0\}.$ Given $\mu>0,$ there exists $\tilde{\tau} \in (0, \tau]$ such that
	\begin{eqnarray*}
		\dist\left( \left\{ \frac{\mathfrak{a}}{\big(\tilde{F}_\varepsilon\big)_\ast(\mathfrak{a})} =1 \right\}, \left\{ \frac{\mathfrak{a}}{(F_e)_\ast(\mathfrak{a})}=1 \right\} \right)< \mu,
	\end{eqnarray*}
	for every $\mathfrak{a} \in K_{\mathfrak{g}^*\setminus \{0\}}$ and $\varepsilon \in (0, \tilde{\tau}).$	
\end{lem}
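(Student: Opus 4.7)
The idea is to rewrite the two sets as parallel affine hyperplanes in $\mathfrak{g}$, apply the distance formula for parallel hyperplanes stated in Proposition \ref{Distancia entre hiperplanos}, and then conclude using the uniform convergence $(\tilde{F}_\varepsilon)_\ast \rightarrow (F_e)_\ast$ on compacta from Proposition \ref{Convergencia norma dual}.

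For a fixed $\mathfrak{a}\in\mathfrak{g}^\ast\setminus\{0\}$, the linear functionals $\mathfrak{a}/(F_e)_\ast(\mathfrak{a})$ and $\mathfrak{a}/(\tilde{F}_\varepsilon)_\ast(\mathfrak{a})$ are both positive multiples of $\mathfrak{a}$, so the first step is to observe that
\[
\left\{\frac{\mathfrak{a}}{(F_e)_\ast(\mathfrak{a})}=1\right\} = \bigl\{\mathfrak{a}=(F_e)_\ast(\mathfrak{a})\bigr\}, \qquad \left\{\frac{\mathfrak{a}}{(\tilde{F}_\varepsilon)_\ast(\mathfrak{a})}=1\right\}=\bigl\{\mathfrak{a}=(\tilde{F}_\varepsilon)_\ast(\mathfrak{a})\bigr\},
\]
which are two parallel affine hyperplanes in $\mathfrak{g}$, both level sets of the same functional $\mathfrak{a}$.

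Next, I would apply Proposition \ref{Distancia entre hiperplanos} to the functional $\mathfrak{a}$ with $k_1=(F_e)_\ast(\mathfrak{a})$ and $k_2=(\tilde{F}_\varepsilon)_\ast(\mathfrak{a})$ to obtain
\[
\dist\!\left(\left\{\tfrac{\mathfrak{a}}{(\tilde{F}_\varepsilon)_\ast(\mathfrak{a})}=1\right\},\left\{\tfrac{\mathfrak{a}}{(F_e)_\ast(\mathfrak{a})}=1\right\}\right)=\frac{\bigl|(F_e)_\ast(\mathfrak{a})-(\tilde{F}_\varepsilon)_\ast(\mathfrak{a})\bigr|}{\Vert\mathfrak{a}\Vert_\ast}.
\]

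Finally, since $K_{\mathfrak{g}^\ast\setminus\{0\}}$ is a compact subset of $\mathfrak{g}^\ast\setminus\{0\}$ and $\Vert\cdot\Vert_\ast$ is continuous and strictly positive there, the number $m:=\min\{\Vert\mathfrak{a}\Vert_\ast:\mathfrak{a}\in K_{\mathfrak{g}^\ast\setminus\{0\}}\}$ is strictly positive. By Proposition \ref{Convergencia norma dual} applied with tolerance $m\mu$, there exists $\tilde\tau\in(0,\tau]$ such that $\bigl|(F_e)_\ast(\mathfrak{a})-(\tilde{F}_\varepsilon)_\ast(\mathfrak{a})\bigr|<m\mu$ for every $\mathfrak{a}\in K_{\mathfrak{g}^\ast\setminus\{0\}}$ and every $\varepsilon\in(0,\tilde\tau)$; dividing by $\Vert\mathfrak{a}\Vert_\ast\geq m$ gives the required bound $<\mu$. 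There is no real obstacle here: the only point that requires care is checking that the two sets are indeed parallel hyperplanes so that the distance formula applies in its stated form, and that the denominator $\Vert\mathfrak{a}\Vert_\ast$ is bounded away from zero, which follows from the compactness of $K_{\mathfrak{g}^\ast\setminus\{0\}}$ and the hypothesis $0\notin K_{\mathfrak{g}^\ast\setminus\{0\}}$.
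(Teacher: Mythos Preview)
Your proposal is correct and follows essentially the same route as the paper: rewrite the two sets as parallel level hyperplanes of the same functional $\mathfrak{a}$, apply the distance formula \eqref{Distancia entre hiperplanos}, bound the denominator below by $m=\min_{\mathfrak{b}\in K_{\mathfrak{g}^\ast\setminus\{0\}}}\Vert\mathfrak{b}\Vert_\ast>0$, and invoke Proposition~\ref{Convergencia norma dual} with tolerance $m\mu$. The paper's argument is identical in substance.
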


\begin{proof}	
	Given $\mathfrak{a} \in K_{\mathfrak{g}^*\setminus \{0\}},$ we have that 
	\begin{eqnarray*}
		\dist\left( \left\{ \frac{\mathfrak{a}}{\big(\tilde{F}_\varepsilon\big)_\ast(\mathfrak{a})} =1 \right\}, \left\{ \frac{\mathfrak{a}}{(F_e)_\ast(\mathfrak{a})}=1 \right\} \right)  = \frac{\big\vert \big(\tilde{F}_\varepsilon\big)_\ast (\mathfrak{a}) - (F_e)_\ast (\mathfrak{a})\big\vert }{\Vert \mathfrak{a} \Vert_{\ast} }
	\end{eqnarray*}
	due to \eqref{Distancia entre hiperplanos}.
	
	By Proposition \ref{Convergencia norma dual}, we have that $\big(\tilde{F}_\varepsilon\big)_\ast  \rightarrow (F_e)_\ast$ uniformly on $K_{\mathfrak{g}^*\setminus \{0\}}$ when $\varepsilon \rightarrow 0.$ Then, given $\mu> 0$, there exists  $\tilde{\tau} \in (0, \tau]$ such that
	\begin{eqnarray*}
		\big\vert \big(\tilde{F}_\varepsilon\big)_\ast (\mathfrak{a}) - (F_e)_\ast (\mathfrak{a})\big\vert < \mu \min \left\{ \big\Vert \mathfrak{b} \big\Vert_{\ast} : \mathfrak{b} \in K_{\mathfrak{g}^{\ast}\setminus \{0\}} \right\},
	\end{eqnarray*}
	for every $\mathfrak{a} \in K_{\mathfrak{g}^*\setminus \{0\}}$ and $\varepsilon \in (0, \tilde{\tau})$. 
	
	Then
	\begin{eqnarray*}
	 & & \dist\left( \left\{ \frac{\mathfrak{a}}{\big(\tilde{F}_\varepsilon\big)_\ast(\mathfrak{a})} =1 \right\}, \left\{ \frac{\mathfrak{a}}{(F_e)_\ast(\mathfrak{a})}=1 \right\} \right) \\   & = & \frac{\big\vert \big(\tilde{F}_\varepsilon\big)_\ast (\mathfrak{a}) - (F_e)_\ast (\mathfrak{a})\big\vert }{\Vert \mathfrak{a} \Vert_{\ast} }
	 < \frac{ \mu \min \left\{ \big\Vert \mathfrak{b} \big\Vert_{\ast} : \mathfrak{b} \in K_{\mathfrak{g}\setminus \{0\}} \right\} }{\Vert \mathfrak{a} \Vert_{\ast} }  \leq \mu,
	\end{eqnarray*}
	for every $\mathfrak{a} \in K_{\mathfrak{g}^*\setminus \{0\}}$ and $\varepsilon \in (0, \tilde{\tau})$, and the lemma is proved.
	
\end{proof}


\begin{lem} \label{As bolas contem uma bola euclidiana e estao contidas em uma bola euclidiana} \rm There are positive constants $\tau_1 \in (0, \tau]$, $r$ and $R$, such that
	\begin{eqnarray}
		& & B_{\Vert \cdot \Vert}[0, r] \subset B_{F_e}[0,1] \subset B_{\Vert \cdot \Vert}[0, R] \ {\rm and} \label{eqnarray_bolas encaixadas} \\
		& & B_{\Vert \cdot \Vert}[0, r] \subset B_{\tilde{F}_\varepsilon}[0,1] \subset B_{\Vert \cdot \Vert}[0, R], \label{eqnarray_bolas encaixadas epsilon}
	\end{eqnarray} 
	for every $\varepsilon \in (0, \tau_1).$
\end{lem}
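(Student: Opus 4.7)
The plan is to combine the standard equivalence between any asymmetric norm and the Euclidean norm (via compactness of the Euclidean unit sphere) with the uniform convergence $\tilde{F}_\varepsilon \to F_e$ established in Corollary \ref{F_epsilon til converge uniforme}. The result is essentially a ``sandwich'' estimate made uniform in $\varepsilon$.

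First I would dispose of the inclusion \eqref{eqnarray_bolas encaixadas} for $F_e$ alone. By Proposition \ref{F e Lipschitz}, $F_e$ is continuous, so the constants $r_m$ and $r_M$ from \eqref{constante r}--\eqref{constante rho} are attained on the compact set $S_{\Vert\cdot\Vert}[0,1]$, and $r_m>0$ because $F_e(y)=0$ forces $y=0$. Positive homogeneity then gives
\[
r_m\Vert y\Vert \leq F_e(y) \leq r_M\Vert y\Vert
\]
for every $y\in \mathfrak{g}$, which yields $B_{\Vert\cdot\Vert}[0,1/r_M]\subset B_{F_e}[0,1]\subset B_{\Vert\cdot\Vert}[0,1/r_m]$.

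Next I would apply Corollary \ref{F_epsilon til converge uniforme} to the compact set $S_{\Vert\cdot\Vert}[0,1]$ with tolerance $r_m/2$: there exists $\tau_1\in(0,\tau]$ such that $\vert \tilde{F}_\varepsilon(y)-F_e(y)\vert < r_m/2$ for every $y\in S_{\Vert\cdot\Vert}[0,1]$ and every $\varepsilon\in(0,\tau_1)$. Together with $r_m\leq F_e(y)\leq r_M$ on this sphere, this gives
\[
\tfrac{r_m}{2}\leq \tilde{F}_\varepsilon(y)\leq \tfrac{3r_M}{2}
\quad \text{for all } y\in S_{\Vert\cdot\Vert}[0,1],\ \varepsilon\in(0,\tau_1),
\]
and positive homogeneity of $\tilde{F}_\varepsilon$ extends this to $\tfrac{r_m}{2}\Vert y\Vert\leq \tilde{F}_\varepsilon(y)\leq \tfrac{3r_M}{2}\Vert y\Vert$ for every $y\in\mathfrak{g}$.

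Finally I would set $r:=1/(2r_M)$ and $R:=2/r_m$, which are weaker than both pairs of bounds obtained above, so the inclusions \eqref{eqnarray_bolas encaixadas} and \eqref{eqnarray_bolas encaixadas epsilon} follow simultaneously for $F_e$ and for all $\varepsilon\in(0,\tau_1)$. There is no genuine obstacle here; the only delicate point is to fix the tolerance in the uniform convergence in terms of $r_m$ (rather than an arbitrary small quantity) so that the lower bound $\tilde{F}_\varepsilon\geq r_m/2$ on the Euclidean unit sphere is strictly positive, ensuring the common inner radius $r$ is independent of $\varepsilon$.
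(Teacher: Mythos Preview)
Your proof is correct and follows essentially the same approach as the paper, which simply states that the result is a direct consequence of the uniform convergence $\tilde{F}_\varepsilon \to F_e$ on $S_{F_e}[0,1]$ from Corollary \ref{F_epsilon til converge uniforme}. You give more explicit detail (working on $S_{\Vert\cdot\Vert}[0,1]$ rather than $S_{F_e}[0,1]$, and writing out the resulting bounds and choices of $r$ and $R$), but the underlying idea is identical.
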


\begin{proof} This result is a direct consequence of the uniform convergence  $\tilde{F}_{\varepsilon} \rightarrow F_e$  on $S_{F_e}[0,1]$ (see Corollary \ref{F_epsilon til converge uniforme}).
\end{proof}

\begin{lem} \label{Lema para mostrar a convergência de u varepsilon} Let $F_e$ be a strongly convex asymmetric norm on $\mathfrak{g}$. 
Let $K_{\mathfrak{g}^*\setminus \{0\}}$ be a compact subset of $\mathfrak{g}^*\setminus \{0\}$.  
For every $\mathfrak{a} \in K_{\mathfrak{g}^*\setminus \{0\}}$,
set $D_{u(\mathfrak{a})}=B_{\Vert \cdot \Vert} [u(\mathfrak{a}), r] \cap S_{F_e}[0,1]$ and 
${D_{u(\mathfrak{a})}}^\complement=S_{F_e}[0,1]-D_{u(\mathfrak{a})}$, where $r$ is defined in Lemma \ref{As bolas contem uma bola euclidiana e estao contidas em uma bola euclidiana}. The following relations hold:
	\begin{enumerate}
		\item $d_m := \inf\left\{\dist \left( {D_{u(\mathfrak{a})}}^\complement, \left\{ \frac{\mathfrak{a}}{(F_e)_\ast(\mathfrak{a}) } = 1\right\} \right): \, \mathfrak{a} \in  	K_{\mathfrak{g}^*\setminus \{0\}} \right\}> 0.$
		\item There exists $\tau_2 \in (0, \tau],$ such that
		\begin{eqnarray*}
			\left\Vert \frac{y}{\tilde{F}_\varepsilon(y)} - y \right\Vert < \frac{d_m}{4},
		\end{eqnarray*}
		for every $y \in S_{F_e}[0,1]$ and $\varepsilon \in (0, \tau_2).$
		\item For every $\mathfrak{a} \in K_{\mathfrak{g}^*\setminus \{0\}}$, we have 
		\begin{eqnarray*}
			\dist \left( \frac{y}{\tilde{F}_\varepsilon(y)}, \left\{ \frac{\mathfrak{a}}{(F_e)_\ast(\mathfrak{a}) } = 1\right\} \right) > \frac{3d_m}{4},
		\end{eqnarray*}
		for every $y \in {D_{u(\mathfrak{a})}}^\complement$ and $\varepsilon \in (0, \tau_2).$ 
	\end{enumerate}
	
\end{lem}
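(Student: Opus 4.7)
\smallskip
\noindent\textbf{Proof plan.}

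The key observation is that $H(\mathfrak{a}) := \{\mathfrak{a}/(F_e)_\ast(\mathfrak{a}) = 1\}$ is exactly the supporting hyperplane of $B_{F_e}[0,1]$ at $u(\mathfrak{a})$: by definition $\mathfrak{a}(u(\mathfrak{a})) = (F_e)_\ast(\mathfrak{a})$, while the strict convexity of $F_e$ (which follows from strong convexity) forces $u(\mathfrak{a})$ to be the \emph{unique} point of $S_{F_e}[0,1]$ lying on $H(\mathfrak{a})$. This guarantees that for each individual $\mathfrak{a}$, the compact set $D_{u(\mathfrak{a})}^\complement$ is disjoint from $H(\mathfrak{a})$, so the distance is strictly positive. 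The content of item (1) is that this positivity is uniform over $\mathfrak{a} \in K_{\mathfrak{g}^\ast\setminus\{0\}}$.

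For item (1), I would use \eqref{Distancia entre hiperplanos} to rewrite, for $y \in S_{F_e}[0,1]$,
\[
\dist(y,H(\mathfrak{a})) \;=\; \frac{(F_e)_\ast(\mathfrak{a}) - \mathfrak{a}(y)}{\Vert \mathfrak{a}\Vert_\ast},
\]
noting that the numerator is non-negative because $u(\mathfrak{a})$ maximizes $\mathfrak{a}$ on $S_{F_e}[0,1]$. Then I argue by contradiction: suppose $d_m = 0$ and take sequences $\mathfrak{a}_n \in K_{\mathfrak{g}^\ast\setminus\{0\}}$ and $y_n \in D_{u(\mathfrak{a}_n)}^\complement$ with $\mathfrak{a}_n(y_n)/(F_e)_\ast(\mathfrak{a}_n) \to 1$. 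By compactness of $K_{\mathfrak{g}^\ast\setminus\{0\}}$ and $S_{F_e}[0,1]$, extract subsequences $\mathfrak{a}_n \to \mathfrak{a}_\infty$ and $y_n \to y_\infty$. Lemma \ref{u_alpha é lipschitz} gives $u(\mathfrak{a}_n) \to u(\mathfrak{a}_\infty)$, and since $\Vert y_n - u(\mathfrak{a}_n)\Vert \geq r$, passing to the limit yields $y_\infty \in D_{u(\mathfrak{a}_\infty)}^\complement$. Continuity then gives $\mathfrak{a}_\infty(y_\infty) = (F_e)_\ast(\mathfrak{a}_\infty)$, so $y_\infty$ is a maximizer of $\mathfrak{a}_\infty$ on $S_{F_e}[0,1]$. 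Uniqueness of the maximizer forces $y_\infty = u(\mathfrak{a}_\infty)$, contradicting $\Vert y_\infty - u(\mathfrak{a}_\infty)\Vert \geq r > 0$. This is the step I expect to be the main obstacle, since one must be careful that the dependence of $D_{u(\mathfrak{a})}^\complement$ on $\mathfrak{a}$ is well-controlled: the Lipschitz continuity of $u$ is precisely what makes the limit $y_\infty$ remain outside the ball $B_{\Vert\cdot\Vert}(u(\mathfrak{a}_\infty),r)$.

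For item (2), on $S_{F_e}[0,1]$ one has $F_e(y) = 1$, so
\[
\left\Vert \frac{y}{\tilde{F}_\varepsilon(y)} - y \right\Vert \;=\; \frac{\vert 1 - \tilde{F}_\varepsilon(y)\vert}{\tilde{F}_\varepsilon(y)}\,\Vert y\Vert.
\]
Since $S_{F_e}[0,1]$ is compact and $\tilde{F}_\varepsilon \to F_e$ uniformly on it by Corollary \ref{F_epsilon til converge uniforme}, for all sufficiently small $\varepsilon$ one can bound $\tilde{F}_\varepsilon(y)$ uniformly below (say by $1/2$), bound $\Vert y\Vert$ uniformly above, and make $\vert 1 - \tilde{F}_\varepsilon(y)\vert$ arbitrarily small. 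Choosing $\tau_2 \in (0,\tau]$ so that the product is less than $d_m/4$ finishes this item.

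For item (3), the conclusion is immediate from the reverse triangle inequality for distances to a set: for any $y \in D_{u(\mathfrak{a})}^\complement$,
\[
\dist\!\left(\frac{y}{\tilde{F}_\varepsilon(y)},\, H(\mathfrak{a})\right)
\;\geq\; \dist(y, H(\mathfrak{a})) - \left\Vert \frac{y}{\tilde{F}_\varepsilon(y)} - y \right\Vert
\;>\; d_m - \frac{d_m}{4} \;=\; \frac{3 d_m}{4},
\]
using item (1) for the first term and item (2) for the second. Thus items (2) and (3) are routine consequences once the compactness/uniqueness argument of item (1) is in place.
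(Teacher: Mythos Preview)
Your argument is correct, and for items (2) and (3) it coincides with the paper's proof essentially line for line (the paper phrases (2) via $|1/\tilde{F}_\varepsilon(y) - 1/F_e(y)|$, but this is cosmetic).

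For item (1), however, your route is genuinely different from the paper's. You run a sequential compactness argument: if $d_m = 0$, extract convergent subsequences $\mathfrak{a}_n \to \mathfrak{a}_\infty$, $y_n \to y_\infty$, use the continuity of $u$ to pass $\Vert y_n - u(\mathfrak{a}_n)\Vert > r$ to the limit, and contradict the uniqueness of the maximizer. This uses only strict convexity of $F_e$ and continuity of $u$. The paper instead invokes the strong convexity of $F_e$ directly through Proposition~\ref{proposition_para mostrar que S_F_e está contida em uma esfera euclidiana}: it first checks that $\frac{2\mathfrak{a}}{(F_e)_\ast(\mathfrak{a})} \in \partial F_e^2(u(\mathfrak{a}))$, so that $S_{F_e}[0,1]$ lies inside a Euclidean ball $B_{\Vert\cdot\Vert}[\tilde{y}(\mathfrak{a}),\mathcal{R}]$ tangent to $H(\mathfrak{a})$ at $u(\mathfrak{a})$, with $\mathcal{R}$ \emph{independent of} $\mathfrak{a}$. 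Then
\[
\dist\!\left(D_{u(\mathfrak{a})}^\complement,\, H(\mathfrak{a})\right)
\;\geq\;
\dist\!\left(S_{\Vert\cdot\Vert}[\tilde{y}(\mathfrak{a}),\mathcal{R}] \setminus B_{\Vert\cdot\Vert}[u(\mathfrak{a}),r],\; T_{u(\mathfrak{a})}S_{\Vert\cdot\Vert}[\tilde{y}(\mathfrak{a}),\mathcal{R}]\right),
\]
and the right-hand side is a purely Euclidean quantity depending only on $\mathcal{R}$ and $r$, hence uniform in $\mathfrak{a}$. Your approach is lighter and avoids the machinery of Proposition~\ref{proposition_para mostrar que S_F_e está contida em uma esfera euclidiana}; the paper's approach is constructive and yields an explicit lower bound for $d_m$ in terms of $\mathcal{R}$ and $r$, which is in keeping with the quantitative spirit of the surrounding sections.
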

\begin{proof}
	
	\
	
	(1) Consider $\mathfrak{a} \in K_{\mathfrak{g}^*\setminus \{0\}}.$ 
	Notice that $F_e(z) \geq \frac{\mathfrak{a}(z)}{(F_e)_{\ast}(\mathfrak{a})}$ for every $z \in S_F[0,1]$, with equality iff $z=u(\mathfrak{a})$.
	Then $F_e(z) \geq F_e(u(\mathfrak{a})) +\frac{\mathfrak{a}}{(F_e)_\ast(\mathfrak{a})}(z-u(\mathfrak{a}))$ because $F_e(u(\mathfrak{a})) = \frac{\mathfrak{a}(u(\mathfrak{a}))}{(F_e)_{\ast}(\mathfrak{a})}=1$.
	Squaring both sides, we have $F_{e}{}^2(z)\geq F_{e}{}^2(u(\mathfrak{a})) + \frac{2\mathfrak{a} }{(F_e)_{\ast}(\mathfrak{a})}(z-u(\mathfrak{a}))$, that is, $\frac{2\mathfrak{a}}{(F_e)_\ast(\mathfrak{a})} \in \partial F_{e}{}^2(u(\mathfrak{a}))$.
	By Proposition \ref{proposition_para mostrar que S_F_e está contida em uma esfera euclidiana}, there exists $\tilde{y}(\mathfrak{a}) \in \mathfrak{g}$ and $\mathcal{R}>0$ (which doesn't depend on $\mathfrak{a}$), so that
		\begin{eqnarray*}
			& S_{F_e}[0,1] \subset B_{\Vert \cdot \Vert} [\tilde{y}(\mathfrak{a}), \mathcal{R}], \quad  \quad u(\mathfrak{a}) \in S_{\Vert \cdot \Vert} [\tilde{y}(\mathfrak{a}), \mathcal{R}] \\ & \text{ and } T_{u(\mathfrak{a})}(S_{\Vert \cdot\Vert}[\tilde{y}(\mathfrak{a}),\mathcal{R}]) = \left\{\frac{\mathfrak{a}}{(F_e)_{\ast}(\mathfrak{a})} = 1 \right\}.
		\end{eqnarray*}
		So
		\begin{eqnarray*}
			& & \dist\left({D_{u(\mathfrak{a})}}^\complement, \left\{\frac{\mathfrak{a}}{(F_e)_\ast(\mathfrak{a}) } = 1\right\} \right) \\
			& = & \dist\left(S_{F_e}[0,1] - B_{\Vert \cdot \Vert} [u(\mathfrak{a}), r] , \left\{\frac{\mathfrak{a}}{(F_e)_\ast(\mathfrak{a}) } = 1\right\} \right) \\
			&\geq &	\dist\left(S_{\Vert \cdot \Vert} [\tilde{y}(\mathfrak{a}), \mathcal{R}] - B_{\Vert \cdot \Vert} [u(\mathfrak{a}), r],  T_{u(\mathfrak{a})}(S_{\Vert \cdot\Vert}[\tilde{y}(\mathfrak{a}),\mathcal{R}]) \right).
		\end{eqnarray*}
		Note that the last distance above is positive and it doesn't depend on $ \mathfrak{a} \in K_{\mathfrak{g}^*\setminus \{0\}}$. Therefore 
		\begin{eqnarray*}
			d_m := \inf\left\{\dist \left( {D_{u(\mathfrak{a})}}^\complement, \left\{ \frac{\mathfrak{a}}{(F_e)_\ast(\mathfrak{a}) } = 1\right\} \right): \, \mathfrak{a} \in  K_{\mathfrak{g}^*\setminus \{0\}} \right\} > 0.
		\end{eqnarray*}
		
		\
		
		(2) As $S_{F_e}[0,1]$ is compact, then $\frac{1}{\tilde{F}_\varepsilon} \rightarrow \frac{1}{F_e}$ uniformly on $S_{F_e}[0,1].$ Set 
		\begin{eqnarray*}
			\mu = \frac{d_m}{4\max\limits_{z \in S_{F_e}[0,1]} \Vert z \Vert} > 0.
		\end{eqnarray*} 
		 Then there exists $\tau_2 \in (0, \tau],$ such that
		\begin{eqnarray*}
			\left| \frac{1}{\tilde{F}_\varepsilon(y)} - \frac{1}{F_e(y)} \right| < \mu,
		\end{eqnarray*}
		for every $y \in S_{F_e}[0,1]$ and $\varepsilon \in (0, \tau_2)$. Then, for every $\varepsilon \in (0, \tau_2)$ and  $y \in S_{F_e}[0,1],$ we have that
		\begin{eqnarray*}
			\Bigg\Vert  \frac{y}{\tilde{F}_\varepsilon(y)}  - y \Bigg\Vert & = & \Bigg\Vert  \frac{1}{\tilde{F}_\varepsilon(y)} y - \frac{1}{F_e(y)}y \Bigg\Vert  \\
			& < & \mu \Vert y \Vert \\
			& = & \frac{d_m}{4\max\limits_{z \in S_{F_e}[0,1]} \Vert z \Vert} \Vert y \Vert \\
			& \leq & \frac{d_m}{4},
		\end{eqnarray*}
		which completes the proof of Item (2).
		
		\
		
		(3) Consider $\mathfrak{a} \in K_{\mathfrak{g}^*\setminus \{0\}}.$ Given $y \in {D_{u(\mathfrak{a})}}^\complement,$ we have 
		\begin{eqnarray*}
			& & \dist\left(\frac{y}{\tilde{F}_\varepsilon(y)}, \left\{ \frac{\mathfrak{a}}{(F_e)_\ast(\mathfrak{a})} = 1 \right\} \right) \\
			& \geq & \dist\left( y, \left\{ \frac{\mathfrak{a}}{(F_e)_\ast(\mathfrak{a})} = 1 \right\}\right) -\dist\left( \frac{y}{\tilde{F}_\varepsilon(y)}, y\right)\\
			& > & d_m - \frac{d_m}{4} \\
			& = & \frac{3d_m}{4},
		\end{eqnarray*}
		for every $\varepsilon \in (0, \tau_2)$. In the second relation, we used Items (1) and (2).
	
\end{proof}


\begin{teo} \rm \label{Convergência de u alpha varepsilon} $u_{\varepsilon} \rightarrow u$ uniformly on compact subsets of $\mathfrak{g}^*\setminus \{0\}.$
\end{teo}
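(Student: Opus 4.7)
The plan is to combine the uniform convergence $(\tilde{F}_\varepsilon)_\ast \to (F_e)_\ast$ (Proposition \ref{Convergencia norma dual}) with the geometric estimates of Lemma \ref{Lema para mostrar a convergência de u varepsilon}, sharpened so that the ball radius can be taken arbitrarily small. Fix a compact set $K_{\mathfrak{g}^\ast\setminus\{0\}} \subset \mathfrak{g}^\ast\setminus\{0\}$ and $\mu>0$; I aim to find $\tilde{\tau}\in(0,\tau]$ such that $\Vert u_\varepsilon(\mathfrak{a})-u(\mathfrak{a})\Vert<\mu$ for every $\mathfrak{a}\in K_{\mathfrak{g}^\ast\setminus\{0\}}$ and $\varepsilon\in(0,\tilde{\tau})$. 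The starting point is to note that $u_\varepsilon(\mathfrak{a})\in S_{\tilde{F}_\varepsilon}[0,1]$ can be written as $u_\varepsilon(\mathfrak{a}) = y_\varepsilon(\mathfrak{a})/\tilde{F}_\varepsilon(y_\varepsilon(\mathfrak{a}))$ with $y_\varepsilon(\mathfrak{a})\in S_{F_e}[0,1]$.

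First, I would observe that the proof of Lemma \ref{Lema para mostrar a convergência de u varepsilon} goes through verbatim if one replaces the radius $r$ by any smaller $\rho>0$; so choose $\rho=\min(r,\mu/3)$, and let $d_m^{\rho}$, $\tau_2^{\rho}$ denote the analogues of $d_m$ and $\tau_2$ for this smaller radius. Now use the identity $\mathfrak{a}(u_\varepsilon(\mathfrak{a}))=(\tilde{F}_\varepsilon)_\ast(\mathfrak{a})$ together with \eqref{Distancia entre hiperplanos} to obtain
\[
\dist\bigl(u_\varepsilon(\mathfrak{a}),\{\mathfrak{a}=(F_e)_\ast(\mathfrak{a})\}\bigr)=\frac{\bigl|(\tilde{F}_\varepsilon)_\ast(\mathfrak{a})-(F_e)_\ast(\mathfrak{a})\bigr|}{\Vert\mathfrak{a}\Vert_\ast}.
\]
By Proposition \ref{Convergencia norma dual} and the positive lower bound of $\Vert\mathfrak{a}\Vert_\ast$ on $K_{\mathfrak{g}^\ast\setminus\{0\}}$, the right-hand side can be made strictly less than $d_m^{\rho}/2$ for every $\mathfrak{a}\in K_{\mathfrak{g}^\ast\setminus\{0\}}$, provided $\varepsilon$ is small enough.

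Suppose, for contradiction, that $y_\varepsilon(\mathfrak{a})\notin D_{u(\mathfrak{a})}^{\rho}:= B_{\Vert\cdot\Vert}[u(\mathfrak{a}),\rho]\cap S_{F_e}[0,1]$. Then by Item (3) of the sharpened Lemma \ref{Lema para mostrar a convergência de u varepsilon},
\[
\dist\bigl(u_\varepsilon(\mathfrak{a}),\{\mathfrak{a}=(F_e)_\ast(\mathfrak{a})\}\bigr)>\tfrac{3d_m^{\rho}}{4},
\]
contradicting the previous paragraph. Hence $y_\varepsilon(\mathfrak{a})\in D_{u(\mathfrak{a})}^{\rho}$, i.e. $\Vert y_\varepsilon(\mathfrak{a})-u(\mathfrak{a})\Vert\leq\rho\leq\mu/3$. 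Finally, an application of Item (2) of Lemma \ref{Lema para mostrar a convergência de u varepsilon} with tolerance $\mu/3$ in place of $d_m/4$ yields $\Vert u_\varepsilon(\mathfrak{a})-y_\varepsilon(\mathfrak{a})\Vert<\mu/3$ for sufficiently small $\varepsilon$, uniformly on $S_{F_e}[0,1]$. The triangle inequality delivers $\Vert u_\varepsilon(\mathfrak{a})-u(\mathfrak{a})\Vert<\mu$.

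The main obstacle I expect is the uniformity bookkeeping: the $\varepsilon$-thresholds arising from Proposition \ref{Convergencia norma dual} and from Items (2) and (3) of Lemma \ref{Lema para mostrar a convergência de u varepsilon} must each depend only on $K_{\mathfrak{g}^\ast\setminus\{0\}}$ and on the chosen radius $\rho$, not on the individual $\mathfrak{a}$. This is already built into the statements (compactness of $K_{\mathfrak{g}^\ast\setminus\{0\}}$ and of $S_{F_e}[0,1]$), but care is required when verifying that the supporting-hyperplane argument behind $d_m^{\rho}>0$ remains valid for the shrunken radius $\rho$; this follows because the tangent hyperplane to $S_{\Vert\cdot\Vert}[\tilde{y}(\mathfrak{a}),\mathcal{R}]$ at $u(\mathfrak{a})$ (furnished by Proposition \ref{proposition_para mostrar que S_F_e está contida em uma esfera euclidiana}) still separates $u(\mathfrak{a})$ from $S_{F_e}[0,1]\setminus B_{\Vert\cdot\Vert}[u(\mathfrak{a}),\rho]$ with a positive gap continuous in $\mathfrak{a}$.
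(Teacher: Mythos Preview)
Your proposal is correct and follows essentially the same approach as the paper's proof: both radially project $u_\varepsilon(\mathfrak{a})$ onto $S_{F_e}[0,1]$, use the contradiction argument based on Item~(3) of Lemma~\ref{Lema para mostrar a convergência de u varepsilon} together with the convergence of the dual norms to force the projection into the small disk $D_{u(\mathfrak{a})}$, and then control the radial correction via the uniform convergence $1/\tilde{F}_\varepsilon\to 1/F_e$. The only cosmetic difference is that you shrink the radius to $\rho=\min(r,\mu/3)$ and rederive the lemma, whereas the paper simply assumes without loss of generality that the $r$ of Lemma~\ref{As bolas contem uma bola euclidiana e estao contidas em uma bola euclidiana} satisfies $r<\mu/2$; likewise, your direct use of $\mathfrak{a}(u_\varepsilon(\mathfrak{a}))=(\tilde{F}_\varepsilon)_\ast(\mathfrak{a})$ is exactly what Lemma~\ref{Convergência das curvas de níveis} encodes.
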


\begin{proof} Let $K_{\mathfrak{g}^*\setminus \{0\}}$ be a compact subset of $\mathfrak{g}^*\setminus \{0\}.$ Consider $\mu>0.$ Let $r$ and $R$ as defined in Lemma \ref{As bolas contem uma bola euclidiana e estao contidas em uma bola euclidiana}. 
We can suppose, without loss of generality, that $ r < \mu/2.$
Let $D_{u(\mathfrak{a})}$ and $d_m$ as given in Lemma \ref{Lema para mostrar a convergência de u varepsilon}. 

Consider 
\begin{eqnarray*} \label{Conjunto Z}
	A = B_{\Vert \cdot \Vert}[0, R] - B_{\Vert \cdot \Vert}(0, r) \subset \mathfrak{g}.
\end{eqnarray*}
Since $\frac{1}{\tilde{F}_\varepsilon(y)} \rightarrow \frac{1}{F_e(y)}$ uniformly on $A$, there exists 
\begin{eqnarray}\label{Constante tau3}
	\tau_3\in (0, \tau]
\end{eqnarray} 
such that
\begin{eqnarray}
\label{eqnarray_majorado por mu}
	\Bigg\vert \frac{1}{\tilde{F}_\varepsilon(y)} - \frac{1}{F_e(y)} \Bigg\vert < \frac{\mu}{2 \max\limits_{z \in A} \Vert z \Vert},
\end{eqnarray}
for every $ y \in A$ and $ \varepsilon \in (0, \tau_3)$. 
	
	Since $K_{\mathfrak{g}^*\setminus \{0\}}$ is compact, then by Lemma \ref{Convergência das curvas de níveis}, there exists $\tilde{\tau}>0,$ such that
	\begin{eqnarray} \label{desigualdade das curvas de níveis menor que d/2}
		\dist \left(\left\{\frac{\mathfrak{a}}{\big(\tilde{F}_\varepsilon\big)_\ast(\mathfrak{a})} = 1\right\}, \left\{\frac{\mathfrak{a}}{(F_e)_\ast(\mathfrak{a})} = 1\right\} \right) < \frac{d_m}{2},
	\end{eqnarray}
	for every $\mathfrak{a} \in K_{\mathfrak{g}^*\setminus \{0\}}$ and $\varepsilon \in (0, \tilde{\tau}).$ We can assume that $\tilde{\tau}\leq\min\{\tau_1, \tau_2, \tau_3\}$, where $\tau_1$ and $\tau_2$ are given in Lemmas \ref{As bolas contem uma bola euclidiana e estao contidas em uma bola euclidiana} and \ref{Lema para mostrar a convergência de u varepsilon} respectively, and $\tau_3$ is defined in (\ref{Constante tau3}).
	
	For every $\mathfrak{a} \in K_{\mathfrak{g}^*\setminus \{0\}}$ and $\varepsilon \in (0, \tilde{\tau}),$ we have
	\begin{eqnarray*}
		\Vert u_\varepsilon(\mathfrak{a}) - u(\mathfrak{a}) \Vert & \leq & \Bigg\Vert  u_\varepsilon(\mathfrak{a}) - \frac{u_\varepsilon(\mathfrak{a})}{F_e(u_\varepsilon(\mathfrak{a}))} \Bigg\Vert + \Bigg\Vert  \frac{u_\varepsilon(\mathfrak{a})}{F_e(u_\varepsilon(\mathfrak{a}))} - u(\mathfrak{a}) \Bigg\Vert\\
		& < &  \Bigg\Vert  \frac{u_\varepsilon(\mathfrak{a})}{\tilde{F}_\varepsilon(u_\varepsilon(\mathfrak{a}))} - \frac{u_\varepsilon(\mathfrak{a})}{F_e(u_\varepsilon(\mathfrak{a}))} \Bigg\Vert + r \\
		& = & \Bigg\vert  \frac{1}{\tilde{F}_\varepsilon(u_\varepsilon(\mathfrak{a}))} - \frac{1}{F_e(u_\varepsilon(\mathfrak{a}))} \Bigg\vert \Vert u_\varepsilon(\mathfrak{a}) \Vert + r \\
		& < & \frac{\mu}{2 \max\limits_{z \in A} \Vert z \Vert} \Vert u_\varepsilon(\mathfrak{a}) \Vert + r < \frac{\mu}{2}+ \frac{\mu}{2} = \mu.
	\end{eqnarray*}
	Note that $u_\varepsilon(\mathfrak{a}) \in A$ due to  \eqref{eqnarray_bolas encaixadas epsilon} and because $u_\varepsilon(\mathfrak{a}) \in S_{\tilde{F}_\varepsilon}[0,1]$.  
	For the second inequality, we use the fact that $\tilde{\tau}$ is less than or equal to $\tau_2$ to ensure that $\frac{u_\varepsilon(\mathfrak{a})}{F_e(u_\varepsilon(\mathfrak{a}))} \in D_{u(\mathfrak{a})},$ otherwise we would have $\frac{u_\varepsilon(\mathfrak{a})}{F_e(u_\varepsilon(\mathfrak{a}))} \in {D_{u(\mathfrak{a})}}^\complement$ 
	and
	\begin{eqnarray*}
		\dist\left(u_\varepsilon(\mathfrak{a}), \left\{ \frac{\mathfrak{a}}{(F_e)_\ast(\mathfrak{a})} = 1 \right\}  \right) \geq \frac{3d_m}{4}
	\end{eqnarray*}
	due to Item (3) of the Lemma \ref{Lema para mostrar a convergência de u varepsilon}, which contradicts (\ref{desigualdade das curvas de níveis menor que d/2}). 
	The fourth relation is due to $\tilde{\tau} \leq \tau_3$ and \eqref{eqnarray_majorado por mu}.
With these remarks, we conclude the proof.
\end{proof}


\begin{teo} \rm \label{Convergência E fresco til} $\tilde{\mathcal{E}}_\varepsilon \rightarrow \tilde{\mathcal{E}}$ uniformly on compact subsets of $\mathfrak{g}^*\setminus\{0\}$.
\end{teo}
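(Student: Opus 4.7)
The plan is to reduce this uniform convergence to the uniform convergence $u_\varepsilon \to u$ established in Theorem \ref{Convergência de u alpha varepsilon}, using the same bilinear estimate that appeared in the Lipschitz bound for $\tilde{\mathcal{E}}$ (Theorem \ref{E til e Lipschitz}). Since $\tilde{\mathcal{E}}_\varepsilon(\mathfrak{a}) = \mathfrak{a}([u_\varepsilon(\mathfrak{a}),\cdot])$ and $\tilde{\mathcal{E}}(\mathfrak{a}) = \mathfrak{a}([u(\mathfrak{a}),\cdot])$, I would begin with the identity
\[
\tilde{\mathcal{E}}_\varepsilon(\mathfrak{a}) - \tilde{\mathcal{E}}(\mathfrak{a}) = \mathfrak{a}\bigl([u_\varepsilon(\mathfrak{a}) - u(\mathfrak{a}),\,\cdot\,]\bigr),
\]
which isolates the dependence on $\varepsilon$ entirely inside the first slot of the Lie bracket.

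Fix a compact subset $\Kgzero \subset \mathfrak{g}^\ast \setminus \{0\}$ and let $\{e_1,\ldots,e_n\}$ be an orthonormal basis of $\mathfrak{g}$ with structure constants $c_{ij}{}^k$. Set $\hat{C} = \max_{i,j,k} |c_{ij}{}^k|$ and $\tilde{C} = \max\{\|\mathfrak{a}\|_\ast : \mathfrak{a}\in \Kgzero\}$. Replaying the string of estimates from \eqref{eqnarray_estimativa Lipschitz u} (but with the single term coming from $u_\varepsilon(\mathfrak{a}) - u(\mathfrak{a})$ instead of the two terms produced there), I would obtain the clean bound
\[
\bigl\|\tilde{\mathcal{E}}_\varepsilon(\mathfrak{a}) - \tilde{\mathcal{E}}(\mathfrak{a})\bigr\|_\ast \;\leq\; \tilde{C}\,\hat{C}\,n^3 \,\bigl\|u_\varepsilon(\mathfrak{a}) - u(\mathfrak{a})\bigr\|
\]
for every $\mathfrak{a}\in \Kgzero$ and every $\varepsilon \in (0,\tau)$. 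The multiplier $\tilde{C}\hat{C}n^3$ depends only on $\Kgzero$ and on the Lie algebra, not on $\varepsilon$.

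Given $\mu > 0$, Theorem \ref{Convergência de u alpha varepsilon} supplies $\tilde{\tau} \in (0,\tau]$ such that $\|u_\varepsilon(\mathfrak{a}) - u(\mathfrak{a})\| < \mu/(\tilde{C}\hat{C}n^3)$ for every $\mathfrak{a}\in \Kgzero$ and $\varepsilon \in (0,\tilde{\tau})$, which then yields $\|\tilde{\mathcal{E}}_\varepsilon(\mathfrak{a}) - \tilde{\mathcal{E}}(\mathfrak{a})\|_\ast < \mu$ uniformly on $\Kgzero$. So the argument is essentially a one-line reduction once the earlier machinery is in place; no further analytic work is needed, and there is no real obstacle, because compactness of $\Kgzero$ is exactly what made $\tilde{C}$ finite and what underwrote the uniform convergence of $u_\varepsilon$.
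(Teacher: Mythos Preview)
Your proof is correct and follows essentially the same approach as the paper: both start from the identity $\tilde{\mathcal{E}}_\varepsilon(\mathfrak{a}) - \tilde{\mathcal{E}}(\mathfrak{a}) = \mathfrak{a}([u_\varepsilon(\mathfrak{a}) - u(\mathfrak{a}),\cdot])$, bound it by a constant (depending only on $\Kgzero$ and the structure constants) times $\|u_\varepsilon(\mathfrak{a}) - u(\mathfrak{a})\|$, and then invoke Theorem~\ref{Convergência de u alpha varepsilon}. The only cosmetic difference is that you recycle the $\tilde C\hat C n^3$ estimate from the proof of Theorem~\ref{E til e Lipschitz}, whereas the paper redoes an equivalent bilinear estimate in place.
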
	

\begin{proof} Let $K_{\mathfrak{g}^*\setminus\{0\}}$ be a compact subset of $\mathfrak{g}^*\setminus\{0\}$. 
	 Given $\mu>0,$ we must find $\tilde{\tau} \in (0, \tau]$ such that
	\begin{eqnarray*}
		\big\Vert \tilde{\mathcal{E}}_\varepsilon(\mathfrak{a}) - \tilde{\mathcal{E}}(\mathfrak{a}) \big\Vert_* < \mu,
	\end{eqnarray*}
	for every $\mathfrak{a} \in K_{\mathfrak{g}^*\setminus\{0\}}$ and $\varepsilon \in (0, \tilde{\tau})$.
	
	Let $\{e_1, \ldots, e_n\}$ be an orthonormal basis of $\mathfrak{g}.$ Given $\mathfrak{a} \in K_{\mathfrak{g}^*\setminus\{0\}}$, we have
	\begin{eqnarray}
		\big\Vert \tilde{\mathcal{E}}_\varepsilon(\mathfrak{a}) - \tilde{\mathcal{E}}(\mathfrak{a}) \big\Vert_* & = & \big\Vert \mathfrak{a}([u_\varepsilon(\mathfrak{a}), \cdot]) - \mathfrak{a}([u(\mathfrak{a}), \cdot]) \big\Vert_* \nonumber\\
		& = & \big\Vert \mathfrak{a}([u_\varepsilon(\mathfrak{a}) - u(\mathfrak{a}), \cdot]) \big\Vert_* \nonumber \\
		& = & \max_{y \in S_{\Vert \cdot \Vert }[0,1]} \big\vert \mathfrak{a}([u_\varepsilon(\mathfrak{a}) - u(\mathfrak{a}), y])\big\vert \nonumber\\
		& \leq & \Vert \mathfrak{a}\Vert_{\ast} \max_{y \in S_{\Vert \cdot \Vert }[0,1]} \big\vert [u_\varepsilon(\mathfrak{a}) - u(\mathfrak{a}), y^ie_i]\big\vert \nonumber \\
		& = & \Vert \mathfrak{a}\Vert_{\ast}\max_{y \in S_{\Vert \cdot \Vert }[0,1]} \big\vert y^i \big\vert \big\vert [u_\varepsilon(\mathfrak{a}) - u(\mathfrak{a}), e_i]\big\vert \nonumber\\
		& \leq & n \Vert \mathfrak{a}\Vert_{\ast} \max_{ i \in \{1, \ldots, n\} } \big\vert [u_\varepsilon(\mathfrak{a}) - u(\mathfrak{a}), e_i]\big\vert. \label{Desigualdade1}
	\end{eqnarray}
	In the last inequality, note that $\big\vert y^i \big\vert \leq 1, $ for every $i \in \{1, \ldots, n\}$.
	
	For every $e_i \in \mathfrak{g},$ $i \in \{1, \ldots, n\}$, we have the uniform convergence
	$$[u_\varepsilon(\mathfrak{a}), e_i] \rightarrow [u(\mathfrak{a}), e_i]$$
	on $K_{\mathfrak{g}^*\setminus\{0\}}$ when $\varepsilon \rightarrow 0$, because the function $[\cdot , e_i]: \mathfrak{g} \rightarrow \mathbb{R}$ is continuous and $u_\varepsilon \rightarrow u$ uniformly on $K_{\mathfrak{g}^*\setminus\{0\}}$ due to Theorem \ref{Convergência de u alpha varepsilon}. Then there is $\tilde{\tau} \in (0, \tau]$ such that
	\begin{eqnarray} \label{Deseigualdade2}
		\big\Vert [u_\varepsilon(\mathfrak{a}), e_i]- [u(\mathfrak{a}), e_i] \big\Vert & < & \frac{\mu}{n \max\limits_{\mathfrak{b} \in K_{\mathfrak{g}^*\setminus\{0\}}} \Vert \mathfrak{b} \Vert_\ast},
	\end{eqnarray}
	for every $i \in \{1, \ldots, n\}$, $ \mathfrak{a} \in K_{\mathfrak{g}^*\setminus\{0\}}$ and $\varepsilon \in (0, \tilde{\tau})$. 
	
	Using (\ref{Desigualdade1}) and (\ref{Deseigualdade2}), we have that
	\begin{eqnarray*}
		\big\Vert \tilde{\mathcal{E}}_\varepsilon(\mathfrak{a}) - \tilde{\mathcal{E}}(\mathfrak{a}) \big\Vert_* & \leq & n \Vert \mathfrak{a} \Vert_\ast \max_{ i \in \{1, \ldots, n\} }  \big\Vert[u_\varepsilon(\mathfrak{a}) - u(\mathfrak{a}), e_i]\big\Vert \\
		& < & n \Vert \mathfrak{a} \Vert_\ast \frac{\mu}{n \max\limits_{\mathfrak{b} \in K_{\mathfrak{g}^*\setminus\{0\}}} \Vert \mathfrak{b} \Vert_\ast} \leq \mu,
	\end{eqnarray*}
	for every $ \mathfrak{a} \in K_{\mathfrak{g}^*\setminus\{0\}}$ and $\varepsilon \in (0, \tilde{\tau}),$ what settles the proof.
\end{proof}


\begin{teo} \label{u_epsilon é lipschitz} Let $K_{\mathfrak{g}^\ast \setminus \{0\}}$ be a compact subset of $\mathfrak{g}^\ast \setminus \{0\}.$ Then there exists $\tau_4 \in (0, \min\{\tau_0, \tau_1, \tau_2, \tau_3\}]$ and $\mathcal{K}_1 > 0$ such that $\Vert u(\mathfrak{a}) - u(\mathfrak{b})\Vert \leq \mathcal{K}_1.\Vert \mathfrak{a} - \mathfrak{b}  \Vert_{\ast}$ and 
$\Vert u_{\varepsilon}(\mathfrak{a}) - u_{\varepsilon}(\mathfrak{b})\Vert \leq \mathcal{K}_1.\Vert \mathfrak{a} - \mathfrak{b}  \Vert_{\ast}$ for every $\varepsilon \in (0,\tau_4)$ and $\mathfrak{a}, \mathfrak{b} \in K_{\mathfrak{g}^\ast \setminus \{0\}}$.
$\tau_0,$ $\tau_1$ and $\tau_2$ are given in Lemmas \ref{c_epsilon convege para c}, \ref{As bolas contem uma bola euclidiana e estao contidas em uma bola euclidiana} and \ref{Lema para mostrar a convergência de u varepsilon} respectively and $\tau_3$ is defined in (\ref{Constante tau3}).
\end{teo}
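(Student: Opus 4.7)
The plan is to apply Lemma \ref{u_alpha é lipschitz} to $F_e$ (yielding a Lipschitz constant for $u$) and, separately, to each $\tilde{F}_\varepsilon$ (yielding a Lipschitz constant for $u_\varepsilon$), and then to show that the four constants $C_1,C_2,C_3,L_1$ and the modulus of convexity $\gamma$ that appear in the formula
\[
\mathcal{K} = \frac{1}{2}\left(C_1^{\,2} C_3 L_1 + C_1^{\,2} C_2 \cdot \frac{4}{\gamma}\right)
\]
can all be chosen independently of $\varepsilon$ for $\varepsilon \in (0,\tau_4)$ with $\tau_4$ small enough. The final $\mathcal{K}_1$ is then the value of $\mathcal{K}$ computed from these uniform bounds (taking the maximum with the value associated to $F_e$ itself).

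The first observation is that the modulus of convexity is already uniform: Theorem \ref{F_epsilon2 til são fortemente convexas com a mesma constante} asserts that both $F_e$ and $\tilde{F}_\varepsilon$ (for $\varepsilon \in (0,\tau_0)$) are strongly convex with respect to $\sqrt{\gamma_0/2}\,\Vert \cdot \Vert$, so one may set $\gamma = \gamma_0$ in the denominator uniformly in $\varepsilon$. Next, I would uniformly bound the $\varepsilon$-versions
\[
C_1^\varepsilon = \max_{\mathfrak{a} \in K_{\mathfrak{g}^\ast\setminus\{0\}}} \frac{1}{(\tilde{F}_\varepsilon)_\ast(\mathfrak{a})}, \qquad C_2^\varepsilon = \max_{\mathfrak{a} \in K_{\mathfrak{g}^\ast\setminus\{0\}}} (\tilde{F}_\varepsilon)_\ast(\mathfrak{a}),
\]
using Proposition \ref{Convergencia norma dual}: since $(\tilde{F}_\varepsilon)_\ast \to (F_e)_\ast$ uniformly on $K_{\mathfrak{g}^\ast\setminus\{0\}}$ and $(F_e)_\ast$ is continuous and strictly positive there, for sufficiently small $\varepsilon$ the function $(\tilde{F}_\varepsilon)_\ast$ lies inside a fixed compact subinterval of $(0,\infty)$ on $K_{\mathfrak{g}^\ast\setminus\{0\}}$; this gives uniform bounds $C_1^\varepsilon \leq 2C_1$ and $C_2^\varepsilon \leq C_2+1$, say, after possibly shrinking $\tau_4$. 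The constant $L_1^\varepsilon = \max_{S_{\Vert \cdot \Vert_\ast}[0,1]}(\tilde{F}_\varepsilon)_\ast$ is uniformly bounded by the same argument applied on the compact Euclidean unit sphere.

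The main technical step, and the one I expect to be the most delicate, is the uniform bound on
\[
C_3^\varepsilon = \max_{\mathfrak{a} \in K_{\mathfrak{g}^\ast\setminus\{0\}}} \big\Vert d(\tilde{F}_\varepsilon)_\ast^{\,2}(\mathfrak{a})\big\Vert.
\]
For this I would exploit Proposition \ref{Formula u alpha}, which gives $d(\tilde{F}_\varepsilon)_\ast^{\,2}(\mathfrak{a}) = 2(\tilde{F}_\varepsilon)_\ast(\mathfrak{a})\, u_\varepsilon(\mathfrak{a})$. Since $\varepsilon < \tau_1$ forces the inclusion $S_{\tilde{F}_\varepsilon}[0,1]\subset B_{\Vert \cdot \Vert}[0,R]$ of Lemma \ref{As bolas contem uma bola euclidiana e estao contidas em uma bola euclidiana}, one has $\Vert u_\varepsilon(\mathfrak{a})\Vert \leq R$ uniformly in $\varepsilon$ and $\mathfrak{a}$, and combining this with the uniform bound on $C_2^\varepsilon$ we get $C_3^\varepsilon \leq 2(C_2+1)R$ uniformly.

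Plugging these four uniform bounds and the common $\gamma_0$ into the formula for $\mathcal{K}$ in Lemma \ref{u_alpha é lipschitz} produces a single constant $\mathcal{K}_1$ that serves as a Lipschitz constant for every $u_\varepsilon$ on $K_{\mathfrak{g}^\ast\setminus\{0\}}$ and, by construction (or by taking a maximum with the original $\mathcal{K}$ from Lemma \ref{u_alpha é lipschitz} applied to $F_e$), also for $u$, completing the proof with $\tau_4 \in (0,\min\{\tau_0,\tau_1,\tau_2,\tau_3\}]$ chosen small enough to guarantee all the uniform bounds above.
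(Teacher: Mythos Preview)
Your proposal is correct and follows essentially the same route as the paper: apply Lemma \ref{u_alpha é lipschitz} to both $F_e$ and each $\tilde{F}_\varepsilon$, use the common modulus $\gamma_0$ from Theorem \ref{F_epsilon2 til são fortemente convexas com a mesma constante}, and show that the constants $C_1^\varepsilon,C_2^\varepsilon,C_3^\varepsilon,L_1^\varepsilon$ are uniformly bounded via Proposition \ref{Convergencia norma dual}. The only noteworthy difference is in the treatment of $C_3^\varepsilon$: the paper argues that $C_3(\varepsilon)\to C_3$ by invoking the uniform convergence $u_\varepsilon\to u$ of Theorem \ref{Convergência de u alpha varepsilon} together with the formula $d(\tilde{F}_\varepsilon)_\ast^{\,2}(\mathfrak{a})=2(\tilde{F}_\varepsilon)_\ast(\mathfrak{a})u_\varepsilon(\mathfrak{a})$, whereas you bound $\Vert u_\varepsilon(\mathfrak{a})\Vert\le R$ directly from the inclusion $S_{\tilde{F}_\varepsilon}[0,1]\subset B_{\Vert\cdot\Vert}[0,R]$ of Lemma \ref{As bolas contem uma bola euclidiana e estao contidas em uma bola euclidiana}; your shortcut is slightly more economical since it avoids appealing to Theorem \ref{Convergência de u alpha varepsilon}.
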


\begin{proof} By Lemma \ref{u_alpha é lipschitz}, we have that $u(\mathfrak{a})$ is Lipschitz on $K_{\mathfrak{g}^\ast \setminus \{0\}}$, with Lipschitz constant
	\begin{eqnarray*}
		\mathcal{K} = \frac{1}{2} \left(C_1^2 C_3 L_1 + C_1^2 C_2 \frac{4}{\gamma_0}\right),
	\end{eqnarray*}
	where $C_1, C_2,$ $C_3$, $L_1$ and $\gamma_0$ are defined by (\ref{Constante C1}), (\ref{Constante C2}), (\ref{Constante C3}), (\ref{Constante L1}) and \eqref{gamma_0} respectively.
	Here we replaced $\gamma$ by $\gamma_0$ in order to simplify the proof.
	
	Analogously, considering $\tau_0$ given in Lemma \ref{c_epsilon convege para c}, for each $\varepsilon \in (0, \tau_0),$ $u_\varepsilon(\mathfrak{a})$ is Lipschitz on $K_{\mathfrak{g}^\ast \setminus \{0\}}$ with Lipschitz constant
	\begin{eqnarray*}
		\mathcal{K}(\varepsilon) = \frac{1}{2} \left(C_1^2(\varepsilon) C_3(\varepsilon) L_1(\varepsilon) + C_1^2(\varepsilon) C_2(\varepsilon) \frac{4}{\gamma_0}\right),
	\end{eqnarray*}
	where
	\begin{eqnarray*}
		C_1(\varepsilon) & = & \max\left\{ \frac{1}{\big(\tilde{F}_\varepsilon\big)_\ast(\mathfrak{a})} : \, \mathfrak{a} \in K_{\mathfrak{g}^\ast \setminus \{0\}} \right\},  \\
		C_2(\varepsilon) & = & \max\left\{ \big(\tilde{F}_\varepsilon\big)_\ast(\mathfrak{a}) : \, \mathfrak{a} \in K_{\mathfrak{g}^\ast \setminus \{0\}} \right\},  \\
		C_3(\varepsilon) & = & \max\left\{\big\Vert d\big(\tilde{F}_\varepsilon\big)_\ast^{\ 2}(\mathfrak{a}) \big\Vert : \, \mathfrak{a} \in K_{\mathfrak{g}^\ast \setminus \{0\}} \right\} \text{ and }  \\
		L_1(\varepsilon) & = & \max\left\{ \big(\tilde{F}_\varepsilon\big)_\ast(\mathfrak{a}) : \, \mathfrak{a} \in S_{\Vert \cdot \Vert_\ast}[0,1]  \right\}.  
	\end{eqnarray*}
	By Proposition \ref{Convergencia norma dual} and the fact that the maximum function is continuous, we have that $C_1(\varepsilon), C_2(\varepsilon)$ and $L_1(\varepsilon)$ converge uniformly to $C_1, C_2$ and $L_1,$ respectively.
	We also have that due to (\ref{Expressão de u_epsilon(alpha)}), Proposition \ref{Convergencia norma dual} and Theorem \ref{Convergência de u alpha varepsilon}, that $C_3(\varepsilon)$ converges uniformly to $C_3.$ Then $\mathcal{K}(\varepsilon)$ converges uniformly to $\mathcal{K}$.
	It follows that there exists
	\begin{eqnarray*} \label{Constante tau_4}
	 \tau_4 \in \left(0, \min\{\tau_0, \tau_1, \tau_2, \tau_3\} \right]
	\end{eqnarray*}
	such that 
	$$\mathcal{K}(\varepsilon) < \mathcal{K}+1,$$
	for every $\varepsilon \in \left(0, \tau_4\right).$
	
	Therefore,
	\begin{eqnarray} \label{Constante K} 
	\mathcal{K}_1 = \mathcal{K}+1
	\end{eqnarray}
	is a common Lipschitz constant for the maps $u$ and $u_\varepsilon$ restricted to $K_{\mathfrak{g}^\ast \setminus \{0\}}$ when $\varepsilon \in \left(0, \tau_4\right).$
\end{proof}


\begin{teo} \label{a[u(a),.] é lipschitz} Let $K_{\mathfrak{g}^\ast \setminus \{0\}}$ be a compact subset of $\mathfrak{g}^\ast \setminus \{0\}.$
		Then there exists $\tilde{\mathcal{K}}>0$ such that 
		\begin{equation}
		\nonumber
		\Vert \tilde{\mathcal{E}}(\mathfrak{a}) - \tilde{\mathcal{E}}(\mathfrak{b})\Vert_\ast \leq \tilde{\mathcal{K}}\Vert \mathfrak{a} - \mathfrak{b} \Vert_\ast
		\end{equation}
		and
		\begin{equation}
		\nonumber 
		\Vert \tilde{\mathcal{E}_\varepsilon}(\mathfrak{a}) - \tilde{\mathcal{E}_\varepsilon}(\mathfrak{b})\Vert_\ast \leq \tilde{\mathcal{K}}\Vert \mathfrak{a} - \mathfrak{b} \Vert_\ast
		\end{equation}
		for every $\mathfrak{a},\mathfrak{b} \in \Kgzero$ and $\varepsilon \in (0,\tau_4)$, where $\tau_4$ is defined in Theorem \ref{u_epsilon é lipschitz}.
		\end{teo}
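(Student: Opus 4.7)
The plan is to rerun the computation of Theorem \ref{E til e Lipschitz} in parallel for $\tilde{\mathcal{E}}$ and for every $\tilde{\mathcal{E}}_\varepsilon$ with $\varepsilon\in (0,\tau_4)$, and then absorb the $\varepsilon$-dependence of the estimates into a single constant using the uniform bounds proved earlier in this section. The Lipschitz constant produced in Theorem \ref{E til e Lipschitz} has the form $\left(\tilde{C}\mathcal{K}+\frac{1}{r_m}\right)\hat{C} n^3$, where $\hat{C}$ depends only on the Lie bracket structure constants $c_{ij}{}^k$ (and hence is the same for the two problems), $\tilde{C}=\max\{\Vert\mathfrak{a}\Vert_\ast:\mathfrak{a}\in K_{\mathfrak{g}^\ast\setminus\{0\}}\}$ does not involve $F$, $\mathcal{K}$ is a Lipschitz constant for $u$ on $K_{\mathfrak{g}^\ast\setminus\{0\}}$, and $\frac{1}{r_m}$ is the bound for $\Vert u(\mathfrak{b})\Vert$ coming from \eqref{align_maximo em SF eh 1 sobre rm}.

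First, I would repeat, verbatim, the chain of inequalities in the proof of Theorem \ref{E til e Lipschitz} replacing $u$ by $u_\varepsilon$, arriving at
\begin{equation*}
\big\Vert\tilde{\mathcal{E}}_\varepsilon(\mathfrak{a})-\tilde{\mathcal{E}}_\varepsilon(\mathfrak{b})\big\Vert_\ast
\;\leq\;\Bigl(\tilde{C}\,\mathcal{K}(\varepsilon)+\tfrac{1}{r_{\varepsilon,m}}\Bigr)\hat{C}\,n^3\,\Vert\mathfrak{a}-\mathfrak{b}\Vert_\ast,
\end{equation*}
where $\mathcal{K}(\varepsilon)$ is the Lipschitz constant of $u_\varepsilon$ on $K_{\mathfrak{g}^\ast\setminus\{0\}}$ provided by Lemma \ref{u_alpha é lipschitz} applied to $\tilde{F}_\varepsilon$, and $r_{\varepsilon,m}$ is the constant defined in \eqref{constante c_epsilon}. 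The key point is that all the other ingredients ($\hat{C}$, $\tilde{C}$, $n$) are independent of $\varepsilon$.

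Next, I would invoke the two uniform bounds already established. Theorem \ref{u_epsilon é lipschitz} provides $\mathcal{K}_1$, a common Lipschitz constant for $u$ and every $u_\varepsilon$ with $\varepsilon\in(0,\tau_4)$, so we may replace both $\mathcal{K}$ and $\mathcal{K}(\varepsilon)$ by $\mathcal{K}_1$. For the other term, Lemma \ref{c_epsilon convege para c} gives $r_{\varepsilon,m}>r_m/2$ for $\varepsilon\in(0,\tau_0)\supset(0,\tau_4)$, and consequently $\tfrac{1}{r_{\varepsilon,m}}<\tfrac{2}{r_m}$. Setting
\begin{equation*}
\tilde{\mathcal{K}}:=\Bigl(\tilde{C}\,\mathcal{K}_1+\tfrac{2}{r_m}\Bigr)\hat{C}\,n^3,
\end{equation*}
this constant dominates both $\bigl(\tilde{C}\mathcal{K}+\tfrac{1}{r_m}\bigr)\hat{C}n^3$ and $\bigl(\tilde{C}\mathcal{K}(\varepsilon)+\tfrac{1}{r_{\varepsilon,m}}\bigr)\hat{C}n^3$ for every $\varepsilon\in(0,\tau_4)$, yielding both advertised inequalities simultaneously.

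There should be no serious obstacle: every inequality in Theorem \ref{E til e Lipschitz} uses only Cauchy--Schwarz, the triangle inequality, positive homogeneity, and the Lipschitz estimate for $u$, and none of these steps loses regularity under the replacement $F_e\leadsto\tilde{F}_\varepsilon$ since $\tilde{F}_\varepsilon$ is also a strongly convex asymmetric norm by Theorem \ref{F_epsilon2 til são fortemente convexas com a mesma constante}. The only mild care required is to verify that replacing $u$ by $u_\varepsilon$ in \eqref{eqnarray_estimativa Lipschitz u} legitimately produces the bound $\mathcal{K}(\varepsilon)\Vert\mathfrak{a}-\mathfrak{b}\Vert_\ast$ on $K_{\mathfrak{g}^\ast\setminus\{0\}}$; this follows because Lemma \ref{u_alpha é lipschitz} is formulated for a generic strongly convex asymmetric norm, so it applies equally to $\tilde{F}_\varepsilon$.
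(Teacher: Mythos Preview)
Your proposal is correct and follows essentially the same route as the paper: rerun the estimate of Theorem \ref{E til e Lipschitz} with $u_\varepsilon$ in place of $u$, then replace the $\varepsilon$-dependent constants by uniform ones using Theorem \ref{u_epsilon é lipschitz}. The only cosmetic difference is in bounding $\Vert u_\varepsilon(\mathfrak{b})\Vert$: the paper uses the constant $R$ from Lemma \ref{As bolas contem uma bola euclidiana e estao contidas em uma bola euclidiana} directly, whereas you bound it by $\tfrac{1}{r_{\varepsilon,m}}<\tfrac{2}{r_m}$ via Lemma \ref{c_epsilon convege para c}; both yield a valid $\varepsilon$-independent constant.
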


\begin{proof} 
The proof for $\tilde{\mathcal{E}}_{\varepsilon}(\mathfrak{a})$ is analogous to the proof of Theorem \ref{E til e Lipschitz} for $\tilde{\mathcal{E}}(\mathfrak{a})$.
Let $\hat{C}$ and $\tilde{C}$ as defined in Theorem \ref{E til e Lipschitz}, and $\{e_1, \ldots, e_n\}$ be an orthonormal basis of $\mathfrak{g}$.
	Consider $\mathfrak{a}, \mathfrak{b} \in K_{\mathfrak{g}^\ast \setminus \{0\}}$ and $\varepsilon \in \left(0, \tau_4\right)$. 
	Following the proof of Theorem \ref{E til e Lipschitz} until \eqref{eqnarray_estimativa Lipschitz u} for $\tilde{\mathcal{E}}_{\varepsilon}$, we have
	
	\begin{eqnarray}
		\big\Vert \tilde{\mathcal{E}}_\varepsilon(\mathfrak{a}) - \tilde{\mathcal{E}}_\varepsilon(\mathfrak{b}) \big\Vert_\ast & \leq &  \sum_{i,j=1}^n  \tilde{C} \big\Vert u_{\varepsilon}(\mathfrak{a})-u_{\varepsilon}(\mathfrak{b}) \big\Vert  \left\Vert \sum_{k=1}^n c_{ij}{}^k e_k \right\Vert \nonumber \\
		& + & \sum_{i,j=1}^n \big\Vert u_{\varepsilon}(\mathfrak{b})\big\Vert \Vert \mathfrak{a}-\mathfrak{b} \Vert_\ast \left\Vert \sum_{k=1}^n c_{ij}{}^k e_k \right\Vert. \nonumber
		\end{eqnarray}
Therefore
		\begin{eqnarray}
		\label{•}
		\big\Vert \tilde{\mathcal{E}}_\varepsilon(\mathfrak{a}) - \tilde{\mathcal{E}}_\varepsilon(\mathfrak{b}) \big\Vert_\ast  \leq	\left( \tilde{C} (\mathcal{K} + 1)  + R  \right)  \hat{C} n^3 \big\Vert \mathfrak{a}-\mathfrak{b} \big\Vert_\ast,\nonumber
	\end{eqnarray}
	where $R$ is defined in the Lemma \ref{As bolas contem uma bola euclidiana e estao contidas em uma bola euclidiana} and $\mathcal{K} + 1$ is a Lipschitz constant of $u_\varepsilon$ defined in \eqref{Constante K} for $\varepsilon \in (0, \tau_4)$. Then $\left( \tilde{C} (\mathcal{K} + 1) + R  \right)  \hat{C} n^3 $ is a Lipschitz constant for $\tilde{\mathcal{E}}_\varepsilon$ for $\varepsilon \in \left(0, \tau_4\right).$	
	This same constant is a Lipschitz constant for $\tilde{\mathcal{E}}(\mathfrak{a})$
because 
\[
\frac{1}{r_m}=\max_{y\in S_{F_e}[0,1]} \Vert y\Vert \leq R
\]
due to \eqref{align_maximo em SF eh 1 sobre rm} and \eqref{eqnarray_bolas encaixadas}.
	 Therefore,
	\begin{eqnarray*}
	\tilde{\mathcal{K}} := \left( \tilde{C} (\mathcal{K}+1) + R  \right)  \hat{C} n^3
	\end{eqnarray*} 
	is a Lipschitz constant for $\tilde{\mathcal{E}}(\mathfrak{a})$ and $\tilde{\mathcal{E}}_\varepsilon(\mathfrak{a})$, $\varepsilon \in \left(0, \tau_4\right).$
\end{proof}


\section{Uniform convergence of Pontryagin extremals}

\label{a epsilon convege para a e x epsilon converge para x}

Let $F_e : \mathfrak{g} \rightarrow  [0,\infty)$ be a strongly convex asymmetric norm with respect to $\sqrt{\frac{\gamma}{2}}\Vert \cdot \Vert.$ 
Consider $\mathfrak{a}_0 \in \mathfrak{g}^\ast \backslash \{0\}$. 
Let 
\begin{eqnarray*}
(u(\mathfrak{a}(t)), \mathfrak{a}(t)) \in S_{F_e}[0,1] \times \mathfrak{g}^*\setminus\{0\},
\end{eqnarray*}
$t \in \mathbb{R},$ be the unique Pontryagin extremal of $\big(G, F \big)$ such that $\mathfrak{a}(0) = \mathfrak{a}_0$ (see Theorem \ref{a(t) esta definida para toda a reta}). 
Recall that $\tilde{F}_\varepsilon : \mathfrak{g} \rightarrow  [0,\infty)$ is the mollifier smoothing of $F_e$ defined in (\ref{aplicacao F_epsilon til}), with $\varepsilon \in (0,\tau),$ where $\tau$ is defined in (\ref{Constante tau}). 
For every $\varepsilon \in (0, \tau)$, the Pontryagin extremal of $\big(G, \tilde{F}_\varepsilon \big)$ satisfying $\mathfrak{a}_\varepsilon (0)=\mathfrak{a}_0$ will be denoted by
\begin{eqnarray*}
	(u_\varepsilon (\mathfrak{a}_\varepsilon(t)), \mathfrak{a}_\varepsilon(t)) \in S_{\tilde{F}_\varepsilon[0,1]} \times \mathfrak{g}^*\setminus\{0\},
\end{eqnarray*}
$t \in \mathbb{R}.$


In Subsection \ref{subsection_convergencia em algebras}, we prove the uniform convergence 
\begin{eqnarray} \label{a epsilon converge uniformemente para a}
	(u_\varepsilon (\mathfrak{a}_\varepsilon(t)),\mathfrak{a}_\varepsilon(t)) \rightarrow (u(\mathfrak{a}(t)),\mathfrak{a}(t))
\end{eqnarray}
on compact intervals of $\mathbb{R}$ when $\varepsilon \rightarrow 0.$ 

In Subsection \ref{subsection_convergencia de extremais de Pontryagin no grupo}, we consider that the Lie group $G$ is connected.
Let $(x_0, \alpha_0) \in T^\ast G\backslash 0$ and $(x(t), \alpha(t))$ be the Pontryagin extremal of $(G,F)$ such that $(x(0), \alpha(0)) = (x_0, \alpha_0)$.
In the same way, let $(x_\varepsilon(t), \alpha_\varepsilon(t))$ be the Pontryagin extremals of $(G,F_\varepsilon)$ such that $(x_\varepsilon(0), \alpha_\varepsilon(0)) = (x_0, \alpha_0)$.
We prove that $(x_\varepsilon(t), \alpha_\varepsilon(t))$ converges uniformly to $(x(t), \alpha(t))$ on compact intervals as $\varepsilon \rightarrow 0$.


\subsection{Pontryagin extremals on $(\mathfrak{g}\backslash \{0\}) \times (\mathfrak{g}^\ast\backslash \{0\}$)}
\label{subsection_convergencia em algebras}

\begin{lem} \label{As bolas duais contem uma bola euclidiana e estao contidas em uma bola euclidiana} Let $\mathfrak{a}_0 \in \mathfrak{g}^\ast \backslash \{0\}$. 
The following statements holds:
	\begin{enumerate}
		\item There are $\tau_{B^\ast} \in (0,\tau_4]$ and positive constants $r^\ast$ and $R^\ast$ such that
		\begin{eqnarray*}
			& & B_{\Vert \cdot \Vert_\ast}[0, r^\ast] \subset B_{(F_e)_\ast}(0,(F_e)_\ast (\mathfrak{a}_0)) \ssubset B_{\Vert \cdot \Vert_\ast}(0, R^\ast);
		\end{eqnarray*}
and	
		\begin{eqnarray*}
			& & B_{\Vert \cdot \Vert_\ast}[0, r^\ast] \subset B_{(\tilde{F}_\varepsilon)_\ast}(0,(\tilde{F}_{\varepsilon})_\ast (\mathfrak{a}_0)) \ssubset B_{\Vert \cdot \Vert_\ast}(0, R^\ast)
		\end{eqnarray*}
for every $\varepsilon \in (0,\tau_{B^\ast})$. 
		\item Set $B^\ast := B_{\Vert \cdot \Vert_\ast}(0, R^\ast)-B_{\Vert \cdot \Vert_\ast}[0, r^\ast]$. For every $t \in \mathbb{R}$ and $\varepsilon \in (0, \tau_{B^\ast})$, we have $\mathfrak{a}(t) \in B^\ast$ and $\mathfrak{a}_{\varepsilon}(t) \in B^\ast$.	
		\end{enumerate}

\end{lem}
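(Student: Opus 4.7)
My plan is to treat the two items separately, establishing Item (1) first by combining the equivalence of asymmetric norms with the uniform convergence from Proposition \ref{Convergencia norma dual}, and then deducing Item (2) from the PMP conservation law \eqref{É constante pelo PMP}.

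For Item (1), I would first handle $F_e$ alone. Since $(F_e)_\ast$ is an asymmetric norm on the finite-dimensional space $\mathfrak{g}^\ast$, Remark \ref{remark_asymmetric norm equivalent} yields constants $c_1,c_2>0$ with $c_1\Vert\mathfrak{a}\Vert_\ast \leq (F_e)_\ast(\mathfrak{a}) \leq c_2\Vert\mathfrak{a}\Vert_\ast$. Choosing, for instance,
\[
r^\ast = \frac{(F_e)_\ast(\mathfrak{a}_0)}{4c_2},\qquad R^\ast = \frac{4(F_e)_\ast(\mathfrak{a}_0)}{c_1},
\]
one gets that on $S_{\Vert\cdot\Vert_\ast}[0,r^\ast]$ the value of $(F_e)_\ast$ is at most $(F_e)_\ast(\mathfrak{a}_0)/4$, while on $S_{\Vert\cdot\Vert_\ast}[0,R^\ast]$ it is at least $4(F_e)_\ast(\mathfrak{a}_0)$. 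By positive homogeneity of $(F_e)_\ast$, this already gives the desired sandwich for $F_e$, with a comfortable safety margin. To extend the sandwich to $(\tilde F_\varepsilon)_\ast$ for small $\varepsilon$, I would invoke the uniform convergence $(\tilde F_\varepsilon)_\ast \to (F_e)_\ast$ on the compact set $S_{\Vert\cdot\Vert_\ast}[0,r^\ast]\cup S_{\Vert\cdot\Vert_\ast}[0,R^\ast]\cup\{\mathfrak{a}_0\}$ (Proposition \ref{Convergencia norma dual}). Picking $\tau_{B^\ast}\in(0,\tau_4]$ so that the error is smaller than, say, $(F_e)_\ast(\mathfrak{a}_0)/2$ for every $\varepsilon\in(0,\tau_{B^\ast})$, the same margin argument then yields
\[
(\tilde F_\varepsilon)_\ast(\mathfrak{a})<(\tilde F_\varepsilon)_\ast(\mathfrak{a}_0)\quad\text{on }S_{\Vert\cdot\Vert_\ast}[0,r^\ast],
\]
and the reverse inequality on $S_{\Vert\cdot\Vert_\ast}[0,R^\ast]$. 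Positive homogeneity of $(\tilde F_\varepsilon)_\ast$ (radial scaling from each sphere) promotes this to the full ball inclusions stated in the lemma, and the strict margin guarantees the compact containment denoted by $\ssubset$.

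For Item (2), the observation is that the function $t\mapsto(F_e)_\ast(\mathfrak{a}(t))$ is constant along any Pontryagin extremal, by \eqref{É constante pelo PMP} applied at $e\in G$; hence $(F_e)_\ast(\mathfrak{a}(t))=(F_e)_\ast(\mathfrak{a}_0)$ for every $t\in\mathbb{R}$. From Item (1), every $\mathfrak{a}$ on this level set satisfies $\Vert\mathfrak{a}\Vert_\ast<R^\ast$ (since the closed $(F_e)_\ast$-ball is compactly contained in the open Euclidean ball of radius $R^\ast$) and $\Vert\mathfrak{a}\Vert_\ast>r^\ast$ (otherwise $\mathfrak{a}$ would lie in $B_{\Vert\cdot\Vert_\ast}[0,r^\ast]\subset B_{(F_e)_\ast}(0,(F_e)_\ast(\mathfrak{a}_0))$, contradicting $(F_e)_\ast(\mathfrak{a})=(F_e)_\ast(\mathfrak{a}_0)$). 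Hence $\mathfrak{a}(t)\in B^\ast$. Exactly the same reasoning, now with $(\tilde F_\varepsilon)_\ast$ in place of $(F_e)_\ast$ and using the second inclusion of Item (1), gives $\mathfrak{a}_\varepsilon(t)\in B^\ast$ for every $\varepsilon\in(0,\tau_{B^\ast})$.

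The only subtle point is the uniformity in $\varepsilon$: I must ensure that the single pair $(r^\ast,R^\ast)$ sandwiches all the balls $B_{(\tilde F_\varepsilon)_\ast}(0,(\tilde F_\varepsilon)_\ast(\mathfrak{a}_0))$ simultaneously. This is precisely why the proof chooses generous constants from the $F_e$-estimate (with a factor of $4$ of slack) and then shrinks $\tau_{B^\ast}$ accordingly using the uniform convergence of the dual norms; no deeper estimate is required since both endpoints $r^\ast$ and $R^\ast$ are positive and finite, and the spheres involved are compact subsets of $\mathfrak{g}^\ast\backslash\{0\}$ where Proposition \ref{Convergencia norma dual} applies.
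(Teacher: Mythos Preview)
Your approach is correct and matches the paper's: both items are deduced from the uniform convergence $(\tilde F_\varepsilon)_\ast \to (F_e)_\ast$ of Proposition \ref{Convergencia norma dual} together with the conservation law \eqref{É constante pelo PMP}, and you have simply written out the details that the paper leaves to the reader. One small arithmetic point: with your choices $r^\ast=(F_e)_\ast(\mathfrak a_0)/(4c_2)$ and error bound $(F_e)_\ast(\mathfrak a_0)/2$, the inner inclusion for $(\tilde F_\varepsilon)_\ast$ does not quite follow (you only get $(\tilde F_\varepsilon)_\ast(\mathfrak a)<\tfrac34(F_e)_\ast(\mathfrak a_0)$ on $S_{\Vert\cdot\Vert_\ast}[0,r^\ast]$ while $(\tilde F_\varepsilon)_\ast(\mathfrak a_0)>\tfrac12(F_e)_\ast(\mathfrak a_0)$); taking the error below $(F_e)_\ast(\mathfrak a_0)/4$, or widening the slack factor, fixes this immediately.
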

\begin{proof}
	Item (1): 
This result is a direct consequence of the uniform convergence  $(\tilde{F}_{\varepsilon})_\ast \rightarrow (F_e)_\ast$  on $S_{F_e}[0,1]$ as $\varepsilon \rightarrow 0$ (see Proposition \ref{Convergencia norma dual}).
	
	Item (2): $\mathfrak{a}(t)$ and $\mathfrak{a}_{\varepsilon}(t)$ are defined on $\mathbb{R} $ due to Theorem \ref{a(t) esta definida para toda a reta}. The inclusions $\mathfrak{a}(t) \in B^\ast$ and $\mathfrak{a}_{\varepsilon}(t)\in B^\ast$ for every $t\in \mathbb{R}$ and $\varepsilon \in (0,\tau_{B^\ast})$ follow from Item (1) and (\ref{É constante pelo PMP}).
\end{proof}


In what follows, we split the proof of \eqref{a epsilon converge uniformemente para a} in two parts: 
$\mathfrak{a}_\varepsilon(t) \rightarrow \mathfrak{a}(t)$ and $u_\varepsilon (\mathfrak{a}_\varepsilon(t)) \rightarrow u(\mathfrak{a}(t))$.

The key result for the proof of the convergence $\mathfrak{a}_\varepsilon(t) \rightarrow \mathfrak{a}(t)$ in \eqref{a epsilon converge uniformemente para a} is Theorem 7 of Filippov's book \cite{Filippov}.
Let us explain it according to our setting.
This result deals with non-autonomous Carath\'eodory ordinary differential equations $\dot{x}(t)=f(t,x;\mu)$ depending on the parameter $\mu$, which lies in some metric space. 
In our case, $\mu = \varepsilon \in [0, \tau_{B^\ast})$, where $\tau_{B^\ast}$ is given in Lemma  \ref{As bolas duais contem uma bola euclidiana e estao contidas em uma bola euclidiana}, and $\dot{x}=f(t,x;\mu)$ is the family of ordinary differential equations 
\begin{equation}
\label{equation_edo campo geodesico estendido}
\dot{\mathfrak{a}} = \mathfrak{a} (\left[u_{\varepsilon}(\mathfrak{a}), \cdot \right]).
\end{equation}
If $\varepsilon>0$, then \eqref{equation_edo campo geodesico estendido} is the equation corresponding to $(G,F_\varepsilon)$. 
If $\varepsilon = 0$, then $u_\varepsilon = u$ and \eqref{equation_edo campo geodesico estendido} is the equation corresponding to $(G,F)$.
The bounded domain $B$ given in \cite{Filippov} is the open subset $B^\ast \subset \mathfrak{g}^\ast$ given in Lemma \ref{As bolas duais contem uma bola euclidiana e estao contidas em uma bola euclidiana}.
Whenever we need estimates that hold in compact subsets of $\mathfrak{g}^\ast\backslash \{0\}$, we will set $\Kgzero = \overline{B^\ast}$.

For $\varepsilon>0$, we use the subindex $\varepsilon$ for solutions $\mathfrak{a}_\varepsilon(t)$ of \eqref{equation_edo campo geodesico estendido} satisfying the initial condition $\mathfrak{a}_\varepsilon(0) = \mathfrak{a}_0$. 
For $\varepsilon=0$, we simply denote this solution by $\mathfrak{a}(t)$. 
These notations differ from Filippov's notation, but this difference will not cause any misunderstandings.
Although \eqref{equation_edo campo geodesico estendido} is an autonomous family of differential equations, we denote 
\begin{equation}
\label{equation_notacao ftalphavarepsilon}
f(t,\mathfrak{a};\varepsilon) := \tilde{\mathcal{E}}_\varepsilon (\mathfrak{a}) = \mathfrak{a}([u_\varepsilon (\mathfrak{a}), \cdot])
\end{equation} 
in order to apply Theorem 7 of \cite{Filippov} more straightforwardly.

Now we present the aforementioned theorem adapted to our situation:


\begin{teo} \label{Teorema do Filippov} Let $\mathfrak{a}_0 \in \mathfrak{g}^\ast \backslash \{0\}$ and consider $B^\ast$ and $\tau_{B^\ast}$ as in Lemma \ref{As bolas duais contem uma bola euclidiana e estao contidas em uma bola euclidiana}. 
Let $\varepsilon \in [0,\tau_{B^\ast})$, $t \in [0,t_0]$, $\mathfrak{a} \in B^\ast$ and $f(t,\mathfrak{a};\varepsilon)$ defined by \eqref{equation_notacao ftalphavarepsilon}.
Suppose that
	
	\begin{enumerate}
		\item the function $f(t, \mathfrak{a}; \varepsilon)$ be measurable in $t$ for constant $\mathfrak{a}, \varepsilon$;
		\item $|f(t, \mathfrak{a}; \varepsilon)| \leq m(t, \varepsilon)$, the function $m(t, \varepsilon)$ being summable in $t$;
		\item there exist a summable function $l(t)$ and a monotone function $\psi(\rho) \to 0$ for $\rho \to 0$ such that for each $\rho > 0$, if $\Vert \mathfrak{a} - \mathfrak{b} \Vert_\ast \leq \rho$, then 
		\begin{equation*}
			\Vert f(t, \mathfrak{a}; \varepsilon) - f(t, \mathfrak{b}; \varepsilon)\Vert_{\ast} \leq l(t) \psi(\rho)
		\end{equation*}
		for almost all $t\in [0,t_0]$.
		\item for each $\mathfrak{a} \in B^\ast$, we have the uniform convergence 
		\begin{equation*}
			\int_{0}^{t} f(s, \mathfrak{a}; \varepsilon)\, ds \to \int_{0}^{t} f(s, \mathfrak{a}; 0)\, ds
		\end{equation*}
		as $\varepsilon \rightarrow 0$ on the segment $t \in [0,t_0]$;
		\item the solution $\mathfrak{a}(t)$ of the problem 
		\begin{equation} \label{eq:16}
			\dot{\mathfrak{a}} = f(t, \mathfrak{a}; \varepsilon), \quad \mathfrak{a}(0) = \mathfrak{a}_0
		\end{equation}
		 for $\varepsilon = 0$ is unique for $t \geq 0$ and lie in the domain $B^\ast$ for $t \in [0,t_0]$.
	\end{enumerate}
	Then for $\varepsilon$ sufficiently near $0$, the solutions $\mathfrak{a}_\varepsilon(t)$ of the problem (\ref{eq:16}) on the interval $[0,t_0]$ exist and converge uniformly to $\mathfrak{a}(t)$ as $\varepsilon \to 0$.
\end{teo}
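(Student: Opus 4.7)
The plan is to regard Theorem \ref{Teorema do Filippov} as a direct specialization of Theorem 7 of \cite{Filippov} to the parameter-dependent Carath\'eodory ODE $\dot{\mathfrak{a}} = f(t,\mathfrak{a};\varepsilon)$ on $B^\ast$, so that the ``proof'' reduces to matching the dictionary between Filippov's abstract formulation and our setting, and then invoking Filippov's result verbatim.

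First I would fix an orthonormal basis of $\mathfrak{g}$, which induces an isomorphism $\mathfrak{g}^\ast \cong \mathbb{R}^n$ turning $\Vert \cdot \Vert_\ast$ into the standard Euclidean norm. Under this identification the family of ODEs $\dot{\mathfrak{a}} = f(t,\mathfrak{a};\varepsilon)$, with $f(t,\mathfrak{a};\varepsilon) = \tilde{\mathcal{E}}_\varepsilon(\mathfrak{a}) = \mathfrak{a}([u_\varepsilon(\mathfrak{a}),\cdot])$, is a non-autonomous parameter-dependent ODE on the bounded open domain $B^\ast \subset \mathbb{R}^n$, with parameter $\varepsilon$ taken in the metric space $[0,\tau_{B^\ast})$. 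Filippov's parameter $\mu$ corresponds to our $\varepsilon$, his domain $B$ to our $B^\ast$, his right-hand side $f(t,x;\mu)$ to our $f(t,\mathfrak{a};\varepsilon)$, and his value-space norm to $\Vert \cdot \Vert_\ast$.

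Once this dictionary is fixed, the five numbered hypotheses of Theorem \ref{Teorema do Filippov} are literal restatements of the measurability, integrable-majorant, modulus-of-continuity, uniform-integral-convergence, and uniqueness hypotheses of Theorem 7 of \cite{Filippov}, and the conclusion---existence of the solutions $\mathfrak{a}_\varepsilon(t)$ of the initial value problem on $[0,t_0]$ for $\varepsilon$ sufficiently close to $0$ together with their uniform convergence to $\mathfrak{a}(t)$ as $\varepsilon \to 0$---is exactly Filippov's conclusion. Therefore the theorem follows at once from \cite{Filippov}.

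There is essentially no technical obstacle in this proof itself; the actual substance lies elsewhere. The hard part, which will be addressed in the sequel, is to \emph{verify} hypotheses (1)--(5) for the specific $f(t,\mathfrak{a};\varepsilon) = \mathfrak{a}([u_\varepsilon(\mathfrak{a}),\cdot])$ produced by our mollifier smoothing. Hypothesis (3) uses the uniform Lipschitz bound for $\tilde{\mathcal{E}}_\varepsilon$ on compact subsets of $\mathfrak{g}^\ast \backslash \{0\}$ given by Theorem \ref{a[u(a),.] é lipschitz}; hypothesis (4) uses the uniform convergence $\tilde{\mathcal{E}}_\varepsilon \to \tilde{\mathcal{E}}$ of Theorem \ref{Convergência E fresco til}; hypotheses (1)--(2) are trivial from continuity and the boundedness of $B^\ast$; and hypothesis (5) is Theorem \ref{a(t) esta definida para toda a reta} together with Item (2) of Lemma \ref{As bolas duais contem uma bola euclidiana e estao contidas em uma bola euclidiana}. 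The only minor point to watch is that Filippov's statement is one-sided on $[0,t_0]$, while we will want the uniform convergence on a two-sided compact interval $[a,b]$; this is handled by applying the theorem separately to the forward and time-reversed problems.
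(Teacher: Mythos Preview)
Your proposal is correct and matches the paper's treatment exactly: the paper does not prove Theorem \ref{Teorema do Filippov} independently but presents it as ``the aforementioned theorem adapted to our situation,'' i.e., a direct restatement of Theorem 7 of \cite{Filippov} in the paper's notation, and then proceeds (as you also outline) to verify hypotheses (1)--(5) for the specific $f(t,\mathfrak{a};\varepsilon)=\tilde{\mathcal{E}}_\varepsilon(\mathfrak{a})$. Your remarks on which earlier results feed into each hypothesis and on the time-reversal trick for two-sided intervals also agree with the paper's subsequent development.
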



Let us prove these items:

\begin{enumerate}
	\item The function $f(t,\mathfrak{a} ; \varepsilon)$ be measurable in $t$ for constant $ \mathfrak{a}$ and $\varepsilon$: In fact, the function $f$ does not depend on $t$.
	\item $\big\Vert \tilde{\mathcal{E}}_\varepsilon(\mathfrak{a}) \big\Vert_\ast \leq m(t,\varepsilon)$, the function $m(t, \varepsilon)$ is integrable in $t$: 
	\begin{eqnarray*}
	\big\Vert \tilde{\mathcal{E}}_\varepsilon(\mathfrak{a}) \big\Vert_\ast & = & \big\Vert \mathfrak{a}[u_\varepsilon(\mathfrak{a}), \cdot] \big\Vert_\ast = \max_{y \in S_{\Vert \cdot \Vert}[0,1]} \big\vert \mathfrak{a}[u_\varepsilon(\mathfrak{a}), y] \big\vert \\
	 & \leq & \max_{y \in S_{\Vert \cdot \Vert}[0,1]} \big\Vert \mathfrak{a} \big\Vert_\ast \big\Vert [u_\varepsilon(\mathfrak{a}), y] \big\Vert.
	\end{eqnarray*}
	 Notice that
	 \begin{itemize}
	 \item $\Vert\mathfrak{a}\Vert_\ast < R^\ast$ for every $\mathfrak{a} \in B^\ast$;
	 \item $\Vert u_\varepsilon(\mathfrak{a})\Vert \leq R$ for every $\varepsilon \in [0,\tau_{B^\ast})$ and $\mathfrak{a}\in B^\ast$ due to Lemma \ref{As bolas contem uma bola euclidiana e estao contidas em uma bola euclidiana};
	 \item $\Vert y\Vert=1$ because $y \in S_{\Vert \cdot \Vert}[0,1]$.
	 \end{itemize}
	 Therefore there exist $C>0$ such that $\Vert \tilde{\mathcal{E}}_\varepsilon (\mathfrak{a})\Vert_\ast < C$ for every $\varepsilon \in [0, \tau_{B^\ast})$ and $\mathfrak{a} \in B^\ast$ because $[\cdot, \cdot]$ is a continuous function, and it is enough to set $m(t,\varepsilon)=C$.

	\item
	There exist a summable function $l(t)$ and a monotone function $\psi(\rho) \to 0$ for $\rho \to 0$ such that for each $\rho > 0$, if $\Vert \mathfrak{a} - \mathfrak{b} \Vert_\ast \leq \rho$, then 		\begin{equation*}
			\big\Vert \tilde{\mathcal{E}}_\varepsilon(\mathfrak{a}) - \tilde{\mathcal{E}}_\varepsilon(\mathfrak{b}) \big\Vert_\ast = \Vert f(t, \mathfrak{a}; \varepsilon) - f(t, \mathfrak{b}; \varepsilon)\Vert_{\ast} \leq l(t) \psi(\rho)
		\end{equation*}
		for almost all $t\in [0,t_0]$:
	This item is a direct consequence of Theorem \ref{a[u(a),.] é lipschitz}, with $l(t)\equiv \tilde{\mathcal{K}}$ being the Lipschitz constant for $\Kgzero = \overline{B^\ast}$ and $\psi(\rho) = \rho$.
	\item For each $\mathfrak{a} \in B^\ast$ 
	\begin{eqnarray*}
		t \mapsto \int_{0}^t f(s, \mathfrak{a} ; \varepsilon)ds
	\end{eqnarray*}
	converges uniformly in $t \in [0,t_0]$
	to
	\begin{eqnarray*}
		t \mapsto \int_{0}^t f(s, \mathfrak{a}; 0) ds:
	\end{eqnarray*}
	
	 We have
	\begin{eqnarray*}
		\int_{0}^t f(s, \mathfrak{a} ; \varepsilon)ds 
		= \int_{0}^t \tilde{\mathcal{E}}_\varepsilon(\mathfrak{a}) ds 
		= t. \tilde{\mathcal{E}}_\varepsilon(\mathfrak{a}).
	\end{eqnarray*}
	In the same way, $\int_{0}^t f(s, \mathfrak{a}; 0) ds = t.\tilde{\mathcal{E}}(\mathfrak{a}).$ So, it suffices to show that
	\begin{eqnarray*}
		\tilde{\mathcal{E}}_\varepsilon(\mathfrak{a}) \rightarrow \tilde{\mathcal{E}}(\mathfrak{a}),
	\end{eqnarray*}
	which was proved in Theorem \ref{Convergência E fresco til}.
	\item	
	The solution $\mathfrak{a}(t)$ of initial value problem
	\begin{eqnarray} \label{PVI 16}
		\dot{\mathfrak{a}} = \mathfrak{a}[u(\mathfrak{a}), \cdot], \quad \mathfrak{a}(0) = \mathfrak{a}_0
	\end{eqnarray}
	is unique and lie in the domain $B^\ast$ for every $t \in \mathbb{R}:$ By Picard-Lindel\"of Theorem and Theorem \ref{a(t) esta definida para toda a reta} there is a unique solution $\mathfrak{a}(t)$ of (\ref{PVI 16}) defined for every $t \in \mathbb{R}.$ By Item (2) of Lemma \ref{As bolas duais contem uma bola euclidiana e estao contidas em uma bola euclidiana}, we have that $\mathfrak{a}(t) \in B^\ast$ for every $t \in \mathbb{R}.$
	
\end{enumerate}

	Therefore, the hypotheses of Theorem \ref{Teorema do Filippov} are satisfied for $f(t, \mathfrak{a}; \varepsilon) = \mathfrak{a}[u_\varepsilon(\mathfrak{a}), \cdot]$, and we can conclude that the unique solutions $t \in \mathbb{R} \mapsto \mathfrak{a}_{\varepsilon}(t)$ of the initial value problems \begin{eqnarray*}
	\dot{\mathfrak{a}} = \mathfrak{a} [u_\varepsilon(\mathfrak{a}), \cdot], \quad \mathfrak{a} (0) = \mathfrak{a}_0, \,\, \varepsilon \in [0,\tau_{B^\ast}),
\end{eqnarray*}
satisfy the uniform convergence
	\begin{eqnarray} \label{a_epsilon converge para a}
	\mathfrak{a}_\varepsilon(t) \rightarrow \mathfrak{a}(t)
\end{eqnarray}
on $[0,t_0]$ when $\varepsilon \rightarrow 0$.

Let us extend this result to the uniform convergence of \eqref{a_epsilon converge para a} to an arbitrary compact interval $[a,b] \subset \mathbb{R}$.
In fact, if we consider the unique solutions $t\in \mathbb{R} \mapsto \mathfrak{\i} (t)$ and $t\in \mathbb{R} \mapsto \mathfrak{\i}_\varepsilon (t)$ of the initial value problems
\begin{eqnarray*}
	\dot{\mathfrak{a}} = -\mathfrak{a} [u(\mathfrak{a}), \cdot], \quad \mathfrak{a} (0) = \mathfrak{a}_0, \,\, \varepsilon \in [0,\tau_{B^\ast})
\end{eqnarray*}
and
\begin{eqnarray*}
	\dot{\mathfrak{a}} = -\mathfrak{a} [u_\varepsilon(\mathfrak{a}), \cdot], \quad \mathfrak{a} (0) = \mathfrak{a}_0, \,\, \varepsilon \in [0,\tau_{B^\ast})
\end{eqnarray*}
respectively, direct calculations show that 
\begin{equation}
\label{relacao a e i}
\mathfrak{a}(t) = \mathfrak{\i}(-t) \text{ and }\mathfrak{a}_\varepsilon(t) = \mathfrak{\i}_\varepsilon(-t).
\end{equation}
Moreover, it is straightforward that $-f(t,\mathfrak{a};\varepsilon) = -\mathfrak{a}([u_\varepsilon(\mathfrak{a}),\cdot])$ also satisfies the conditions of Theorem \ref{Teorema do Filippov}.
Therefore, we have the uniform convergence
\begin{eqnarray*}
\mathfrak{\i}_\varepsilon(t) \rightarrow \mathfrak{\i}(t)
\end{eqnarray*}
on $[0,t_0]$ when $\varepsilon \rightarrow 0$, what implies that the uniform convergence \eqref{a_epsilon converge para a} holds on $[-t_0,0]$ due to \eqref{relacao a e i}. 
Since the uniform convergence \eqref{a_epsilon converge para a} holds on any interval $[-t_0,t_0]$, it also holds on any interval $[a,b] \subset \mathbb{R}$. 

Now we prove that $u_\varepsilon (\mathfrak{a}_\varepsilon(t)) \rightarrow u (\mathfrak{a}(t))$ uniformly on $[a,b].$ Given $t \in [a,b]$ we have
\begin{eqnarray}
	\big\Vert u_\varepsilon(\mathfrak{a}_\varepsilon(t)) - u(\mathfrak{a}(t)) \big\Vert	& \leq & \big\Vert u_\varepsilon(\mathfrak{a}_\varepsilon(t)) - u_\varepsilon(\mathfrak{a}(t)) \big\Vert + \big\Vert u_\varepsilon(\mathfrak{a}(t)) - u(\mathfrak{a}(t)) \big\Vert \nonumber \\
	& \leq & \mathcal{K}_1 \big\Vert \mathfrak{a}_\varepsilon(t) - \mathfrak{a}(t) \big\Vert_\ast + \big\Vert u_\varepsilon(\mathfrak{a}(t)) - u(\mathfrak{a}(t)) \big\Vert.\label{eqnarray_estimativa uespilont menos ut}
\end{eqnarray}
The relation $\big\Vert u_\varepsilon(\mathfrak{a}_\varepsilon(t)) - u_\varepsilon(\mathfrak{a}(t)) \big\Vert \leq \mathcal{K}_1 \big\Vert \mathfrak{a}_\varepsilon(t) - \mathfrak{a}(t) \big\Vert_\ast$ holds due to Theorem \ref{u_epsilon é lipschitz} and Item (2) of Lemma \ref{As bolas duais contem uma bola euclidiana e estao contidas em uma bola euclidiana}. 
The uniform convergence of \eqref{eqnarray_estimativa uespilont menos ut} to zero follows from \eqref{a_epsilon converge para a}, Theorem \ref{Convergência de u alpha varepsilon} applied to $\Kgzero = \overline{B^\ast}$, and Item (2) of Lemma \ref{As bolas duais contem uma bola euclidiana e estao contidas em uma bola euclidiana}.
Therefore, the following result was proved:
\begin{teo}
\label{theorem_convergencia uniforme nas algebras}
Let $G$ be a Lie group endowed with a left-invariant strongly convex $C^0$-Finsler structure $F$ and consider the left-invariant mollifier smoothing $F_\varepsilon$ of $F$ defined by \eqref{equation_define F_varespilon}.
Let $[a,b]\subset \mathbb{R}$ and consider $\mathfrak{a}_0 \in \mathfrak{g}^\ast \backslash \{0\}$.
Suppose that $(u(\mathfrak{a}(t)), \mathfrak{a}(t))$ and $(u_\varepsilon(\mathfrak{a}_\varepsilon(t)), \mathfrak{a}_\varepsilon(t))$, $\varepsilon \in (0,\tau_{B^\ast})$, $t\in \mathbb{R}$, are the Pontryagin extremals of $(G,F)$ and $(G,F_\varepsilon)$ in $\mathfrak{g}\backslash  \{0\}\times B^\ast$ respectively satisfying $\mathfrak{a}(0) = \mathfrak{a}_\varepsilon(0) = \mathfrak{a}_0$.
Then $(u_\varepsilon(\mathfrak{a}_\varepsilon(t)), \mathfrak{a}_\varepsilon(t))$ converges uniformly to $(u(\mathfrak{a}(t)), \mathfrak{a}(t))$ on $[a,b]$ as $\varepsilon \rightarrow 0$.
\end{teo}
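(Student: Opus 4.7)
The plan is to treat the problem as a continuous dependence result for the initial value problem $\dot{\mathfrak{a}} = \tilde{\mathcal{E}}_\varepsilon(\mathfrak{a})$, $\mathfrak{a}(0)=\mathfrak{a}_0$, on the parameter $\varepsilon \in [0,\tau_{B^\ast})$ (with $\varepsilon=0$ interpreted as the original structure $F$), and then to deduce the convergence of the controls as a consequence. The natural tool is Filippov's continuous dependence theorem (Theorem 7 of \cite{Filippov}), which I would apply on the bounded open domain $B^\ast \subset \mathfrak{g}^\ast$ supplied by Lemma \ref{As bolas duais contem uma bola euclidiana e estao contidas em uma bola euclidiana}. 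The key observation is that since $(\tilde{F}_\varepsilon)_\ast(\mathfrak{a}_\varepsilon(t))$ and $(F_e)_\ast(\mathfrak{a}(t))$ are conserved along their respective extremals by \eqref{É constante pelo PMP}, all curves $\mathfrak{a}_\varepsilon(t)$ and $\mathfrak{a}(t)$ lie in the fixed annular region $B^\ast$ for $\varepsilon \in (0,\tau_{B^\ast})$, so uniform estimates on this common compact set can be invoked throughout.

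Setting $f(t,\mathfrak{a};\varepsilon) := \tilde{\mathcal{E}}_\varepsilon(\mathfrak{a}) = \mathfrak{a}([u_\varepsilon(\mathfrak{a}),\cdot])$, I would verify Filippov's five hypotheses on $[0,t_0] \times B^\ast$. Measurability in $t$ is trivial since $f$ is autonomous. Uniform boundedness $\|f(t,\mathfrak{a};\varepsilon)\|_\ast \leq C$ follows from $\Vert\mathfrak{a}\Vert_\ast < R^\ast$, the bound $\Vert u_\varepsilon(\mathfrak{a})\Vert \leq R$ from Lemma \ref{As bolas contem uma bola euclidiana e estao contidas em uma bola euclidiana}, and the continuity of $[\cdot,\cdot]$. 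The uniform Lipschitz estimate in $\mathfrak{a}$ with constant $\tilde{\mathcal{K}}$ (independent of $\varepsilon$) is exactly the content of Theorem \ref{a[u(a),.] é lipschitz} applied to $K_{\mathfrak{g}^\ast\setminus\{0\}} = \overline{B^\ast}$. The pointwise convergence $\int_0^t f(s,\mathfrak{a};\varepsilon)\,ds = t\,\tilde{\mathcal{E}}_\varepsilon(\mathfrak{a}) \to t\,\tilde{\mathcal{E}}(\mathfrak{a})$, uniform in $t \in [0,t_0]$, follows from Theorem \ref{Convergência E fresco til}. Uniqueness and global existence of the limit solution $\mathfrak{a}(t)$ are Theorem \ref{a(t) esta definida para toda a reta}, and its containment in $B^\ast$ is Item (2) of Lemma \ref{As bolas duais contem uma bola euclidiana e estao contidas em uma bola euclidiana}. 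Filippov's theorem then yields the uniform convergence $\mathfrak{a}_\varepsilon(t) \to \mathfrak{a}(t)$ on $[0,t_0]$.

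To extend from $[0,t_0]$ to a general compact interval $[a,b]$, I would apply the same argument to the time-reversed problem $\dot{\mathfrak{a}} = -\tilde{\mathcal{E}}_\varepsilon(\mathfrak{a})$, $\mathfrak{a}(0)=\mathfrak{a}_0$, whose solutions $\mathfrak{\i}_\varepsilon(t)$ satisfy $\mathfrak{\i}_\varepsilon(t) = \mathfrak{a}_\varepsilon(-t)$; since $-f$ also satisfies Filippov's hypotheses, uniform convergence on $[0,t_0]$ gives uniform convergence of $\mathfrak{a}_\varepsilon$ on $[-t_0,0]$, and covering $[a,b]$ by a sufficiently large $[-t_0,t_0]$ finishes this part.

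Finally, for the control component, I would estimate
\[
\|u_\varepsilon(\mathfrak{a}_\varepsilon(t)) - u(\mathfrak{a}(t))\| \leq \|u_\varepsilon(\mathfrak{a}_\varepsilon(t)) - u_\varepsilon(\mathfrak{a}(t))\| + \|u_\varepsilon(\mathfrak{a}(t)) - u(\mathfrak{a}(t))\|,
\]
bound the first summand by $\mathcal{K}_1 \|\mathfrak{a}_\varepsilon(t) - \mathfrak{a}(t)\|_\ast$ via the uniform Lipschitz constant from Theorem \ref{u_epsilon é lipschitz} on $\overline{B^\ast}$, and control the second summand by Theorem \ref{Convergência de u alpha varepsilon} applied on the compact trace $\{\mathfrak{a}(t) : t\in[a,b]\}$. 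The main obstacle, which is already largely resolved by the preceding sections, is ensuring that all estimates (Lipschitz constants, boundedness, uniform convergence) hold on a single compact set $\overline{B^\ast}$ that contains every curve $\mathfrak{a}_\varepsilon(t)$ for all sufficiently small $\varepsilon$; this is precisely what the conservation law \eqref{É constante pelo PMP} combined with Lemma \ref{As bolas duais contem uma bola euclidiana e estao contidas em uma bola euclidiana} guarantees.
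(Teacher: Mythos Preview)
Your proposal is correct and follows essentially the same approach as the paper's own proof: the paper also applies Filippov's Theorem 7 on the domain $B^\ast$, verifies the same five hypotheses using the same references (Theorems \ref{a[u(a),.] é lipschitz}, \ref{Convergência E fresco til}, \ref{a(t) esta definida para toda a reta}, and Lemma \ref{As bolas duais contem uma bola euclidiana e estao contidas em uma bola euclidiana}), extends to general $[a,b]$ by the time-reversal argument $\mathfrak{a}_\varepsilon(t)=\mathfrak{i}_\varepsilon(-t)$, and then handles the control component via the same triangle inequality with Theorems \ref{u_epsilon é lipschitz} and \ref{Convergência de u alpha varepsilon}. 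The only cosmetic difference is that the paper applies Theorem \ref{Convergência de u alpha varepsilon} on $\overline{B^\ast}$ rather than on the trace $\{\mathfrak{a}(t):t\in[a,b]\}$, which is immaterial.
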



\subsection{Pontryagin extremals on $T^\ast G\backslash 0$}
\label{subsection_convergencia de extremais de Pontryagin no grupo}

Now we present the version of Theorem \ref{theorem_convergencia uniforme nas algebras} on $T^\ast G\backslash 0$.
The key result for the proof of this result is Lemma 4.3.2 of \cite{Fritz},
which elements we introduce in the sequel according to our settings.

Let $\{e_1, \ldots, e_n\}$ be an orthonormal basis of $\mathfrak{g}$.
Denote the left-invariant vector fields on $G$ corresponding to $e_i$ by $x \in G \mapsto X_i(x) = d(L_x)_e(e_i)$. 

\begin{lem}
\label{Lemma_existencia e unicidade de extremal no grupo}
Let $G$ a Lie group, $x_0 \in G$ and $\mathfrak{u}(t): \mathbb{R} \rightarrow \mathfrak{g}$ a measurable function satisfying $\Vert \mathfrak{u}(t)\Vert < \tilde{R}$ for some $\tilde{R}>0$.
Then the initial value problem 
\begin{equation}
\label{equation_EDO para u mensuravel}
\dot{x}= \mathfrak{u}^j(t)X_j(x),\quad x(0)=x_0
\end{equation}
on $G$ admits a unique solution $x(t)$ defined on $\mathbb{R}$. 
\end{lem}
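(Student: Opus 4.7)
The plan is to combine a local Carathéodory existence/uniqueness result with a bounded-speed argument that rules out finite-time blow-up. The technical content splits naturally into (i) setting up the ODE in coordinates so that classical theorems apply, and (ii) using the left-invariant geometry to upgrade local solutions to a solution on all of $\mathbb{R}$.

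First, I would reduce the problem to a neighborhood of the identity: if $x(t)$ solves the IVP, then $z(t) = x_0^{-1} x(t)$ satisfies the same equation $\dot{z} = \mathfrak{u}^j(t) X_j(z)$ with $z(0)=e$ (this uses $d(L_{x_0^{-1}})_{x(t)} \circ d(L_{x(t)})_e = d(L_{z(t)})_e$, i.e.\ left-invariance of the $X_j$). So assume $x_0=e$. Next, pick any coordinate chart $\phi$ around $e$; in these coordinates the vector field reads $V(t,x) = \mathfrak{u}^j(t) f_j^i(x)\,\partial/\partial x^i$, where $f_j^i$ are smooth in $x$. Because $\mathfrak{u}(t)$ is measurable and essentially bounded (by $\tilde{R}$) while $f_j^i$ is smooth, $V$ satisfies the Carathéodory conditions: it is measurable in $t$ for each fixed $x$, continuous (indeed smooth) in $x$ for each fixed $t$, and dominated by the integrable function $t\mapsto \tilde{R}\max_j \|f_j(\cdot)\|$ on any compact $x$-ball. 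Moreover, on any compact coordinate neighborhood $V(t,\cdot)$ is Lipschitz with constant $\tilde{R}\cdot L$, where $L$ is a uniform Lipschitz constant of the $f_j$; this is independent of $t$. The standard Carathéodory existence/uniqueness theorem (see e.g.\ Chapter~1 of \cite{Filippov}) then yields a unique absolutely continuous local solution, and patching charts gives a unique maximal solution $x:(a,b)\to G$ with $0\in (a,b)$.

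To promote this to a solution on all of $\mathbb{R}$, I would use the left-invariant Riemannian metric $g$ on $G$ induced by $\langle\cdot,\cdot\rangle$ on $\mathfrak{g}$. For such a metric, $\|\dot{x}(t)\|_{g(x(t))} = \|d(L_{x(t)})_e(\mathfrak{u}(t))\|_{g(x(t))} = \|\mathfrak{u}(t)\|<\tilde R$ for a.e.\ $t$. Hence for $s,t\in(a,b)$,
\[
d_g(x(s),x(t)) \leq \int_{\min(s,t)}^{\max(s,t)} \|\dot{x}(\sigma)\|_{g(x(\sigma))}\,d\sigma \leq \tilde{R}|t-s|.
\]
The key fact I will invoke is that every left-invariant Riemannian metric on a Lie group is geodesically complete, so by Hopf-Rinow closed bounded subsets of $(G,d_g)$ are compact. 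Therefore, suppose for contradiction $b<\infty$; then $\{x(t_n)\}$ is $d_g$-Cauchy for any $t_n\to b^-$, the limit $x(b)=\lim_{t\to b^-}x(t)\in G$ exists, and we can apply the local Carathéodory theorem with initial datum $x(b)$ to extend $x(t)$ past $b$, contradicting maximality. The same argument works at the left endpoint. Uniqueness on $\mathbb{R}$ is immediate from local uniqueness: two global solutions must agree on a neighborhood of every point where they coincide, and the set where they coincide is closed; so it is all of $\mathbb{R}$.

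The main obstacle is verifying the Carathéodory hypotheses cleanly in the manifold setting, which is why I would reduce to coordinates first. The completeness of left-invariant Riemannian metrics on Lie groups is well-known, so the no-blow-up step is a short finish once the local theory is in place.
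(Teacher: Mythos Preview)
Your proof is correct and follows essentially the same approach as the paper: local existence and uniqueness via Carath\'eodory's theorem in coordinates, followed by a bounded-speed argument in a left-invariant Riemannian metric together with Hopf--Rinow to rule out finite-time escape. The reduction to $x_0=e$ is harmless but unnecessary, and your more explicit verification of the Carath\'eodory hypotheses and of global uniqueness simply spells out what the paper leaves implicit.
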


\begin{proof}
If we represent \eqref{equation_EDO para u mensuravel} in a coordinate system $(x^1, \ldots, x^n)$ on $U \ssubset G$, then there exist smooth functions $b_j^k:U \rightarrow \mathbb{R}^n$, $j,k \in \{1, \ldots, n\}$, such that 
\[
\dot{x}^i = \mathfrak{u}^j(t) b_j^i(x).
\]
It is straightforward that it satisfies the Carath\'eodory conditions for the local existence and uniqueness of solutions (see \cite{Filippov}).
Let $x(t):(a,b) \rightarrow G$ be the maximal solution of \eqref{equation_EDO para u mensuravel}.
We will suppose that $b<\infty$ and show that this leads to a contradiction. 
Consider the left-invariant Riemannian metric $x \mapsto \left< \cdot, \cdot \right>_x$ on $G$ such that $\{X_1(x), \ldots, X_n(x)\}$ is an orthonomal frame of $G$.
Denote the distance function with respect to 
$x \mapsto \left< \cdot , \cdot \right>_x$ by $d_G$.
Notice that $(G,d_G)$ is a complete metric space due to the Hopf-Rinow Theorem.
Moreover 
\begin{equation}
\label{equation dG Lipschitz}
d_G(x(t),x(s)) \leq \tilde{R} \vert t-s \vert
\end{equation} 
because $\mathfrak{u}(t) \in B_{\Vert \cdot\Vert}[0,\tilde{R}]$ and $\Vert \dot{x}(t) \Vert \leq \tilde{R}$ for almost every $t \in (a,b)$.
Therefore, if $t_i$ is a sequence in $(a,b)$ converging to $b$, then it is a Cauchy sequence, what implies that $x(t_i)$ is a Cauchy sequence in $(G,d_G)$.
Then $x(t_i)$ converges to a point in $G$ and $x(t)$ can be extended beyond $t=b$, what gives a contradiction. 
Therefore $b=\infty$.
In the same way we prove that $a=-\infty$, and we conclude that the maximal interval of definition of $x(t)$ is $\mathbb{R}$.
\end{proof}

Consider the diffeomorphism $T^\ast G \rightarrow G \times \mathfrak{g^\ast}$ given by $(x, \alpha) \mapsto$ $(x, d(L^\ast_x)_e(\alpha))$.
Define the metric $d_{T^\ast G}: T^\ast G \times T^\ast G \rightarrow \mathbb{R}$ given by 
\begin{align*}
& d_{T^\ast G}((x_1,\alpha_1),(x_2,\alpha_2)) \\ = &  \max\{ d_G(x_1, x_2), \Vert d(L^\ast_{x_1})_{e}(\alpha_1) - d(L^\ast_{x_2})_{e}(\alpha_2)\Vert_\ast \},
\end{align*}
which is the maximum norm on $G\times \mathfrak{g}^\ast$.
Let $(x_0, \alpha_0) \in T^\ast G\backslash 0$ and consider the Pontryagin extremals $t \in \mathbb{R} \mapsto (x(t),\alpha(t))$ and $t \in \mathbb{R} \mapsto (x_\varepsilon(t), \alpha_\varepsilon(t))$ of $(G,F)$ and $(G,F_\varepsilon)$ respectively satisfying $(x(0),\alpha(0)) = (x_\varepsilon(0),\alpha_\varepsilon(0)) = (x_0, \alpha_0)$.
According to Corollary \ref{corollary_Jessica Ryuichi}, there exist a unique $\mathfrak{a}(t)$ corresponding to $(x(t), \alpha(t))$.
Analogously, for each $\varepsilon \in (0,\tau)$, there exist a unique $\mathfrak{a}_\varepsilon(t)$ corresponding to $(x_\varepsilon(t),$ $\alpha_\varepsilon(t))$.
Then $(u_\varepsilon(\mathfrak{a}_\varepsilon(t)), \mathfrak{a}_\varepsilon(t))$ converges uniformly to $(u(\mathfrak{a}(t)), \mathfrak{a}(t))$ on $[a,b]$ as $\varepsilon \rightarrow 0$ due to 
Theorem \ref{theorem_convergencia uniforme nas algebras}.
We will prove that $(x_\varepsilon(t),\alpha_\varepsilon(t))$ converges uniformly to $(x(t),\alpha(t))$ on $[a,b]$ when $\varepsilon \rightarrow 0$. 

Define $\mathcal{U}= \{\mathfrak{u}:\mathbb{R} \rightarrow B_{\Vert \cdot\Vert}[0,R] \subset \mathfrak{g}: \,\, \mathfrak{u} \text{ is measurable}\}$, where $R$ is defined in Lemma \ref{As bolas contem uma bola euclidiana e estao contidas em uma bola euclidiana}. 
Then every measurable function $\mathfrak{u}(t)$ defined on $S_{F_e}[0,1]$ or else on $S_{\tilde{F}_\varepsilon}[0,1]$ lies in $\mathcal{U}$.

The Pontryagin extremal $(x(t), \alpha(t))$ corresponds to the control $u(\mathfrak{a}(t))$, what implies, $\dot{x}(t)=d(L_{x(t)})_e (u(\mathfrak{a}(t)))$.
If we write $u(\mathfrak{a}(t)) = u^j(\mathfrak{a}(t)) e_j$, we have that
\begin{eqnarray} \label{Sistema de controle afim 2}
	\dot{x}(t) = u^j(\mathfrak{a}(t)) X_j(x(t)), \quad x(0) = x_0.
\end{eqnarray}
Analogously, the Pontryagin extremals $(x_\varepsilon(t), \alpha_\varepsilon(t))$ of $(G,F_\varepsilon)$ correspond to the controls $u_\varepsilon(\mathfrak{a}_\varepsilon(t))$, and we have
\begin{eqnarray} \label{Sistema de controle afim 3}
	\dot{x}_\varepsilon(t) = u_\varepsilon^j(\mathfrak{a}_\varepsilon(t))X_j(x_\varepsilon(t)), \quad x_\varepsilon(0) = x_0.
\end{eqnarray}

Then we can consider (\ref{Sistema de controle afim 2}) and (\ref{Sistema de controle afim 3}) as differential equations of the affine control system
\begin{eqnarray*}\label{sistema de controle}
	\dot{x}(t) = \mathfrak{u}^j(t) X_j (x(t)),\quad \mathfrak{u}(t)= \mathfrak{u}^i(t)e_i \in B_{\Vert \cdot\Vert}[0,R].
\end{eqnarray*}

Lemma \ref{Lemma_existencia e unicidade de extremal no grupo} states that for every $(x_0, \mathfrak{u}) \in G \times \mathcal{U}$, there is a unique solution $t\in \mathbb{R} \mapsto$ $\phi(t,x_0, \mathfrak{u}),$ of the initial value problem $\dot{x} = \mathfrak{u}^j(t) X_j(x)$, $\phi(0, x_0, \mathfrak{u}) = x_0.$ Furthermore, Lemma 4.3.2 of \cite{Fritz} states that
\begin{eqnarray*}
	\phi: \mathbb{R} \times G \times \mathcal{U} \rightarrow G
\end{eqnarray*}
is continuous when $\mathcal{U}\subset L_\infty(\mathbb{R}, \mathbb{R}^n) = \left( L_1(\mathbb{R}, \mathbb{R}^n) \right)^*$ is endowed with the weak$^*$ topology. 
Since the weak$^*$ topology is coarser than the strong topology, we have that $\phi: \mathbb{R} \times G \times \mathcal{U} \rightarrow G$ is continuous when $\mathcal{U}$ is equipped with the topology of the essential supremum norm, which is the topology of $\mathcal{U}$ that we will consider. 

Now we are in position to state the main theorem of this work.

\begin{teo} 
\label{theorem_principal no fibrado cotangente} 
Let $G$ be a Lie group endowed with a left-invariant strongly convex $C^0$-Finsler structure $F$.
Let $F_\varepsilon$, $\varepsilon \in (0,\tau_{B^\ast})$, be the left-invariant mollifier smoothing of $F$ defined by \eqref{equation_define F_varespilon}.
Consider $(x_0, \alpha_0) \in T^\ast G\backslash 0$ and let $(x(t),\alpha(t))$ and $(x_\varepsilon(t), \alpha_\varepsilon(t))$ be the Pontryagin extremals of $(G,F)$ and $(G,F_\varepsilon)$ respectively satisfying $(x(0), \alpha(0)) = (x_\varepsilon(0), \alpha_\varepsilon(0)) = (x_0, \alpha_0)$. 
If $[a,b]$ is a compact interval, then we have the uniform convergence 
\[
(x_\varepsilon(t), \alpha_\varepsilon(t)) \mapsto (x(t), \alpha(t))
\] 
on $[a,b]$ as $\varepsilon \rightarrow 0$.
\end{teo}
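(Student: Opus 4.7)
The plan is to lift the already-established convergence on $\mathfrak{g}\backslash\{0\}\times\mathfrak{g}^\ast\backslash\{0\}$ to $T^\ast G\backslash 0$ using Corollary \ref{corollary_Jessica Ryuichi} together with the continuous dependence on controls supplied by Lemma 4.3.2 of \cite{Fritz}. By Corollary \ref{corollary_Jessica Ryuichi}, the Pontryagin extremals $(x(t),\alpha(t))$ and $(x_\varepsilon(t),\alpha_\varepsilon(t))$ are in bijection with their vertical parts $\mathfrak{a}(t)=d(L^\ast_{x(t)})_e(\alpha(t))$ and $\mathfrak{a}_\varepsilon(t)=d(L^\ast_{x_\varepsilon(t)})_e(\alpha_\varepsilon(t))$; moreover they are the unique solutions of the control systems \eqref{Sistema de controle afim 2} and \eqref{Sistema de controle afim 3} driven by $u(\mathfrak{a}(t))$ and $u_\varepsilon(\mathfrak{a}_\varepsilon(t))$ respectively, with common initial condition $\mathfrak{a}(0)=\mathfrak{a}_\varepsilon(0)=d(L^\ast_{x_0})_e(\alpha_0)=:\mathfrak{a}_0$. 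Applying Theorem \ref{theorem_convergencia uniforme nas algebras} on $[a,b]$, we obtain the uniform convergences $\mathfrak{a}_\varepsilon\to\mathfrak{a}$ and $u_\varepsilon(\mathfrak{a}_\varepsilon(\cdot))\to u(\mathfrak{a}(\cdot))$ on $[a,b]$ as $\varepsilon\to 0$.

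To upgrade to convergence of the base points, I invoke the continuity of $\phi:\mathbb{R}\times G\times\mathcal{U}\to G$ when $\mathcal{U}$ carries the essential-supremum norm topology. One first checks that both $u(\mathfrak{a}(\cdot))$ and $u_\varepsilon(\mathfrak{a}_\varepsilon(\cdot))$ really do lie in $\mathcal{U}$: their values are confined to $B_{\Vert\cdot\Vert}[0,R]$ by \eqref{eqnarray_bolas encaixadas} and \eqref{eqnarray_bolas encaixadas epsilon} in Lemma \ref{As bolas contem uma bola euclidiana e estao contidas em uma bola euclidiana}, and they are measurable (indeed continuous, since $\mathfrak{a}(\cdot)$ and $\mathfrak{a}_\varepsilon(\cdot)$ are absolutely continuous by Proposition \ref{condições sobre a(t) e sua derivada} and $u$, $u_\varepsilon$ are Lipschitz on $\overline{B^\ast}$ by Theorem \ref{u_epsilon é lipschitz}). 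Truncating each control to coincide with $u(\mathfrak{a}(\cdot))$ outside $[a,b]$ (a modification that does not affect the solution on $[a,b]$ by uniqueness), the uniform convergence on $[a,b]$ becomes convergence in the $L^\infty(\mathbb{R},\mathfrak{g})$ norm, so the continuity of $\phi$ yields $x_\varepsilon(t)=\phi(t,x_0,u_\varepsilon(\mathfrak{a}_\varepsilon(\cdot)))\to\phi(t,x_0,u(\mathfrak{a}(\cdot)))=x(t)$ uniformly for $t\in[a,b]$.

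Finally, for the cotangent component, inverting the pullback identifies $\alpha_\varepsilon(t)=d(L^\ast_{x_\varepsilon(t)^{-1}})_e^{-1\,\ast}\mathfrak{a}_\varepsilon(t)$ (and similarly for $\alpha(t)$), which is the image of $(x_\varepsilon(t),\mathfrak{a}_\varepsilon(t))$ under the smooth bundle isomorphism $G\times\mathfrak{g}^\ast\to T^\ast G$ sending $(x,\mathfrak{a})$ to $d(L_{x^{-1}})_x^\ast(\mathfrak{a})$. Because $\{x_\varepsilon(t):t\in[a,b],\,\varepsilon\in(0,\tau_{B^\ast})\}\cup\{x(t):t\in[a,b]\}$ is relatively compact in $G$ (being the uniform limit of continuous $G$-valued curves on a compact interval) and $\{\mathfrak{a}(t)\}\cup\{\mathfrak{a}_\varepsilon(t)\}$ lies in $\overline{B^\ast}$, uniform continuity of this isomorphism on the relevant compact set, combined with the uniform convergences $x_\varepsilon\to x$ and $\mathfrak{a}_\varepsilon\to\mathfrak{a}$, delivers the desired uniform convergence $\alpha_\varepsilon\to\alpha$ on $[a,b]$.

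The main technical care lies in verifying that Fritz's Lemma 4.3.2 applies to this setting, i.e., that the controls $u_\varepsilon(\mathfrak{a}_\varepsilon(\cdot))$ really do define an $L^\infty$-convergent family in $\mathcal{U}$; this is precisely why the uniform bound $\Vert u_\varepsilon(\mathfrak{a})\Vert\le R$ obtained in Lemma \ref{As bolas contem uma bola euclidiana e estao contidas em uma bola euclidiana} and the uniform convergence on compact intervals from Theorem \ref{theorem_convergencia uniforme nas algebras} were set up earlier. Once these ingredients are in place, the remaining arguments are essentially the continuity of smooth maps applied to uniformly convergent sequences lying in compact sets.
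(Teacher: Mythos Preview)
Your proposal is correct and follows the same strategy as the paper: lift Theorem \ref{theorem_convergencia uniforme nas algebras} to $T^\ast G\backslash 0$ via Corollary \ref{corollary_Jessica Ryuichi}, use the continuous dependence on controls from Lemma 4.3.2 of \cite{Fritz} for the base points, and then pass through the left trivialization $T^\ast G\cong G\times\mathfrak{g}^\ast$ for the covectors.

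Two small imprecisions are worth tightening. First, your truncation of the controls outside $[a,b]$ leaves the trajectory on $[a,b]$ unchanged only when $0\in[a,b]$; if, say, $0<a$, then replacing $u_\varepsilon(\mathfrak{a}_\varepsilon(\cdot))$ by $u(\mathfrak{a}(\cdot))$ on $[0,a]$ alters the arrival point at $t=a$. The fix is to assume without loss of generality that $0\in[a,b]$. Second, continuity of $\phi$ at each $(t,x_0,\mathfrak{u})$ gives only pointwise convergence $x_\varepsilon(t)\to x(t)$; the upgrade to uniform convergence on $[a,b]$ requires a compactness argument. The paper does this explicitly: it takes a finite partition $\{t_i\}$ of $[a,b]$, uses continuity of $\phi$ at each $t_i$ to get $d_G(x_\varepsilon(t_i),x(t_i))<\mu/3$, and then invokes the uniform Lipschitz bound $d_G(x_\varepsilon(t),x_\varepsilon(s))\le R|t-s|$ (from the left-invariant Riemannian metric) to interpolate. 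Your argument implicitly relies on the equivalent standard fact that a continuous map $[a,b]\times\mathcal{U}\to G$ with $[a,b]$ compact induces a continuous map $\mathcal{U}\to C([a,b],G)$; either way is fine, but it should be stated.

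For the covector component the paper is slightly slicker: it equips $T^\ast G$ with the metric $\max\{d_G(x_1,x_2),\|\mathfrak{a}_1-\mathfrak{a}_2\|_\ast\}$ coming from the trivialization, so that convergence of $(x_\varepsilon,\alpha_\varepsilon)$ is \emph{by definition} the conjunction of $x_\varepsilon\to x$ and $\mathfrak{a}_\varepsilon\to\mathfrak{a}$, and no separate compactness or uniform-continuity step is needed.
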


\begin{proof} 
Firstly we prove the uniform convergence $x_\varepsilon(t) \rightarrow x(t)$ on $[a,b]$ as $\varepsilon \rightarrow 0$.
Let $\mu > 0$. We will prove that there exist $\tau_{\mu} \in (0,\tau_{B^\ast})$ such that if $\varepsilon \in (0,\tau_{\mu})$, then $d_G(x_\varepsilon(t),x(t))< \mu$ for every $t \in [a,b]$.
Notice that if $t,s \in [a,b]$ are such that $\vert t - s\vert < \frac{\mu}{3R}$, then 
\begin{equation}
\label{equation_distancia G xvarepsilon t e s}
d_G(x_\varepsilon(t), x_\varepsilon(s)) < \frac{\mu}{3}
\end{equation} 
for every $\varepsilon \in [0,\tau_{B^\ast})$ due to \eqref{equation dG Lipschitz}.

Consider a partition $\{t_0 =a, t_1, \ldots,$ $ t_N = b\}$ such that for every $t \in [a,b]$, there exist a $t_i$ such that $\vert t-t_i\vert < \frac{\mu}{3R}$.
For every $t_i$, $i=0,1, \ldots, N$, consider $\phi^{-1}(B_{d_G}(x(t_i),\frac{\mu}{3})) \cap \left( \{t_i\} \times \{x_0\} \times \mathcal{U} \right)$, which can be identified with an open subset of $\mathcal{U}$.
Of course $u(\mathfrak{a}(t))$ lies in this open subset.
Then there exist $r_{\mu,i} > 0$ such that for every $\tilde{\mathfrak{u}}(t) \in \mathcal{U}$ satisfying $\esssup_{t\in \mathbb{R}} \Vert u(\mathfrak{a}(t)) -  \tilde{\mathfrak{u}}(t) \Vert < r_{\mu,i}$, we have that 
\[
d_G(\phi(t_i,x_0,\tilde{\mathfrak{u}}(t)),\phi(t_i,x_0, u(\mathfrak{a}(t)))) < \frac{\mu}{3}.
\]

Set $r_{\mu} = \min_{i=0,\ldots,N} r_{\mu,i}$.
Since $u_\varepsilon(\mathfrak{a}_\varepsilon(t))$ converges uniformly to $u(\mathfrak{a}(t))$ on $[a,b]$ when $\varepsilon \rightarrow 0$, then there exist a $\tau_{\mu} \in (0,\tau_{B^\ast})$ such that $\Vert  u_\varepsilon(\mathfrak{a}_\varepsilon(t)) - u(\mathfrak{a}(t))\Vert < r_{\mu}$ for every $t \in [a,b]$ and $\varepsilon \in (0,\tau_{\mu})$, in which case we have
\begin{align}
\nonumber
& d_G(\phi(t_i,x_0, u_\varepsilon(\mathfrak{a}_\varepsilon(t))),\phi(t_i,x_0, u(\mathfrak{a}(t)))) & \\ 
& = d_G(x_\varepsilon(t_i),x(t_i)) < \frac{\mu}{3} & \label{align_estimativa ponto amostral}
\end{align}
for every $i\in \{0, \ldots, N\}$.

Now consider $t\in [a,b]$ and $\varepsilon \in (0,\tau_{\mu})$.
Let $i \in \{0,\ldots, N\}$ such that $\vert t-t_i \vert < \frac{\mu}{3R}$.
Then
\begin{eqnarray*}
d_G(x_\varepsilon(t), x(t)) & = & d_G(x_\varepsilon(t),x_\varepsilon(t_i))+d_G(x_\varepsilon(t_i),x(t_i))+d_G(x(t_i),x(t)) \\
& < & \frac{\mu}{3} + \frac{\mu}{3}+ \frac{\mu}{3} = \mu
\end{eqnarray*}
due to \eqref{equation_distancia G xvarepsilon t e s} and \eqref{align_estimativa ponto amostral}, what proves the uniform convergence $x_\varepsilon(t) \rightarrow x(t)$ on the interval $[a,b]$ as $\varepsilon \rightarrow 0$.

In order to prove the uniform convergence $(x_\varepsilon(t),\alpha_\varepsilon(t)) \rightarrow (x(t), \alpha(t))$ on $[a,b]$ as $\varepsilon \rightarrow 0$, notice that
\begin{eqnarray*}
& \max\limits_{t\in [a,b]} \left\{ d_{T^\ast G}((x_\varepsilon(t), \alpha_\varepsilon(t)),(x(t), \alpha(t))) \right\} \\ 
& = \max\limits_{t\in [a,b]} \left\{ \max \{d_G(x_\varepsilon(t), x(t)), \Vert d(L^\ast_{x_\varepsilon(t)})_{e}(\alpha_\varepsilon(t)) -d(L^\ast_{x(t)})_{e}(\alpha(t))\Vert_\ast \} \right\} \\
& = \max\limits_{t\in [a,b]} \left\{ \max \{d_G(x_\varepsilon(t), x(t)), \Vert \mathfrak{a}_\varepsilon(t) - \mathfrak{a}(t) \Vert_\ast \}\right\},
\end{eqnarray*}
which converges zero as $\varepsilon \rightarrow 0$ due to Theorem \ref{theorem_convergencia uniforme nas algebras} and the uniform convergence $x_\varepsilon(t) \rightarrow x(t)$ on the interval $[a,b]$ as $\varepsilon \rightarrow 0$.
\end{proof}



\begin{thebibliography}{99}
	
	\bibitem{Agrachev-Barilari-Boscain} 
	Agrachev, A., Barilari, D., Boscain, U.: A Comprehensive Introduction to Sub-Riemannian Geometry. Cambridge Stud. Adv. Math., vol. 181. Cambridge University Press, Cambridge (2019). 
	\href{https://mathscinet.ams.org/mathscinet/relay-station?mr=3971262}{MR3971262}.
	\href{https://doi.org/10.1017/9781108677325}{https://doi.org/10.1017/9781108677325}
	
	
	\bibitem{Agrachev-Gamkrelidze-feedback-1} 
	Agrachev, A.A., Gamkrelidze, R.V.: Feedback-invariant optimal control theory and differential geometry. I. Regular extremals. J. Dynam. Control Systems \textbf{3}, 343-389 (1997).
	\href{https://mathscinet.ams.org/mathscinet/relay-station?mr=1472357}{MR1472357}.
	\href{https://doi.org/10.1007/BF02463256}{https://doi.org/10.1007/BF02463256}
	
	
	\bibitem{Agrachev-Lee} 
	Agrachev, A., Lee, P.: Optimal transportation under nonholonomic	constraints. Trans. Amer. Math. Soc. \textbf{361}, no. 11, 6019-6047 (2009).
	\href{https://mathscinet.ams.org/mathscinet/relay-station?mr=2529923}{MR2529923}.
	\href{https://doi.org/10.1090/S0002-9947-09-04813-2}{https://doi.org/10.1090/S0002-9947-09-04813-2}
	
	
	\bibitem{Sachkov} 
	Agrachev, A.A., Sachkov, Y.L.: Control Theory from the Geometric Viewpoint. Encyclopaedia of Mathematical Sciences, vol 87. Springer Berlin, Heidelberg (2004).
	\href{https://doi.org/10.1007/978-3-662-06404-7}{https://doi.org/10.1007/978-3-662-06404-7}
	
	
	\bibitem{Ardentov-LeDonne-Sachkov} 
	Ardentov, A.A., Le Donne, E., Sachkov, Y.L.: A Sub-Finsler Problem on the Cartan Group. Proc. Steklov Inst. Math. \textbf{304}, 42-59 (2019). 
	\href{https://mathscinet.ams.org/mathscinet/relay-station?mr=3951611}{MR3951611}.
	\href{https://doi.org/10.1134/S0081543819010048}{https://doi.org/10.1134/S0081543819010048}
	
	
	\bibitem{Ardentov-Lokutsievskiy-Sachkov}  
	Ardentov, A.A., Lokutsievskiy,  L.V.,  Sachkov, Y.L.: Extremals for a series of sub-Finsler problems with 2-dimensional control via convex trigonometry. ESAIM: COCV. \textbf{27}, no. 32, 52 (2021).
	\href{https://mathscinet.ams.org/mathscinet/relay-station?mr=4251315}{MR4251315}.
	\href{https://doi.org/10.1051/cocv/2021024}{https://doi.org/10.1051/cocv/2021024}
	
	
	\bibitem{BaoChernShen}
	Bao, D., Chern,  S.-S., Shen, Z.: An introduction to Riemann-Finsler Geometry. Grad. Texts in Math., vol. 200. Springer-Verlag, New York, (2000). \
	\href{https://mathscinet.ams.org/mathscinet/relay-station?mr=1747675}{MR1747675}.
	\href{https://doi.org/10.1007/978-1-4612-1268-3}{https://doi.org/10.1007/978-1-4612-1268-3}
	
	
	\bibitem{Barilari-Boscain-LeDonne-Sigalotti} 
	Barilari, D., Boscain, U., Le Donne, E., Sigalotti, M.: Sub-Finsler Structures from the Time-Optimal Control Viewpoint for some Nilpotent Distributions. J. Dyn. Control Syst. \textbf{23}, 547-575 (2017). 
	\href{https://mathscinet.ams.org/mathscinet/relay-station?mr=3657277}{MR3657277}.
	\href{https://doi.org/10.1007/s10883-016-9341-8}{https://doi.org/10.1007/s10883-016-9341-8}
	
	
	\bibitem{Berestovskii-Zubareva-Engel} 
	Berestovski\u{\i}, V.N., Zubareva, I.A.: Extremals of a Left-Invariant Sub-Finsler Metric on the Engel Group. Sibirsk. Mat. Zh. \textbf{61}, no. 4, 735-751 (2020); translation in	Sib. Math. J. \textbf{61}, no. 4, 575-588 (2020).
	\href{https://mathscinet.ams.org/mathscinet/relay-station?mr=4195240}{MR4195240}.
	\href{https://doi.org/10.1134/S0037446620040023}{https://doi.org/10.1134/S0037446620040023}
	
	
	\bibitem{Cobzas} 
	Cobza\c{s}, \c{S}.: Functional analysis in asymmetric normed spaces. Front. Math. Birkh\"{a}user/Springer Basel AG, Basel (2013).
	\href{https://mathscinet.ams.org/mathscinet/relay-station?mr=2977440}{MR2977440}.
	\href{https://doi.org/10.1007/978-3-0348-0478-3}{https://doi.org/10.1007/978-3-0348-0478-3}
	
	
	\bibitem{Fritz} 
	Colonius, F., Kliemann, W.: The Dynamics of control. Systems Control Found. Appl. Birkh\"{a}user Boston, MA (2000). 
	\href{https://mathscinet.ams.org/mathscinet/relay-station?mr=1752730}{MR1752730}.
	\href{https://doi.org/10.1007/978-1-4612-1350-5}{https://doi.org/10.1007/978-1-4612-1350-5}
	
	
	\bibitem{Evans}
	Evans, L.C.: Partial Differential Equations. Second Edition. Grad. Stud. Math. vol. 19. American Mathematical Society, Providence, RI, (2010). 
	\href{https://mathscinet.ams.org/mathscinet/relay-station?mr=2597943}{MR2597943.}
	
	
	\bibitem{Filippov} 
	Filippov, A.F.: Differential Equations with Discontinuous Righthand Sides. Translated from the Russian. Math. Appl. (Soviet Ser.), vol. 18. Kluwer Academic Publishers Group, Dordrecht (1988). 
	\href{https://mathscinet.ams.org/mathscinet/relay-station?mr=1028776}{MR1028776}.
	
	
	\bibitem{Fukuoka-large-family} 
	Fukuoka, R.: A large family of projectively equivalent ${C}^0$-{F}insler manifolds. Tohoku Math. J. \textbf{72}, no. 3, 425-450, (2020). 
	\href{https://mathscinet.ams.org/mathscinet/relay-station?mr=4154827}{MR4154827}.
	\href{https://doi.org/10.2748/tmj/1601085624}{https://doi.org/10.2748/tmj/1601085624}
	
	
	\bibitem{Fukuoka-Setti} 
	Fukuoka, R., Setti, A.M.: Mollifier smoothing of $C^0$-Finsler structures. Ann. Mat. Pura Appl. \textbf{200}, 595-639 (2021). 
	\href{https://mathscinet.ams.org/mathscinet/relay-station?mr=4229543}{MR4229543}.
	\href{https://doi.org/10.1007/s10231-020-01007-z}{https://doi.org/10.1007/s10231-020-01007-z}
	
	
	\bibitem{Gribanova}
	Gribanova, I.A.: The quasihyperbolic plane. Sibirsk. Mat. Zh. \textbf{40}, no. 2, 288-301 (1999); translation in
	Siberian Math. J. \textbf{40}, no. 2, 245-257 (1999).
	\href{https://mathscinet.ams.org/mathscinet/relay-station?mr=1698304}{MR1698304}.	
	\href{https://doi.org/10.1007/s11202-999-0005-8}{https://doi.org/10.1007/s11202-999-0005-8}
	
	
	\bibitem{Hakavuori-Infinite-geodesics}
	Hakavuori, E.: Infinite Geodesics and Isometric Embeddings in Carnot Groups of step 2. SIAM J. Control Optim. \textbf{58}, no. 1,	447-461 (2020).
	\href{https://mathscinet.ams.org/mathscinet/relay-station?mr=4062792}{MR4062792}.
	\href{https://doi.org/10.1137/19M1271166}{https://doi.org/10.1137/19M1271166}
	
	
	\bibitem{Lokutsievskiy-Heisenberg} Lokutsievskiy, L.V.: Explicit Formulae for Geodesics in Left-Invariant Sub-Finsler Problems on Heisenberg Groups via Convex Trigonometry. J. Dyn. Control Syst. \textbf{27}, 661-681 (2021). 
	\href{https://mathscinet.ams.org/mathscinet/relay-station?mr=4320510}{MR4320510}.
	\href{https://doi.org/10.1007/s10883-020-09516-z}{https://doi.org/10.1007/s10883-020-09516-z}
	
	
	\bibitem{Mennucci-asymmetric-distances} 
	Mennucci, A.C.G.: On Asymmetric Distances. Anal. Geom. Metr. Spaces \textbf{1},  200-231 (2013). 
	\href{https://mathscinet.ams.org/mathscinet/relay-station?mr=3108873}{MR3108873}.
	\href{https://doi.org/10.2478/agms-2013-0004}{https://doi.org/10.2478/agms-2013-0004}
	
	
	\bibitem{PrudencioFukuoka} 
	Prudencio, J.B., Fukuoka, R.: Extremals on Lie Groups with Asymmetric Polyhedral Finsler Structures. J. Dyn. Control Syst. \textbf{30}, 30 (2024).
	\href{https://mathscinet.ams.org/mathscinet/relay-station?mr=4780023}{MR4780023}.
	\href{https://doi.org/10.1007/s10883-024-09705-0}{https://doi.org/10.1007/s10883-024-09705-0}
	
	
	\bibitem{Rockafellar} 
	Rockafellar, R.T.: Convex analysis. Princeton Math. Ser., no. 28. Princeton University Press, Princeton, NJ	(1970).
	\href{https://mathscinet.ams.org/mathscinet/relay-station?mr=0274683}{MR0274683}.
	\href{https://doi.org/10.1515/9781400873173}{https://doi.org/10.1515/9781400873173}
	
	
	\bibitem{Fukuoka-Rodrigues} Rodrigues. H.M, Fukuoka, R.: Geodesic fields for Pontryagin type $C^0$-Finsler manifolds. ESAIM: COCV. \textbf{28}, no. 19, 41 (2022). 
	\href{https://mathscinet.ams.org/mathscinet/relay-station?mr=4387182}{MR4387182}.
	\href{https://doi.org/10.1051/cocv/2022013}{https://doi.org/10.1051/cocv/2022013}
	
	
	\bibitem{Sachkov-bang-bang-Engel}
	Sachkov, Y.L.: Optimal Bang-Bang Trajectories in Sub-Finsler Problems on the Engel Group. Russ. J. Nonlinear Dyn., \textbf{16}, no. 2, 355-367 (2020).
	\href{https://mathscinet.ams.org/mathscinet/relay-station?mr=4126040}{MR4126040}.
	
	
	\bibitem{SunYuan} Sun, W., Yuan, Y.-X.: Optimization Theory and Methods: Nonlinear Programming. Springer Optim. Appl., vol. 1. Springer New York, NY, (2006).
	\href{https://mathscinet.ams.org/mathscinet/relay-station?mr=2232297}{MR2232297}.
	\href{https://doi.org/10.1007/b106451}{https://doi.org/10.1007/b106451}
	
\end{thebibliography}
\end{document}